\newif\ifPDF
\theoremstyle{definition}
\newtheorem{definition}{Definition}[section]
\theoremstyle{remark}
\newtheorem{example}[definition]{Example}
\newtheorem{remark}[definition]{Remark}
\theoremstyle{plain}
\newtheorem{thm}[definition]{Theorem}
\newtheorem{theorem}[definition]{Theorem}
\newtheorem{prop}[definition]{Proposition}
\newtheorem{lemma}[definition]{Lemma}
\newtheorem{corol}[definition]{Corollary}
\newcommand{\tcb}{\textcolor{blue}}
\def\wt{\widetilde}
\def\ol{\overline}
\def\ov{\overline}
\def\tl{\widetilde}
\def\wh{\widehat}
\newcommand{\N}{\mathbb N}
\newcommand{\Z}{\mathbb Z}
\newcommand{\B}{\mathcal B}
\newcommand{\ent}{f^{(n)}_{vw}}
\newcommand{\be}{\begin{equation}}
\newcommand{\ee}{\end{equation}}
\newcommand{\ba}{\begin{aligned}}
\newcommand{\ea}{\end{aligned}}
\newcommand{\mc}{\mathcal}
\newcommand{\A}{{\cal A}}
\newcommand{\R}{{\mathbb R}}
\numberwithin{equation}{section}
\newcommand{\ignore}[1]{}
\begin{document}

\title{Bratteli diagrams in Borel dynamics}

\author{Sergey Bezuglyi}
%    Address of record for the research reported here
\address{Department of Mathematics, University of Iowa, Iowa City, USA}
\email{sergii-bezuglyi@uiowa.edu}
\author{Palle E.T. Jorgensen}
%    Address of record for the research reported here
\address{Department of Mathematics, University of Iowa, Iowa City, USA}
\email{palle-jorgensen@uiowa.edu}
\author{Olena Karpel}
%    Address of record for the research reported here
\address{Faculty of Applied Mathematics, AGH University of Krakow, Krakow, Poland \&
B. Verkin Institute for Low Temperature Physics and Engineering,
Kharkiv, Ukraine}

\email{okarpel@agh.edu.pl}
\author{Shrey Sanadhya}
%    Address of record for the research reported here
\address{Department of Mathematics, Ben-Gurion University of the Negev, Beersheba, Israel}
\email{sanadhya@post.bgu.ac.il}

\subjclass[2020]{37A05, 37B05, 37A40, 54H05, 05C60}

\keywords{Borel dynamical systems, Bratteli-Vershik model, tail-invariant measures, infinite ergodic theory.}

\date{}

\begin{abstract} Bratteli-Vershik models have been very successfully applied to the study of various
dynamical systems, in particular, in Cantor dynamics. In this paper, we 
study dynamics on the path spaces of \textit{generalized} Bratteli 
diagrams that form  models for non-compact Borel dynamical systems. 
Generalized Bratteli diagrams have countably infinite many vertices 
at each level, thus the corresponding incidence matrices are also 
countably infinite. We emphasize differences (and similarities) 
between generalized and classical Bratteli diagrams. 

Our main results: $(i)$ We utilize Perron-Frobenius theory for 
countably infinite matrices to establish criteria for 
the existence and uniqueness of tail-invariant path space measures 
(both probability and $\sigma$-finite). $(ii)$ We provide criteria 
for the topological transitivity of the tail equivalence relation. 
$(iii)$ We describe classes of stationary generalized Bratteli 
diagrams (hence 
Borel dynamical systems) that: $(a)$ do not support a probability 
tail-invariant measure, $(b)$ are not uniquely ergodic with respect to the tail 
equivalence relation. $(iv)$ We describe classes of generalized Bratteli diagrams which can or cannot admit a continuous Vershik map and construct a Vershik map which is a minimal homeomorphism of a (non locally compact) Polish space.
$(v)$ We 
provide an application of the 
theory of stochastic matrices to analyze diagrams with positive recurrent incidence matrices. 
\end{abstract}

\maketitle
\tableofcontents

\section{Introduction}\label{intro}
This paper is dedicated to the study of discrete dynamical 
systems realized on the path space of \textit{generalized 
Bratteli diagrams}. A generalized Bratteli diagram is a 
natural 
extension of the notion of classical (standard)
Bratteli diagrams where each level has a countably infinite 
set of vertices. The structure of such diagrams 
is determined by a sequence of countably infinite incidence
matrices.

%%%Palle's text
In general terms, a Bratteli diagram is a certain 
combinatorial structure which encompasses the following: 
The diagram takes the form of a graph which is represented 
as a countable union of levels, and a specification of 
edges between levels. The level count in turn is indexed 
or labeled by non-negative integers and edges link 
only levels with index count differing by one. For the last 
decades, such diagrams (graphs) have come to serve as 
a powerful tool for the analysis of path space, and 
representing infinite paths via the particular graphs under 
consideration. The countable index of the levels may represent discrete time in associated models of dynamics. 
Our aim here is to extend earlier results on dynamics via diagrams, and associated path space constructions, to 
the broader context of Borel dynamics, and then to study the corresponding measures on these generalized path spaces. 
Our analysis will entail an extended Perron-Frobenius 
theory, a tail equivalence relation, and the construction 
of corresponding tail-invariant measures, and finally a 
study of generalized Vershik maps.

This wide framework of path space analysis is motivated by a 
variety of applications. In this paper, we identify new 
properties 
of stationary and non-stationary generalized Bratteli diagrams. 
Our main results contribute to the following five important 
questions: $(i)$ identify the structure and the properties of 
 Vershik maps and $(ii)$  the tail equivalence relation 
 associated with generalized Bratteli diagrams; $(iii)$ existence 
 and uniqueness of tail-invariant measures, finite and 
 $\sigma$-finite.
In this context, we present an analysis of $(iv)$ Bratteli 
diagrams with positive recurrent incidence matrices; and $(v)$ 
path space measures induced by stochastic matrices. These 
and other results can be found in  Theorems 
\ref{thm_VM not cont}, \ref{Thm:ContVM} \ref{thm_top_trans},
\ref{Thm:Unique finite}, \ref{Thm:Unique sigma finite},
\ref{thm:H grow} and \ref{Thm:seq_wh_mu_main}.

The existing literature on Bratteli diagrams, corresponding dynamical 
systems, invariant path-space measures, and other areas used in the paper
is very extensive. We give the references below 
 discussing the
most important ingredients of our work. The reader 
can find the main ideas in \cite{Bratteli1972}, 
\cite{HermanPutnamSkau1992}, \cite{GiordanoPutnamSkau1995},
\cite{Kechris1995}, \cite{Nadkarni1991}, \cite{Kitchens1998},
\cite{Vershik_1982}, and other fundamental works cited below.

Our results may be interesting for mathematicians studying 
infinite matrices as well as for experts in Markov chains and 
random walks on a countable set. If all incidence matrices of a 
generalized Bratteli diagram are 0-1 matrices, then the path 
space of such a diagram is a well-known object in the theory 
of random walks on a countable set. It is worth noting that our
focus is on the study of the tail equivalence relation which 
gives the principal different kind of dynamics on the path space. 
Sections \ref{Section:stat_GBD} and 
\ref{sect:stochastic} contain several key theorems and 
examples about infinite matrices, their eigenvectors, and 
corresponding invariant measures. 
\vskip 0.3cm

\textit{Bratteli diagrams, incidence matrices, path spaces.} 
We recall that 
Bratteli diagrams are infinite-graded graphs that were  
named after Ola Bratteli who introduced them in his pioneering 
paper \cite{Bratteli1972} on the classification of approximately 
finite $C^*$-algebras. In short, a Bratteli diagram is 
a countable graph $G= (V, E)$ where vertices $V = \bigcup_n V_n$
and edges $E = \bigcup_n E_n$ are divided into disjoint 
finite sets (levels) $V_n$ and $E_n$. 
The edges from the set $E_n$ exist only for some vertices from 
consecutive levels $V_n$ and $V_{n+1}$. This set of edges $E_n$
defines a $|V_{n+1}| \times |V_n|$ matrix $F_n$ called the 
incidence matrix. Every $F_n$ has non-negative integer entries.
In this paper, we consider the generalized 
Bratteli diagrams satisfying the property of finite row sums
for every incidence matrix $F_n$. This means that every row has
finitely many non-zero entries. For a Bratteli diagram $B$, the 
path space $X_B$ is formed by infinite sequences $(e_i)$ 
of edges such that $e_{i+1}$ begins at the vertex where 
$e_i$ ends. 
\vskip 0.3cm

\textit{Bratteli diagrams and dynamical systems.}
For the last decades, Bratteli diagrams 
have been intensively studied and used in various areas of
dynamics. The trend to use discrete 
structures, such as graphs and sequences of partitions, proved to 
be a very powerful tool in the theory of 
dynamical systems, see e.g. \cite{GambaudoMartens2006} and the 
papers cited below. In the 1970s, Krieger and Vershik applied 
sequences of refining partitions to the study of ergodic 
automorphisms of a measure space 
\cite{Vershik1973}, 
\cite{Vershik_1981}, \cite{Vershik_1982}, \cite{Krieger1976},
\cite{ConnesKrieger1977}. In particular, Vershik proved that 
any ergodic automorphism of a measure space can be realized as
a transformation acting on the path space indexed by a sequence 
of refining partitions, a prototype of a Bratteli diagram. 

At the beginning of the 1990s, these ideas found their new 
applications in Cantor dynamics. Putnam \cite{Putnam1989} 
showed that, for every minimal homeomorphism $\varphi$ 
of a Cantor set $X$, 
there exists a  sequence of refining partitions into clopen sets 
that approximates the orbits of $\varphi$ and the topology on
$X$. Following this article, Herman-Putnam-Skau \cite{HermanPutnamSkau1992} proved that every 
minimal homeomorphism of a Cantor set can be realized as a
homeomorphism $\varphi_B$ (called a Vershik map) 
of a path space $X_B$ of a Bratteli diagram $B$. In other words, 
Bratteli diagrams represent models of minimal Cantor dynamical 
systems. 
This remarkable result led to a breakthrough in Cantor dynamics
based on the works by Giordano-Putnam-Skau   
\cite{GiordanoPutnamSkau1995}), Glasner-Weiss 
\cite{GlasnerWeiss_1995}, and others.  
It was proved that minimal homeomorphisms could be 
completely classified with respect to orbit equivalence. 
The articles by Forrest \cite{Forrest1997} and 
Durand-Host-Skau \cite{Durand_Host_skau_1999} answered the question about the role of 
stationary Bratteli diagrams: they represent substitution
dynamical systems.
Further applications of Bratteli diagrams in Cantor dynamics
extended the class of homeomorphisms that can be realized as 
Vershik maps. Medynets \cite{Medynets_2006} proved this fact for 
aperiodic homeomorphisms of a Cantor set (see also 
\cite{BezuglyiDooleyMedynets2005}). Recently, 
the papers by Downarowicz-Karpel \cite{DownarowiczKarpel_2019}
and Shimomura \cite{Shimomura2020}, \cite{Shimomura2020_1} 
showed that any homeomorphism
of a Cantor set could be represented as a Vershik map on a 
Bratteli diagram. There are several important classes of Bratteli 
diagrams that deserve special attention. 
Stationary non-simple Bratteli diagrams give models for aperiodic 
substitution systems \cite{Bezuglyi_Kwiatkowski_Medynets_2009}).
Finite rank Bratteli diagrams (i.e., $|V_n|$ is bounded), which, 
in particular, represent interval exchange transformations, 
were studied in \cite{BezuglyiKwiatkowskiMedynetsSolomyak2013}.
Finite and infinite invariant measures were the focus of the 
papers \cite{BezuglyiKarpelKwiatkowski2019}, 
\cite{AdamskaBezuglyiKarpelKwiatkowski2017}. 
Eigenvalues of Cantor minimal systems were considered  
in a series of papers, see e.g. 
\cite{DurandFrankMaass_2019} for references.
We do not discuss all the interesting applications here. 
The reader can find more results about invariant measures, 
dynamics, and applications in \cite{AminiElliottGolestani2021},
\cite{DurandPerrin2022}, 
\cite{GiordanoGoncalvesStarling2017},
\cite{GiordanoMatuiPutnamSkau2010}, \cite{DownarowiczMaass2008}, 
\cite{GjerdeJohansen2000}, 
\cite{Putnam2018},  \cite{Trevinio2018}. 
See also  the surveys
\cite{Durand2010}, \cite{BezuglyiKarpel2016}, \cite{DownarowiczKarpel2018},
\cite{BezuglyiKarpel_2020} and the literature mentioned there. 
The reader who is interested in the classification of stationary 
Bratteli diagrams, various links to operator algebras, and 
$K$-theory can find more information in 
\cite{Bratteli1972}, \cite{Effros1981}, 
\cite{EffrosHandelmanShen1980}, 
\cite{BratteliJorgensenKimRoush2000},
\cite{BratteliJorgensenKimRoush2001},
\cite{BratteliJorgensenKimRoush2002}.
\vskip 0.3cm

\textit{Generalized Bratteli diagrams vs standard Bratteli 
diagrams.} 
Why do we need generalized Bratteli diagrams with countable 
levels? One obvious reason to study such diagrams is explained by
the following result. Bezuglyi-Dooley-Kwiatkowski  
\cite{BezuglyiDooleyKwiatkowski_2006} proved that every aperiodic Borel automorphism of an uncountable standard Borel space admits a realization as a Vershik map on the 
path space of a \textit{generalized Bratteli diagram}. 
A recent result in this direction was obtained in 
\cite{Bezuglyi_Jorgensen_Sanadhya_2022} where the authors proved 
that there is a wide class of substitution dynamical systems on infinite alphabets that 
can be realized  
as Vershik maps on stationary generalized Bratteli 
diagrams. We also refer our readers to related recent works \cite{Manibo_Rust_Walton_2022}, \cite{Frettloh_Garber_Manibo_2022}.
Among other possible applications of generalized 
Bratteli diagrams, we can mention Markov chains, random walks, iterated function systems
\cite{https://doi.org/10.48550/arxiv.2210.14059}, 
harmonic analysis on the path space
of generalized Bratteli diagrams \cite{Bezuglyi_Jorgensen_2021}, 
etc. 

It is clear that the variety of classes of generalized 
Bratteli diagrams is much wider than that of standard 
Bratteli diagrams. 
This fact suggests the possibility of now establishing key 
results 
in the wider context of all generalized Bratteli diagrams.
Another obvious observation is that 
the case of countably infinite matrices requires new techniques 
and methods in comparison with 
finite matrices. In this paper, we consider mostly two classes
of diagrams: stationary diagrams (when $F_n =F$) and 
bounded size diagrams (when all $F_n$ are banded matrices with 
bounded row sums), see Section 
\ref{sect Basic} for definitions. These diagrams have 
the predictable behavior of infinite paths and, therefore, 
we can better 
understand their dynamical properties. It is an interesting 
problem to find out how much the structure of a generalized 
Bratteli diagram determines dynamics and invariant measures 
on the path space of a diagram.

The main results of this paper are concentrated on 
principal problems in dynamics on the path spaces of Bratteli diagrams.
We consider the existence and uniqueness of measures invariant 
with respect to the tail equivalence relation and Vershik maps.
We discuss the dynamical properties of the Vershik map, and how 
the structure of a Bratteli diagram affects the dynamics on the 
path space. The notions of isomorphic and order isomorphic 
generalized Bratteli diagrams are considered in this paper. 
\vskip 0.3cm

\textit{Extended Perron-Frobenius theory, the tail equivalence 
relation, and Vershik map}.
Our main emphasis in the current work is to point out the 
differences and similarities between the dynamics on path space 
of the classical Bratteli diagrams and the generalized ones. 
The tools used in this work are also significantly different. 
For example, considering stationary Bratteli diagrams, we work 
with Perron-Frobenius eigenpairs. For classical diagrams, the
Perron-Frobenius theory covers all possible cases of stationary 
Bratteli diagrams. Moreover, using eigenvectors and eigenvalues
of non-negative matrices, one can explicitly describe all finite 
and $\sigma$-finite invariant measures for simple and non-simple 
stationary Bratteli diagrams 
\cite{BezuglyiKwiatkowskiMedynetsSolomyak2010}.  
For infinite non-negative matrices, the Perron-Frobenius theory
does not cover all cases of generalized Bratteli diagrams: 
there are stationary  diagrams with incidence matrices
that do not have a finite Perron eigenvalue. 
Another important circumstance is that the cases of recurrent 
and transient incidence matrices lead to essentially different 
results regarding the uniqueness of ergodic invariant measures.
This means that one needs to use different techniques 
for finding invariant measures. We use the book by Kitchens
\cite{Kitchens1998} for references about the definitions 
and main results of the Perron-Frobenius theory. For the 
reader's convenience, we included in Appendix 
\ref{APP:Perron-Frobenius_Theory} the facts that are used in this
paper.

One more essential distinction between standard and generalized Bratteli diagrams consists of the existence of invariant measures
on the path spaces. For a standard diagram $B$, the path space $X_B$ is a compact Cantor set, and every 
Vershik map $\varphi_B$ is a homeomorphism of $X_B$. 
Therefore, the classical Bogoliubov-Krylov theorem guarantees the 
existence of a probability $\varphi_B$-invariant measure on 
$X_B$. By contrast, for a generalized
Bratteli diagram, $X_B$ is a Polish zero-dimensional
space, and $(X_B, \varphi_B)$
is a Borel dynamical system. There are then generalized Bratteli
diagrams that do not support finite invariant measures. The 
following question is natural: for what classes of 
generalized Bratteli diagrams are there finite invariant 
measures? 
We note that minimal Cantor dynamics deals with probability 
invariant measures only. The settings for generalized Bratteli 
diagrams lead to the study of both finite and infinite invariant 
measures. We construct and describe classes of generalized Bratteli 
diagrams that do not support a probability invariant measure. 

On every Bratteli diagram $B$, we can consider dynamical systems
of two kinds: the tail equivalence relation $\mathcal R$, and 
a Borel dynamical system $(X_B, \varphi_B)$ defined by a Vershik
map $\varphi_B$. Then $\mathcal R$ is a countable Borel 
equivalence relation which is completely defined by the diagram.
To define a Vershik map $\varphi_B$, we need to consider a 
partial ordering on the set of all edges $E$. The question 
about the continuity of a Vershik map was studied 
in Cantor dynamics \cite{Medynets_2006}, 
\cite{BezuglyiKwiatkowskiYassawi2014}, 
\cite{BezuglyiYassawi2017}, \cite{JanssenQuasYassawi2017}. The result of Bezuglyi-Dooley-Kwiatkowski  
\cite{BezuglyiDooleyKwiatkowski_2006} shows that every aperiodic Borel automorphism of an uncountable standard Borel space admits a realization as a Vershik map on the 
path space of an ordered generalized Bratteli diagram such that the Vershik map is a homeomorphism. 
The left-to-right ordering on a simple (standard) Bratteli 
diagram always gives rise to a continuous Vershik map. But this 
is not the case for generalized Bratteli diagrams,  
even for diagrams with reasonable simple structure, see 
Section \ref{sect Bndd size}. In other words, there are irreducible
generalized Bratteli diagrams such that the left-to-right order
does not generate a continuous Vershik map. Moreover, in the class of generalized Bratteli diagrams with a unique infinite minimal path and a unique infinite maximal path one can find diagrams for which both Vershik map and its inverse are discontinuous, or Vershik map is continuous but its inverse is not (see Section \ref{Sect:contin_V_map}). 

Infinite matrices, especially banded matrices, is the subject of 
great interest because of their applications in various 
areas of mathematics and mathematical physics. In this 
context we mention \cite{AvniBreuerSimon2020}, 
\cite{ChristiansenSimonZinchenko2012}, 
\cite{ChristiansenSimonZinchenko2013}.
\vskip 0.3cm

 \textit{The outline of the paper and main results}. 
In Section \ref{sect Basic} we provide basic definitions and 
discuss the properties of generalized Bratteli diagrams. 
We consider such notions as tail equivalence relation, 
Vershik map, isomorphism of Bratteli diagrams. 
In Section \ref{sect Bndd size} we focus mostly on the 
notion of 
\textit{bounded size} generalized Bratteli diagrams (see 
Definition \ref{Def:BD_bdd_size}) and show that they form 
a natural class of diagrams that can be viewed as 
intermediate 
between classical (standard) and generalized Bratteli diagrams. 
We show that even diagrams with this natural structure can 
provide interesting examples that contrast the classical case. We find classes of bounded size generalized Bratteli diagrams such 
that, for the left-to-right ordering, 
the Vershik map is discontinuous (unlike the case of standard Bratteli diagrams), moreover, every infinite maximal path is a point of discontinuity. We also provide some classes of bounded size diagrams for which it is possible to prolong the Vershik map to a homeomorphism. 
%\tcb{The result in \cite{BezuglyiDooleyKwiatkowski_2006} states that that any aperiodic Borel automorphism of a standard Borel space can be realized as a Vershik map acting on a path space of an ordered generalized Bratteli diagram which has no infinite minimal and no infinite maximal paths, in particular, the Vershik map is a homeomorphism.}
In Section \ref{Sect:contin_V_map}, we consider 
arbitrary generalized Bratteli diagrams and find conditions 
under which they have an order such that the Vershik map can 
be prolonged to a homeomorphism. 
In particular, we are interested in the orders such that the diagram does not possess infinite minimal and infinite maximal paths. We give sufficient conditions for a generalized Bratteli diagram to have such an order. 
We give an example of an ordered generalized Bratteli diagram with a non locally compact path space such that the corresponding Vershik map is a minimal homeomorphism.
In contrast to the case of standard Bratteli diagrams, there are examples of generalized Bratteli diagrams with 
a unique minimal and a unique maximal path such that 
the corresponding Vershik map cannot be prolonged to a 
homeomorphism. 
We show that in the class of ordered generalized Bratteli diagrams with a unique infinite minimal path and a unique infinite maximal path one can find examples of diagrams such that (i) both the Vershik map $\varphi_B$ and its inverse  $\varphi_B^{-1}$ are not continuous;
(ii) the Vershik map $\varphi_B$ is continuous but 
the inverse $\varphi_B^{-1}$ is discontinuous;
(iii) both the Vershik map $\varphi_B$ and its inverse  $\varphi_B^{-1}$ are continuous.
Section \ref{sec: topological property} discusses some 
topological properties of generalized Bratteli diagrams.
 It is proved that, for an irreducible stationary 
generalized Bratteli diagram, the tail equivalence relation 
is topologically transitive. The case of non-stationary 
diagrams 
is also considered. It is proved that irreducible bounded size generalized 
Bratteli diagrams do not generate minimal tail equivalence 
relations. In Section \ref{Sec:Tail_inv_mes} we discuss the tail 
invariant measures 
on the path space of generalized Bratteli diagrams. We provide 
explicit examples of non-stationary generalized Bratteli diagrams 
that do not support full probability measures invariant with 
respect to the Vershik map, and examples of stationary 
generalized Bratteli diagrams which do not support finite 
tail-invariant measures, i.e., the tail equivalence relations for 
such diagrams are compressible. In Section 
\ref{Section:stat_GBD}, we study 
finite and $\sigma$-finite invariant measures on stationary 
generalized Bratteli diagrams. 
We give an explicit description of these measures and prove their 
uniqueness. Several examples that illustrate these theorems 
are also given in Section \ref{Section:stat_GBD}. The detailed 
calculations and proofs for these examples are provided in Appendix 
\ref{APP:Example}. 
Section \ref{sect:stochastic} deals with applications of 
stochastic matrices, related to stationary and non-stationary 
generalized Bratteli diagrams, to the problem of the existence of invariant measures. We give examples of null-recurrent incidence matrices such that the invariant measure for the corresponding generalized Bratteli diagrams is not unique. We also show that, for positive recurrent matrices, one
can control the growth of the heights of the 
Kakutani-Rokhlin towers in terms of the corresponding eigenvectors. 
In Section~\ref{Sect:OP}, we present further possible directions of research.
In Appendix \ref{APP:Perron-Frobenius_Theory}, we provide a 
brief  description of the Perron-Frobenius theory for 
infinite matrices.

%%%%% Section 2
\section{Generalized Bratteli diagrams: basic definitions and 
facts}\label{sect Basic}
%%%%%

This section contains the main definitions of the objects we 
consider
in the paper. We discuss generalized Bratteli diagrams and 
their 
subclasses, the corresponding infinite matrices, 
the tail equivalence relation on the path space of a
diagram, finite and $\sigma$-finite measures invariant with 
respect to tail equivalence relation, and orderings on the 
set of edges and corresponding Vershik maps.

\subsection{Definition of generalized Bratteli diagrams}
The notion of a generalized Bratteli diagram is a natural 
extension of the notion 
of a Bratteli diagram to the case when 
all levels in a generalized Bratteli diagram are countable. 
It was proved in \cite{BezuglyiDooleyKwiatkowski_2006}
that any aperiodic Borel automorphism can be realized as 
a Vershik map on the path space of a generalized Bratteli 
diagram, see Theorem \ref{Thm:GBD_models_Borel_dyn} below.

We will use the standard notation $\mathbb N, \Z, \R,$  
$\mathbb{N}_0 = \N \cup \{0\}$ for the sets of numbers, and 
$|\cdot |$ denotes the cardinality of a set.

\begin{definition}\label{Def:generalized_BD} A 
\textit{generalized Bratteli diagram} is a graded graph 
$B = (V, E)$ such that the 
vertex set $V$ and the edge set $E$ can be represented as 
partitions $V = \bigsqcup_{i=0}^\infty  V_i$ and $E = 
\bigsqcup_{i=0}^\infty  E_i$ satisfying the following 
properties: 
\vspace{2mm}

\noindent $(i)$ The number of vertices at each level 
$V_i$, $i 
\in \N_0$ is countably infinite (in most cases, we will 
identify each 
$V_i$ with $\Z$ or $\N$). The set $V_i$ is called the $i$th 
level of the diagram $B$. For all $i \in \N_0$, the set $E_i$
of all edges between $V_i$ and $V_{i+1}$ is countable.
\vspace{2mm}

\noindent $(ii)$ For every edge $e\in E$, we define the range and 
source maps $r$ and $s$ such that $r(E_i) = V_{i+1}$ and 
$s(E_i) = V_{i}$ for $i \in \N_0$. In particular, we have 
$s^{-1}(v)\neq \emptyset $ for all $v\in V$, and  
$r^{-1}(v)\neq\emptyset$ for all $v \in V\setminus V_0$. 

\vspace{2mm}

\noindent $(iii)$ For every vertex $v \in V \setminus V_0$, we 
have $|r^{-1}(v)| < \infty$.  
\end{definition} 

\begin{remark} (1) 
If the level $V_0$ consists of a single vertex and each set $V_n$ 
is finite, then we get the usual definition of a Bratteli diagram 
which was first defined in \cite{Bratteli1972}. In particular, 
these diagrams were
used to model Cantor dynamical systems and classify them up to 
orbit equivalence (see \cite{HermanPutnamSkau1992}, 
\cite{GiordanoPutnamSkau1995}). 

(2) Since all levels in a generalized Bratteli diagram are 
countable sets, we can use the integers $\Z$ (or natural 
numbers $\N$) to enumerate the vertices. 
%It is more convenient 
%to use $\Z$ because in this case, we do not have a boundary that
%may change the structure of a diagram.
When we index the vertices 
at each level by $\Z$, the generalized diagram is called 
\textit{two-sided infinite}, and when the vertices are indexed by 
$\N$, we call it \textit{one-sided infinite}.
\end{remark}

To define the path space of a generalized Bratteli diagram,
we consider a 
finite or infinite sequence of edges $(e_i: e_i\in E_i)$ such 
that $s(e_i)=r(e_{i-1})$ which is called a finite or infinite 
path, respectively. Given a generalized Bratteli diagram $B = 
(V, E)$, we denote the set of infinite
paths starting at $V_0$ by $X_B$ and call it the \textit{path 
space}. For a finite path $\ol e = (e_0, ... , e_n)$, we denote 
$s(\ol e) = s(e_0), r(\ol e) = r(e_n)$. The set 
$$
    [\ol e] := \{x = (x_i) \in X_B : x_0 = e_0, ..., x_n = e_n\}, 
$$ 
is called the \textit{cylinder set} associated with $\ol e$. 

The \textit{topology} on the path space $X_B$ is generated by
cylinder sets.
This topology coincides with the topology defined by the 
following metric on $X_B$: for $x = (x_i), \, y = (y_i)$, set 
$$
\mathrm{dist}(x, y) = \frac{1}{2^N},\ \ \ N = \min\{i \in \N_0 : 
x_i \neq y_i\}.
$$
The path space $X_B$ is a zero-dimensional Polish space and 
therefore a standard Borel space.

For a vertex $v \in V_m$ and a vertex $w \in V_{n}$, we will 
denote 
by $E(v, w)$ the set of all finite paths between $v$ and $w$. 
Set $f^{(i)}_{v,w} = |E(v, w)|$ for every $w \in V_i$ and $v \in 
V_{i+1}$. In such a way,  we associate with the generalized 
Bratteli diagram $B = (V,E)$ 
a sequence of non-negative countable infinite matrices $(F_i)$, 
$i \in \N_0$, 
(called the \textit{incidence  matrices}) given by
\begin{equation}\label{Notation:f^i}
    F_i = (f^{(i)}_{v,w} : v \in V_{i+1}, w\in V_i),\ \   
    f^{(i)}_{v,w}  \in \N_0.
\end{equation} 

In this paper, a matrix $F = (f_{ij})$ is called 
\textit{infinite} 
(or countably infinite) if its rows and columns
are indexed by the same countably infinite set. Assuming that 
all matrices $F^n, n \in \N$ are defined (i.e., they have 
finite entries), we denote the entries of $F^n$ by 
$f_{ij}^{(n)}$. 
Observe that this notation is similar to \eqref{Notation:f^i}. 
It will be clear from the context whether $f_{ij}^{(n)}$ denotes 
the $(i,j)$-th entry of the matrix $F_n$ (in a sequence of 
matrices $(F_n)_{n \in \N_0}$) or it denotes the $(i,j)$-th entry 
of the $n$-th power of the matrix $F$.  

\begin{remark}
The structure of a generalized Bratteli diagram $B=(V, E)$ is 
completely determined by the sequence of incidence matrices 
$(F_n), 
\, n \in \N_0$. In this case, we write $B = B(F_n)$. For each $n 
\in \N_0$, the matrix $F_n$ has at most finitely many non-zero 
entries in each row, and none of its rows or columns are entirely 
zero. A column of $F_n$ may have a finite or infinite number of 
non-zero entries. Note that, $X_B$ is locally 
compact if and only if every column of $F_n$ has finitely many 
non-zero entries for all $n$.
\end{remark}

\begin{definition}\label{Def:irreducible_GBD} (1) 
Let $B = B(F_n)$ be
a generalized Bratteli diagram. If  $F_n = F$ for every $n \in 
\N_0$, then the diagram $B$ is called \textit{stationary.} We 
will write $B = B(F)$ in this case.

(2) A generalized Bratteli diagram $B =(V,E)$, where 
all levels $V_i$ are identified with a set $V_0$ (e.g. $V_0 = 
\mathbb{N}$ or $\mathbb{Z}$),  is called  \textit{irreducible} if 
for any vertices $i, j \in V_0$ and any level $V_n$ there exist 
$m 
> n$ and a finite path connecting $i \in V_n$ and $j \in V_m$. In 
other words, the $(j, i)$-entry of the matrix $F_{m-1} \cdots 
F_n$ 
is non-zero. Otherwise, the diagram is called \textit{reducible}.

\end{definition}

In particular, a  generalized stationary Bratteli diagram is
irreducible if and only if the corresponding incidence matrix is 
irreducible (see Appendix ~\ref{APP:Perron-Frobenius_Theory} for 
more definitions and results about infinite positive matrices). 

\begin{definition}\label{Def:Tail_equiv_relation}
Two paths $x= (x_i)$ and $y=(y_i)$ in $X_B$ are called 
\textit{tail equivalent} if there exists an $n \in \mathbb{N}_0$ 
such that $x_i = y_i$ for all $i \geq n$. This notion defines a 
\textit{countable Borel equivalence relation} $\mathcal R$ on the
path space $X_B$ which is called the \textit{tail equivalence 
relation}.
\end{definition}

\begin{remark} The set of generalized Bratteli diagrams contains 
various ``exotic'' examples. In this paper, we will consider the
diagrams whose properties are natural from the point of view 
of dynamical systems. 
We will assume that the path space $X_B$ of a generalized 
Bratteli 
diagram $B$ has no isolated points, i.e., for every infinite  
path 
$(x_0, x_1, x_2, ... ) \in X_B$ and every $n \in \N_0$, there 
exists a level $m > n$ such that $|s^{-1}(r(x_m))| > 1$. Hence, 
the 
set $X_B$ is uncountable. We will consider only such Bratteli 
diagrams for which the tail equivalence relation $\mathcal{R}$ is 
aperiodic. We will not consider cases where the Bratteli diagram 
is 
a disjoint union of two or more Bratteli diagrams, that is, the 
Bratteli diagram should be connected when considered as an 
undirected graph.
\end{remark}

\begin{definition}\label{Def:telescoping} Given a generalized 
Bratteli diagram $B = (V,E)$ and a  monotone increasing sequence 
$(n_k : k \in \N_0), n_0 = 0$, we define a new generalized Bratteli 
diagram $B' = (V', E')$ as follows: the vertex sets are 
determined by $V'_k = V_{n_k} $, and the edge sets  $E'_k = 
E_{n_{k}} \circ ...\circ E_{n_{k+1}-1}$ are formed by finite paths 
between the levels $V'_k$ and 
$V'_{k+1}$. The diagram $B' = (V', E')$  is called a 
\textit{telescoping} of the original diagram $B = (V,E)$. 
\end{definition}

Note that for a  generalized stationary Bratteli diagram, after 
telescoping, we can always assume that the incidence matrix is 
aperiodic (see Appendix \ref{APP:Perron-Frobenius_Theory}).

\subsection{Isomorphism of generalized Bratteli diagrams}\label{Subsec:isomBD}
Similarly to the case of standard Bratteli diagrams (see e.g. 
\cite{Durand2010}), we define the notion of isomorphic 
generalized Bratteli diagrams:

\begin{definition} \label{def_isom BD}
Two generalized Bratteli diagrams $B = (V, E)$ and $B' = 
(V', E')$ 
are called \textit{isomorphic} if there exist two sequence of
bijections $(g_n : V_n \rightarrow V_n')_{n \in \N_0}$ and 
$(h_n : E_n \rightarrow E_n')_{n \in \N_0}$ such that 
for every $n \in \N_0$, we have $g_n(V_n) = V_n'$ and $h_n(E_n) = 
E_n'$, and $s' \circ h_n = g_n \circ s$, $r' \circ h_n = 
g_n \circ r$. 

\end{definition} 

To illustrate Definition \ref{def_isom BD}, we consider 
the following example. 

\begin{example}[Isomorphic generalized Bratteli 
diagrams, Figure~\ref{Fig:Isom_BD}]\label{Ex:isom_bd}
Let $B(F)$ be a stationary generalized Bratteli diagram such that 
every level of $B$ is identified with $\Z$, and the incidence 
matrix $F = (f_{ij})_{i,j \in \Z}$ has entries
$$
f_{ij} = 
\left\{
\begin{aligned}
& 2, \mbox{ for } i = j,\\
& 1, \mbox{ for } |i - j| = 1,\\
& 0, \mbox{ otherwise. }
\end{aligned}
\right.
$$ 
Similarly, let $B'(F')$ be a stationary generalized Bratteli 
diagram such that every level of $B'$ is identified with $\N_0$, 
and its incidence matrix $F' = (f'_{ij})_{i,j \in \N_{0}}$ has 
entries $f'_{00} = f'_{11} = 2$, $f'_{01} = 1, f'_{02} = 1, f'_{10} = 1, f'_{13} = 1, f'_{20} = 1, f'_{31} = 1$,
and for $i,j \notin \{0,1\}$: 
$$
f'_{ij} = 
\left\{
\begin{aligned}
& 2, \mbox{ for } i = j,\\
& 1, \mbox{ for } |i - j| = 2,\\
& 0, \mbox{ otherwise. }
\end{aligned}
\right.
$$ 
The diagrams $B$ and $B'$ (elaborated in 
Figure~\ref{Fig:Isom_BD}) are isomorphic. Indeed, the bijections 
$(g_n : V_n \rightarrow V_n')_{n \in \N_0}$ and $(h_n : E_n 
\rightarrow E_n')_{n \in \N_0}$ that give the isomorphism are 
defined as follows: since the diagrams are stationary, we set,
for every $n \in \N_0$, $g_n = g : \mathbb{Z} \rightarrow 
\mathbb{N}_0$ 
$$
g (n) = 
\left\{
\begin{aligned}
&2n, \mbox{ if } n \geq 0,\\
&- 2n - 1, \mbox{ if } n < 0.
\end{aligned}
\right.
$$ 
The bijection $h_n : E_n \rightarrow E_n'$ is defined as follows: 
The two vertical edges in the diagram $B(F)$ with range $i \in 
V_n$ are mapped to the two vertical edges in the diagram $B'(F')$ 
with range $g(i) \in V_n$. For $i > 0$, the 
non-vertical edge with range $i$ coming from left (respectively 
from the right) is mapped to the non-vertical edge 
with range 
$g(i) \in V'_n$ coming from the left (respectively from the 
right). For $i < 0$, the 
non-vertical edge with range $i$ coming from left (respectively 
from the right) is mapped to the non-vertical edge 
with range 
$g(i) \in V'_n$ coming from the right (respectively from the 
left). For the non-vertical edges with range $0 \in V_n$, 
the mapping  can be seen from the 
Figure~\ref{Fig:Isom_BD}. It is easy to see that with the above 
bijections the two diagrams are isomorphic. 

\begin{figure}
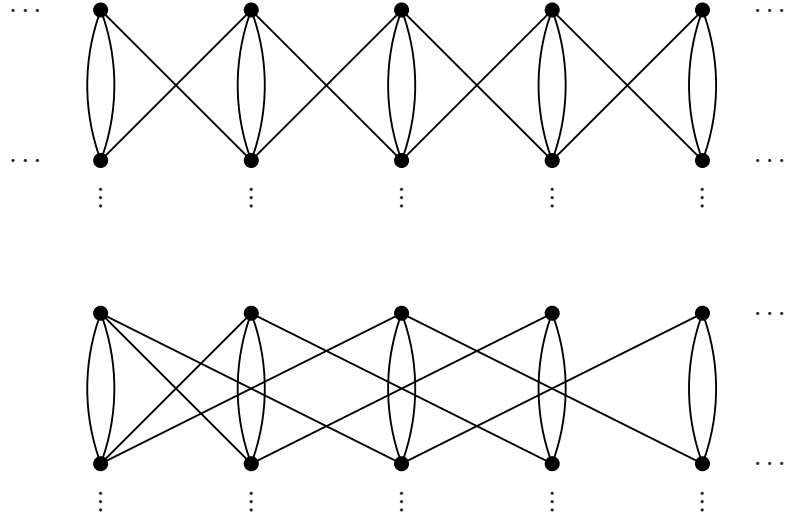

\unitlength=1cm
\begin{graph}(11,4)
% \graphnodesize{0.2}
% \roundnode{V0}(3,6)
%  %\nodetext{V0}(-1,0){$V_0$}
 \roundnode{V11}(2,3)
 %\nodetext{V11}(-0.7,0){$w_1^{(0)}$}
  %\nodetext{V12}(0.7,0){$w_2^{(0)}$}
 \roundnode{V12}(4,3)
 \roundnode{V13}(6,3)
 \roundnode{V14}(8,3)
  \roundnode{V15}(10,3)
  % The second level vertices
 \roundnode{V21}(2,1)
 \roundnode{V22}(4,1)
 \roundnode{V23}(6,1)
  \roundnode{V24}(8,1)
    \roundnode{V25}(10,1)
  %\nodetext{V21}(-0.7,0){$w_1^{(1)}$}
 % \nodetext{V22}(0.7,0){$w_2^{(1)}$}
  %
 %
 % EDGES
 \graphlinewidth{0.025}
% % First level
%  \edge{V0}{V11}
%  \edge{V0}{V12}

 % Second level
 \bow{V21}{V11}{0.09}
  \bow{V21}{V11}{-0.09}
    \edge{V22}{V11}
    \edge{V21}{V12}
     
 \bow{V22}{V12}{0.09}
 \bow{V22}{V12}{-0.09}

 \bow{V23}{V13}{0.09}
  \bow{V23}{V13}{-0.09}

 \edge{V23}{V12}
    \edge{V22}{V13}
    
     \bow{V24}{V14}{0.09}
  \bow{V24}{V14}{-0.09}
  
   \edge{V24}{V13}
    \edge{V23}{V14}
    
         \bow{V25}{V15}{0.09}
  \bow{V25}{V15}{-0.09}
  
   \edge{V25}{V14}
    \edge{V24}{V15}

 \freetext(10.9,3){$\ldots$}
  \freetext(10.9,1){$\ldots$}
   \freetext(1,3){$\ldots$}
  \freetext(1,1){$\ldots$}
  \freetext(2,0.5){$\vdots$}
  \freetext(4,0.5){$\vdots$}
    \freetext(6,0.5){$\vdots$}
      \freetext(8,0.5){$\vdots$}  
        \freetext(10,0.5){$\vdots$} 
    
\end{graph}

\begin{graph}(11,4)
% \graphnodesize{0.2}
% \roundnode{V0}(3,6)
%  %\nodetext{V0}(-1,0){$V_0$}
 \roundnode{V11}(2,3)
 %\nodetext{V11}(-0.7,0){$w_1^{(0)}$}
  %\nodetext{V12}(0.7,0){$w_2^{(0)}$}
 \roundnode{V12}(4,3)
 \roundnode{V13}(6,3)
 \roundnode{V14}(8,3)
  \roundnode{V15}(10,3)
  % The second level vertices
 \roundnode{V21}(2,1)
 \roundnode{V22}(4,1)
 \roundnode{V23}(6,1)
  \roundnode{V24}(8,1)
    \roundnode{V25}(10,1)
  %\nodetext{V21}(-0.7,0){$w_1^{(1)}$}
 % \nodetext{V22}(0.7,0){$w_2^{(1)}$}
  %
 %
 % EDGES
 \graphlinewidth{0.025}
% % First level
%  \edge{V0}{V11}
%  \edge{V0}{V12}

 % Second level
 \bow{V21}{V11}{0.09}
  \bow{V21}{V11}{-0.09}
    \edge{V22}{V11}
    \edge{V21}{V12}
     \edge{V21}{V13}
     
 \bow{V22}{V12}{0.09}
 \bow{V22}{V12}{-0.09}

 \bow{V23}{V13}{0.09}
  \bow{V23}{V13}{-0.09}

 \edge{V23}{V11}
    \edge{V22}{V14}
    
     \bow{V24}{V14}{0.09}
  \bow{V24}{V14}{-0.09}
  
   \edge{V24}{V12}
    \edge{V23}{V15}
    
         \bow{V25}{V15}{0.09}
  \bow{V25}{V15}{-0.09}
  
   \edge{V25}{V13}
    %\edge{V24}{V15}

 \freetext(10.9,3){$\ldots$}
  \freetext(10.9,1){$\ldots$}
  \freetext(2,0.5){$\vdots$}
  \freetext(4,0.5){$\vdots$}
    \freetext(6,0.5){$\vdots$}
      \freetext(8,0.5){$\vdots$}  
        \freetext(10,0.5){$\vdots$} 
    
\end{graph}
\caption{Isomorphic generalized Bratteli diagrams $B$ and 
$B'$.}\label{Fig:Isom_BD}
\end{figure}
\end{example}
\medskip

To define a dynamical system on the path space of a 
generalized 
Bratteli diagram, we need to take a linear order $>$ on each
(finite) set $r^{-1}(v),\ v
\in V\setminus V_0$. This order defines a partial order on 
the sets of edges $E_i,\ i=0,1,...$: edges
$e,e'$ are comparable if and only if $r(e)=r(e')$. For a 
generalized Bratteli diagram $B=(V,E)$ equipped with partial order 
$>$, we define a partial lexicographical order on
the set $E_{k}\circ\cdots\circ E_{l}$ of all finite paths 
from 
$V_k$ to $V_{l+1}$ as follows:
$(e_{k},...,e_{l}) > (f_{k},...,f_{l})$ if and only if $e_i > f_i$ 
for some $i$ with $k\le i\le l$, and $e_j=f_j$ for $i<j\le 
l$. Then any 
two paths from the (finite) set $E(V_0,v)$ of all paths 
connecting a vertex from $V_0$ and $v$ are comparable with 
respect to the lexicographic order. 

\begin{definition}\label{Def:Ordered_GBD} 
A generalized Bratteli diagram $B=(V,E)$
together with a partial order $>$ on $E$ is called 
\textit{an ordered generalized Bratteli diagram} $B=(V,E,>)$. 
\end{definition}

We call an infinite path $e= (e_0,e_1,..., e_i,...)$ 
\textit{maximal (respectively minimal)} if every
$e_i$ has a maximal (respectively minimal) number among all 
elements from $r^{-1}(r(e_i))$. The same definition is used 
for finite maximal/minimal paths. 
Remark that there are unique minimal and maximal 
paths in the set $E(V_0,v)$ of all finite paths arriving at 
$v$ for each $v\in V_i,\ i > 0$.

We note that, in contrast to standard Bratteli diagrams, 
there are orders on generalized Bratteli diagrams such that the sets
$X_{max}$ and $X_{min}$ of maximal and minimal paths are empty,
see \cite{BezuglyiDooleyKwiatkowski_2006} and Example~\ref{Ex:noXminnoXmax}. 
It is not hard to see that $X_{max}$ and $X_{min}$ are 
closed subsets of $X_B$. 

\begin{definition}\label{Def:VershikMap}  For an ordered 
generalized Bratteli diagram $B=(V,E,>)$, we define a Borel 
transformation 
\begin{equation}\label{eq: Vershik map}
\varphi_B : X_B \setminus X_{max} \rightarrow X_B \setminus X_{min}
\end{equation}
as follows.  Given $x = (x_0, x_1,...)\in X_B\setminus X_{max}$, 
let $m$ be the smallest number such that $x_m$ is not maximal. Let 
$g_m$ be the successor of $x_m$ in the finite set $r^{-1}(r(x_m))$.
Then we set $\varphi_B(x)= (g_0, g_1,...,g_{m-1},g_m,x_{m+1},...)$
where $(g_0, g_1,..., g_{m-1})$ is the minimal path in $E(V_0, 
s(g_{m}))$. The map $\varphi_B$ is
a Borel bijection. Moreover, $\varphi_B$ is a homeomorphism from 
$X_B\setminus X_{max}$ onto $X_B\setminus X_{min}$. If  
$\varphi_B$ admits a bijective extension 
to the 
entire path space $X_B$, then we call the Borel transformation 
$\varphi_B : X_B  \rightarrow X_B$ a \textit{Vershik map}, and 
the 
Borel dynamical system $(X_B,\varphi_B)$ is called a generalized 
\textit{Bratteli-Vershik} system.

\end{definition}

\begin{remark} We collect here several facts about Vershik maps 
on generalized Bratteli diagrams. 

(1) Let $B = (V, E, >)$ be an ordered generalized Bratteli diagram.
Relation \eqref{eq: Vershik map} defines $\varphi_B$ uniquely as 
a map from $X_B \setminus X_{max}$ onto  $X_B \setminus X_{min}$. 
We note that if $\varphi_B$ can be prolonged to a Vershik map on 
$X_B$, then, in general, such an extension is not unique. Indeed, if $|X_{min}| = |X_{max}| > 1$ then we can 
choose arbitrary a Borel map from $X_{max}$ onto $X_{min}$ as a Vershik map acting on $X_{max}$.

(2) There exist orders $>$ on $X_B$ such that
both sets $X_{max}$ and $X_{min}$ are empty.
In this case, $\varphi_B$ is uniquely 
determined according to \eqref{eq: Vershik map} of Definition 
\ref{Def:VershikMap}. Also, there orders $>$ on $B$ such that 
$|X_{max}| \neq |X_{min}|$; in particular, one of these sets may
be empty. In the latter case, $B (V,E, >)$ does not support a 
Vershik map.

(3) We note that for simple standard Bratteli diagrams, the 
left-to-right
ordering always generates a Vershik homeomorphism. Indeed, in this 
case, all minimal edges start from the first vertex on each level, 
and all maximal edges start from the last vertex. This guarantees 
the uniqueness of an infinite minimal and infinite maximal path. 
For irreducible generalized Bratteli diagrams the left-to-right 
ordering does not necessarily 
produce a Borel Vershik automorphism (see 
Example~\ref{Ex:LROrdering_no_V.map} below) or a continuous 
Vershik map (see Theorem \ref{thm_VM not cont} below).

\end{remark}

\begin{definition}\label{def_isom BD_1} Two ordered generalized 
Bratteli diagrams $B = (V, E, >)$, $B' = (V', E',>')$ are called 
\textit{order isomorphic} if they are isomorphic (see Definition 
\ref{def_isom BD}) and, for all $n \in \N$, $v \in V_n$ and 
$e_1,e_2 
\in r^{-1}(v)$, we have $e_2>e_1$ if and only if $h_n(e_1)>' 
h_n(e_2)$ for $h_n(e_1), h_n(e_2) \in r^{-1}(g_n(v))$, where $g_n: 
V_n \rightarrow V_n'$ and $h_n: E_n \rightarrow E_n'$ are the 
bijections defined as in Definition \ref{def_isom BD}. 
\end{definition} 

We show that an order isomorphism of two generalized  
Bratteli diagrams implies an isomorphism of the respective 
generalized Bratteli-Vershik dynamical systems. 

\begin{thm} Let  $B = (V, E, >)$ and $B' = (V', E',>')$ be order 
isomorphic generalized Bratteli diagrams. 
Assume that $|X_B(max)| = |X_B(min)|$ and $|X_{B'}(max)| = 
|X_{B'}(min)|$. Then the orders $>$ and $>'$ generate Vershik maps 
$\varphi_{B}: X_B \rightarrow X_B$ and $\varphi_{B'} : X_{B'} 
\rightarrow X_{B'}$ such that the generalized 
Bratteli-Vershik systems $(X_B, \varphi_{B})$ and $(X_{B'}, 
\varphi_{B'})$ are Borel isomorphic.
\end{thm}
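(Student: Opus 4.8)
The plan is to manufacture the isomorphism directly from the edge bijections and then reconcile the two (equinumerous) sets of extremal paths by hand.

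First I would define $H \colon X_B \to X_{B'}$ coordinatewise by $H((x_i)_{i \ge 0}) = (h_i(x_i))_{i \ge 0}$ and check that it lands in $X_{B'}$: for a path $x$ one has $s'(h_i(x_i)) = g_i(s(x_i)) = g_i(r(x_{i-1})) = r'(h_{i-1}(x_{i-1}))$ by the source--range compatibility of Definition \ref{def_isom BD}, so consecutive images concatenate into a legitimate path. Since each $h_i$ is a bijection of $E_i$ onto $E_i'$, the map $H$ is a bijection with inverse $(y_i) \mapsto (h_i^{-1}(y_i))$, and both $H$ and $H^{-1}$ send cylinder sets to cylinder sets; hence $H$ is a homeomorphism, in particular a Borel isomorphism of the two path spaces.

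Next I would transport the order data. Because each $h_n$ restricts to an order isomorphism of every fiber $r^{-1}(v)$ onto $r^{-1}(g_n(v))$, it carries the maximal (resp.\ minimal) edge of a fiber to the maximal (resp.\ minimal) edge of the image fiber, and carries the successor of an edge to the successor of its image. Applying this coordinatewise, $H$ maps $X_B(\max)$ bijectively onto $X_{B'}(\max)$ and $X_B(\min)$ bijectively onto $X_{B'}(\min)$ (running the same argument for $H^{-1}$), and it sends minimal finite paths to minimal finite paths. The conjugacy on the generic part is then a direct unwinding of Definition \ref{Def:VershikMap}: given $x \notin X_B(\max)$, if $m$ is the least index with $x_m$ non-maximal, then since $H$ preserves maximality in each coordinate $m$ is also least for $H(x)$; the successor of $x_m$ maps to the successor of $h_m(x_m)$ and the minimal initial segment below it maps to the corresponding minimal initial segment, so $H(\varphi_B(x))$ and $\varphi_{B'}(H(x))$ agree coordinate by coordinate. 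Thus $H$ conjugates the canonical partial maps $X_B \setminus X_B(\max) \to X_{B'} \setminus X_{B'}(\max)$.

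Finally --- and this is the only place the cardinality hypotheses are used, and the step I expect to be the crux --- I would fix the extensions over the extremal sets compatibly. Since $|X_B(\max)| = |X_B(\min)|$ and, being closed in a Polish space, both are standard Borel, they are Borel isomorphic; choose such a Borel bijection as $\varphi_B$ on $X_B(\max) \to X_B(\min)$, completing $\varphi_B$ to a Vershik map (using the freedom in the extension noted in the Remark following Definition \ref{Def:VershikMap}). Rather than choosing an independent extension on $B'$, I would define $\varphi_{B'}$ on $X_{B'}(\max)$ by transport, $\varphi_{B'}|_{X_{B'}(\max)} := H \circ \varphi_B \circ H^{-1}$; because $H$ takes $X_B(\max)$ onto $X_{B'}(\max)$ and $X_B(\min)$ onto $X_{B'}(\min)$, this is a Borel bijection of $X_{B'}(\max)$ onto $X_{B'}(\min)$, hence a legitimate Vershik extension, whose existence is exactly what $|X_{B'}(\max)| = |X_{B'}(\min)|$ guarantees. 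With these choices $H \circ \varphi_B = \varphi_{B'} \circ H$ holds on $X_B(\max)$ by construction and on its complement by the previous paragraph, so it holds on all of $X_B$; as $H$ is a Borel bijection with Borel inverse, $(X_B, \varphi_B)$ and $(X_{B'}, \varphi_{B'})$ are Borel isomorphic. The subtle point is that one should not expect a canonical correspondence of extremal paths: the freedom in extending the Vershik map must be spent to make a single global $H$ a conjugacy, which is available precisely because the extremal sets pair up as $H(X_B(\max)) = X_{B'}(\max)$ and $H(X_B(\min)) = X_{B'}(\min)$, together with the equalities $|X(\max)| = |X(\min)|$ on each side.
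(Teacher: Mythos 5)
Your proposal is correct and follows essentially the same route as the paper's own proof: the coordinatewise bijection $H$ built from the $h_n$ (the paper's map $f$), the direct unwinding of Definition \ref{Def:VershikMap} to get the conjugacy on $X_B \setminus X_B(\max)$, and then spending the cardinality hypotheses to extend $\varphi_B$ over $X_B(\max)$ and defining $\varphi_{B'}$ on $X_{B'}(\max)$ by transport, $\varphi_{B'} = H \circ \varphi_B \circ H^{-1}$, exactly as in the paper's equation \eqref{eq:iso}. Your remarks on why $H$ preserves extremal paths and successors, and on the Borel-isomorphism of the equinumerous extremal sets, only make explicit what the paper leaves implicit.
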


\begin{proof} We will construct two Vershik maps $\varphi_{B}: 
X_B \rightarrow X_B$ and $\varphi_{B'}  : X_{B'} \rightarrow 
X_{B'}$ and define a Borel map $f: X_{B} \rightarrow X_{B'}$
such that, for $x \in X_B$, 
\begin{equation}\label{eq: conj}
f(\varphi_{B} (x)) = \varphi_{B'} (f(x)).
\end{equation} 
We use the concatenation of the maps $(h_n : E_n 
\rightarrow E'_n)_{n \in \N_0}$ given in Definition 
\ref{def_isom BD} to define  $f$ for $x = (x_0, x_1,...)\in X_B$, 
$$ f(x) = (h_n(x_n))_{n \in \N_0} : = (x'_n)_{n \in \N_0} : = 
x' \in X_{B'}.
$$ 
Since every $h_n$ is a bijection, we see that $f: X_{B} 
\rightarrow X_{B'}$ is a well defined bijective map. It follows 
from  Definition \ref{def_isom BD} that $f(X_{B} (max)) =  
X_{B'} (max)$ and $f(X_{B} (min)) = X_{B'} (min)$. 

By Definition \ref{Def:VershikMap}, we have 
\begin{equation}\label{eq: restricted1}
    \varphi_B : X_{B}\setminus X_{B} (max)\rightarrow X_{B}
    \setminus X_{B} (min), \ \  \varphi_{B'} : X_{B'}\setminus
    X_{B'}(max) \rightarrow X_{B'}\setminus X_{B'} (min)
\end{equation}
We show that, for $x \in X_{B}\setminus X_{B} (max)$, the 
map $f : X_{B}\setminus X_{B} (max) \rightarrow X_{B'}\setminus
X_{B'} (max) $ intertwines the Vershik maps $\varphi_B$ and 
$\varphi_{B'}$.
To see this, take $x = (x_0, x_1,...)\in X_B\setminus X_B(max)$ and
find  the smallest integer $k$ such that $x_k$ is not maximal. Let 
$y_k$ be the successor of $x_k$ in the finite set $r^{-1}(r(x_k))$.
Then, by definition of the Vershik map, $\varphi_B(x)= (y_0, 
y_1,...,y_{k-1},y_k,x_{k+1},...)$
where $(y_0, y_1,..., y_{k-1})$ is the minimal path in $E(V_0, 
r(y_{k-1}))$. By definition of $f$, we have
$$
f(\varphi_B(x)) = (h_0(y_0), \cdots, h_{k-1}(y_{k-1}), h_k(y_k), 
h_{k+1}(x_{k+1}), \cdots )
$$
where $(h_0(y_0), \cdots, h_{k-1}(y_{k-1}))$ is the minimal path
with range $s'(h_k(y_k))$. We note that $h_k(y_k)$ is the successor 
of $h_k(x_k)$ in the set $(r')^{-1}(r'(h(y_k)))$.

We now compute $\varphi_{B'}(f(x))$:
$$
\ba
\varphi_{B'}(f(x)) = &\ \varphi_{B'}(h_0(x_0), \cdots, 
h_{k-1}(x_{k-1}), h_k(x_k), h_{k+1}(x_{k+1}), \cdots )\\
= & \ (h_0(y_0), \cdots, h_{k-1}(y_{k-1}), h_k(y_k), 
h_{k+1}(x_{k+1}),  \cdots) 
\ea
$$
because the minimal path with range in $s'(h_k(y_k))$ is unique.
Hence, $\varphi_{B'}f = f\varphi_B$ on $X_{B}\setminus 
X_{B} (max)$. 

It remains to show that the relation $\varphi_{B'}f = f\varphi_B$
can be extended to all $X_B$.
By the condition of the theorem, $\varphi_B$ can be extended
to $X_B$, we keep the same notation $\varphi_B$ for the extension.
Since $f$ implements a bijection between $X_B(max)$ and 
$X_{B'}(max)$ and between $X_B(min)$ and $X_{B'}(min)$, we can 
extend the definition of $\varphi_{B'}$ as follows: for $x \in
X_{B'} (max)$, we set
\begin{equation}\label{eq:iso} 
    \varphi_{B'} (x) = f \circ \varphi_B \circ f^{-1} (x) . 
\end{equation}
Then $\varphi_{B'} : X_{B'}(max) \to X_{B'}(min)$, and, taking 
into account the result proved above, we conclude that
relation \eqref{eq:iso} holds for all $x$. 
\end{proof} 

Now we mention an observation that will be of use later on.

\begin{lemma}\label{lem:Vershik cont}
Let $B = (V, E, >)$ be an ordered generalized Bratteli diagram. 
Then the sets $X_{max}$ and $X_{min}$ of maximal and minimal
infinite paths 
are closed, in particular, they can be empty. The Vershik 
map $\varphi_B : X_B 
\setminus X_{max} \rightarrow X_B \setminus X_{min}$ is a homeomorphism. 
\end{lemma}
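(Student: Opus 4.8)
The plan is to treat the two assertions separately: first the closedness (possibly emptiness) of $X_{max}$ and $X_{min}$, and then the homeomorphism property of $\varphi_B$.

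For the closedness I would work directly with the cylinder-set topology. For each level $i$ and each edge $e \in E_i$, the set $\{x \in X_B : x_i = e\}$ is clopen, since both it and its complement $\bigcup_{e' \in E_i,\, e' \neq e}\{x : x_i = e'\}$ are unions of cylinder sets. Now the property ``$x_i$ is the maximal element of $r^{-1}(r(x_i))$'' depends only on the edge $x_i$: the vertex $r(x_i)$ is determined by $x_i$, and by Definition \ref{Def:generalized_BD}$(iii)$ the fibre $r^{-1}(r(x_i))$ is finite and linearly ordered, so it has a well-defined maximum. Hence I can set $M_i = \bigcup_e \{x \in X_B : x_i = e\}$, the union being over those edges $e \in E_i$ that are maximal in their range fibre; this $M_i$ is clopen. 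Then $X_{max} = \bigcap_{i \geq 0} M_i$ is an intersection of clopen sets and thus closed, and the analogous construction with minimal edges shows $X_{min}$ is closed. The empty case requires no separate treatment, since $\emptyset$ is closed.

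For the homeomorphism property, the key observation is that $\varphi_B$ is, locally, a \emph{finite recoordinatization}. Given $x \in X_B \setminus X_{max}$, let $m = m(x)$ be the smallest index at which $x_m$ is not maximal. On the whole cylinder $[x_0,\ldots,x_m]$ the value $m(\cdot)$ is constant equal to $m$, because the maximality of $x_0,\ldots,x_{m-1}$ and the non-maximality of $x_m$ are properties of the fixed prefix. Thus the prescription of Definition \ref{Def:VershikMap} replaces precisely the first $m+1$ coordinates by a fixed string $(g_0,\ldots,g_m)$ (the successor $g_m$ of $x_m$ together with the minimal path to $s(g_m)$) and leaves every later coordinate unchanged. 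Consequently, if $y$ agrees with $x$ through level $n \geq m$, then $\varphi_B(y)$ agrees with $\varphi_B(x)$ through level $n$, which gives continuity of $\varphi_B$ at $x$ straight from the metric.

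Continuity of the inverse I would obtain by the symmetric description: $\varphi_B^{-1}$ sends $y \in X_B \setminus X_{min}$ to the path obtained by locating the smallest non-minimal coordinate, replacing it by its predecessor, and completing the earlier coordinates by the maximal path to the relevant source vertex. This is again a local finite recoordinatization, so the same cylinder argument yields continuity, and bijectivity is already recorded in Definition \ref{Def:VershikMap} (one also checks directly that this map is a two-sided inverse). I do not anticipate a genuine obstacle; the only point demanding care is verifying that $m(\cdot)$ is constant on cylinders of the appropriate depth and that the minimal (respectively maximal) completions depend only on that prefix, which is exactly what makes $\varphi_B$ and $\varphi_B^{-1}$ locally prefix-determined and hence continuous.
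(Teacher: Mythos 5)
Your proof is correct. The paper in fact gives no proof of this lemma at all --- it is introduced as ``an observation,'' with the homeomorphism claim already asserted without argument inside Definition \ref{Def:VershikMap} --- so your cylinder-set argument supplies exactly the standard details the authors left implicit: writing $X_{max}$ (resp.\ $X_{min}$) as a countable intersection of clopen sets $M_i$, and observing that $\varphi_B$ and $\varphi_B^{-1}$ are locally prefix-determined, hence continuous. The one step you use tacitly that merits a sentence in a careful write-up is that the minimal (resp.\ maximal) finite path in $E(V_0,v)$ consists of edges each of which is minimal (resp.\ maximal) in its own fibre $r^{-1}(\cdot)$; this follows from the paper's lexicographic order, which compares two finite paths at the largest index where they disagree, and it is precisely what makes your greedy description of $\varphi_B^{-1}$ (replace the first non-minimal edge by its predecessor, complete upward by the maximal path) a genuine two-sided inverse of the map in Definition \ref{Def:VershikMap}.
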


Let $(X, \B)$ be a standard Borel space. 
Recall that any two uncountable standard Borel spaces
are Borel isomorphic. For a standard Borel space $(X, \B)$, a 
one-to-one Borel map $T$ of $X $ onto itself is called a 
\textit{Borel automorphism} of $(X, \B)$. The following result 
shows that any aperiodic Borel automorphism of a standard Borel 
space can be realized as a Vershik map on a generalized Bratteli 
diagram.

\begin{theorem}[\cite{BezuglyiDooleyKwiatkowski_2006}]
\label{Thm:GBD_models_Borel_dyn} Let $T$ be an aperiodic Borel
 automorphism acting
on a standard Borel space $(X, \B)$. Then there exists an ordered
generalized Bratteli diagram $B=(V,E, >)$ and a Vershik map  
$\varphi_B : X_B \to X_B$ such that $(X, T)$ is Borel isomorphic to 
$(X_B,\varphi_B)$. Moreover, $\varphi_B$ is a homeomorphism of 
the path space $X_B$.
\end{theorem}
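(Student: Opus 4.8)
The plan is to follow the classical Kakutani--Rokhlin tower strategy from Cantor dynamics, adapted to the Borel category. The central objects will be a refining sequence of Borel Kakutani--Rokhlin partitions of $(X,\mathcal B)$, from which the diagram, its order, and a coding conjugacy are read off. Concretely, I would seek Borel bases and integer heights producing partitions
\[
\xi_n = \{\, T^{j} B_n^{(v)} : v \in V_n,\ 0 \le j < h_n^{(v)} \,\},
\]
where each \emph{tower} indexed by $v$ has a Borel base $B_n^{(v)}$ and finite height $h_n^{(v)}$, the towers partition $X$, the $\xi_n$ refine in the tower-compatible sense (each $\xi_n$-level is a union of $\xi_{n+1}$-levels and each $\xi_n$-base is a union of $\xi_{n+1}$-bases), the union $\bigcup_n \sigma(\xi_n)$ generates $\mathcal B$, and the bases $\bigcup_v B_n^{(v)}$ descend so as to capture the orbit structure of $T$.

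The main obstacle, and the genuinely Borel ingredient, is producing such an exhaustive refining tower sequence for an \emph{arbitrary} aperiodic Borel automorphism. Here I would invoke the Borel Rokhlin lemma for aperiodic automorphisms of a standard Borel space: aperiodicity yields, for each target height, a Borel complete section whose first-return partition produces towers, and iterating with control over the bases and over a fixed sequence of Borel sets separating points gives both the refinement and the generation of $\mathcal B$. Because the space is only standard Borel (not compact), the towers need not be clopen and the heights $h_n^{(v)}$ may be unbounded across $v$; this is precisely why countably many vertices per level cannot be avoided, forcing the passage from standard to \emph{generalized} Bratteli diagrams.

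Given the tower sequence, the diagram is combinatorial. I would let $V_n$ index the towers of $\xi_n$ (a countable set, giving countable levels) and declare $f^{(n)}_{v,w}$ to be the number of levels of the level-$n$ tower $w$ occurring, in order, inside one pass of the level-$(n+1)$ tower $v$; finiteness of $h_{n+1}^{(v)}$ forces finite row sums, matching the standing hypothesis $|r^{-1}(v)| < \infty$. The order $>$ on $r^{-1}(v)$ is the bottom-to-top stacking order of these constituent towers inside $v$. The coding map $\pi : X \to X_B$ sends $x$ to the edge sequence recording, at each level, which constituent tower and which level within it contains $x$; tower-compatibility makes $\pi$ a well-defined injective Borel map, generation of $\mathcal B$ together with exhaustiveness of the tower sequence makes it a Borel isomorphism onto $X_B$, and the defining exponents $T^{j}$ make $T$ correspond exactly to moving one level up a tower, i.e. to the successor operation underlying the Vershik map, so $\pi \circ T = \varphi_B \circ \pi$ on the appropriate domains.

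Finally, for the homeomorphism claim I would exploit the freedom afforded by infinite levels to arrange $X_{max} = X_{min} = \varnothing$: since each level has infinitely many vertices, the stacking orders can be chosen so that every infinite path meets a non-maximal (and a non-minimal) edge infinitely often, ruling out infinite maximal and minimal paths; this is impossible for finite-level diagrams but available here. With both exceptional sets empty, Definition~\ref{Def:VershikMap} and Lemma~\ref{lem:Vershik cont} give that $\varphi_B$ is defined on all of $X_B$ and is a homeomorphism, with no extension across $X_{max}$ required, and $\pi$ then upgrades to a Borel isomorphism of the full systems $(X,T)$ and $(X_B,\varphi_B)$. I expect the delicate bookkeeping to lie entirely in simultaneously guaranteeing generation of $\mathcal B$, surjectivity of $\pi$, and the emptiness of $X_{max}$ and $X_{min}$, all while keeping the refinement tower-compatible.
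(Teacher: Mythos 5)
Your proposal is essentially the proof of the cited source: the present paper does not prove Theorem~\ref{Thm:GBD_models_Borel_dyn} itself but imports it from \cite{BezuglyiDooleyKwiatkowski_2006}, where the construction proceeds exactly as you describe---refining Borel Kakutani--Rokhlin partitions obtained from a vanishing sequence of markers (complete Borel sections with return times tending to infinity), with the countable vertex levels, incidence counts, stacking order, and coding conjugacy all read off from the towers. In particular, your final step of choosing the order so that $X_{max} = X_{min} = \varnothing$, which makes the Vershik map a homeomorphism with no extension across exceptional paths required, is precisely the mechanism used there, as the present paper notes explicitly at the start of Section~\ref{Sect:contin_V_map}.
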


%%%%%
\section{Generalized Bratteli diagrams of bounded size} \label{sect Bndd size} 

In this section, we 
discuss generalized Bratteli diagrams of \textit{bounded size} (see 
Definition \ref{Def:BD_bdd_size}). These diagrams are characterized 
by the fact that all incidence matrices are \textit{banded}. Such diagrams have a locally compact path space and in our analysis, we consider them as an intermediate step between 
the classical Bratteli diagrams (i.e. having finitely many vertices 
at each level) and generalized Bratteli diagrams.
Generalized Bratteli diagrams of bounded size present models for substitution dynamical systems on countably infinite alphabets (see~\cite{Bezuglyi_Jorgensen_Sanadhya_2022}).  We present more examples of 
generalized Bratteli diagrams of bounded size together with the corresponding substitution dynamical systems and invariant measures 
in Subsection \ref{Subsec:Examples}.
\ignore{In the 
rest of this paper, generalized Bratteli diagrams of bounded size 
will play an important role. }
In Subsection 
\ref{Subsec:bdd_size} we prove  statements about the 
structure of such diagrams which will be of use later on. 
In Subsection \ref{sub:Left_right}, we consider the Vershik map on 
the path space of a bounded size diagram defined by the 
\textit{left-to-right order}.  We show
that, unlike the classical simple Bratteli diagrams, the left-to-
right order on an irreducible 
generalized Bratteli diagram of bounded size does not necessarily give rise to 
a continuous Vershik map. We give conditions under which the left-to-right order can give rise to a continuous Vershik map and conditions under which every infinite maximal path is necessarily a point of discontinuity.

%%%%
\subsection{The structure of the generalized Bratteli diagrams of bounded 
size}\label{Subsec:bdd_size} 

This subsection is dedicated to studying the structure of 
generalized Bratteli diagrams $B=(V, E)$ of \textit{bounded size}. 
Unless stated otherwise, we will identify the set of vertices at 
each level with integers, i.e., for each $i \in \N_0$, we have 
$V_i = \Z$. 
Observe that such a diagram has a locally compact path space $X_B$, 
and the restriction of $X_B$ to any cylinder set can be represented 
as the path space of a standard Bratteli diagram.

\begin{definition}\label{Def:BD_bdd_size} A generalized Bratteli 
diagram $B(F_n)$ is called of \textit{bounded size} if there exists 
a sequence of pairs of natural numbers $(t_n, L_n)_{n \in \N_0}$ 
such that, for all $n \in \mathbb{N}_0$ and all $v \in V_{n+1}$,
\begin{equation}\label{eq: Bndd size}
s(r^{-1}(v)) \in \{v - t_n, \ldots, v + t_n\} \quad \mbox{and} 
\quad \sum_{w \in V_{n}} f^{(n)}_{vw} = \sum_{w \in V_{n}} |E(w,v)| 
\leq L_n.
\end{equation} 
If the sequence $(t_n, L_n)_{n \in \N_0}$ is constant, i.e. $t_n = 
t$ and $L_n = L$ for all $n \in \N_0$, then we say that the 
diagram $B(F_n)$ is of \textit{uniformly bounded size}.
\end{definition} 

Observe that, the condition $ \sum_{w \in V_{n}} f^{(n)}_{v,w} \leq 
L_n$ implies that the set $s(r^{-1}(v))$ is finite. Moreover, 
the cardinality $|s(r^{-1}(v))|$ is bounded as a function of $v$
for every level $V_n$. The role 
of the first condition in \eqref{eq: Bndd size} is to control the
sources of edges arriving at $v \in V_{n+1}$.

\begin{remark} (1)
We will use the following convention for bounded size Bratteli 
diagrams. For each $n \in \N_0$, the pair of natural numbers 
$(t_n, L_n)$ are chosen to be minimal possible. Also, for every 
$n \in \mathbb{N}_0$, it is assumed that $E(v - t_n, v)$ and 
$E(v + t_n, v)$ are nonempty for all $v \in V_{n+1}$.
These assumptions are made to simplify our notation and 
calculations. 
Otherwise, we would 
have to use two sequences, $(t_n^+)$, $(t_n^-)$, where
$$
t_n^+ = \max\{t : E(v + t, v) \neq \emptyset\}
$$
and 
$$
t_n^- = \max\{t : E(v - t, v) \neq \emptyset\}.
$$
The usage of two different sequence $(t^{\pm}_n)$ instead of 
$(t_n)$ would affect computations but not the corresponding 
results.

(2) Observe that the property of bounded size implies that the 
incidence matrices of the diagram are \textit{banded} infinite 
matrices. Let $B = B(F_n)$ be a generalized Bratteli diagram of 
bounded size corresponding to a sequence $(t_n, L_n)_{n \in \N_0}$. 
Then all non-zero entries of $F_n$ belong to a band of width 
$2t_n +1$ along the main diagonal. Moreover, the sum of entries 
in every row of $F_n$ is bounded by $L_n$. In other words, 
for every $n \in \N_0$ and $v \in V_{n+1}$ we have $f^{(n)}_{v,w} 
= 0 \ \mbox{if} \ |v - w| > t_n$
and $\sum_{w \in V_n} f^{(n)}_{v,w} \leq L_n$. Also, by the above
assumption, $E(v \pm t_n, v) \neq \emptyset$ 
(or $f^{(n)}_{v,v \pm t_n} >0$) for all $v\in V_{n+1}, v \pm t_n 
\in V_n$ and $n \in \mathbb{N}_0$.
    
\end{remark} 

In \cite{Bezuglyi_Jorgensen_Sanadhya_2022}, the authors studied  
substitution dynamical systems on a countably infinite alphabet 
$\A$ as Borel dynamical systems. It was proved that 
a substitution dynamical system 
on a countably infinite alphabet which is 
\textit{left determined} (a generalization of the recognizability 
property to the countable case, see \cite{Ferenczi_2006})
and has \textit{bounded size} (see the definition below) admits 
a realization as a Vershik map on a stationary generalized 
Bratteli diagram.
 
\begin{definition}Identifying $\A$ with $\Z$, we say that a 
substitution $\sigma : n \to \sigma(n) $, $n \in \Z$, is of 
\textit{bounded size} if it is of bounded length and there exists 
a positive integer $t$ (independent of $n$) such that for every 
$n \in \Z$, if $m \in \sigma (n)$, then $m \in \{n-t,...,n,..., 
n+t\}$. 
 \end{definition} 

Clearly, this definition is analogous to the definition of bounded 
size generalized Bratteli diagrams.

\begin{theorem}\cite{Bezuglyi_Jorgensen_Sanadhya_2022}
Let $\sigma$ be a bounded size left determined substitution on a countably infinite alphabet and $(X_{\sigma}, T)$ be the corresponding subshift. Then there exists a
stationary ordered generalized Bratteli diagram $B = (V, E, \geq)$ of bounded size and a Vershik map $\varphi \colon X_B \rightarrow X_B$ such that $(X_{\sigma}, T)$ is Borel isomorphic to $(X_B, \varphi)$.
\end{theorem}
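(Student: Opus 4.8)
The plan is to adapt the Durand--Host--Skau realization of finite-alphabet substitution subshifts to the countably infinite, Borel setting, using left-determination as the substitute for recognizability and the bounded-size hypothesis to force the resulting stationary diagram into the bounded-size class of Definition~\ref{Def:BD_bdd_size}. First I would build the diagram. Identify $\A$ with $\Z$ and set $V_n = \Z$ for every $n \in \N_0$, so the diagram is automatically stationary. Define the single incidence matrix $F = (f_{v,w})$ by letting $f_{v,w}$ be the number of occurrences of the letter $w$ in the word $\sigma(v)$; equivalently, each edge into $v$ at level $n+1$ is labelled by one position of $\sigma(v)$, its source being the letter occupying that position. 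Since $\sigma$ has bounded length, say $|\sigma(v)| \le L$, each row sum $\sum_w f_{v,w} = |\sigma(v)| \le L$ is finite, and since $\sigma$ is of bounded size with parameter $t$, every letter of $\sigma(v)$ lies in $\{v-t,\dots,v+t\}$, so $s(r^{-1}(v)) \subseteq \{v-t,\dots,v+t\}$. Thus $F$ is banded and $B = B(F)$ is a stationary generalized Bratteli diagram of uniformly bounded size.

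Next I would install the order. For each $v$ the set $r^{-1}(v)$ is in natural bijection with the positions $1,\dots,|\sigma(v)|$ of $\sigma(v)$; I order the incoming edges so that the edge coming from the $i$-th letter of $\sigma(v)$ is the $i$-th smallest. With this left-to-right order the minimal (resp. maximal) edge into $v$ reads the first (resp. last) letter of $\sigma(v)$, and the number of finite paths from $V_0$ to $v \in V_{n+1}$ equals $|\sigma^{n+1}(v)|$. The central object is then the coding map $\psi : X_B \to X_\sigma$: a path $x$ sits, for each level $n$, inside the Kakutani--Rokhlin tower over $r(x_n)$ at the height equal to the lexicographic rank of the initial segment $(x_0,\dots,x_n)$ among all paths from $V_0$ to $r(x_n)$; reading off the letter occupying that position in the base word $\sigma^{n+1}(r(x_n))$, and letting $n \to \infty$, produces by the desubstitution consistency built into the order a well-defined point $\psi(x) \in \A^{\Z}$ lying in $X_\sigma$. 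A direct check from Definition~\ref{Def:VershikMap} shows $\psi \circ \varphi_B = T \circ \psi$ on $X_B \setminus X_{max}$, i.e. advancing a path by the Vershik map shifts the read-off sequence by one.

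The hardest part is proving that $\psi$ is a Borel isomorphism, and this is exactly where left-determination enters. Surjectivity follows because every $z \in X_\sigma$ admits, at each level, a cutting into blocks of the form $\sigma^{n}(a)$, which assembles into a path; the real work is injectivity. Left-determination is precisely the statement that the desubstitution of a sequence is unique, so that the level-by-level tower decomposition of $\psi(x)$ recovers the vertices $r(x_n)$ and the heights, hence $x$ itself. I would turn this into injectivity of $\psi$ off the Borel, dynamically negligible set of paths whose image has a non-unique desubstitution, handling that exceptional set either by passing to a Borel invariant conull/comeager set or by a direct orbit-matching argument in the Borel category.

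Finally, to obtain an honest Vershik map $\varphi_B$ on all of $X_B$ I must extend the map across $X_{max}$ and $X_{min}$. In the bounded-size stationary setting these boundary sets are governed by the finitely many local minimal/maximal choices, so I would identify them explicitly and either telescope (which preserves stationarity by passing to a power of $F$) or match them directly to arrange $|X_{max}| = |X_{min}|$, extending $\varphi_B$ via the observation in the remark after Definition~\ref{Def:VershikMap} while preserving the intertwining relation. Combining this extension with $\psi \circ \varphi_B = T \circ \psi$ on $X_B \setminus X_{max}$ then yields the Borel isomorphism $(X_\sigma, T) \cong (X_B, \varphi_B)$. The main obstacle, and the place where the countable alphabet genuinely differs from the classical case, is that compactness and minimality are unavailable, so both the injectivity argument and the boundary extension must be carried out in the Borel rather than the topological category.
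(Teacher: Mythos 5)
You should note first that this paper never proves the statement itself—it is imported verbatim from \cite{Bezuglyi_Jorgensen_Sanadhya_2022}—and your proposal reconstructs essentially the route taken in that reference: the stationary diagram whose incidence matrix is the substitution matrix with the position-induced order on $r^{-1}(v)$ (bounded length giving the row-sum bound $L$, bounded size giving the band width $t$), the Kakutani--Rokhlin tower coding $\psi$ intertwining $\varphi_B$ with $T$, left-determination as the recognizability substitute yielding unique desubstitution and hence injectivity, and an explicit Borel treatment of the countably many maximal/minimal paths in place of the compactness arguments available for finite alphabets. The one caveat: since the theorem asserts a genuine Borel isomorphism with no distinguished measure, your ``conull/comeager'' fallback for the exceptional set is not available, so the direct orbit-matching alternative you mention is the one that must carry that step.
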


In the rest of the subsection, we discuss the structure of 
generalized Bratteli diagrams of bounded size.

\begin{lemma}\label{lemma_bdd_size_upper_cone}
Let $B=(V, E)$ be a generalized Bratteli diagram of bounded size. 
Let $n \in \mathbb{N}_0$, $v \in V_{n+1}$ and $E(V_0, v)$ be the set of all finite paths 
$\ov e = (e_0, \ldots, e_{n})$ such that $r(\ov e) = v$. Then $$
s(E(V_0, v)) \subset \left\{v - \sum_{i = 0}^n t_i, \ldots, v + 
\sum_{i = 0}^n t_i\right\}
$$
and
$$
|E(V_0, v)| \leq L_0 \cdots L_n.
$$
\end{lemma}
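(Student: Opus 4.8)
The plan is to establish the two assertions separately: the source inclusion by a direct telescoping estimate along a path, and the cardinality bound by induction on the level $n$. Both follow from the two clauses of Definition~\ref{Def:BD_bdd_size}, one controlling the horizontal displacement of edge sources and the other the row sums.

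For the source inclusion, I would fix a finite path $\ov e = (e_0, \ldots, e_n) \in E(V_0, v)$ and record the vertices it visits: set $v_k = s(e_k)$ for $0 \le k \le n$ and $v_{n+1} = r(e_n) = v$, so that $v_k \in V_k$ and $r(e_k) = s(e_{k+1}) = v_{k+1}$ for each $k$. The bounded size hypothesis applied at level $k$ says precisely that the source of any edge with range $v_{k+1} \in V_{k+1}$ lies in $\{v_{k+1} - t_k, \ldots, v_{k+1} + t_k\}$, that is, $|v_k - v_{k+1}| \le t_k$. Summing these inequalities via the triangle inequality yields
$$|s(\ov e) - v| = |v_0 - v_{n+1}| \le \sum_{k=0}^n |v_k - v_{k+1}| \le \sum_{k=0}^n t_k,$$
which is exactly the claimed inclusion $s(E(V_0, v)) \subset \{v - \sum_{i=0}^n t_i, \ldots, v + \sum_{i=0}^n t_i\}$.

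For the cardinality bound I would argue by induction on $n$. In the base case $n = 0$ the paths in $E(V_0, v)$ with $v \in V_1$ are single edges with range $v$, and their number is $\sum_{w \in V_0} f^{(0)}_{vw} \le L_0$ by the defining inequality. For the inductive step, every path in $E(V_0, v)$ with $v \in V_{n+1}$ factors uniquely as a path in $E(V_0, w)$ for some $w \in V_n$ followed by one of the $f^{(n)}_{vw}$ edges from $w$ to $v$, so that
$$|E(V_0, v)| = \sum_{w \in V_n} |E(V_0, w)| \, f^{(n)}_{vw}.$$
Applying the induction hypothesis $|E(V_0, w)| \le L_0 \cdots L_{n-1}$ to each term and then using $\sum_{w \in V_n} f^{(n)}_{vw} \le L_n$ gives
$$|E(V_0, v)| \le L_0 \cdots L_{n-1} \sum_{w \in V_n} f^{(n)}_{vw} \le L_0 \cdots L_n,$$
as required.

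There is no serious obstacle here; the only points deserving care are purely bookkeeping. One must index the visited vertices consistently so that the $t_k$ accumulate from $k=0$ to $k=n$, and one must note that the factorization sum over $w \in V_n$ is genuinely finite, since the row of $F_n$ indexed by $v$ has only finitely many non-zero entries (the finite row sum property built into the definition), which is what legitimizes combining it with the induction hypothesis. Together the two estimates show that the set of paths arriving at $v$ is supported on a finite window of level-$0$ vertices and is of bounded cardinality, which is the structural fact about the cone above $v$ needed in the subsequent analysis.
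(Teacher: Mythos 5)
Your proof is correct and follows essentially the same route as the paper's: the cardinality bound is the identical factorization-plus-row-sum induction, and your triangle-inequality telescoping of $|v_k - v_{k+1}| \le t_k$ along the path is just the unrolled form of the paper's induction for the source inclusion (the paper instead writes $s(E(V_0,v)) \subset \bigcup_{w \in \{v-t_{k+1},\ldots,v+t_{k+1}\}} s(E(V_0,w))$ and applies the inductive hypothesis). Your added remark that the sum over $w \in V_n$ is finite by the finite-row-sum property is a correct bookkeeping point the paper leaves implicit.
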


\begin{proof}
We prove the lemma by induction. Case $n = 0$ follows from the 
definition. Suppose the statement of the lemma is true for $n = k$. 
Then for any $v \in V_{k+2}$,
$$
s(r^{-1}(v)) \in \{v - t_{k+1}, \ldots, v + t_{k+1}\}
$$
and
\begin{eqnarray*}
s(E(V_0, v)) &\subset& \bigcup_{w \in \{v - t_{k+1}, \ldots, v + 
t_{k+1}\}} s(E(V_0, w))\\
 &\subset& \left\{v - t_{k+1} - \sum_{i = 0}^k t_i, \ldots, v + 
 t_{k+1} + \sum_{i = 0}^k t_i\right\}\\
 &=& \left\{v - \sum_{i = 0}^{k+1} t_i, \ldots, v + \sum_{i = 
 0}^{k+1} t_i\right\}.
\end{eqnarray*} We also have
$$
|E(V_0,v)| = \sum_{w \in V_{k+1}} f_{vw}^{(k+1)}|E(V_0,w)| 
\leq (L_0 \cdots L_k)\sum_{w \in V_{k+1}} f_{vw}^{(k+1)} \leq L_0 
\cdots L_k \cdot L_{k+1}.
$$
\end{proof}

\begin{corol}\label{corol_complete_upper_cone}
Let $B=(V, E)$ be a generalized Bratteli diagram of bounded size. 
Let $n \in \mathbb{N}_0$, $v \in V_{n+1}$ and $E(V_0, v)$ be the set of all finite paths 
$\ov e = (e_0, \ldots, e_{n})$ such that $r(\ov e) = v$. Then for 
every $m \leq n$ we have 
$$
s(E(V_m, v)) \subset \left\{v - \sum_{i = m}^{n} t_i, \ldots, v + 
\sum_{i = m}^n t_i\right\}.
$$
\end{corol}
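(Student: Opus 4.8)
The plan is to prove Corollary~\ref{corol_complete_upper_cone} by a direct adaptation of the induction already carried out in Lemma~\ref{lemma_bdd_size_upper_cone}. The key observation is that the set $E(V_m, v)$ of finite paths from level $V_m$ to $v \in V_{n+1}$ is, structurally, an instance of the ``full upper cone'' studied in the lemma, but with the base level shifted from $V_0$ to $V_m$. Concretely, I would note that the telescoped/truncated diagram obtained by regarding $V_m$ as the new bottom level is again a generalized Bratteli diagram of bounded size, now associated with the shifted sequence of pairs $(t_i, L_i)_{i \geq m}$. Applying Lemma~\ref{lemma_bdd_size_upper_cone} to this truncated diagram immediately yields the source bound with the summation index running from $m$ to $n$ rather than from $0$ to $n$.

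Alternatively, and perhaps cleaner for a self-contained write-up, I would run the induction directly on the quantity $n - m$ (the number of levels traversed), keeping $m$ fixed. First I would handle the base case: when the path has a single edge, i.e.\ when we look at $E(V_n, v)$ for $v \in V_{n+1}$, the bounded size condition $s(r^{-1}(v)) \subset \{v - t_n, \ldots, v + t_n\}$ from Definition~\ref{Def:BD_bdd_size} gives exactly the claimed inclusion with the one-term sum $\sum_{i=n}^n t_i = t_n$. For the inductive step, suppose the statement holds for paths from $V_{m+1}$. Then for $v \in V_{n+1}$ I would decompose
$$
s(E(V_m, v)) \subset \bigcup_{w \in s(r^{-1}(v))} s(E(V_m, w)),
$$
where each $w$ ranges over $\{v - t_n, \ldots, v + t_n\}$. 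Applying the induction hypothesis to each $w \in V_n$ bounds $s(E(V_m, w))$ inside $\{w - \sum_{i=m}^{n-1} t_i, \ldots, w + \sum_{i=m}^{n-1} t_i\}$, and combining this with $|v - w| \leq t_n$ produces the telescoped bound $\{v - \sum_{i=m}^{n} t_i, \ldots, v + \sum_{i=m}^{n} t_i\}$.

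The only genuine subtlety is bookkeeping of the indices: one must be careful that the band width $t_i$ contributed at each level is indexed correctly, so that the final sum runs over exactly $i = m, \ldots, n$ and no boundary term is double-counted or omitted. This is precisely the same arithmetic that appears in the displayed chain of inclusions in the proof of Lemma~\ref{lemma_bdd_size_upper_cone}, so I expect no real obstacle here. I would not need the row-sum bound $L_0 \cdots L_n$ at all, since the corollary only asserts the source-localization statement and says nothing about the cardinality $|E(V_m, v)|$; consequently the proof is strictly a restriction of the lemma's argument to its spatial (source) component. The main point to state explicitly is simply that the bounded size structure is preserved under truncation of the bottom levels, which makes the reduction to the lemma transparent.
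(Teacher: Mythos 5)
Your proposal is correct and matches the paper's argument: the paper proves Corollary~\ref{corol_complete_upper_cone} precisely by the induction of Lemma~\ref{lemma_bdd_size_upper_cone} with the base level shifted from $V_0$ to $V_m$, which is your second (direct) route, and your truncation observation is just a repackaging of the same idea. Your remark that the row-sum bounds $L_i$ play no role here is also accurate, since the corollary asserts only the source-localization statement.
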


\begin{proof} The proof follows from an induction argument similar 
to the proof of Lemma \ref{lemma_bdd_size_upper_cone}.
\end{proof}

Assume that $B=(V,E)$ is a diagram of bounded size corresponding to 
a sequence $(t_n, L_n)_{n \in \N_0}$. Fix $v \in V_{n+1}$ and let 
$\ov e = (e_0, \ldots, e_{n})$ be a finite path with $r(\ov e) = v 
\in V_{n+1}$. By 
Lemma~\ref{lemma_bdd_size_upper_cone}, 
$$
s(e_0) \in \left\{v - \sum_{i = 0}^n t_i, \ldots, v + 
\sum_{i = 0}^n t_i\right\}.
$$

\begin{lemma}\label{lemma_bdd_size_lower_cone} Let $B=(V,E)$ be a 
diagram of bounded size corresponding to a sequence 
$(t_n, L_n)_{n \in \N_0}$. Fix $v \in V_{n+1}$, and let $\ov e = 
(e_0, \ldots, e_{n})$ be a finite path with $r(\ov e) = v \in 
V_{n+1}$. Then for all infinite paths $x = 
(x_n)_{n\in \N_0} \in [\ov e]$ and all $m \geq 0$: 
\begin{equation}\label{formula_bdd_size_lower_cone}
r(x_{n + m}) \in \left\{v - \sum_{i = 1}^m t_{n + i}, \ldots, v + 
\sum_{i = 1}^m t_{n + i}\right\} \subset V_{n+m +1}.
\end{equation}
\end{lemma}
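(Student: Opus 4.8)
The plan is to prove \eqref{formula_bdd_size_lower_cone} by induction on $m$, propagating the source--range constraint of a bounded size diagram along the infinite path $x$. For the base case $m = 0$ I will use that $x \in [\ov e]$ forces $x_n = e_n$, so $r(x_n) = r(\ov e) = v$; since the empty sum $\sum_{i=1}^0 t_{n+i}$ equals $0$, the required inclusion reads $r(x_n) \in \{v\}$, which holds.

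For the inductive step I will assume \eqref{formula_bdd_size_lower_cone} for some $m \ge 0$ and bound $w := r(x_{n+m+1}) \in V_{n+m+2}$. The decisive point is that consecutive edges of a path satisfy $s(x_{n+m+1}) = r(x_{n+m})$. Applying the first condition of \eqref{eq: Bndd size} at level $n+m+1$ to the vertex $w$ gives $s(r^{-1}(w)) \subset \{w - t_{n+m+1}, \ldots, w + t_{n+m+1}\}$, and since $x_{n+m+1} \in r^{-1}(w)$ I obtain $r(x_{n+m}) = s(x_{n+m+1}) \in \{w - t_{n+m+1}, \ldots, w + t_{n+m+1}\}$. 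Because this interval is symmetric about $w$, it is equivalent to $w \in \{r(x_{n+m}) - t_{n+m+1}, \ldots, r(x_{n+m}) + t_{n+m+1}\}$.

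Combining the last inclusion with the inductive hypothesis and summing the two radii yields $r(x_{n+m+1}) \in \{v - \sum_{i=1}^{m+1} t_{n+i}, \ldots, v + \sum_{i=1}^{m+1} t_{n+i}\}$, where I use $\sum_{i=1}^m t_{n+i} + t_{n+m+1} = \sum_{i=1}^{m+1} t_{n+i}$; this is precisely \eqref{formula_bdd_size_lower_cone} for $m+1$, closing the induction.

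This is the downward analogue of Lemma~\ref{lemma_bdd_size_upper_cone} and Corollary~\ref{corol_complete_upper_cone}, and I do not expect any genuine obstacle. The only step meriting care is the direction in which the bounded size condition is applied: \eqref{eq: Bndd size} constrains the \emph{sources} of edges into a fixed vertex relative to that vertex, whereas here the source is known (from the inductive hypothesis) and the range is to be bounded. The symmetry of the interval about $w$ makes this inversion immediate, so the whole difficulty reduces to keeping the index shifts in $\sum_{i=1}^{m} t_{n+i}$ straight.
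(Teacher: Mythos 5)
Your proof is correct and takes essentially the same route as the paper: induction on $m$, with the key step being the inversion of the bounded-size constraint via the symmetry of the interval $\{w - t_{n+m+1}, \ldots, w + t_{n+m+1}\}$ about $w$. The only cosmetic difference is that you propagate the bound one edge at a time directly from condition \eqref{eq: Bndd size}, whereas the paper's inductive step cites Lemma~\ref{lemma_bdd_size_upper_cone} applied to the whole length-$(k+1)$ segment from $v$ to $u = r(x_{n+k+1})$ before performing the same symmetric flip; the underlying computation, including the index bookkeeping $\sum_{i=1}^{m} t_{n+i} + t_{n+m+1} = \sum_{i=1}^{m+1} t_{n+i}$, is identical.
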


\begin{proof}
We prove this lemma by induction. Fix $v \in V_{n+1}$, then 
(\ref{formula_bdd_size_lower_cone}) is trivially true for 
$m = 0$. By induction step, we assume that 
(\ref{formula_bdd_size_lower_cone}) holds for $m = k$ i.e. 
$$
r(x_{n + k}) \in \left\{v - \sum_{i = 1}^k t_{n + i}, \ldots, v + 
\sum_{i = 1}^k t_{n + i}\right\} \subset V_{n+k +1}.
$$
By Lemma \ref{lemma_bdd_size_upper_cone} if $u \in V_{n + 
k + 2}$, such that $E(v,u) \neq \emptyset$ then 
$$
v \in  \left\{u - \sum_{i = 0}^k t_{n + 1 + i}, \ldots, u + 
\sum_{i = 0}^k t_{n + 1 + i}\right\}.
$$ 
This implies,
$$
u \in  \left\{v - \sum_{i = 1}^{k+1} t_{n + i}, \ldots, v + 
\sum_{i = 1}^{k+1} t_{n + i}\right\}.
$$ In other words
$$
r(x_{n+k+1}) \in \left\{v - \sum_{i = 1}^{k+1} t_{n + i}, \ldots, 
v + \sum_{i = 1}^{k+1} t_{n + i}\right\} \subset V_{n+k+2}
$$ as needed.
\end{proof}

\subsection{Continuity of a Vershik 
map for generalized Bratteli diagrams of bounded size}\label{sub:Left_right} 
In the case of classical Bratteli diagrams, i.e., the diagrams
with finitely 
many vertices at each level, it is a well-known fact that, for 
simple Bratteli diagrams, the left-to-right order always gives rise 
to a Vershik homeomorphism 
\cite{HermanPutnamSkau1992}, \cite{Durand2010}. 
In this subsection, 
we show that this is not true for irreducible generalized Bratteli 
diagrams of bounded size. 

%We also provide an example of a Bratteli diagram (not of bounded size) for which the sets of infinite minimal and maximal paths are empty, hence the corresponding Vershik map is a homeomorphism.

Let $B=(V, E)$ be an irreducible generalized Bratteli diagram of 
bounded size with the corresponding sequence 
$(t_n, L_n)_{n \in \N_0}$. We will denote by $\omega$ a fixed 
partial order on $E$. To emphasize that a 
diagram $B$ is ordered, we will write $(B,\omega)$. 
Let $X_{max} = X_{max}(\omega)$ and $X_{min} = X_{min}(\omega)$ 
denote the sets of infinite 
maximal and minimal paths, respectively, with respect to the 
corresponding order.

\begin{lemma}\label{gbd size} Let $B = (B,\omega)$ be an 
ordered generalized Bratteli diagram of 
bounded size where $\omega$ is the left-to-right partial order on 
$E$, then
$$
|X_{max}|= |X_{min}| = \aleph_{0}.
$$
\end{lemma}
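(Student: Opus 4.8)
The plan is to set up an explicit bijection between each of $X_{max}$, $X_{min}$ and the starting level $V_0 = \Z$, which immediately yields cardinality $\aleph_0$. The starting point is to pin down, for the left-to-right order $\omega$, exactly which edge is maximal (respectively minimal) in each fiber $r^{-1}(v)$. By Definition \ref{Def:BD_bdd_size} together with the standing convention that $E(v - t_n, v)$ and $E(v + t_n, v)$ are nonempty for all $v \in V_{n+1}$, the sources of the edges arriving at $v \in V_{n+1}$ lie in $\{v - t_n, \ldots, v + t_n\}$ and both extremes are attained. Since the left-to-right order ranks edges by the index of their source, the unique maximal edge of $r^{-1}(v)$ has source $v + t_n$ and the unique minimal edge has source $v - t_n$.

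Next I would read off the vertex sequence of a maximal path. Writing an infinite path as $x = (x_0, x_1, \ldots)$, let $u_n \in V_n$ be the vertex it passes through at level $n$, so that $u_n = s(x_n) = r(x_{n-1})$. If $x$ is maximal, then each $x_n$ is the maximal edge into $u_{n+1}$, whence $u_n = s(x_n) = u_{n+1} + t_n$; equivalently $u_{n+1} = u_n - t_n$, and therefore $u_n = u_0 - \sum_{i=0}^{n-1} t_i$. Thus the whole vertex sequence of a maximal path is forced by its starting vertex $u_0$, and because the maximal edge in each fiber is unique, the full edge sequence is forced as well: there is at most one maximal path through each $u_0 \in V_0$. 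Conversely, given any $u_0 \in \Z$, the recursion $u_{n+1} = u_n - t_n$ together with the nonemptiness of $E(u_{n+1} + t_n, u_{n+1})$ produces a genuine infinite maximal path starting at $u_0$. This gives a bijection $V_0 \to X_{max}$, so $|X_{max}| = |\Z| = \aleph_0$.

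Finally I would run the symmetric argument for minimal paths: the minimal edge into $v \in V_{n+1}$ has source $v - t_n$, so along a minimal path $u_{n+1} = u_n + t_n$ and $u_n = u_0 + \sum_{i=0}^{n-1} t_i$; again the path is determined by $u_0 \in \Z$ and every $u_0$ yields exactly one, whence $|X_{min}| = \aleph_0$.

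The argument is short, and the only genuinely load-bearing point---the step I would be most careful about---is the use of the bounded-size convention guaranteeing that the extremal sources $v \pm t_n$ actually carry an edge. Without it, the ``maximal/minimal source'' could vary with $v$ and a prescribed starting vertex might fail to extend to an infinite maximal (respectively minimal) path; it is precisely this convention that makes the correspondence $u_0 \mapsto$ (its maximal/minimal path) both well defined and surjective. I would also note explicitly that uniqueness of the extremal edge in each finite fiber (forced by totality of $\omega$ on $r^{-1}(v)$) is what upgrades ``the vertex sequence is determined'' to ``the path is determined.''
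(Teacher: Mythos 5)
Your proof is correct and follows essentially the same route as the paper: both arguments use the bounded-size convention that $E(v - t_n, v)$ and $E(v + t_n, v)$ are nonempty to identify the source of the unique minimal (resp.\ maximal) edge in each fiber $r^{-1}(v)$ as $v - t_n$ (resp.\ $v + t_n$), which forces minimal paths to slant right and maximal paths to slant left by exactly $t_n$ per level. The paper phrases this as each finite extremal path in $E(V_0, v)$ admitting a unique extremal extension to the next level, whereas you make the resulting bijection between $X_{max}$ (or $X_{min}$) and $V_0 = \Z$ explicit, but the substance is identical.
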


\begin{proof} Let $(t_n, L_n)_{n \in \N_0}$ denote the sequence 
corresponding to the bounded size diagram $(B,\omega)$. Recall that 
for $n \in \N$ and $v \in V_{n}$, $E(V_0, v)$ denotes the set of 
all finite paths $\ov e = (e_0, \ldots, e_{n})$ such that 
$r(\ov e) = v$. Since 
$\omega$ is left-to-right ordering, the leftmost finite path and 
the rightmost finite path is the unique minimal and the unique
maximal path,
respectively, in the set $E(V_0, v)$. We denote them by $\ov 
e_{min}$ and $\ov e_{max}$. Since $(B,\omega)$ is of bounded size 
there are  vertices $u= v + t_{n+1}$ and $u'= v - t_{n+1}$  in
$V_{n+1}$ such that $e(v,u)$ is the  minimal edge in the set 
$r^{-1}(u)$ and $e(v,u')$ is the maximal 
edge in the set $r^{-1}(u')$. 

This observation implies that, for every $v \in V_n$, the finite
minimal path
$\ov e_{min} \in E(V_0, v)$ and  the finite maximal path 
$\ov e_{max} \in E(V_0, v))$ have unique minimal and unique maximal
extensions to the level $n+1$. This proves that the sets $X_{max}$ 
and $X_{min}$ are countable. 
\end{proof}

Observe that for a two-sided generalized Bratteli diagram of 
bounded size $B=(V, E)$ (i.e. each $V_i$ is identified with $\Z$) 
with left-to-right ordering there exist countably infinite 
``slanted'' 
minimal paths which go ``from left-to-right'' countably infinite 
``slanted'' maximal paths which go ``from right to left''. Since by 
convention, each vertex has the rightmost and leftmost incoming 
edge corresponding to the parameters $t_n$, all infinite minimal 
paths go ``parallelly'' to each other, the same holds for maximal 
paths.

Note that for an ordered generalized Bratteli diagram $(B, \omega)$,
the Vershik map $\varphi_B(\omega)$ (corresponding to $\omega$) is 
a well defined map from $X_B \setminus X_{max}(\omega)$ to $X_B 
\setminus X_{min}(\omega)$. Denote by $\Phi_B(\omega)$ the set of 
all possible extensions of $\varphi_B(\omega)$ to Vershik maps 
defined on the entire $X_B$. In general,  $\Phi_B(\omega)$ can be 
empty. As mentioned in Lemma \ref{lem:Vershik cont}, the map 
$\varphi_B(\omega) : X_B \setminus X_{max}(\omega) \to X_B 
\setminus X_{min}(\omega)$ is continuous. In what follows, we
discuss the question of whether $\varphi_B(\omega)$ can be 
extended to a \textit{continuous} Vershik map on the entire path 
space $X_B$.

We first give a simple example where there is no Vershik map 
on a generalized Bratteli diagram, in other words, the map 
$\varphi_B(\omega) : X_B \setminus X_{max}(\omega) \to X_B 
\setminus X_{min}(\omega)$ can not be extended to the entire path 
space in Borel fashion. 

\begin{example}\label{Ex:LROrdering_no_V.map} Let $B = 
(B,\omega)$ be a one-sided generalized Bratteli diagram of bounded 
sized (i.e. each $V_i$ is identified with $\N$) where $\omega$ is 
the left-to-right ordering. We considered such a diagram in
Example~\ref{Ex:isom_bd} (although without the ordering, see the second diagram in 
Figure \ref{Fig:Isom_BD}). Then it is easy to see that $(B,\omega)$ 
admits infinitely many minimal paths and no maximal paths in the 
diagram. 
Hence, the Vershik map $\varphi_B :X_B \setminus X_{max}
\to X_B \setminus X_{min}$ cannot be extended to a Borel 
bijection to $X_B$.
\end{example} 

Now we show that there exists a class of generalized Bratteli 
diagrams of bounded size such that the left-to-right order does 
not give rise to a continuous Vershik map. 

\begin{theorem}\label{thm_VM not cont}
Let $B  = (B,\omega)$ be a generalized Bratteli diagram of 
bounded size, with the corresponding sequence 
$(t_n, L_n)_{n \in \N_0}$ and left-to-right ordering $\omega$. 
Assume that for all $N \in \mathbb{N}$ there exists $k \in 
\mathbb{N}$ such that 
\begin{equation}\label{condit_discont_V_map}
t_{N+k} < \sum_{i = N}^{N + k - 1} t_i.
\end{equation}
Let $\varphi_{B} (\omega) \in \Phi(\omega)$ be any Vershik map on
$X_B$ corresponding to the order $\omega$. Then 
$\varphi_{B} (\omega)$ is not continuous. Moreover, 
every infinite maximal path is a point of discontinuity. 
\end{theorem}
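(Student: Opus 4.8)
The plan is to fix an arbitrary infinite maximal path $x = x_{max}$ and to produce a sequence of non-maximal paths converging to $x$ whose Vershik images cannot converge to $\varphi_B(x)$. First I would record the geometry of the left-to-right order on a bounded size diagram. Writing $T_n = \sum_{i=0}^{n-1} t_i$ (with $T_0 = 0$) and identifying each level with $\Z$, the maximal edge into a vertex $w \in V_{n+1}$ has source $w + t_n$, so the maximal path $x$ issuing from $v_0 := s(x_0)$ passes through the vertices $v_n = v_0 - T_n$; dually, the minimal (leftmost) edge into $w$ has source $w - t_n$, whence the minimal finite path in $E(V_0, w)$ for $w \in V_m$ has source vertex $w - T_m$ at level $0$. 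I will use these two facts repeatedly; they are immediate from Lemma~\ref{lemma_bdd_size_upper_cone} and the convention that $E(v \pm t_n, v) \neq \emptyset$.

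Next, for each large $N$ I would produce a nearby non-maximal path as follows. Apply hypothesis \eqref{condit_discont_V_map} at level $N$ to obtain $k \geq 1$ with $t_{N+k} < \sum_{i=N}^{N+k-1} t_i = T_{N+k} - T_N$, and put $m = N+k$. Let $y = y^{(N)}$ be the path that agrees with $x$ on the edges $x_0, \dots, x_{m-1}$, and whose $m$-th edge $y_m$ is the minimal edge into the vertex $u := v_m + t_m$ (this edge exists and has source $u - t_m = v_m$, and, because $t_m \geq 1$, it is not maximal); below level $m$ the path $y$ may be continued arbitrarily, which is possible since $s^{-1}(u) \neq \emptyset$. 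Then $y \notin X_{max}$, its first non-maximal coordinate is exactly $m$, and $\mathrm{dist}(y^{(N)}, x) \leq 2^{-m} \leq 2^{-N}$, so $y^{(N)} \to x$. Applying the Vershik map, the successor $g_m$ of $y_m$ in $r^{-1}(u)$ has source $s(g_m) \leq u + t_m = v_m + 2t_m$, and $\varphi_B(y^{(N)})$ begins with the minimal path in $E(V_0, s(g_m))$; by the dual fact above its level-$0$ vertex is
\[
a_N := s(g_m) - T_m \;\leq\; v_m + 2t_m - T_m \;=\; v_0 - 2T_{N+k} + 2t_{N+k} \;<\; v_0 - 2T_N ,
\]
where the last inequality is exactly the content of \eqref{condit_discont_V_map}.

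Finally, since $t_i \geq 1$ gives $T_N \geq N \to \infty$, the starting vertices satisfy $a_N < v_0 - 2T_N \to -\infty$. Thus $y^{(N)} \to x$ while the initial vertices of $\varphi_B(y^{(N)})$ escape to $-\infty$; in particular, for all large $N$ the path $\varphi_B(y^{(N)})$ differs from $z := \varphi_B(x)$ already in its $0$-th edge, so $\mathrm{dist}(\varphi_B(y^{(N)}), z) = 1$. Hence $\varphi_B$ is discontinuous at $x$, and since $x$ was an arbitrary maximal path (and the extension $\varphi_B(\omega) \in \Phi(\omega)$ was arbitrary, its value on $X_{max}$ playing no role beyond fixing the single finite starting vertex of $z$), every infinite maximal path is a point of discontinuity. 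I expect the main obstacle to be the second step: pinning down, via the explicit left-to-right combinatorics, that resetting the initial segment to a minimal path forces the image to start near $v_0 - 2T_m$, and then using \eqref{condit_discont_V_map} to guarantee that this drifts away from the fixed starting vertex of $z$ no matter how close $y^{(N)}$ is taken to $x$. The role of the hypothesis is precisely to prevent a single large $t_m$ from cancelling the accumulated leftward shift $2T_m$, which is what would otherwise let the images stabilize and $\varphi_B$ be continuous.
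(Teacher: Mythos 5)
Your proof is correct and follows essentially the same route as the paper: your perturbed path $y^{(N)}$ (follow the maximal path to level $m=N+k$, then switch to the minimal edge into $v_m+t_m$) is exactly the paper's path $x''$, and your key estimate $s(g_m)\le v_m+2t_m$ combined with \eqref{condit_discont_V_map} is precisely the paper's computation showing $s(y''_n)<r(x_{n-1})$. The only cosmetic difference is the endgame: the paper pairs $x''$ with a second nearby path $x'$ and shows $d(\varphi_B(x'),\varphi_B(x''))=1$, whereas you compare $\varphi_B(y^{(N)})$ directly with the fixed image $\varphi_B(x)$ via the unbounded leftward drift $a_N<v_0-2T_N\to-\infty$ of the initial vertices; both versions yield discontinuity at every maximal path for any extension in $\Phi(\omega)$.
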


\begin{proof} 
Assume that the vertices of $B$ are enumerated by $\mathbb{Z}$.
By Lemma \ref{gbd size}, there always exists a well defined 
Vershik map $\varphi_{B} = \varphi_{B}(\omega)$ on $X_B$. 
To show that $\varphi_B$ is not continuous, we will prove that
any neighborhood of a maximal path contains two infinite paths, 
$x'$ and $x''$, such that the distance between their images under 
the Vershik map is equal to $1$.

Pick any $x = (x_n)_{n \in \N_0}\in X_{max}$, we will show that in 
an arbitrarily small 
neighborhood of $x$ there are paths $x' = (x'_n)_{n \in \N_0}, x'' 
= (x''_n)_{n \in \N_0} \in 
X_B \setminus X_{max}$ such that $d(\varphi_B(x'), \varphi_B(x'') 
=1$. For $\varepsilon > 0$, take any $n$  such that 
$\frac{1}{2^{n-1}} < \varepsilon$. Let $x'_l = x_l$ for $l < n$ 
and $x_n'$ be the minimal edge with the source in $r(x_{n-1})$. 
Define $x'_k$ for $k > n$ in an arbitrary way such that $x' = 
(x_l')$ form a path in $X_B$. Then $d(x,x') < \varepsilon$ and $x'$ 
is not a maximal path. Let $y' = (y'_n) = \varphi_B(x')$. 
Then $y'_n$ 
is the successor of $x_n'$ and since the ordering is left-to-right, 
we have $s(y_n') \geq r(x_{n-1})= s(x'_n)$. 

By~(\ref{condit_discont_V_map}), there exists $k \in \mathbb{N}$
such that 
$$
t_{n+k} < \sum_{i = n}^{n + k - 1} t_i.
$$ 
Set $x''_l = x_l$ for $0 \leq l \leq n+k-1$. 
Let $x_{n+k}''$ be the minimal edge with the source in 
$r(x_{n+k-1})$ and let $x''_l$ for $l > n+k-1$ be defined in an 
arbitrary way such that $x'' = (x_l'')$ form a path in $X_B$. Then 
$d(x,x'') < \varepsilon$ and $x''$ is not a maximal path. Let 
$y''_{n+k}$ be the successor of $x''_{n+k}$ and denote $y'' = 
(y''_l) = \varphi_B(x'')$.
Since $B$ is a diagram of bounded size, we have $s(y''_{n+k}) \leq 
r(x_{n+k-1}) + 2t_{n+k}$. Recall that we assume that the sets
$E(v-t_n,v)$ and 
$E(v+t_n,v)$ are nonempty for all $v \in V_{n+1}$ and all $n$. 
This means that  
$$
r(x_{n+k-1}) = r(x_{n-1}) - \sum_{i = n}^{n+k-1}t_i.
$$ 
Since $(y_0'',\ldots, y_{n+k-1}'')$ is the finite minimal path 
and the sets 
$E(v-t_n,v)$, $E(v + t_n,v)$ are nonempty for every $v \in 
V_{n+1}$, we have 
$$s(y''_{n}) = s(y''_{n+k}) -  \sum_{i = n}^{n+k-1}t_i.
$$ 
Thus,
$$
\ba
    s(y''_{n}) =\  & s(y''_{n+k}) -  \sum_{i = n}^{n+k-1}t_i\\
    \leq \ & r(x_{n+k-1}) + 2t_{n+k} - \sum_{i = n}^{n+k-1}t_i\\ 
    =\ & r(x_{n-1}) + 2t_{n+k} - 2\sum_{i = n}^{n+k-1}t_i\\
    < \ & r(x_{n-1}).
\ea     
$$ Hence, $s(y_n') > s(y_n'')$ and since all minimal paths in the 
diagram go ``parallelly'' to each other, we have 
$d(\varphi_B(x'), 
\varphi_B(x'')) = 1$.
\end{proof}

\begin{remark}
We note that the conditions of the theorem above hold for 
generalized Bratteli diagrams of uniformly bounded size. Indeed, 
since $t_n = t$ for all $n\in \mathbb{N}_0$, for every $N\in 
\mathbb{N}$ it is enough to take $k = 2$ 
and the condition~(\ref{condit_discont_V_map}) will be satisfied. 
Thus, even irreducible generalized Bratteli diagrams of uniformly 
bounded size with 
left-to-right ordering do not support a continuous Vershik map.
\end{remark} 

Now we show that there exists a class of generalized Bratteli 
diagrams of bounded size such that the left-to-right order does 
give rise to a continuous Vershik map. This class is 
dual to the class of diagrams considered in 
Theorem \ref{thm_VM not cont}.

\begin{theorem}\label{Thm:ContVM} 
Let $B = (B,\omega)$ be a 
generalized Bratteli diagram of bounded size, with the 
corresponding sequence 
$(t_n, L_n)_{n \in \N_0}$ and left-to-right ordering $\omega$. 
Let $L_n = 2$ for all $n \in 
\mathbb{N}$ and let there exist $N \in \mathbb{N}_0$ such that
\begin{equation}\label{condit_cont_V_map}
t_{N+k} = \sum_{i = N}^{N + k - 1} t_i
\end{equation}
for all $k \in \mathbb{N}$. Then there exists a continuous 
Vershik map corresponding to $\omega$.
\end{theorem}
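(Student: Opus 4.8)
The plan is to show that under condition \eqref{condit_cont_V_map}, the unique infinite maximal path and the unique infinite minimal path can be matched so that the Vershik map extends continuously. First I would analyze the structure of maximal and minimal paths. Since $L_n = 2$ for all $n$ and the diagram is of bounded size, every vertex has exactly two incoming edges, and the left-to-right order makes each vertex's leftmost incoming edge minimal and rightmost incoming edge maximal. By Lemma~\ref{gbd size} the sets $X_{max}$ and $X_{min}$ are countable, and (as remarked after that lemma) the infinite minimal paths run ``parallelly'' slanting one way while the maximal paths run ``parallelly'' slanting the other way. The key geometric point is that condition \eqref{condit_cont_V_map} forces $t_{N+k} = \sum_{i=N}^{N+k-1} t_i$, which is exactly the borderline case excluded by \eqref{condit_discont_V_map}; this balance should mean that the horizontal displacement of a maximal path over a block of levels matches the reach $t_{N+k}$ of a single edge at the next level, so that a maximal path and its intended successor (a minimal path) stay a \emph{bounded} distance apart rather than diverging.

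Next I would identify explicitly, for each infinite maximal path $x^{(j)} \in X_{max}$, the infinite path $y = \varphi_B(x^{(j)})$ that the Vershik map \emph{should} assign to it if it is to be continuous. The natural candidate is obtained by taking a sequence $x' \to x^{(j)}$ of non-maximal paths, applying $\varphi_B$, and checking that $\varphi_B(x')$ converges; condition \eqref{condit_cont_V_map} should guarantee that the limit exists and is a well-defined minimal path $y^{(j)} \in X_{min}$. Concretely, I would compute, as in the proof of Theorem~\ref{thm_VM not cont}, the source $s(y_n)$ of the successor edge at level $n$ when we change $x$ to a nearby non-maximal path at level $n+k$, and verify that with equality in \eqref{condit_cont_V_map} the two estimates $s(y_n') > s(y_n'')$ collapse to $s(y_n') = s(y_n'')$ for all sufficiently deep perturbations. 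This equality is what makes $\varphi_B$ Cauchy on small neighborhoods of $x^{(j)}$.

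I would then define the extension $\overline{\varphi}_B : X_B \to X_B$ by setting $\overline{\varphi}_B(x^{(j)}) = y^{(j)}$ on $X_{max}$ and equal to $\varphi_B$ elsewhere, and argue that this is a well-defined bijection from $X_{max}$ onto $X_{min}$ (using $|X_{max}| = |X_{min}| = \aleph_0$ from Lemma~\ref{gbd size}, together with the explicit pairing $x^{(j)} \mapsto y^{(j)}$). To establish continuity at a maximal path $x^{(j)}$, I would show: given $\varepsilon > 0$, choose $M$ with $1/2^M < \varepsilon$; then for any $x'$ agreeing with $x^{(j)}$ up to a sufficiently deep level, the balanced condition forces $\varphi_B(x')$ to agree with $y^{(j)}$ up to level $M$, so $\mathrm{dist}(\varphi_B(x'), \overline{\varphi}_B(x^{(j)})) < \varepsilon$. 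Continuity on $X_B \setminus X_{max}$ is already given by Lemma~\ref{lem:Vershik cont}, and continuity of the inverse should follow by the symmetric argument exchanging the roles of $X_{max}$ and $X_{min}$.

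The hard part will be the bookkeeping in the coordinate estimates: tracking how the source index of the successor edge propagates downward through a block of levels, and verifying that equality in \eqref{condit_cont_V_map} produces \emph{exact} matching of the perturbed successor-paths (not merely a bounded gap) for all sufficiently deep perturbations, so that the images genuinely converge in the metric rather than only staying close. A secondary subtlety is checking that the pairing $x^{(j)} \mapsto y^{(j)}$ is correct and that the slanting directions of maximal and minimal families are compatible under \eqref{condit_cont_V_map}, so that the extension is a bijection onto all of $X_{min}$ and not just an injection; here the constraint $L_n = 2$, which pins down the edge structure at each vertex exactly, should be what closes the gap.
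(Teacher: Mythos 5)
Your plan is essentially the paper's proof: the paper defines the extension by sending, for each vertex $w \in V_N$, the unique infinite maximal path through $w$ to the unique infinite minimal path through the same $w$ — exactly the pairing your limit construction would produce — and verifies continuity by the same source-tracking computation you sketch, using $L_n = 2$ (so every non-maximal edge is minimal and the edge structure at each vertex is pinned down) together with equality in \eqref{condit_cont_V_map} to show that any path agreeing with $x_{max}^{(w)}$ up to level $M \geq N$ has image agreeing with $x_{min}^{(w)}$ up to level $M$, whence $d(\varphi_B(x), x_{min}^{(w)}) \leq 2^{-M}$. Two small remarks: your opening sentence refers to ``the unique'' infinite maximal and minimal path, whereas (as you correctly note later via Lemma~\ref{gbd size}) there are countably many of each, one pair per vertex of $V_N$; and the theorem only asserts continuity of $\varphi_B$, so your symmetric argument for $\varphi_B^{-1}$ is extra rather than required.
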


\begin{proof} Clearly, the Vershik map is well-defined on 
$X_B \setminus X_{max}$. 
Define the Vershik map $\varphi_B$ on the set of maximal paths as 
follows: for every vertex $w \in V_N$, let $\varphi_B$ map the 
unique maximal path $x_{max}^{(w)}$ passing through $w$ to the 
unique minimal path $x_{min}^{(w)}$ passing through $w$. Then 
$\varphi_B$ is a bijection. Since every vertex $v \in V \setminus 
V_0$ has exactly two incoming edges, it is easy to verify the 
continuity of $\varphi_B$. Indeed, let $w \in V_N$ and $x$ be any 
non-maximal path that coincides with $x_{max}^{(w)}$ up to 
level $M \geq N$. Then
$$
d(x_{max}^{(w)}, x) = \frac{1}{2^M} \leq \frac{1}{2^N}.
$$ 
Then it follows from the structure of the diagram that the distance 
$$
d(\varphi_B(x), x^{(w)}_{min}) = \frac{1}{2^M}.
$$ Thus, $x$
will be mapped to the neighborhood of the corresponding $x_{min}$ 
(see Figure \ref{Fig:ContinVMap} in Example \ref{ex_Cont VM} below).
\end{proof}

\begin{figure}[ht]
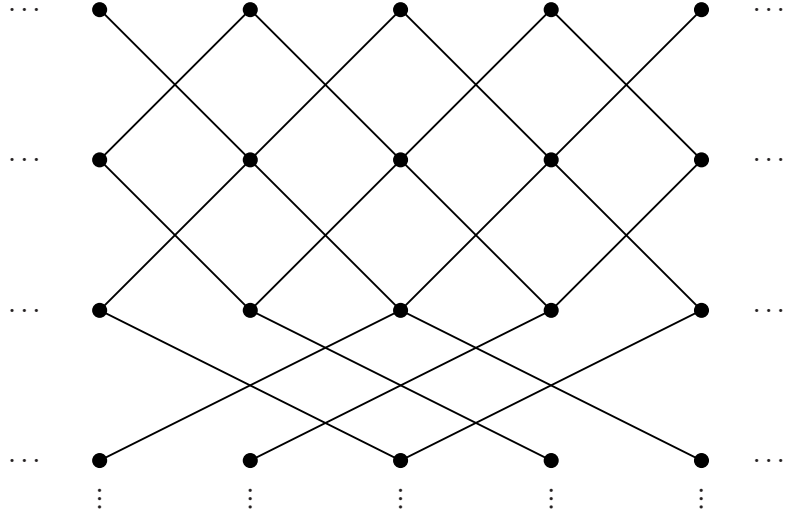

\unitlength=1cm
\begin{graph}(11,8)
% \graphnodesize{0.2}
% \roundnode{V0}(3,6)
%  %\nodetext{V0}(-1,0){$V_0$}
 \roundnode{V11}(2,7)
 %\nodetext{V11}(-0.7,0){$w_1^{(0)}$}
  %\nodetext{V12}(0.7,0){$w_2^{(0)}$}
 \roundnode{V12}(4,7)
 \roundnode{V13}(6,7)
 \roundnode{V14}(8,7)
  \roundnode{V15}(10,7)

  %
    % The second level vertices
   \roundnode{V21}(2,5)
 %\nodetext{V11}(-0.7,0){$w_1^{(0)}$}
  %\nodetext{V12}(0.7,0){$w_2^{(0)}$}
 \roundnode{V22}(4,5)
 \roundnode{V23}(6,5)
 \roundnode{V24}(8,5)
  \roundnode{V25}(10,5)
  % The third level vertices
 \roundnode{V31}(2,3)
 \roundnode{V32}(4,3)
 \roundnode{V33}(6,3)
  \roundnode{V34}(8,3)
    \roundnode{V35}(10,3)
  
  % The fourth level vertices 
  \roundnode{V41}(2,1)
 \roundnode{V42}(4,1)
 \roundnode{V43}(6,1)
  \roundnode{V44}(8,1)
    \roundnode{V45}(10,1)
  
 %
 % EDGES
 \graphlinewidth{0.025}
% % First level
%  \edge{V0}{V11}
%  \edge{V0}{V12}

     \edge{V22}{V11}
    \edge{V21}{V12}
     
 \edge{V23}{V12}
    \edge{V22}{V13}
    
   \edge{V24}{V13}
    \edge{V23}{V14}
    
   \edge{V25}{V14}
    \edge{V24}{V15}
    
 % Second level
\edge{V32}{V21}
    \edge{V31}{V22}
     
 \edge{V33}{V22}
    \edge{V32}{V23}
    
   \edge{V34}{V23}
    \edge{V33}{V24}
    
   \edge{V35}{V24}
    \edge{V34}{V25}
    
     % Third level
     
 %\edge{V42}{V31}
    \edge{V41}{V33}
     
 \edge{V43}{V31}
    \edge{V42}{V34}
    
   \edge{V44}{V32}
    \edge{V43}{V35}
    
   \edge{V45}{V33}
    %\edge{V44}{V35}
    
  \freetext(10.9,7){$\ldots$}
  \freetext(10.9,5){$\ldots$}
 \freetext(10.9,3){$\ldots$}
  \freetext(10.9,1){$\ldots$}
   \freetext(1,7){$\ldots$}
  \freetext(1,5){$\ldots$}  
   \freetext(1,3){$\ldots$}
  \freetext(1,1){$\ldots$}
  \freetext(2,0.5){$\vdots$}
  \freetext(4,0.5){$\vdots$}
    \freetext(6,0.5){$\vdots$}
      \freetext(8,0.5){$\vdots$}  
        \freetext(10,0.5){$\vdots$} 
\end{graph}
\caption{A continuous Vershik map (left-to-right 
ordering)}\label{Fig:ContinVMap}
\end{figure} Now we give an example which illustrates Theorem \ref{Thm:ContVM}. 

\begin{example}\label{ex_Cont VM} Let $B  = (B,\omega)$ be a 
two-sided generalized Bratteli diagram of bounded size, with the 
corresponding sequence 
$(t_n, L_n)_{n \in \N_0}$ and left-to-right ordering $\omega$. 
Suppose that 
$$
L_n = 2, \,\,\, t_n = 2^{n-1} \,\,\,\mathrm{for} \,\,\, n \in \N 
\,\,\,\mathrm{and}\,\,\, t_0 = 1.
$$ 
The diagram $B = (V,E)$ is shown in Figure \ref{Fig:ContinVMap}. 
In other words, every vertex $v \in V \setminus V_0$ 
has exactly two incoming edges. Endow $B$ with the left-to-right 
ordering. To define the Vershik map, we use the following rule:   
for every vertex $w \in V_0$, the maximal infinite path 
which starts at $w$ is mapped to the minimal infinite path 
which starts at the same vertex $w$. 
Then it is easy to check that the corresponding Vershik map 
is a homeomorphism. Note that, in this case, 
for every $n \geq 1$
$$
t_{n} = \sum_{i = 0}^{n-1} t_i
$$
(compare with the result of Theorem \ref{Thm:ContVM}).
We remark also that the Bratteli diagram, constructed in this 
example, is reducible. Indeed, starting from the 
level $V_2$, all infinite paths that begin at even vertices 
will go through even vertices only.

\end{example}

%Below we present an example of an ordered generalized Bratteli diagram for which the sets of infinite maximal and minimal paths are empty, which guarantees that the corresponding Vershik map is a homeomorphism.

\ignore{
\begin{example}\label{Ex:noXminnoXmax}
Let $B = B(F)$ be a one-sided infinite generalized Bratteli diagram
where $A = F^{T} = (a_{ij})_{i,j \in \N}$ is defined by
\begin{equation}
    A = \begin{pmatrix}
    1 & 1 & 1 & 1 & 1 & 1 & \cdots\\
    1 & 0 & 0 & 0 & 0 & 0 & \cdots\\
    1 & 1 & 0 & 0 & 0 & 0 & \cdots\\
    0 & 1 & 1 & 0 & 0 & 0 & \cdots\\
    0 & 0 & 1 & 1 & 0 & 0 & \cdots\\
    0 & 0 & 0 & 1 & 1 & 0 & \cdots\\
   \vdots & \vdots & \vdots & \vdots & \vdots & \vdots & \ddots
    \end{pmatrix}.
\end{equation}
In other words, for all $j \in \mathbb{N}$,
\begin{equation} \label{Matrix A_HomeoVmap}
     a_{ij} = \left\{
\begin{aligned}
& 1 \mbox{ if } i = 1 \mbox{ or } i = j + 1 \mbox{ or } i = j + 2,\\
& 0 \mbox{ otherwise }.
\end{aligned}
\right.
\end{equation}
Notice that every vertex in $V \setminus V_0$ has exactly three incoming edges. Define a stationary order on $B$ as follows: let all the edges outgoing from the first vertex have label $1$, and all other edges can be labeled by $0$ and $2$ in an arbitrary way so that one gets an ordered generalized Bratteli diagram. Then for any vertex $v \in V_0$, any minimal or maximal path starting from this vertex will eventually end in the first vertex. Since there are no minimal or maximal edges outgoing from the first vertex, minimal and maximal paths cannot be infinite paths 
(see Figure~\ref{Fig:noXminnoXmax} below).
\begin{figure}[ht]
\unitlength=1cm
\begin{graph}(11,6)
% \graphnodesize{0.2}
% \roundnode{V0}(3,6)
%  %\nodetext{V0}(-1,0){$V_0$}
 \roundnode{V11}(2,5)
 %\nodetext{V11}(-0.7,0){$w_1^{(0)}$}
  %\nodetext{V12}(0.7,0){$w_2^{(0)}$}
 \roundnode{V12}(4,5)
 \roundnode{V13}(6,5)
 \roundnode{V14}(8,5)
  \roundnode{V15}(10,5)
  %
    % The second level vertices
   \roundnode{V21}(2,3)
 %\nodetext{V11}(-0.7,0){$w_1^{(0)}$}
  %\nodetext{V12}(0.7,0){$w_2^{(0)}$}
 \roundnode{V22}(4,3)
 \roundnode{V23}(6,3)
 \roundnode{V24}(8,3)
  \roundnode{V25}(10,3)
  % The third level vertices
 \roundnode{V31}(2,1)
 \roundnode{V32}(4,1)
 \roundnode{V33}(6,1)
  \roundnode{V34}(8,1)
    \roundnode{V35}(10,1)
   %
 % EDGES
 \graphlinewidth{0.025}
% % First level
%  \edge{V0}{V11}
%  \edge{V0}{V12}
    \edge{V21}{V11}
    \edge{V22}{V11}
    \edge{V21}{V12}
    \edge{V21}{V13}
   \edge{V23}{V11}
 \edge{V22}{V13}
  \edge{V22}{V14}
    \edge{V24}{V11}
    \edge{V23}{V14}
       \edge{V23}{V15}
   \edge{V25}{V11}
    \edge{V24}{V15}
    % Second level
    \edge{V31}{V21}
    \edge{V32}{V21}
    \edge{V31}{V22}
    \edge{V31}{V23}
 \edge{V33}{V21}
 \edge{V32}{V23}
  \edge{V32}{V24}
   \edge{V34}{V21}
    \edge{V33}{V24}
       \edge{V33}{V25}
   \edge{V35}{V21}
    \edge{V34}{V25}
   \freetext(10.9,5){$\ldots$}
 \freetext(10.9,3){$\ldots$}
  \freetext(10.9,1){$\ldots$}
    \freetext(2,0.5){$\vdots$}
  \freetext(4,0.5){$\vdots$}
    \freetext(6,0.5){$\vdots$}
      \freetext(8,0.5){$\vdots$}  
        \freetext(10,0.5){$\vdots$} 
          \freetext(1.6,4){$1$} 
          \freetext(2.3,4.3){$1$} 
          \freetext(3,5){$1$} 
          \ignore{
          \freetext(1.6,2){$1$} 
          \freetext(2.3,2.3){$1$} 
          \freetext(3,3){$1$} }
  \end{graph}
\caption{A continuous Vershik map (the sets of infinite minimal and maximal paths are empty)}\label{Fig:noXminnoXmax}
\end{figure}
\end{example}
}

%%%%%% Section 4
\section{Continuity of Vershik map for generalized Bratteli diagrams}\label{Sect:contin_V_map}

The aim of this section is to find conditions under which a 
generalized Bratteli diagram 
admits an order such 
that the corresponding Vershik map can be prolonged to a 
homeomorphism.
In particular, we are interested in orders $\omega$ such that there are no infinite minimal and no infinite maximal paths. The Vershik map $\varphi_B(\omega)$ corresponding to such an order is uniquely defined, and it is always a homeomorphism. Such orders were also used in 
 Theorem \ref{Thm:GBD_models_Borel_dyn} where generalized Bratteli diagrams corresponding to Borel dynamical systems were constructed using sequences of vanishing markers \cite{BezuglyiDooleyKwiatkowski_2006}. 

\subsection{Ordered generalized Bratteli diagrams with no infinite minimal and maximal paths.}
Let $B$ be a generalized Bratteli diagram such that, for every level $n \in \mathbb{N}$, the set of its vertices $V_n$ is identified with $\mathbb{N}$. Note that this condition is not a restriction, since the set of vertices of any generalized Bratteli diagram can be enumerated this way (see Subsection~\ref{Subsec:isomBD} and Example~\ref{Ex:isom_bd}). 
We call an edge $e$ of such a diagram \textit{slanted from right to left} if $r(e) < s(e)$, \textit{slanted from left to right} if $r(e) > s(e)$, and \textit{vertical} if $r(e) = s(e)$. We also say that a finite or infinite path is slanted from right to left if it consists of edges slanted from right to left.

\begin{theorem}\label{Thm:suff_cond_no_x_min_max}
    Let $B$ be a generalized Bratteli diagram such that the set of vertices on each level is identified with $\mathbb{N}$. Suppose that 
    there exists a level $N$ such that starting from that level every vertex has at least two incoming edges which are slanted from right to left. Then $B$ admits an order such that the sets of infinite minimal and infinite maximal paths are empty.
\end{theorem}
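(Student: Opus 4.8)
The plan is to build the order by hand and then exploit the fact that $\mathbb{N}$ admits no infinite strictly decreasing sequence: an order in which every maximal (and every minimal) infinite path is forced to drift steadily ``to the left'' as it descends cannot support any infinite maximal or minimal path. The hypothesis is tailored for this, since it guarantees at each vertex $v$ on level $n \geq N$ at least two incoming edges slanted from right to left, i.e.\ two edges $e \in r^{-1}(v)$ with $s(e) > v$. I would use one of these to carry the maximal label at $v$ and a \emph{different} one to carry the minimal label.

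Concretely, I would define an order $\omega$ as in Definition~\ref{Def:Ordered_GBD} as follows. For every vertex $v \in V_n$ with $n \geq N$, choose two distinct incoming edges $e_{\max}(v), e_{\min}(v) \in r^{-1}(v)$, both slanted from right to left, so $s(e_{\max}(v)) > v$ and $s(e_{\min}(v)) > v$; declare $e_{\max}(v)$ to be the largest element of $r^{-1}(v)$ and $e_{\min}(v)$ the smallest, and order the remaining edges of $r^{-1}(v)$ arbitrarily in between. On levels below $N$, order each $r^{-1}(v)$ in an arbitrary way. This is a legitimate linear order on every fiber $r^{-1}(v)$, hence defines an ordered generalized Bratteli diagram.

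The core step is to show $X_{max} = \emptyset$ (and then $X_{min} = \emptyset$ by the same argument). Take any infinite path $x = (x_0, x_1, \dots)$ and write $v_i := s(x_i) \in V_i$, so that $r(x_{i-1}) = v_i$ and $x_{i-1} \in r^{-1}(v_i)$ with $s(x_{i-1}) = v_{i-1}$. If $x$ were maximal, then each $x_{i-1}$ would be the maximal edge of $r^{-1}(v_i)$; in particular, for every $i \geq N$ we would have $x_{i-1} = e_{\max}(v_i)$, and therefore $v_{i-1} = s(x_{i-1}) > v_i$. This produces an infinite strictly decreasing chain $v_{N-1} > v_N > v_{N+1} > \cdots$ of natural numbers, which is impossible. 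Hence no infinite maximal path can exist; substituting $e_{\min}$ for $e_{\max}$ gives the identical descent argument for minimal paths.

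The argument is essentially self-contained, and there is no serious analytic obstacle; the two points deserving care are both structural. First, the maximal and minimal labels must be placed on \emph{distinct} right-to-left slanted edges — this is precisely why the hypothesis requires two such edges at each vertex rather than one, and it should be flagged explicitly. Second, one must note that the arbitrary ordering below level $N$ is harmless: any infinite path meets infinitely many levels $\geq N$, so the forced descent eventually kicks in regardless of how the lower levels are ordered. Once $X_{max} = X_{min} = \emptyset$ is established, the conclusion is exactly the claim of the theorem (and, by the remarks following Definition~\ref{Def:VershikMap}, $\varphi_B(\omega)$ is then automatically a homeomorphism of all of $X_B$).
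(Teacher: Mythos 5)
Your proof is correct and follows essentially the same route as the paper: fix an order in which, from level $N$ onward, the minimal and maximal edges at every vertex are chosen among the (at least two) right-to-left slanted incoming edges, and then observe that any infinite minimal or maximal path would yield an infinite strictly decreasing sequence of vertex labels in $\mathbb{N}$, a contradiction. Your explicit flagging of the need for two \emph{distinct} slanted edges and of the harmlessness of the arbitrary order below level $N$ matches the paper's argument, so there is nothing to fix.
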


\begin{proof}
Put an arbitrary order on $B$ up to level $N$. Starting from level $N$, we define an order such that, for every vertex, all minimal and maximal edges are slanted from right to left. Indeed, by the assumption, every vertex $v$ beginning level $N$ and below has at least two incoming edges slanted from right to left, say $e_v$ and $e'_v$. We define an order $\omega$ such that a minimal edge and a maximal edge in 
$r^{-1}(v)$ are $e_v$ and $e'_v$, respectively. Since all
vertices are enumerated by natural numbers, we denote by
$v_1$ the leftmost vertex in every level starting from $N$.
Then the vertex $v_1$ of the diagram has no outgoing minimal edge and no outgoing maximal edge since all its outgoing edges are either vertical or slanted from left to right.
    
Let $x = (x_n)$ be any minimal or maximal path that passes through some vertex $w \in V_N$. Since for every $n \geq N$, all minimal and maximal edges are slanted from right to left, we obtain $r(x_n) < w + N - n$ and $r(x_{n+1}) < r(x_n)$ for every $n \geq N$. Since all levels of vertices are enumerated by natural numbers, every such path $x$ must be finite. %Indeed, eventually, the path $x$ will pass through $v_1$, the first vertex of the diagram, and there will be no way to prolong it further since there is no minimal edge and no maximal edge that start from the first vertex.

\end{proof}

\begin{example}\label{Ex:noXminnoXmax}
The diagram in Figure~\ref{Fig:noXminnoXmax} satisfies the conditions of  Theorem~\ref{Thm:suff_cond_no_x_min_max} for $N = 0$. Indeed, define $B = B(F)$ to be a one-sided infinite generalized Bratteli diagram
where $A = F^{T} = (a_{ij})_{i,j \in \N}$ is defined by
\begin{equation}
    A = \begin{pmatrix}
    1 & 1 & 1 & 1 & 1 & 1 & \cdots\\
    1 & 0 & 0 & 0 & 0 & 0 & \cdots\\
    1 & 1 & 0 & 0 & 0 & 0 & \cdots\\
    0 & 1 & 1 & 0 & 0 & 0 & \cdots\\
    0 & 0 & 1 & 1 & 0 & 0 & \cdots\\
    0 & 0 & 0 & 1 & 1 & 0 & \cdots\\
   \vdots & \vdots & \vdots & \vdots & \vdots & \vdots & \ddots
    \end{pmatrix}.
\end{equation}
In other words, for all $j \in \mathbb{N}$,
\begin{equation} \label{Matrix A_HomeoVmap}
     a_{ij} = \left\{
\begin{aligned}
& 1 \mbox{ if } i = 1 \mbox{ or } i = j + 1 \mbox{ or } i = j + 2,\\
& 0 \mbox{ otherwise }.
\end{aligned}
\right.
\end{equation}
Note that every vertex in $V \setminus V_0$ has exactly three incoming edges. Define a stationary order on $B$ as follows: let all the edges outgoing from the first vertex have label $1$, and all other edges can be labeled by $0$ and $2$ in an arbitrary way. So it defines an ordered generalized Bratteli diagram. Then, for any vertex $v \in V_0$, any minimal or maximal path starting from this vertex will be slanted from right to left and finite (see Figure~\ref{Fig:noXminnoXmax} below). Notice that the Vershik map corresponding to this diagram is a minimal homeomorphism. Indeed, every infinite path in $X_B$ passes through the first vertex infinitely many times and the orbit of such path visits every cylinder set of $X_B$.

    \begin{figure}[hbt!]
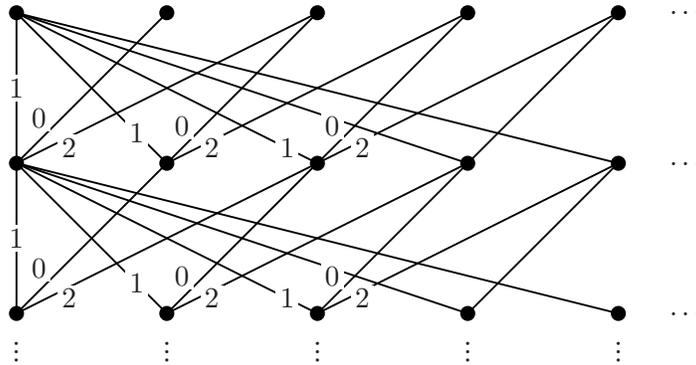

\unitlength=1cm
\begin{graph}(9,6)
% \graphnodesize{0.2}
% \roundnode{V0}(3,6)
%  %\nodetext{V0}(-1,0){$V_0$}
 \roundnode{V11}(0,5)
 %\nodetext{V11}(-0.7,0){$w_1^{(0)}$}
  %\nodetext{V12}(0.7,0){$w_2^{(0)}$}
 \roundnode{V12}(2,5)
 \roundnode{V13}(4,5)
 \roundnode{V14}(6,5)
  \roundnode{V15}(8,5)

  %
    % The second level vertices
   \roundnode{V21}(0,3)
 %\nodetext{V11}(-0.7,0){$w_1^{(0)}$}
  %\nodetext{V12}(0.7,0){$w_2^{(0)}$}
 \roundnode{V22}(2,3)
 \roundnode{V23}(4,3)
 \roundnode{V24}(6,3)
  \roundnode{V25}(8,3)
  % The third level vertices
 \roundnode{V31}(0,1)
 \roundnode{V32}(2,1)
 \roundnode{V33}(4,1)
  \roundnode{V34}(6,1)
    \roundnode{V35}(8,1)
  
 %
 % EDGES
 \graphlinewidth{0.025}
% % First level
%  \edge{V0}{V11}
%  \edge{V0}{V12}

    \edge{V21}{V11}
    \edgetext{V21}{V11}{$1$}
    \edge{V22}{V11}
    \edge{V21}{V12}
    \edge{V21}{V13}

 \edge{V23}{V11}
 \edge{V22}{V13}
  \edge{V22}{V14}
    
   \edge{V24}{V11}
    \edge{V23}{V14}
       \edge{V23}{V15}
    
   \edge{V25}{V11}
    \edge{V24}{V15}
    
 % Second level

    \edge{V31}{V21}
    \edgetext{V31}{V21}{$1$}
    \edge{V32}{V21}
    \edge{V31}{V22}
    \edge{V31}{V23}

 \edge{V33}{V21}
 \edge{V32}{V23}
  \edge{V32}{V24}
    
   \edge{V34}{V21}
    \edge{V33}{V24}
       \edge{V33}{V25}
    
   \edge{V35}{V21}
    \edge{V34}{V25}

  \freetext(8.9,5){$\ldots$}
 \freetext(8.9,3){$\ldots$}
  \freetext(8.9,1){$\ldots$}
  
  \freetext(0,0.5){$\vdots$}
  \freetext(2,0.5){$\vdots$}
    \freetext(4,0.5){$\vdots$}
      \freetext(6,0.5){$\vdots$}  
        \freetext(8,0.5){$\vdots$} 

          %\freetext(-0.4,4){$1$} 
          %\freetext(0.3,4.3){$1$} 
            \freetext(0.3,3.6){$0$} 
            \freetext(0.7,3.2){$2$}
             \freetext(2.2,3.5){$0$} 
            \freetext(2.6,3.2){$2$}
             \freetext(4.2,3.5){$0$} 
            \freetext(4.6,3.2){$2$}
            \freetext(0.3,1.6){$0$} 
            \freetext(0.7,1.2){$2$}
             \freetext(2.2,1.5){$0$} 
            \freetext(2.6,1.2){$2$}
             \freetext(4.2,1.5){$0$} 
            \freetext(4.6,1.2){$2$}
          %\freetext(1,5){$1$} 

            \freetext(1.6,3.4){$1$}
            \freetext(1.6,1.4){$1$}
            \freetext(3.6,3.2){$1$}
            \freetext(3.6,1.2){$1$}
\end{graph}
\caption{The Vershik map is a minimal homeomorphism, the sets of infinite minimal and maximal paths are empty}\label{Fig:noXminnoXmax}
\end{figure}
\end{example}

\begin{example}\label{Ex:NoXminXmax2}
The diagram in Figure~\ref{Fig:tel_noXminnoXmax} satisfies the conditions of the Theorem~\ref{Thm:suff_cond_no_x_min_max} for $N = 0$ after telescoping with respect to even levels. The incidence matrix of the diagram has the form
$$
F = \begin{pmatrix}
    2 & 1 & 0 & 0 & 0 & \cdots\\
    1 & 2 & 1 & 0 & 0 & \cdots\\
    0 & 1 & 2 & 1 & 0 & \cdots\\
    0 & 0 & 1 & 2 & 1 & \cdots\\
    0 & 0 & 0 & 1 & 2 & \cdots\\
    \vdots & \vdots & \vdots & \vdots & \vdots & \ddots
    \end{pmatrix}.
$$

The orders of $E_0$ and $E_1$ are presented in Figure~\ref{Fig:tel_noXminnoXmax}. For every even $i$, the edges $E_i$ are enumerated in the same way as $E_0$, and for every odd $i$, in the same way as $E_1$. Thus, after telescoping with respect to even-numbered levels, we obtain a stationary ordered Bratteli diagram such that all minimal and maximal edges are slanted from right to left.

    \begin{figure}[ht]
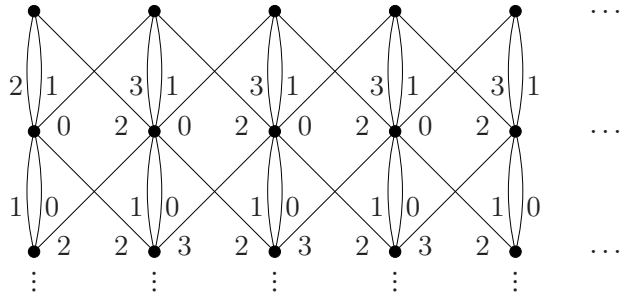

\unitlength = 0.4cm
\begin{center}
\begin{graph}(25,9)
\graphnodesize{0.4}

% Vertices of the first level
\roundnode{V11}(3,9)

\roundnode{V12}(7,9)
%\nodetext{V12}(-0.6,0.4){$v$}
\roundnode{V13}(11,9)
%\nodetext{V13}(0.6,0.4){$w$}
\roundnode{V14}(15,9)
\roundnode{V15}(19,9)

%Vertices of the second level
\roundnode{V21}(3,5)
\freetext(2.4,6.5){2}
\freetext(3.6,6.5){1}
\freetext(4,5.2){0}

\roundnode{V23}(7,5)%odometer
\freetext(5.9,5.2){2}
\freetext(6.4,6.5){3}
\freetext(7.6,6.5){1}
\freetext(8,5.2){0}

\roundnode{V24}(11,5)

\roundnode{V25}(15,5)

\freetext(13.9,5.2){2}
\freetext(14.4,6.5){3}
\freetext(15.6,6.5){1}
\freetext(16,5.2){0}

\roundnode{V26}(19,5)
\freetext(17.9,5.2){2}
\freetext(18.4,6.5){3}
\freetext(19.6,6.5){1}
%\freetext(19,6){$w$}

% Edges of the second level
\bow{V21}{V11}{-0.06}%odometer
\bow{V21}{V11}{0.06}%odometer

\bow{V23}{V12}{-0.06}%odometer
\bow{V23}{V12}{0.06}%odometer

\edge{V23}{V11}
\edge{V21}{V12}

\edge{V24}{V12}
%\edgetext{V24}{V12}{0}
\freetext(9.9,5.2){2}
\freetext(10.4,6.5){3}
\freetext(11.6,6.5){1}
\freetext(12,5.2){0}

\bow{V24}{V13}{-0.06}%odometer
\bow{V24}{V13}{0.06}%odometer
\bow{V25}{V14}{-0.06}%odometer
\bow{V25}{V14}{0.06}%odometer
\bow{V26}{V15}{-0.06}%odometer
\bow{V26}{V15}{0.06}%odometer
%\edgetext{V24}{V13}{1}
%\edgetext{V25}{V12}{0}
\edge{V25}{V13}
\edge{V26}{V14}
\edge{V23}{V13}
\edge{V24}{V14}
\edge{V25}{V15}

%\edgetext{V25}{V13}{1}
%\edgetext{V23}{V12}{0}

%Vertices of the third level
\roundnode{V31}(3,1)
\freetext(2.4,2.5){1}
\freetext(3.6,2.5){0}
\freetext(4,1.2){2}

\roundnode{V35}(7,1)
\freetext(5.9,1.2){2}
\freetext(6.4,2.5){1}
\freetext(7.6,2.5){0}
\freetext(8,1.2){3}

\roundnode{V36}(11,1)
\freetext(9.9,1.2){2}
\freetext(10.4,2.5){1}
\freetext(11.6,2.5){0}
\freetext(12,1.2){3}

\roundnode{V37}(15,1)
\freetext(13.9,1.2){2}
\freetext(14.4,2.5){1}
\freetext(15.6,2.5){0}
\freetext(16,1.2){3}

\roundnode{V38}(19,1)
\freetext(17.9,1.2){2}
\freetext(18.4,2.5){1}
\freetext(19.6,2.5){0}

\freetext(3,0){$\vdots$}
\freetext(7,0){$\vdots$}
\freetext(11,0){$\vdots$}
\freetext(15,0){$\vdots$}
\freetext(19,0){$\vdots$}
\freetext(22,9){$\ldots$}
\freetext(22,5){$\ldots$}
\freetext(22,1){$\ldots$}
% Edges of the third level

\bow{V31}{V21}{-0.06}%odometer
\bow{V31}{V21}{0.06}%odometer
\edge{V35}{V21}
\edge{V31}{V23}

\bow{V35}{V23}{-0.06}
\bow{V35}{V23}{0.06}
%\freetext(14.3,3.5){0}
%\freetext(15.7,3.5){1}

\edge{V35}{V24}
%\edgetext{V36}{V24}{1}
\edge{V36}{V23}
\edge{V36}{V25}
%\edgetext{V36}{V23}{0}

\edge{V37}{V24}
\edge{V37}{V26}
%\edgetext{V37}{V24}{1}

\edge{V38}{V25}
%\edgetext{V38}{V25}{1}

\bow{V24}{V36}{-0.06}%odometer
\bow{V24}{V36}{0.06}%odometer
\bow{V25}{V37}{-0.06}%odometer
\bow{V25}{V37}{0.06}%odometer
\bow{V26}{V38}{-0.06}%odometer
\bow{V26}{V38}{0.06}%odometer

\end{graph}
\caption{The Vershik map is a homeomorphism: after telescoping, all minimal and maximal edges are slanted from right to left.}\label{Fig:tel_noXminnoXmax}
\end{center}
\end{figure}

\end{example}

\begin{remark} Note that if every vertex of a generalized Bratteli diagram has an outgoing minimal and an outgoing maximal edge then the sets of infinite minimal paths and infinite maximal paths are non-empty. The diagram in Example \ref{Ex:NoXminXmax2} has the property that every vertex has at least one extreme (minimal or maximal) outgoing edge, but this does not guarantee the existence of infinite maximal or infinite minimal paths. Moreover, the aforementioned property  does not hold after telescoping with respect to even levels. After such a telescoping, the first vertex does not have any outgoing minimal or maximal edges.
\end{remark}

\begin{example}
The diagram shown in Figure~\ref{Fig:notsatThem_emptyXminXmax} does not satisfy conditions of Theorem~\ref{Thm:suff_cond_no_x_min_max} for any $N$, but still, there is an order for which there are no infinite minimal and infinite maximal paths. For every vertex $w \in V\setminus V_0$, if there are two incoming edges that are slanted from right to left, we enumerate them as minimal and maximal. Otherwise, we enumerate two vertical incoming edges as minimal and maximal. The remaining edges can be enumerated in arbitrary ways, for instance, from left to right. Then every minimal and maximal edge is either vertical or slanted from right to left and any minimal or maximal path eventually goes only through edges slanted from right to left.

This example shows that the assumption made in Theorem 
\ref{Thm:suff_cond_no_x_min_max} is not necessary.

%\tcr{For every vertex eventually outgoing minimal and maximal edges are going to the left. It's not enough (from first vertex diagonal path)}

     \begin{figure}[ht]
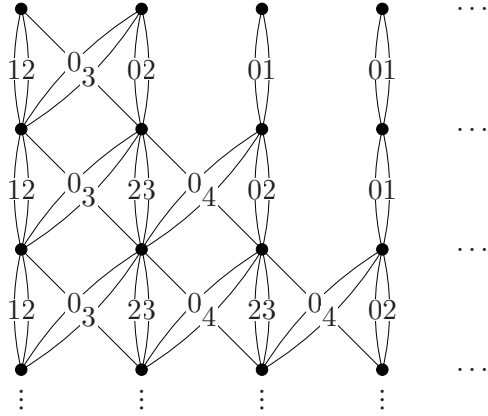

\unitlength = 0.4cm
\begin{center}
\begin{graph}(21,14)
\graphnodesize{0.4}

% Vertices of zero level
\roundnode{V01}(3,13)
\roundnode{V02}(7,13)
\roundnode{V03}(11,13)
\roundnode{V04}(15,13)
%\roundnode{V05}(19,13)

% Vertices of the first level
\roundnode{V11}(3,9)

\roundnode{V12}(7,9)
%\nodetext{V12}(-0.6,0.4){$v$}
\roundnode{V13}(11,9)
%\nodetext{V13}(0.6,0.4){$w$}
\roundnode{V14}(15,9)
%\roundnode{V15}(19,9)

%Edges zero level
\bow{V11}{V01}{-0.06}%odometer
\bowtext{V11}{V01}{-0.06}{$2$}
\bow{V11}{V01}{0.06}%odometer
\bowtext{V11}{V01}{0.06}{$1$}
\bow{V12}{V02}{-0.06}%odometer
\bowtext{V12}{V02}{-0.06}{$2$}
\bow{V12}{V02}{0.06}%odometer
\bowtext{V12}{V02}{0.06}{$0$}
\bow{V13}{V03}{-0.06}%odometer
\bowtext{V13}{V03}{-0.06}{$1$}
\bow{V13}{V03}{0.06}%odometer
\bowtext{V13}{V03}{0.06}{$0$}
\bow{V14}{V04}{-0.06}%odometer
\bowtext{V14}{V04}{-0.06}{$1$}
\bow{V14}{V04}{0.06}%odometer
\bowtext{V14}{V04}{0.06}{$0$}
%\bow{V15}{V05}{-0.06}%odometer
%\bow{V15}{V05}{0.06}%odometer

\edge{V12}{V01}
\bow{V11}{V02}{0.06}
\bowtext{V11}{V02}{0.06}{$0$}
\bow{V11}{V02}{-0.06}
\bowtext{V11}{V02}{-0.06}{$3$}

%Vertices of the second level
\roundnode{V21}(3,5)
%\freetext(2.4,6.5){2}
%\freetext(3.6,6.5){1}
%\freetext(4,5.2){0}

\roundnode{V22}(7,5)%odometer
%\freetext(5.9,5.2){2}
%\freetext(6.4,6.5){3}
%\freetext(7.6,6.5){1}
%\freetext(8,5.2){0}

\roundnode{V23}(11,5)

\roundnode{V24}(15,5)

%\freetext(13.9,5.2){2}
%\freetext(14.4,6.5){3}
%\freetext(15.6,6.5){1}
%\freetext(16,5.2){0}

%\roundnode{V25}(19,5)
%\freetext(17.9,5.2){2}
%\freetext(18.4,6.5){3}
%\freetext(19.6,6.5){1}
%\freetext(19,6){$w$}

% Edges of the second level
\bow{V21}{V11}{-0.06}%odometer
\bowtext{V21}{V11}{-0.06}{$2$}
\bow{V21}{V11}{0.06}%odometer
\bowtext{V21}{V11}{0.06}{$1$}

\bow{V22}{V12}{-0.06}%odometer
\bowtext{V22}{V12}{-0.06}{$3$}
\bow{V22}{V12}{0.06}%odometer
\bowtext{V22}{V12}{0.06}{$2$}

\edge{V22}{V11}
\bow{V21}{V12}{0.06}
\bowtext{V21}{V12}{0.06}{$0$}
\bow{V21}{V12}{-0.06}
\bowtext{V21}{V12}{-0.06}{$3$}

\edge{V23}{V12}

\bow{V23}{V13}{-0.06}%odometer
\bowtext{V23}{V13}{-0.06}{$2$}
\bow{V23}{V13}{0.06}%odometer
\bowtext{V23}{V13}{0.06}{$0$}
\bow{V24}{V14}{-0.06}%odometer
\bowtext{V24}{V14}{-0.06}{$1$}
\bow{V24}{V14}{0.06}%odometer
\bowtext{V24}{V14}{0.06}{$0$}
%\bow{V25}{V15}{-0.06}%odometer
%\bow{V25}{V15}{0.06}%odometer
%\edgetext{V24}{V13}{1}
%\edgetext{V25}{V12}{0}
%\edge{V24}{V13}
%\edge{V25}{V14}
\bow{V22}{V13}{0.06}
\bowtext{V22}{V13}{0.06}{$0$}
\bow{V22}{V13}{-0.06}
\bowtext{V22}{V13}{-0.06}{$4$}
%\edge{V23}{V14}
%\edge{V24}{V15}

%Vertices of the third level
\roundnode{V31}(3,1)
%\freetext(2.4,2.5){1}
%\freetext(3.6,2.5){0}
%\freetext(4,1.2){2}

\roundnode{V32}(7,1)
%\freetext(5.9,1.2){2}
%\freetext(6.4,2.5){1}
%\freetext(7.6,2.5){0}
%\freetext(8,1.2){3}

\roundnode{V33}(11,1)
%\freetext(9.9,1.2){2}
%\freetext(10.4,2.5){1}
%\freetext(11.6,2.5){0}
%\freetext(12,1.2){3}

\roundnode{V34}(15,1)
%\freetext(13.9,1.2){2}
%\freetext(14.4,2.5){1}
%\freetext(15.6,2.5){0}
%\freetext(16,1.2){3}

%\roundnode{V35}(19,1)
%\freetext(17.9,1.2){2}
%\freetext(18.4,2.5){1}
%\freetext(19.6,2.5){0}

\freetext(3,0){$\vdots$}
\freetext(7,0){$\vdots$}
\freetext(11,0){$\vdots$}
\freetext(15,0){$\vdots$}
%\freetext(19,0){$\vdots$}
\freetext(18,13){$\ldots$}
\freetext(18,9){$\ldots$}
\freetext(18,5){$\ldots$}
\freetext(18,1){$\ldots$}
% Edges of the third level

\bow{V31}{V21}{-0.06}%odometer
\bowtext{V31}{V21}{-0.06}{$2$}
\bow{V31}{V21}{0.06}%odometer
\bowtext{V31}{V21}{0.06}{$1$}
\edge{V32}{V21}
\bow{V31}{V22}{0.06}
\bowtext{V31}{V22}{0.06}{$0$}
\bow{V31}{V22}{-0.06}
\bowtext{V31}{V22}{-0.06}{$3$}

\bow{V32}{V22}{-0.06}
\bowtext{V32}{V22}{-0.06}{$3$}
\bow{V32}{V22}{0.06}
\bowtext{V32}{V22}{0.06}{$2$}

\bow{V32}{V23}{0.06}
\bowtext{V32}{V23}{0.06}{$0$}
\bow{V32}{V23}{-0.06}
\bowtext{V32}{V23}{-0.06}{$4$}
\edge{V33}{V22}
\bow{V33}{V24}{0.06}
\bowtext{V33}{V24}{0.06}{$0$}
\bow{V33}{V24}{-0.06}
\bowtext{V33}{V24}{-0.06}{$4$}

\edge{V34}{V23}
%\edge{V34}{V25}

%\edge{V35}{V24}

\bow{V23}{V33}{-0.06}%odometer
\bowtext{V23}{V33}{-0.06}{$2$}
\bow{V23}{V33}{0.06}%odometer
\bowtext{V23}{V33}{0.06}{$3$}
\bow{V24}{V34}{-0.06}%odometer
\bowtext{V24}{V34}{-0.06}{$0$}
\bow{V24}{V34}{0.06}%odometer
\bowtext{V24}{V34}{0.06}{$2$}
%\bow{V25}{V35}{-0.06}%odometer
%\bow{V25}{V35}{0.06}%odometer

\end{graph}
\caption{The sets of infinite minimal and maximal paths are empty (the labels are shown only for bow-shaped edges).}\label{Fig:notsatThem_emptyXminXmax}
\end{center}
\end{figure}
 
\end{example}

% In the following remark we describe the class of generalized Bratteli diagrams, such that for any order there exist infinite minimal and infinite maximal infinite paths.

\begin{remark} Not every generalized Bratteli diagram can be endowed with an order such that there are no infinite minimal and no infinite maximal paths.
It is easy to provide examples of reducible generalized Bratteli diagrams such that for any order there exist infinite minimal and infinite maximal paths. 
We recall the definition of a \textit{vertex subdiagram} which was used before for standard Bratteli diagrams but can be also defined in the same way for generalized ones. Let $B = (V,E)$ be a generalized Bratteli diagram. Let $\overline W = \{W_n\}_{n>0}$ be a sequence of proper, non-empty subsets $W_n \subset V_n$. Set $W'_n = V_n \setminus W_n$. The (vertex) subdiagram $\overline B =  (\ol W, \ol E)$ is a (standard or generalized) Bratteli diagram defined by the vertices $\ol W = \bigcup_{i\geq 0} W_n$ and the edges $\ol E$ that have their source and range in $\ol W$. In other words, the incidence matrix $\ol F_n$ of $\ol B$ is defined by those edges from $B$ that have their source and range in vertices from $W_{n}$ and $W_{n+1}$, respectively (see e.g. \cite{BezuglyiKarpel2016}). Suppose that a path space of a generalized Bratteli diagram $B$ has a compact subset that is invariant under the tail equivalence relation $\mathcal{R}$ and is represented by a standard Bratteli subdiagram $\ov B$ of $B$. Then for any order on ${B}$, the path space $X_{\ov B}$ has at least one minimal path and one maximal path. Since $X_{\ov B}$ is invariant under $\mathcal{R}$, there are no incoming edges to vertices of $\ov B$ from the vertices that do not lie in $\ov B$. Thus, the maximal and minimal paths which lie in $X_{\ov B}$ stay maximal and minimal also in $X_B$. For instance, in Figure~\ref{Fig:AlwaysXmiXmax}, the subdiagram $\ov B$ consists of all paths which pass through the first vertex on each level. More generally, if a generalized Bratteli diagram $B$ has $n$  subdiagrams with compact $\mathcal{R}$-invariant path spaces for some $n \in \mathbb{N}$, then any order on $B$ admits at least $n$ infinite minimal and $n$ infinite maximal paths.
\end{remark}

    \begin{figure}[hbt!]
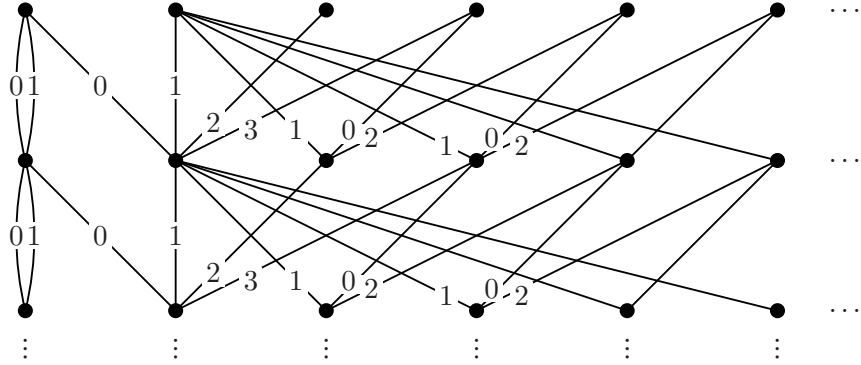

\unitlength=1cm
\begin{graph}(11,6)
% \graphnodesize{0.2}
% \roundnode{V0}(3,6)
%  %\nodetext{V0}(-1,0){$V_0$}
\roundnode{V10}(0,5)
 \roundnode{V11}(2,5)
 %\nodetext{V11}(-0.7,0){$w_1^{(0)}$}
  %\nodetext{V12}(0.7,0){$w_2^{(0)}$}
 \roundnode{V12}(4,5)
 \roundnode{V13}(6,5)
 \roundnode{V14}(8,5)
  \roundnode{V15}(10,5)

  %
    % The second level vertices
    \roundnode{V20}(0,3)
   \roundnode{V21}(2,3)
 %\nodetext{V11}(-0.7,0){$w_1^{(0)}$}
  %\nodetext{V12}(0.7,0){$w_2^{(0)}$}
 \roundnode{V22}(4,3)
 \roundnode{V23}(6,3)
 \roundnode{V24}(8,3)
  \roundnode{V25}(10,3)

  % The third level vertices
  \roundnode{V30}(0,1)
 \roundnode{V31}(2,1)
 \roundnode{V32}(4,1)
 \roundnode{V33}(6,1)
  \roundnode{V34}(8,1)
    \roundnode{V35}(10,1)
  
 %
 % EDGES
 \graphlinewidth{0.025}
% % First level
%  \edge{V0}{V11}
%  \edge{V0}{V12}

\bow{V20}{V10}{0.06}
\bow{V20}{V10}{-0.06}
\bowtext{V20}{V10}{0.06}{$0$}
\bowtext{V20}{V10}{-0.06}{$1$}
\edge{V21}{V10}
\edgetext{V21}{V10}{$0$}

    \edge{V21}{V11}
    \edgetext{V21}{V11}{$1$}
    \edge{V22}{V11}
        \freetext(3.6,3.4){$1$}
    \edge{V21}{V12}
    \freetext(2.5,3.5){$2$}
    \edge{V21}{V13}
    \freetext(3,3.4){$3$}

 \edge{V23}{V11}
        \freetext(5.6,3.2){$1$}
 \edge{V22}{V13}
     \freetext(4.3,3.4){$0$}
  \edge{V22}{V14}
       \freetext(4.6,3.3){$2$}
    
   \edge{V24}{V11}
    \edge{V23}{V14}
         \freetext(6.2,3.3){$0$}
       \edge{V23}{V15}
             \freetext(6.6,3.2){$2$}
    
   \edge{V25}{V11}
    \edge{V24}{V15}
    
 % Second level
 
\bow{V30}{V20}{0.06}
\bow{V30}{V20}{-0.06}
\bowtext{V30}{V20}{0.06}{$0$}
\bowtext{V30}{V20}{-0.06}{$1$}
\edge{V31}{V20}
\edgetext{V31}{V20}{$0$}

    \edge{V31}{V21}
    \edgetext{V31}{V21}{$1$}
    \edge{V32}{V21}
       \freetext(3.6,1.4){$1$}
    \edge{V31}{V22}
        \freetext(2.5,1.5){$2$}
    \edge{V31}{V23}
        \freetext(3,1.4){$3$}

 \edge{V33}{V21}
        \freetext(5.6,1.2){$1$}
 \edge{V32}{V23}
    \freetext(4.3,1.4){$0$}
  \edge{V32}{V24}
        \freetext(4.6,1.3){$2$}
    
   \edge{V34}{V21}
    \edge{V33}{V24}
    \freetext(6.2,1.3){$0$}
       \edge{V33}{V25}
        \freetext(6.6,1.2){$2$}
    
   \edge{V35}{V21}
    \edge{V34}{V25}

  \freetext(10.9,5){$\ldots$}
 \freetext(10.9,3){$\ldots$}
  \freetext(10.9,1){$\ldots$}
  
  \freetext(0,0.5){$\vdots$}
  \freetext(2,0.5){$\vdots$}
    \freetext(4,0.5){$\vdots$}
      \freetext(6,0.5){$\vdots$}  
        \freetext(8,0.5){$\vdots$} 
        \freetext(10,0.5){$\vdots$} 

          %\freetext(-0.4,4){$1$} 
          %\freetext(0.3,4.3){$1$} 
           % \freetext(0.3,3.6){$0$} 
            %\freetext(0.7,3.2){$2$}
             %\freetext(2.2,3.5){$0$} 
           % \freetext(2.6,3.2){$3$}
           %  \freetext(4.2,3.5){$0$} 
           % \freetext(4.6,3.2){$2$}
            %\freetext(0.3,1.6){$0$} 
            %\freetext(0.7,1.2){$2$}
            % \freetext(2.2,1.5){$0$} 
            %\freetext(2.6,1.2){$3$}
             %\freetext(4.2,1.5){$0$} 
           % \freetext(4.6,1.2){$2$}
          %\freetext(1,5){$1$} 
\end{graph}
\caption{For the stationary order presented above, the Vershik map is a continuous bijection, but its inverse is discontinuous. For any order, there is at least one infinite minimal and one infinite maximal paths.}\label{Fig:AlwaysXmiXmax}
\end{figure}

\subsection{Prolongation of the Vershik map.}

The goal of this subsection is to emphasize a sharp difference in the properties of Vershik maps for generalized
and standard Bratteli diagrams. 

\begin{theorem}
There are stationary ordered generalized Bratteli diagrams with a unique infinite minimal and a unique infinite maximal paths such that

\medskip
(i) both the Vershik map $\varphi_B$ and its inverse  $\varphi_B^{-1}$ are not continuous;

\medskip
(ii) the Vershik map $\varphi_B$ is continuous but 
the inverse $\varphi_B^{-1}$ is discontinuous;

\medskip
(iii) both the Vershik map $\varphi_B$ and its inverse  $\varphi_B^{-1}$ are continuous.
    
\end{theorem}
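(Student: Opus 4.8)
The plan is to prove the theorem by exhibiting three explicit stationary ordered generalized Bratteli diagrams, one realizing each regime. In every case the diagram will be a banded (bounded size) diagram with a distinguished \emph{trap vertex}, the leftmost vertex $1$, carrying on each level both the minimal and the maximal incoming edge as a self-edge from vertex $1$; every \emph{other} extremal (minimal or maximal) edge will be slanted toward the trap. As in the Remark preceding Figure~\ref{Fig:AlwaysXmiXmax}, the subdiagram supported on vertex $1$ is a compact $\mathcal R$-invariant standard Bratteli subdiagram, so there is at least one infinite maximal and one infinite minimal path; the slanting of every other extremal edge toward the trap forces any infinite extremal path to reach vertex $1$ and stay there, so that $X_{max}=\{x_{max}\}$ and $X_{min}=\{x_{min}\}$ are singletons, where $x_{max}$ (resp. $x_{min}$) runs along the maximal (resp. minimal) self-edges of the trap.

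First I would record the reduction that localizes everything to the two singular points. By Lemma~\ref{lem:Vershik cont}, $\varphi_B$ is a homeomorphism of $X_B\setminus X_{max}$ onto $X_B\setminus X_{min}$; hence the only possible discontinuity point of $\varphi_B$ is $x_{max}$, and the only possible discontinuity point of $\varphi_B^{-1}$ is $x_{min}$. So for each diagram it suffices to decide continuity of $\varphi_B$ at $x_{max}$ and of $\varphi_B^{-1}$ at $x_{min}$ separately. Next I would isolate the criterion driving these decisions. Fix a path $x$ agreeing with $x_{max}$ through level $m-1$ whose first non-maximal edge is $x_m$ (so $s(x_m)=1$), and let $g_m$ be the successor of $x_m$ in $r^{-1}(r(x_m))$. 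By Definition~\ref{Def:VershikMap},
\[
\varphi_B(x)=(\,\ov e_{\min}(s(g_m)),\, g_m,\, x_{m+1}, x_{m+2},\dots\,),
\]
where $\ov e_{\min}(s(g_m))$ is the minimal finite path in $E(V_0,s(g_m))$. Since $x_{min}$ runs along the minimal self-edges of the trap, $\varphi_B(x)$ agrees with $x_{min}$ exactly up to the level at which the backward minimal path from $s(g_m)$ first leaves vertex $1$. Therefore $\varphi_B$ is continuous at $x_{max}$ iff this first-disagreement level tends to $\infty$ with $m$, and discontinuous exactly when the minimal filling from $s(g_m)$ peels off vertex $1$ at a level bounded independently of $m$ (forcing $d(\varphi_B(x),x_{min})$ to stay bounded away from $0$). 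The dual criterion, with minimal replaced by maximal and successor by predecessor, governs continuity of $\varphi_B^{-1}$ at $x_{min}$.

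With this criterion in hand, the three constructions differ only in how the stationary order places the trap-sourced edges relative to their order-neighbours. For case (ii) I would take the diagram of Figure~\ref{Fig:AlwaysXmiXmax}, whose order is arranged so that the successor of every vertex-$1$-sourced edge is again vertex-$1$-sourced (so the backward minimal filling stays pinned to the trap and $\varphi_B$ is continuous), while the predecessor of a vertex-$1$-sourced edge has source marching rightward away from the trap, so that the backward maximal filling detaches from $x_{max}$ at level $0$ and $\varphi_B^{-1}$ is discontinuous. For case (iii) I would keep both the successor and the predecessor of every trap-sourced edge pinned to the trap, so that both the minimal and maximal fillings converge and both $\varphi_B,\varphi_B^{-1}$ are continuous; a symmetric banded incidence matrix of the shape used in Example~\ref{Ex:NoXminXmax2}, but ordered so that the two self-edges at vertex $1$ are the extreme ones and their immediate order-neighbours are again self-edges, realizes this. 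For case (i) I would use a left--right mirror of the (ii)-order so that \emph{both} the successor and the predecessor of trap-sourced edges have sources escaping the trap, whence both fillings detach at a bounded level and both $\varphi_B,\varphi_B^{-1}$ are discontinuous.

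The hard part will not be the continuity estimate itself, which is just the cone-tracking induction of Lemma~\ref{lemma_bdd_size_upper_cone} and Corollary~\ref{corol_complete_upper_cone} applied to the minimal and maximal fillings to compute the first-disagreement level as a function of $m$. The genuine obstacle is the simultaneous bookkeeping needed to guarantee \emph{uniqueness} of $x_{max}$ and $x_{min}$ while prescribing opposite (for (i), (ii)) or identical (for (iii)) behaviour of successors and predecessors at the very same trap-sourced edges: I must choose the stationary order so that at vertex $1$ the two self-edges are genuinely extreme (forcing singleton extremal sets and funnelling all forward extremal paths into the trap), yet the order-neighbours of the trap-sourced edges lie on the prescribed sides for successors and predecessors. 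Verifying that these local order choices are mutually compatible and really yield the claimed limiting sources $s(g_m)$ is the delicate step, and I would settle it by displaying the explicit edge labellings on the three figures and checking the first-disagreement level directly for each.
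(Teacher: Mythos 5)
You have reproduced the paper's strategy: the theorem is proved there by explicit trap-vertex constructions, with case (i) being Example~\ref{ex:discontinuous map} (Figure~\ref{Fig:UniqueXmiXmaxNotHomeo}), case (ii) being Example~\ref{ex:discontinuous inverse} on the very Figure~\ref{Fig:AlwaysXmiXmax} you cite, and case (iii) being Remark~\ref{rem:cont VM}; your localization step via Lemma~\ref{lem:Vershik cont} and the ``first-disagreement level'' criterion are a correct formalization of the continuity checks the paper performs directly on the figures. However, your case (iii) witness, as specified, does not exist, and this is a genuine gap rather than bookkeeping. For the tridiagonal matrix of Example~\ref{Ex:NoXminXmax2}, vertex $1$ receives exactly three edges (two self-edges and one from vertex $2$), so if the self-edges carry the extreme labels their immediate order-neighbour is forced to be the vertex-$2$-sourced edge and cannot be ``again a self-edge.'' More substantively, vertex $1$ sends a \emph{single} edge to vertex $2$, which cannot simultaneously be the minimal and the maximal edge into vertex $2$; chasing the alternatives shows the obstruction cascades (each vertex $k+1$ receives only one edge from vertex $k$, so at some vertex either the successor's minimal filling or the predecessor's maximal filling must detach from the trap, making one of $\varphi_B,\varphi_B^{-1}$ discontinuous, or else you pin a filling to the vertical self-edges at vertex $2$ and uniqueness of the extremal path fails). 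The repair is cheap: either allow multiplicity at least $2$ on the relevant trap-sourced edges, or simply use the paper's criterion in Remark~\ref{rem:cont VM} --- all minimal edges of $E_n$ sharing one source vertex and all maximal edges sharing one source vertex --- which pins both fillings automatically and yields a homeomorphism.

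Two smaller inaccuracies: your uniqueness mechanism is mis-stated, since infinite extremal paths outside the trap do not ``reach vertex $1$ and stay there'' --- the only extremal edges into vertex $1$ are its self-edges, so no extremal path can enter the trap from outside; rather, an extremal path outside the trap has strictly decreasing vertex indices and must terminate, which is exactly what forces $X_{max}$ and $X_{min}$ to be the two trap paths (had extremal edges genuinely funnelled into vertex $1$, you would get infinitely many infinite extremal paths by slanting for $k$ levels and then riding the self-edges, destroying uniqueness). Also, for case (i), a ``left-to-right mirror of the (ii)-order'' by itself only swaps which of $\varphi_B$ and $\varphi_B^{-1}$ is discontinuous; your supplementary requirement --- that both the successor's minimal filling and the predecessor's maximal filling escape the trap at a bounded level --- is the correct condition and is what the paper's Example~\ref{ex:discontinuous map} implements.
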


The proof of this theorem is given in Examples 
\ref{ex:discontinuous map}, 
\ref{ex:discontinuous inverse} and Remark \ref{rem:cont VM}. 

We recall that for standard Bratteli diagrams, it is obvious that if a diagram has a unique minimal infinite path $x_{min}$ and a unique maximal infinite path $x_{max}$, then the Vershik map which sends $x_{max}$ to $x_{min}$ is a homeomorphism. This fact follows from compactness of the path space. In the example below we show that the result does not hold for generalized Bratteli diagrams.

\begin{example}\label{ex:discontinuous map}
The stationary diagram in Figure~\ref{Fig:UniqueXmiXmaxNotHomeo} has a unique infinite minimal path $x_{min}$ and a unique infinite maximal path $x_{max}$ such that they pass through the first vertex on each level of the diagram. The second vertex of the diagram has no minimal or maximal outgoing edges and all maximal and minimal edges which start not at the first vertex are slanted from right to left, which guarantees that there are no other infinite minimal and maximal paths (see also Example~\ref{Ex:noXminnoXmax}).
This means that the corresponding Vershik map $\varphi_B$
sends $x_{max}$ to $x_{min}$ with necessity. We claim that 
$\varphi_B$ is not a homeomorphism of $X_B$ in this case.
Indeed, let $x$ be a non-maximal path that coincides with $x_{max}$ for exactly $n$ first edges, and then passes through a non-maximal edge. Then the image of $x$ under the Vershik map $\varphi_B$ lies in the cylinder set corresponding to the minimal path of length $n$ which is slanted from right to left and ends in the second vertex of the diagram. Thus, the images of the non-maximal paths which lie in the neighborhood of the unique maximal path are not mapped into the neighborhood of the unique minimal path. 
Hence the Vershik map $\varphi_B$ is not continuous on $X_B$. Similarly, $\varphi_B^{-1}$ is discontinuous.

    \begin{figure}[hbt!]
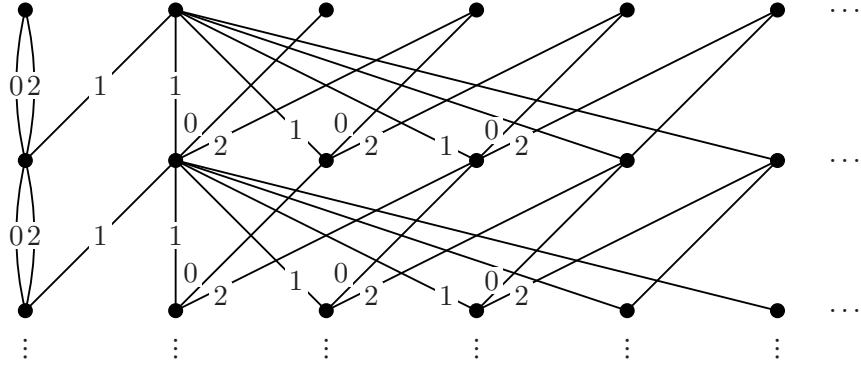

\unitlength=1cm
\begin{graph}(11,6)
% \graphnodesize{0.2}
% \roundnode{V0}(3,6)
%  %\nodetext{V0}(-1,0){$V_0$}
\roundnode{V10}(0,5)
 \roundnode{V11}(2,5)
 %\nodetext{V11}(-0.7,0){$w_1^{(0)}$}
  %\nodetext{V12}(0.7,0){$w_2^{(0)}$}
 \roundnode{V12}(4,5)
 \roundnode{V13}(6,5)
 \roundnode{V14}(8,5)
  \roundnode{V15}(10,5)

  %
    % The second level vertices
    \roundnode{V20}(0,3)
   \roundnode{V21}(2,3)
 %\nodetext{V11}(-0.7,0){$w_1^{(0)}$}
  %\nodetext{V12}(0.7,0){$w_2^{(0)}$}
 \roundnode{V22}(4,3)
 \roundnode{V23}(6,3)
 \roundnode{V24}(8,3)
  \roundnode{V25}(10,3)

  % The third level vertices
  \roundnode{V30}(0,1)
 \roundnode{V31}(2,1)
 \roundnode{V32}(4,1)
 \roundnode{V33}(6,1)
  \roundnode{V34}(8,1)
    \roundnode{V35}(10,1)
  
 %
 % EDGES
 \graphlinewidth{0.025}
% % First level
%  \edge{V0}{V11}
%  \edge{V0}{V12}

\bow{V20}{V10}{0.06}
\bow{V20}{V10}{-0.06}
\bowtext{V20}{V10}{0.06}{$0$}
\bowtext{V20}{V10}{-0.06}{$2$}
\edge{V20}{V11}
\edgetext{V20}{V11}{$1$}

    \edge{V21}{V11}
    \edgetext{V21}{V11}{$1$}
    \edge{V22}{V11}
    \edge{V21}{V12}
    \edge{V21}{V13}

 \edge{V23}{V11}
 \edge{V22}{V13}
  \edge{V22}{V14}
    
   \edge{V24}{V11}
    \edge{V23}{V14}
       \edge{V23}{V15}
    
   \edge{V25}{V11}
    \edge{V24}{V15}
    
 % Second level
 
\bow{V30}{V20}{0.06}
\bow{V30}{V20}{-0.06}
\bowtext{V30}{V20}{0.06}{$0$}
\bowtext{V30}{V20}{-0.06}{$2$}
\edge{V30}{V21}
\edgetext{V30}{V21}{$1$}

    \edge{V31}{V21}
    \edgetext{V31}{V21}{$1$}
    \edge{V32}{V21}
    \edge{V31}{V22}
    \edge{V31}{V23}

 \edge{V33}{V21}
 \edge{V32}{V23}
  \edge{V32}{V24}
    
   \edge{V34}{V21}
    \edge{V33}{V24}
       \edge{V33}{V25}
    
   \edge{V35}{V21}
    \edge{V34}{V25}

  \freetext(10.9,5){$\ldots$}
 \freetext(10.9,3){$\ldots$}
  \freetext(10.9,1){$\ldots$}
  
  \freetext(0,0.5){$\vdots$}
  \freetext(2,0.5){$\vdots$}
    \freetext(4,0.5){$\vdots$}
      \freetext(6,0.5){$\vdots$}  
        \freetext(8,0.5){$\vdots$} 
        \freetext(10,0.5){$\vdots$} 

             \freetext(2.2,3.5){$0$} 
            \freetext(2.6,3.2){$2$}
             \freetext(4.2,3.5){$0$} 
            \freetext(4.6,3.2){$2$}
                  \freetext(6.2,3.4){$0$} 
            \freetext(6.6,3.2){$2$}
            \freetext(6.2,1.4){$0$} 
            \freetext(6.6,1.2){$2$}
             \freetext(2.2,1.5){$0$} 
            \freetext(2.6,1.2){$2$}
             \freetext(4.2,1.5){$0$} 
            \freetext(4.6,1.2){$2$}

            \freetext(3.6,3.4){$1$}
            \freetext(3.6,1.4){$1$}
            \freetext(5.6,3.2){$1$}
            \freetext(5.6,1.2){$1$}

\end{graph}
\caption{There is a unique infinite minimal and a unique infinite maximal path, Vershik map $\varphi_B$ is a Borel bijection, both $\varphi_B$ and $\varphi_B^{-1}$ are discontinuous.}\label{Fig:UniqueXmiXmaxNotHomeo}
\end{figure}
\end{example}

The following remark describes a class of ordered generalized Bratteli diagrams with a unique infinite minimal path and a unique infinite maximal path such that the corresponding Vershik map is a homeomorphism.

\begin{remark}\label{rem:cont VM}
Similar to the case of standard Bratteli diagrams, 
it is easy to 
see that if for every level $n$ of a generalized Bratteli diagram $B$ there is a unique 
vertex $v_{min}^{(n)}$ such that all minimal edges of $E_n$ start at $v_{min}^{(n)}$, and a unique 
vertex $v_{max}^{(n)}$ such that all maximal edges of $E_n$ start at $v_{max}^{(n)}$,
then there is a unique infinite minimal path $x_{min}$, a unique infinite maximal path $x_{max}$, and the Vershik map which maps $x_{max}$ to $x_{min}$ is a homeomorphism of $X_B$.
\end{remark}

The example below presents a stationary ordered generalized Bratteli diagram $B$ with a unique infinite minimal and a unique infinite maximal path such that the Vershik map $\varphi_B$ is a continuous bijection, but $\varphi^{-1}_B$ is discontinuous.

\begin{example}\label{ex:discontinuous inverse}
The stationary ordered diagram on Figure~\ref{Fig:AlwaysXmiXmax} has a unique infinite minimal path $x_{min}$ and a unique infinite maximal path $x_{max}$ passing through the first vertex on each level of the diagram. All minimal and maximal edges which do not end in the first vertex of the diagram are slanted from right to left, there are no outgoing minimal or maximal edges from the second vertex of the diagram. Thus, the infinite minimal and infinite maximal paths are unique. We define the Vershik map $\varphi_B$ on these paths by setting $\varphi_B(x_{max}) = x_{min}$. Then it is easy to see that $\varphi_B$ is a continuous bijection of $X_B$. We show that  $\varphi_B^{-1}$ is not continuous. Indeed, any non-maximal path from a neighborhood of $x_{max}$ is mapped to a neighborhood of $x_{min}$. Now, consider a path $x$ which first pass through minimal edges and the first vertex of the diagram for $n$ levels, but then goes once along the minimal edge to the second vertex and then along the edge $e$ enumerated by $1$ to the third vertex. We see that $x$ lies in a neighborhood of $x_{min}$, but this path $x$ is not mapped to a neighborhood of $x_{max}$ by $\varphi^{-1}_B$. Indeed, the edge $e$ has a predecessor which is the edge 
$e'$ labeled by $0$ slanted from right to left. The source of $e'$ is the fourth vertex of the diagram, and it is joined with $V_0$ by a finite maximal path slanted from right to left. Thus, the preimage of $x$ does not belong to a neighborhood of $x_{max}$.
\end{example}

Recall that some results concerning the prolongation of a Vershik map for generalized Bratteli diagrams of bounded size can be also
found in Section~\ref{sect Bndd size}.

%%%%%Section 5

\section{Topological transitivity of the tail equivalence relation}\label{sec: topological property}
In this section, we prove that the tail equivalence relation on the 
path space of a stationary generalized Bratteli diagram 
with an irreducible and aperiodic incidence matrix is 
topologically 
transitive. We also show that the irreducibility of the diagram does not imply the tail equivalence relation is minimal (see Theorem \ref{thm:non_minimal}). 

\begin{thm}\label{thm_top_trans}
Let $B = (V,  E)$ be a generalized stationary Bratteli diagram 
with an 
irreducible aperiodic incidence matrix $F = (f_{ij})_{i,j \in 
\mathbb{Z}}$. Then the tail equivalence relation $\mathcal{R}$ is 
topologically transitive.
\end{thm}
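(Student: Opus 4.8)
The plan is to prove the open-set formulation of topological transitivity---that for any two nonempty open sets $U, V \subseteq X_B$ there are tail-equivalent paths $x \in U$ and $y \in V$---and then upgrade it to the existence of a dense $\mathcal{R}$-orbit by a Baire category argument. Since the cylinder sets form a basis, it suffices to find tail-equivalent $x \in [\ov e]$ and $y \in [\ov f]$ for basic cylinders $[\ov e] \subseteq U$ and $[\ov f] \subseteq V$. Because every vertex has an outgoing edge ($s^{-1}(v) \neq \emptyset$), any finite path may be prolonged, so after replacing $[\ov e]$ or $[\ov f]$ by a subcylinder I may assume that $\ov e$ and $\ov f$ both terminate at the same level $\ell$, with $r(\ov e) = v$ and $r(\ov f) = w$ in $V_\ell$.

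The heart of the argument is to produce a common level $N > \ell$ and a single vertex $u \in V_N$ reachable by finite paths both from $v$ and from $w$ over the same number $k = N - \ell$ of levels, i.e. with $f^{(k)}_{uv} > 0$ and $f^{(k)}_{uw} > 0$. Fix any vertex $u_0$. By aperiodicity, the set of lengths of loops at $u_0$ is a subsemigroup of $\N$ (it is closed under concatenation of loops) with gcd $1$, so by the Sylvester--Frobenius numerical-semigroup argument there is an $M$ with $f^{(n)}_{u_0 u_0} > 0$ for all $n \ge M$. By irreducibility there are $a, b \in \N$ with $f^{(a)}_{u_0 v} > 0$ and $f^{(b)}_{u_0 w} > 0$. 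Concatenating a path from $v$ to $u_0$ with a loop at $u_0$ yields $f^{(a+j)}_{u_0 v} \ge f^{(j)}_{u_0 u_0} f^{(a)}_{u_0 v} > 0$ for every $j \ge M$, and likewise for $w$; hence for every $k \ge \max(a,b) + M$ both $f^{(k)}_{u_0 v} > 0$ and $f^{(k)}_{u_0 w} > 0$. This is exactly where the infinite setting departs from the classical finite one: an infinite irreducible aperiodic matrix need not admit any power with all entries positive, so I cannot invoke a uniform $N$ and must instead obtain positivity of $f^{(k)}_{u_0 v}$ and $f^{(k)}_{u_0 w}$ separately and align them through the common target $u_0$. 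Fixing such a $k$, I set $N = \ell + k$ and $u = u_0 \in V_N$. The positivity just established lets me choose a finite path $\ov g$ from $v$ to $u$ and a finite path $\ov h$ from $w$ to $u$ over the levels $\ell, \dots, N-1$; fixing any infinite tail $z$ starting at $u \in V_N$, the paths $x = (\ov e, \ov g, z)$ and $y = (\ov f, \ov h, z)$ satisfy $x \in [\ov e]$, $y \in [\ov f]$, and $x_i = y_i = z_i$ for all $i \ge N$, so $x$ and $y$ are tail equivalent, completing the open-set formulation.

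Finally, I would upgrade this to a dense orbit. The tail equivalence relation is étale: it is the union over $m$ of the graphs of the homeomorphisms that alter only the first $m$ coordinates, so the $\mathcal{R}$-saturation $[W]_\mathcal{R}$ of any open set $W$ is again open. Fixing a countable base $\{W_n\}$ of $X_B$ and setting $O_n = [W_n]_\mathcal{R}$, each $O_n$ is open, and the open-set transitivity shows that $O_n$ meets every nonempty open set, so $O_n$ is dense. Since $X_B$ is Polish and hence a Baire space, $\bigcap_n O_n$ is a dense $G_\delta$; any $x$ in this intersection has $[x]_\mathcal{R}$ meeting every $W_n$, i.e. a dense orbit. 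I expect the main obstacle to be the second paragraph---the vertex-by-vertex positivity argument replacing the uniform positive power available in the finite case---while the openness of $\mathcal{R}$-saturations underpinning the Baire step is the other ingredient genuinely needed beyond the classical theory.
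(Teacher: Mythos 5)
Your proof is correct, but it takes a genuinely different route from the paper's. The paper argues constructively: it fixes a vertex $i$ with $f^{(k)}_{ii}>0$, exhibits an explicit ``vertical'' path $x$ returning to $i$ at every level $sk$, and shows $\ov{[x]}_{\mathcal R}=X_B$ directly, by splicing any cylinder's endpoint $j\in V_N$ into $x$ via the estimate $f^{(M+t)}_{ij}\ge f^{(t)}_{ij}f^{(M)}_{jj}>0$, where $f^{(m)}_{jj}>0$ for all large $m$ comes from aperiodicity (Lemma~\ref{Lemma_Perron_value}). You instead prove the two-open-sets formulation --- joining two cylinders to a common vertex $u_0$ by equal-length paths, where your length-alignment step $f^{(a+j)}_{u_0v}\ge f^{(j)}_{u_0u_0}f^{(a)}_{u_0v}$ is the same combinatorial engine as the paper's splicing, and correctly substitutes for the uniform positive power that fails for infinite matrices --- and then upgrade to a dense orbit by Baire category, using that $\mathcal R$-saturations of open sets are open (your prefix-swap homeomorphisms; note $x_i=y_i$ for $i\ge m$ forces $r(x_{n'-1})=r(y_{n'-1})$, so the swap is well defined) and that cylinders form a countable base of the Polish space $X_B$. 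What each approach buys: the paper's proof hands you a concrete transitive point and shows exactly where stationarity enters (the vertical path exists because the same $F$ repeats), while yours is non-constructive but yields the stronger structural conclusion that the transitive points form a dense $G_\delta$, and its first half (open-set transitivity) isolates a statement that survives verbatim in non-stationary settings where no periodic vertical path is available --- essentially the generalization the paper only sketches in the remark following the theorem. Note that your argument still uses stationarity implicitly, since interpreting powers of the single matrix $F$ as path counts between any pair of levels at the right distance is what makes the loop-concatenation estimates meaningful on the diagram.
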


\begin{proof}
The idea of the proof is to show that the diagram has 
``vertical'' paths with dense orbits in the path space. We identify 
vertices at each level with $\Z$. Fix a vertex $i \in \mathbb{Z}$. 
Since $F$ is irreducible, there exists $k \in \N$ such that 
$f^{(k)}_{ii} > 0$. In other words, there is an infinite path $x$ 
in $X_B$ which passes through the vertex $i$ on 
levels $s \cdot k$ for all $s \in \mathbb{N}$.
We will show that $x$ is a transitive point, i.e.,  
$$
\ov{[x]}_{\mathcal{R}} = X_B.
$$ 
Indeed, it is enough to show that the tail equivalence class of 
$x$ intersects every cylinder set. Let $[\ov e] = (e_0, \ldots, 
e_{N-1})$ be an arbitrary cylinder set. Denote $j = r(e_{N-1}) \in 
V_{N}$, we show that for $s \in \N$ large enough there is a path 
between $i \in V_{sk}$ and $j \in V_{N}$.
Since the matrix $F$ is irreducible, there exists $t \in \mathbb{N}$
such that $f^{(t)}_{ij} > 0$. By Lemma~\ref{Lemma_Perron_value}, 
there exists $l = l(j) \in \N$ such that 
$$
f^{(m)}_{jj} > 0 \mbox{ for all } m \geq l.
$$

Note that there exist  $s \in \N$ and an integer $M \geq l$ 
such that there is a 
path of length $t$ between $i \in V_{sk}$ and $j \in V_{sk-t}$ and 
a path of length $M$ between $j \in V_{sk-t}$ and $j \in V_{N}$ 
(see Figure~\ref{Fig:Transitive_R}).
To see this, choose $s$ such that 
$sk - t - N > l$. Take $M = sk - t - N$. 
Hence, we obtain
$$
f^{(M + t)}_{ij} \geq f^{(t)}_{ij}\, f^{(M)}_{jj} > 0.
$$

Thus, there exists a path of length $M+t$ between vertex 
$j = r(e_N) 
\in V_{N}$ and vertex $i$ on level $V_{sk}$. Therefore, 
$[x]_{\mathcal{R}}$ and $[\ov e]$ have a non-empty intersection.

\begin{figure}
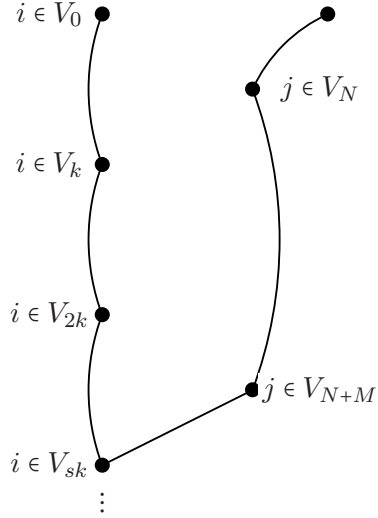

\unitlength=1cm
\begin{graph}(7,8)
% \graphnodesize{0.2}
% \roundnode{V0}(3,6)
%  %\nodetext{V0}(-1,0){$V_0$}
 \roundnode{V11}(2,7)
 \nodetext{V11}(-0.7,0){$i \in V_0$}
  %\nodetext{V12}(0.7,0){$w_2^{(0)}$}
 %\roundnode{V12}(4,7)
 \roundnode{V13}(5,7)
  % The second level vertices
 \roundnode{V21}(2,5)
 \roundnode{V22}(4,6)
  \nodetext{V22}(0.9,0){$j \in V_N$}
%\roundnode{V23}(6,5)
  \nodetext{V21}(-0.7,0){$i \in V_k$}
 % \nodetext{V22}(0.7,0){$w_2^{(1)}$}
 % The third level vertices
 \roundnode{V31}(2,3)
 \roundnode{V32}(4,2)
 %\roundnode{V33}(6,3)
 \nodetext{V31}(-0.7,0){$i \in V_{2k}$}
 \nodetext{V32}(0.9,0){$j \in V_{N + M}$}
  %\nodetext{V32}(0.7,0){$w_2^{(2)}$}
   % The fourth level vertices
    \roundnode{V41}(2,1)
     \nodetext{V41}(-0.7,0){$i \in V_{sk}$}
   
 %
 % EDGES
 \graphlinewidth{0.025}
% % First level
%  \edge{V0}{V11}
%  \edge{V0}{V12}

 % Second level
 \bow{V21}{V11}{0.09}

\bow{V22}{V13}{0.09}

 %third level
 \bow{V31}{V21}{0.09}
 \bow{V32}{V22}{-0.09}

  %fourth level
  
 \bow{V41}{V31}{0.09}
 \edge{V41}{V32}
 
  %\freetext(6.9,5){$\ldots$}
 %\freetext(6.9,3){$\ldots$}
 % \freetext(6.9,1){$\ldots$}
  \freetext(2,0.5){$\vdots$}
  %\freetext(4,0.5){$\vdots$}
   % \freetext(6,0.5){$\vdots$}
\end{graph}
\caption{Illustration to the proof of Theorem~\ref{thm_top_trans}. 
A diagram with ``vertical'' paths resulting in dense orbits in the 
path space.}\label{Fig:Transitive_R}
\end{figure}

\end{proof}

\begin{remark} Theorem \ref{thm_top_trans} focuses on 
\textit{stationary} diagrams. In this remark, we consider 
a structural property of the diagram which implies the transitivity 
of the tail equivalence relation for non-stationary diagrams as 
well. 
Let $B(V, E)$ be an irreducible generalized Bratteli diagram with 
a vertex $i \in \Z$ (recall that we identify vertices at each level 
with integers) such that for every level $n$ there 
is an edge between $i \in V_n$ and $i \in V_{n+1}$. In other words, 
there exists an infinite vertical path $x \in X_B$ which passes 
through vertex $i$ on each level of the diagram. Then, it can be
easily proved (in the same manner as in the proof of 
Theorem~\ref{thm_top_trans})
that the orbit $[x]_{\mathcal R}$ is dense in $X_B$, and hence 
the tail equivalence relation $\mathcal{R}$ is topologically 
transitive. More generally, it suffices to assume that there exists
a sequence $(n_k)$ such that there is a path between the vertex
$i \in V_{n_k}$ and $i \in V_{n_{k+1}}$ for all $k$. Then after 
telescoping, we can use the above result.
\end{remark} 

It is natural to ask if there exist conditions on the structure of 
a generalized Bratteli diagram that would imply the minimality of 
the tail equivalence relation. To this effect, we show (see 
Theorem~\ref{thm:non_minimal}) that bounded size diagrams (see 
Definition~\ref{Def:BD_bdd_size}) with irreducible incidence 
matrices contain proper closed subsets that are invariant under the 
tail equivalence relation and nowhere dense in the path space. In 
particular, this shows that the tail equivalence relation is 
\textit{not minimal}. Section \ref{Sect:contin_V_map} provides an example of a minimal Vershik map on an ordered generalized Bratteli diagram which has no infinite minimal and no infinite maximal paths (see Example \ref{Ex:noXminnoXmax}). Since the orbits of the Vershik map and the tail equivalence relation are the same for such a diagram, we obtain an example of a stationary generalized Bratteli diagram with an irreducible aperiodic incidence matrix such that the corresponding tail equivalence relation is minimal.

\medskip

Let $B = (V,E) $ be a generalized Bratteli diagram of bounded size, 
with the corresponding sequence 
$(t_n, L_n)_{n \in \N_0}$. For $w \in V_0$ we define 
$$
Z_w^+ = \left\{x = (x_n) \in X_B : s(x_0) \geq w \mbox{ and } 
r(x_n) \geq w + \sum_{i = 0}^{n} t_i \mbox{ for } n \in 
\mathbb{N}_0\right\}.
$$
Similarly, for $w \in V_0$ define
$$
Z_w^- = \left\{x = (x_n) \in X_B : s(x_0) \leq w \mbox{ and } 
r(x_n) \leq w - \sum_{i = 0}^{n} t_i \mbox{ for } n \in 
\mathbb{N}_0\right\}.
$$

\begin{lemma}\label{Lemma_slanting_sets_invar} Let $B = (V,E)$ 
be a generalized Bratteli diagram of bounded size, with the 
corresponding sequence $(t_n, L_n)_{n \in \N_0}$. Then,
for every $w \in V_0$, the sets $Z_w^+$, $Z^-_w$ are invariant with 
respect to the tail equivalence relation $\mathcal{R}$.
\end{lemma}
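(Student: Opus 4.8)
The plan is to prove the invariance directly from the definition of the tail equivalence relation, using only the band structure of bounded size diagrams. Fix $w \in V_0$ and take $x = (x_n) \in Z_w^+$ together with a path $y = (y_n) \in X_B$ that is tail equivalent to $x$; it suffices to show $y \in Z_w^+$, since then $Z_w^+$ is a union of $\mathcal{R}$-classes. By Definition \ref{Def:Tail_equiv_relation}, there is an $N \in \N_0$ with $x_n = y_n$ for all $n \geq N$. I would split the verification of the defining inequalities of $Z_w^+$ into two ranges of levels.

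For the high levels the inequalities are inherited immediately. For $n \geq N$ we have $r(y_n) = r(x_n) \geq w + \sum_{i=0}^n t_i$ because $x \in Z_w^+$. Moreover, since $y_N = x_N$, the ranges at level $N-1$ agree: $r(y_{N-1}) = s(y_N) = s(x_N) = r(x_{N-1}) \geq w + \sum_{i=0}^{N-1} t_i$. Thus the required lower bound already holds at every level $n \geq N-1$.

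The real work is at the low levels $0 \leq n \leq N-2$, where $y_n$ may differ from $x_n$. Here I would use the elementary consequence of bounded size that every edge $e \in E_n$ satisfies $s(e) \geq r(e) - t_n$, which is exactly the lower endpoint of the band $s(r^{-1}(v)) \subseteq \{v - t_n, \ldots, v + t_n\}$ in Definition \ref{Def:BD_bdd_size}. Since $r(y_n) = s(y_{n+1})$, this gives the recursion $r(y_n) \geq r(y_{n+1}) - t_{n+1}$. Telescoping from level $n$ up to level $N-1$ yields $r(y_n) \geq r(y_{N-1}) - \sum_{j=n+1}^{N-1} t_j$, and substituting the bound $r(y_{N-1}) \geq w + \sum_{i=0}^{N-1} t_i$ established above, the two partial sums cancel to leave precisely $r(y_n) \geq w + \sum_{i=0}^{n} t_i$. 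Applying the range bound at level $0$ together with $s(y_0) \geq r(y_0) - t_0$ then gives $s(y_0) \geq w$, the last defining condition, so $y \in Z_w^+$.

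The only point I expect to require care is recognizing that the decrement $t_{n+1}$ in the telescoping recursion matches the decrement $t_{n+1}$ in the defining threshold $w + \sum_{i=0}^n t_i$ exactly; this precise cancellation is what lets the lower bound propagate downward without loss, and it reflects the fact that the thresholds in the definition of $Z_w^+$ are tuned to the maximal leftward shift an edge can make. The set $Z_w^-$ is handled by the mirror-image argument, replacing $s(e) \geq r(e) - t_n$ with $s(e) \leq r(e) + t_n$ and reversing all inequalities, so that $r(y_n) \leq r(y_{N-1}) + \sum_{j=n+1}^{N-1} t_j \leq w - \sum_{i=0}^n t_i$.
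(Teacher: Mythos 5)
Your proof is correct and follows essentially the same route as the paper: the paper propagates the lower bound downward from the agreement level by invoking Corollary~\ref{corol_complete_upper_cone} (the telescoped band estimate $s(E(V_m,v)) \subset [v - \sum_{i=m}^n t_i,\, v + \sum_{i=m}^n t_i]$), while you re-derive the same one-sided inequality inline from $s(e) \geq r(e) - t_n$. Your explicit handling of the level-$0$ condition $s(y_0) \geq w$ and the exact cancellation of the $t_i$ sums matches the mechanism of the paper's argument, so the two proofs are interchangeable.
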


\begin{proof}
Fix $x = (x_n) \in Z_w^+$ and consider an infinite path $y = 
(y_n)\in X_B$ which is tail equivalent to $x$. Thus there exists 
$n \in \mathbb{N}$ such that $r(x_n) = 
r(y_n) = v$ for some $v \in V_{n+1}$. Since $x \in Z_w^+$, we have 
$$
v \geq w + \sum_{i = 0}^{n} t_i.
$$
By Corollary~\ref{corol_complete_upper_cone}, 
$$
s(y_m) \in \left[v - \sum_{i = m}^{n} t_i, v + \sum_{i = m}^n 
t_i\right] \subset \left[w + \sum_{i = 1}^{m-1} t_i, \infty \right)
$$
for all $m \leq n$. 
In other words, 
$$
r(y_{m}) = s(y_{m+1}) \in \left[w + \sum_{i = 1}^{m} t_i, \infty 
\right).
$$
Since $x \in Z_w^+$ and $x$ and $y$ are tail equivalent, we also 
have
$$
r(y_k) \in \left[w + \sum_{i = 1}^{k} t_i, \infty \right)
$$
for all $k \geq n$. 
Thus, $y \in  Z_w^+$, this shows that $Z^+_w$ is invariant with 
respect to the tail equivalence relation 
$\mathcal{R}$. A similar argument shows that $Z^-_w$ is also 
invariant with respect to the tail equivalence relation 
$\mathcal{R}$. 
\end{proof} 

We will call the sets $Z^+_w$, $Z^-_w$ \textit{slanting sets} for 
$w \in V_0$.

\begin{thm}\label{thm:non_minimal} Let $B = (V,E) $ be a generalized Bratteli diagram of bounded size, with the 
corresponding sequence $(t_n, L_n)_{n \in \N_0}$. 
Then, for every $w \in V_0$, the sets $Z^+_w$, $Z^-_w$ are closed 
nowhere dense sets with respect to the topology generated by 
cylinder sets. In particular, this shows (together with Lemma 
\ref{Lemma_slanting_sets_invar}) that the tail equivalence relation 
$\mathcal{R}$ is \textit{not minimal}.
\end{thm}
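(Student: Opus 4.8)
The plan is to prove the two topological assertions separately—that $Z^+_w$ is closed and that it has empty interior—and then to combine them with Lemma~\ref{Lemma_slanting_sets_invar} to read off non-minimality; the set $Z^-_w$ will be handled by a mirror-image argument. For \emph{closedness}, observe that for each fixed $n\in\N_0$ the inequality $r(x_n)\geq w+\sum_{i=0}^{n}t_i$ depends only on the initial segment $(x_0,\dots,x_n)$ of a path $x$, so the set of paths satisfying it is a union of cylinder sets of depth $n+1$, hence clopen; the same is true of the condition $s(x_0)\geq w$. Since $Z^+_w$ is precisely the intersection of these countably many clopen sets, it is closed.

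The heart of the matter is \emph{empty interior}. I would show that no nonempty cylinder set $[\ov e]$, with $\ov e=(e_0,\dots,e_{N-1})$ and $v:=r(e_{N-1})$, is contained in $Z^+_w$, by constructing inside it a path that escapes $Z^+_w$. If $\ov e$ already violates one of the defining inequalities then $[\ov e]\cap Z^+_w=\emptyset$ and there is nothing to prove, so assume $v\geq w+\sum_{i=0}^{N-1}t_i$. The key structural input is the bounded-size convention that $E(u+t_n,u)\neq\emptyset$ for every $u\in V_{n+1}$: applied with $u=v-t_n$ it produces, from any vertex $v\in V_n$, an outgoing edge with range $v-t_n\in V_{n+1}$, a \emph{leftward} edge. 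I extend $\ov e$ by selecting such a leftward edge at every level $n\geq N$, completing the path arbitrarily afterwards (possible since $s^{-1}(v)\neq\emptyset$ always). Monitoring the gap $g_n:=r(x_n)-\bigl(w+\sum_{i=0}^{n}t_i\bigr)$, each leftward edge decreases $r(x_n)$ by $t_n$ while the threshold grows by the same $t_n$, so $g_n=g_{n-1}-2t_n$. As $g_{N-1}=v-(w+\sum_{i=0}^{N-1}t_i)$ is a fixed nonnegative integer and $\sum_{i\geq N}t_i=\infty$, after finitely many steps $g_n<0$, i.e. $r(x_n)<w+\sum_{i=0}^{n}t_i$; thus the constructed path lies in $[\ov e]\setminus Z^+_w$. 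Hence $Z^+_w$ contains no cylinder set, and being closed it is nowhere dense. The symmetric argument, now using rightward edges, disposes of $Z^-_w$.

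For \emph{non-minimality} I would finally note that $Z^+_w\neq\emptyset$: starting at any vertex $\geq w$ of $V_0$ and following rightward edges yields a path with $r(x_n)=s(x_0)+\sum_{i=0}^{n}t_i\geq w+\sum_{i=0}^{n}t_i$, so it belongs to $Z^+_w$. By Lemma~\ref{Lemma_slanting_sets_invar} this set is $\mathcal R$-invariant, and it is a nonempty proper (since nowhere dense) closed subset of $X_B$; its mere existence shows that $\mathcal R$ admits an orbit closure that is not all of $X_B$, hence $\mathcal R$ is not minimal.

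The main obstacle is the escape construction in the second step, and in particular the tacit use that the tails $\sum_{i\geq N}t_i$ diverge, which is exactly what forces $g_n$ to turn negative. I would make this explicit: divergence holds whenever infinitely many $t_n$ are positive, and the only excluded situation—$t_n=0$ for all large $n$, so that the diagram consists eventually of vertical edges alone—is degenerate and conflicts with the standing connectedness hypothesis, so it may be set aside.
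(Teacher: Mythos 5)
Your proof is correct and follows essentially the same route as the paper's: closedness via a countable intersection of (cl)open sets built from cylinder sets, empty interior by extending any finite path in $Z^+_w$ along the leftward edges $E(v+t_n,v)\neq\emptyset$ until the range drops below the threshold $w+\sum_{i=0}^{n}t_i$, and non-minimality via the invariance lemma. Your two small additions—explicitly verifying $Z^+_w\neq\emptyset$ (which the paper leaves implicit, though its sets $Y^+_w\subset Z^+_w$ supply it) and worrying about $\sum_n t_n$ diverging—are fine; the latter is automatic since the $t_n$ are natural numbers, hence $t_n\geq 1$.
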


\begin{proof}
First, we prove that the set $Z^+_w$ is closed. Let $Z_0$ be a 
union of cylinder sets, on level $V_0$ which correspond to the 
vertices in the interval $[w, \infty) \subset V_0$. Let $Z_n$ be a union of all cylinder sets 
corresponding to finite paths of length $n$ which lie in $Z^+_w$. 
Then we have
$$
Z_0 \supset Z_1 \supset \ldots \supset Z_n  \supset 
\ldots,
$$
each $Z_n$ is closed and 
$$
Z^+_w = \bigcap_{n = 0}^{\infty} Z_n.
$$
Hence $Z^+_w$ is closed.

We show that $Z^+_w$ does not contain any cylinder set. 
Let $\ov e = (e_0, \ldots, e_n)$ be a finite path which lies in 
$Z^+_w$ and $v = r(e_n)  \in V_{n+1}$.
There exists $m$ such that $v - \sum_{i = 1}^{m} t_{n + i} < w + \sum_{i = 1}^{m} t_{n + i}$. Since $E(u - t_n, u) \neq \emptyset$ for all $n$ and for all $u \in V_{n+1}$, there is a finite path $(e_{n+1}, \ldots, e_{n+m})$ between $v \in V_{n+1}$ and $v - \sum_{i = 1}^{m} t_{n + i} \in V_{n+m}$. Hence the cylinder set generated by the path 
$(e_0, \ldots, 
e_{n+m})$ is a subset of $[\ov e]$ which does not belong to 
$Z^+_w$. Thus, $Z^+_w$ has empty interior. 
Since $Z^+_w$ is closed, it follows that $Z^+_w$ is nowhere dense. 
A similar argument shows that $Z^-_w$ is also closed and nowhere 
dense.
\end{proof}

To end this section, we discuss the cardinality  of the sets of 
the form $Z^+_w$ and $Z^-_w$ for a bounded size generalized 
Bratteli diagram and provide conditions that guarantee that 
$Z^+_w$ and $Z^-_w$ are countable sets. 

Let $B=(V,E)$ be a bounded size generalized Bratteli diagram. For 
every $w \in V_0$, let $Y_w^+$ denote the set 
of all infinite paths which start at $w$ and then pass through 
the rightmost possible vertex on each level, i.e. for every 
$m \in \mathbb{N}$, the paths from $Y_w^+$ go through the vertex 
$w + \sum_{i = 0}^{m-1} t_i$ on level $m$. Obviously, we have 
$Y_w^+ \subset Z_w^+$ for all $w \in V_0$. Analogously, let 
$Y_w^- \subset Z_w^-$ be the set of all infinite paths which 
start at $w$ and then pass through the leftmost possible vertex 
on each level. Note that the sets $Y_w^+$, $Y_w^-$ can be either 
finite or uncountable (then they are odometers). We can say that 
$Y_w^+$, $ Y_w^-$ are the ``boundary'' paths for $Z_w^+$,
$Z_w^-$.

\begin{prop} Let $B = (V,E) $ be a generalized Bratteli diagram of 
bounded size, with the corresponding sequence 
$(t_n, L_n)_{n \in \N_0}$. If the sets $Y_w^+$ are 
finite for all $w \in V_0$ then the sets $Z^+_w$, are 
countable for all $w \in V_0$. Otherwise, there exists an 
uncountable set $Z^+_w$. 
The same is true for $Y_w^-$ and $Z^-_w$.
In particular, if $|E(v + t_n, v)| = |E(v - t_n, v)| = 1$ for all 
$v \in V_{n+1}$ and $n\in \N$, then sets $Z^+_w$, $Z^-_w$ are 
countable for all $w \in V_0$.
\end{prop}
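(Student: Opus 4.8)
The plan is to isolate a monotonicity property of the paths in $Z_w^+$ and then count. Fix $w \in V_0 = \Z$ and set $b_n = w + \sum_{i=0}^n t_i$, the vertex at level $n+1$ on the rightmost path issued from $w$ (put $b_{-1} := w$). For $x = (x_n) \in Z_w^+$ write $v_n = r(x_n)$ and $v_{-1} = s(x_0)$, and define the \emph{excess} $\e_n = v_n - b_n$, a nonnegative integer by the definition of $Z_w^+$. Since $x_n$ is an edge of a bounded size diagram, $s(x_n) = v_{n-1} \in \{v_n - t_n, \dots, v_n + t_n\}$, so $v_n \le v_{n-1} + t_n$; as $b_n - b_{n-1} = t_n$ this gives $\e_n \le \e_{n-1}$. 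Thus $(\e_n)_{n \ge -1}$ is a non-increasing sequence of nonnegative integers, hence eventually constant. If $N$ is a level from which it is constant, then $v_n = v_{n-1} + t_n$ for all $n \ge N$, i.e. $x$ is \emph{rightmost} from level $N$ on (each of its edges increases the vertex index by the maximal amount). This structural observation is the heart of the argument.

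For the first assertion, assume every $Y_{w'}^+$ ($w' \in V_0$) is finite. For $N \in \N_0$ and $u \in \Z$ let $P_{N,u}$ be the set of those $x \in X_B$ with $r(x_{N-1}) = u$ whose tail $(x_n)_{n \ge N}$ is rightmost. By the previous paragraph $Z_w^+ \subseteq \bigcup_{N \in \N_0}\bigcup_{u \in \Z} P_{N,u}$, a countable union, so it suffices to show each $P_{N,u}$ is finite. The initial segment $(x_0, \dots, x_{N-1})$ ranges over $E(V_0, u)$, which is finite with $|E(V_0,u)| \le L_0 \cdots L_{N-1}$ by Lemma~\ref{lemma_bdd_size_upper_cone}. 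A rightmost tail from $u$ at level $N$ runs through the vertices $p_m = u + \sum_{i=N}^{m-1} t_i$ at level $m \ge N$; putting $w' = u - \sum_{i=0}^{N-1} t_i$, these are precisely the rightmost vertices of $Y_{w'}^+$ at levels $\ge N$, so the number of such tails equals the tail $\prod_{m \ge N} |E(p_m, p_{m+1})|$ of the product that counts $Y_{w'}^+$. Since $Y_{w'}^+$ is finite this tail product is finite, whence $P_{N,u}$ is finite. Therefore $Z_w^+$ is countable; the argument for $Z_w^-$ is symmetric (replace ``rightmost'' by ``leftmost'', $b_n$ by $w - \sum_{i=0}^n t_i$, and $Y^+$ by $Y^-$).

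For the converse, if the hypothesis fails then some $Y_{w_0}^+$ is not finite; being a product of nonempty finite sets it is then uncountable, and $Y_{w_0}^+ \subseteq Z_{w_0}^+$ forces $Z_{w_0}^+$ to be uncountable (and symmetrically for $Y^-, Z^-$). Finally, under the extra assumption $|E(v + t_n, v)| = |E(v - t_n, v)| = 1$ for all $v \in V_{n+1}$ and all $n$, every factor in the products defining $Y_w^+$ and $Y_w^-$ equals $1$, so all these sets are singletons; by the first assertion every $Z_w^+$ and $Z_w^-$ is then countable.

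The only delicate point — and the main obstacle — is recognizing that a rightmost tail issued from an \emph{arbitrary} vertex $u$ at level $N$ coincides with a tail of the boundary set $Y_{w'}^+$ for a suitable $w'$. This is exactly why finiteness must be assumed for all $w' \in V_0$ and not merely for the given $w$: the tails appearing in the decomposition of $Z_w^+$ are governed by boundary sets issued from many different vertices. Once this identification is in place, the remaining steps are the monotone-excess observation and the routine counting above.
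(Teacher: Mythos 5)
Your proof is correct and follows essentially the same route as the paper's: you show every path in $Z_w^+$ must eventually be rightmost (your monotone excess $\e_n = v_n - b_n$ is a repackaging of the paper's telescoping inequality $u - w \ge \sum_{i}(t_i - \widetilde{t}_i)$, which likewise forces all but finitely many deviations to vanish), and you then count by grouping paths according to where the rightmost tail begins, exactly as the paper does when it concludes that the eventually-rightmost paths form a countable set. The one place you go beyond the paper's terse final step is in spelling out the decomposition into the finite sets $P_{N,u}$ and in making explicit that a rightmost tail issued from $u$ at level $N$ is a tail of $Y_{w'}^+$ for the shifted vertex $w' = u - \sum_{i=0}^{N-1} t_i$ --- a correct and worthwhile clarification of why the finiteness hypothesis is needed for all starting vertices, not just the given $w$.
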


\begin{proof}
Let $Y_w^+$ be finite for all $w \in V_0$. Fix any $w \in V_0$.
Suppose $y = (y_n)_{n = 0}^{\infty} \in Z^+_w$ and $s(y_0) = u 
\geq w$. Since $B$ is of bounded size, for every $m \in 
\mathbb{N}_0$:
$$
r(y_m) = u + \tl t_0 + \ldots + \tl t_{m},
$$
where $\tl t_i \in [-t_i, t_i]$ for $i = 1, \ldots, m$.
Since $y \in Z^+_w$, we have
$$
r(y_m) = u + \tl t_0 + \ldots + \tl t_{m} \in 
\left[w + \sum_{i = 0}^{m} t_i, \infty \right).
$$
Thus, for all $m \in \mathbb{N}_0$,
$$
u + \sum_{i = 0}^{m} \tl t_i \geq w + \sum_{i = 0}^{m} t_i
$$
and
$$
u - w \geq \sum_{i = 0}^{m} (t_i - \tl t_i).
$$
Recall that $t_i \geq \tl t_i$, hence $t_i - \tl t_i \geq 0$. 
Thus, we have
$$
u - w \geq \sum_{i = 0}^{\infty} (t_i - \tl t_i),
$$
which is possible only if there are finitely many non-zero 
elements among $(t_i - \tl t_i)$. Hence, the path $y$ should  go 
in the same 
direction as the ``boundary'' paths $Y_w^{+}$, except for 
finitely many deviations. Since all the sets $Y_w^{+}$ are 
finite, the number of such paths $y$ is countable. If for some $w 
\in V_0$ the set $Y_w^+$ is uncountable, then the set 
$Z_w^+ \supset Y_w^+$ is also uncountable. The same proof works 
for $Y_w^-$ and $Z_w^{-}$.
\end{proof}

%%%%Section 5

\section{Tail-invariant measures for generalized Bratteli 
diagrams}\label{Sec:Tail_inv_mes} 
In this section, we discuss \textit{tail-invariant measures}
on the path space of a generalized Bratteli diagram. We emphasize 
that in this paper, the term \textit{measures} is used for 
non-atomic positive Borel measures. Moreover, we are mostly 
interested in \textit{full measures}, i.e., every cylinder set 
must be of positive measure. We consider both finite 
(probability) and $\sigma$-finite measures. In the case of 
$\sigma$-finite measures, we are interested in only those measures 
which take finite values on cylinder sets. 

We describe every tail-invariant measure in terms of a sequence of 
positive vectors associated with vertices of each level, see 
Theorem  \ref{BKMS_measures=invlimits}. We give an explicit 
construction of ordered generalized Bratteli diagrams for which there 
exists no full probability measure invariant under the Vershik map, and the restriction
of the tail equivalence relation onto the equivalence class of some non-empty clopen set is
compressible
 (see Theorem 
\ref{Thm:no_full_inv_prob_mu}). We also provide a class of 
generalized Bratteli diagrams such that there exists no tail 
invariant (finite or $\sigma$-finite) measure with  finite values 
on cylinder sets (see Proposition 
\ref{Prop:no_meas_fin_cyl_sets}).

\begin{definition}\label{def: tail inv meas} Let $B =(V, E)$ be 
a generalized Bratteli diagram and $\mathcal R$ 
the tail equivalence relation on the path space $X_B$ (see 
Definition \ref{Def:Tail_equiv_relation}). A measure $\mu$ on 
$X_B$ is called \textit{tail-invariant} if, for any cylinder sets
$[\ol e]$ and $[\ol e']$ such that $r(\ol e) = r(\ol e')$, we have
$\mu([\ol e]) = \mu([\ol e'])$.
\end{definition}

\begin{remark}
The theory of countable Borel equivalence relations is a key object
in Borel dynamics; it has been considered from different 
points of view in numerous books and articles, see, e.g., 
\cite{Gao2009}, \cite{GaoJackson2015}, 
\cite{Dougherty_Jackson_Kechris1994},
\cite{JacksonKechrisLouveau2002},
\cite{KechrisMiller2004} and the literature within. For any
generalized Bratteli diagram $B$, the tail equivalence relation 
$\mathcal R$ is 
a countable Borel hyperfinite equivalence relation. This means
that there exists a Borel automorphism $T : X_B \to X_B$ whose 
orbits coincide with the orbits of $\mathcal R$. Can we take 
a Vershik map $\varphi_B$ for $T$? First, we note that 
the set of tail-invariant measures does not depend on an order on
$B$. Second, the orbits of a Vershik map and the
tail-invariant relation differ at the sets of maximal and minimal 
paths. In general, every measure $\mu$ that is invariant with respect to a Vershik map is also tail-invariant. Hence, if the sets of maximal and minimal paths have zero
measure, then we can identify tail-invariant measures with 
measures invariant with respect to a Vershik map. This happens
for generalized Bratteli diagrams of bounded size, see Lemma 
\ref{gbd size}. As a rule, we 
will consider measures on the path space of (ordered) Bratteli 
diagrams with zero-measure sets of maximal and minimal paths. 
This property will allow us to use the notions of tail-invariant 
measures and that of $\varphi_B$-invariant measures
interchangeably, see Section \ref{Sec:Tail_inv_mes}.
\end{remark}

In what follows we will use the following obvious fact:
Suppose that a tail-invariant Borel measure $\mu$ on $X_B$ takes 
finite values on all cylinder sets. Then $\mu$ is uniquely 
determined by its values on cylinder sets in $X_B$, i.e., it can
be extended uniquely to all Borel sets. 

The definition below uses 
the notion of \textit{Kakutani-Rokhlin 
towers} which is well-studied in the context of Cantor dynamics. We 
refer the reader to \cite{HermanPutnamSkau1992},
\cite{Putnam2018}, \cite{GiordanoPutnamSkau1995}, 
 \cite{BezuglyiKarpel2016} where this notion is discussed. 

\begin{definition} \label{Def:Kakutani-Rokhlin} Let $B =(V, E)$ 
be a generalized Bratteli diagram, for $w \in V_n, 
n \in \N$, denote 
$$
X_w^{(n)} = \{x = (x_i)\in X_B : r(x_{n-1}) = w\}.
$$
The collection of all such sets forms a partition of $X_B$ into  
\textit{Kakutani-Rokhlin towers}
corresponding to the vertices from $V_{n}$.
Each finite path $\ov e = (e_0, \ldots, e_{n-1})$ with 
$r(e_{n-1}) 
= w$, determines a ``floor'' of this tower
$$
X_w^{(n)}(\ov e) = \{x = (x_i)\in X_B : x_i = e_i,\; i = 
0,\ldots, n-1 \}
$$
(we denoted this set by $[\ol e]$ above; the notation 
$X_w^{(n)}(\ov
e)$ indicates the position of $[\ol e]$ in the tower $X_w^{(n)}$).
Clearly, 
$$
X_w^{(n)} = \bigcup_{\ol e \in E(V_0, w)} X_w^{(n)}(\ov e).
$$ Thus, the set $X_w^{(n)}$ is a union of a finite number of 
cylinder sets and can be considered as a \textit{tower associated 
with the vertex $w \in V_n$}.
\end{definition}

\begin{definition}\label{Def:Height}  For $v \in V_n$ and $v_0 \in
V_0$, we set $h^{(n)}_{v_0, v} = |E(v_0, v)| $ and define 
$$
H^{(n)}_v = \sum_{v_0 \in V_0} h^{(n)}_{v_0, v}, \ \ n \in \N.
$$ 
Set $H^{(0)}_v = 1$ for all $v\in V_0$.
This gives us the vector $H^{(n)} = (H^{(n)}_{v} : 
v \in V_n)$ associated with every level $n\in \N_0$. Since
$H^{(n)}_v = |E(V_0, v)|$, we call
$H^{(n)}_v$ the \textit{height of the tower} $X_v^{(n)}$ 
corresponding to the vertex $v\in V_n$.
 \end{definition} 

\begin{remark} We have defined the vector $H^{(0)} = (H^{(0)}_{v} : v \in V_0)$ such that $H^{(0)}_{v} = 1$ for all $v$ (see Definition~\ref{Def:Height}).  In fact, one can choose any finite values for $H^{(0)}_{v}$.  The role of $H^{(0)}$ can be interpreted as the vector of heights of the Kakutani-Rokhlin towers between the vertices of $V_0$ and an imaginary level $V_{-1}$ consisting of exactly one vertex. 
 \end{remark}
 
 Observe that 
 $$ H^{(n+1)}_v = \sum_{w \in V_n} \ent H^{(n)}_w,  \ \ \ v \in 
 V_{n+1},
 $$ 
 which immediately implies that 
\begin{equation}\label{lem vector H} 
    F_n H^{(n)} = H^{(n+1)} \ \ \ \textrm{ and } \ \ \ F_n \cdots 
    F_0 H^{(0)} = H^{(n+1)},  \,\,\,n \in \N_0.
\end{equation} 

We consider here the problem of the existence of tail-invariant 
measures on the path space of a generalized Bratteli diagram $B$. 
Our main results are mostly related to Bratteli diagrams of bounded 
size. Note that every incidence matrix $F_n$ defines a linear map
from $\R^{V_{n}}$ to $\R^{V_{n+1}}$ (recall that we identify all 
$V_n$). Using Lemma \ref{lemma_bdd_size_lower_cone}, we see that, 
for every fixed $n \in \mathbb{N}_0$ and any $m \in \mathbb{N}$, we 
can define the sequence of convex sets 
$$
C^{(n)}_m = F_n^T\cdots F_{n + m - 1}^T
(\mathbb{R}_{+}^{V_{n+m}}),
$$ 
where $F_i^T$ stands for the transpose of $F_i$. The above relation 
is well defined because $F_i^T$ maps the positive 
cone $\mathbb{R}_{+}^{V_{i+1}}$ into the positive cone of 
$\mathbb{R}_{+}^{V_{i}}$. Set 
$$
C^{(n)}_{\infty} = \bigcap_{m = 1}^{\infty} C^{(n)}_m.
$$ 
In general, the set $C^{(n)}_{\infty}$ might be empty.

Given a Bratteli diagram $B$ (generalized or classical), let 
$\mathcal M(B)$ denote the set of tail-invariant finite or 
$\sigma$-finite measures on the path space $X_B$ which takes finite 
values on cylinder sets. In the following theorem, we assume that 
the set $\mathcal M(B)$ is not empty. 

\begin{thm}\label{BKMS_measures=invlimits}
 Let $B = (V,E)$ be a Bratteli diagram (generalized or classical) 
 with the sequence of incidence matrices $(F_n)$. Then:
\begin{enumerate}

\item If  $\mu \in \mathcal M(B)$, then for every $n\in \N_0$ the 
vector defined as follows
\begin{equation}\label{eq:def_p_n}
    p^{(n)}= (\mu(X_w^{(n)}(\ov e)))_{w\in V_n}
\end{equation} satisfies $p^{(n)} \in  C_{\infty}^{(n)}$ and 
\begin{equation}\label{eq:formula_p_n}
F^{T}_n p^{(n+1)} = p^{(n)}
\end{equation}
for all $n\geq 0$.
\\

\item Conversely, suppose that $\{p^{(n)}= (p_w^{(n)}) \}_{n \in 
\N_0}$ is a sequence of non-negative vectors such that $p^{(n)} \in 
C_{\infty}^{(n)}$ and $F^{T}_n p^{(n+1)} = p^{(n)}$ for all $n \in 
N_0$. Then there exists a uniquely determined tail-invariant 
measure $\mu$ such that $\mu(X_w^{(n)}(\ov e))= p_w^{(n)}$ for 
$w\in V_n, n \in \mathbb N_0$.

\end{enumerate}

\end{thm}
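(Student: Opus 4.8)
The plan is to recast the measure-theoretic statement as one about consistent weights on the refining sequence of clopen partitions of $X_B$ by cylinder sets, and to pass between the two directions using the one-level decomposition of a cylinder together with countable additivity. For part (1), start from $\mu \in \mathcal M(B)$. Tail-invariance says that $\mu(X_w^{(n)}(\ov e))$ depends only on the endpoint $w = r(\ov e)$, so the vector $p^{(n)}$ of \eqref{eq:def_p_n} is well defined. The key computation is the refinement by one level: each length-$n$ cylinder ending at $w\in V_n$ is the (countable) disjoint union, over edges $e_n\in E_n$ with $s(e_n)=w$, of the length-$(n+1)$ cylinders obtained by appending $e_n$, and exactly $f^{(n)}_{v,w}$ of these edges reach a given $v\in V_{n+1}$. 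Countable additivity of $\mu$ together with tail-invariance then yields $p_w^{(n)} = \sum_{v\in V_{n+1}} f^{(n)}_{v,w}\, p_v^{(n+1)}$, which is precisely \eqref{eq:formula_p_n}, i.e. $F_n^{T} p^{(n+1)} = p^{(n)}$ (the sum being finite since $p^{(n)}_w = \mu(X^{(n)}_w(\ov e)) < \infty$). Iterating gives $p^{(n)} = F_n^{T}\cdots F_{n+m-1}^{T}\, p^{(n+m)}$ with $p^{(n+m)}\in \mathbb{R}_{+}^{V_{n+m}}$, so $p^{(n)}\in C^{(n)}_m$ for every $m$, hence $p^{(n)}\in C^{(n)}_\infty$.

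For part (2), the relation $F_n^{T} p^{(n+1)} = p^{(n)}$ is exactly the consistency condition guaranteeing that the assignment of weight $p_w^{(n)}$ to every length-$n$ cylinder ending at $w$ respects the above one-level refinement. The task is then to promote these consistent cylinder weights to a genuine Borel measure. I would organize $X_B$ by its first edge: since $E_0$ is countable and $X_B = \bigsqcup_{e_0\in E_0} [e_0]$, I would build a finite measure $\mu_{e_0}$ on each clopen piece $[e_0]$ (of total mass $p^{(1)}_{r(e_0)}<\infty$) and set $\mu = \sum_{e_0\in E_0}\mu_{e_0}$, which is automatically $\sigma$-finite and finite on cylinders.

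To construct $\mu_{e_0}$ I would pass to transition kernels: for a length-$n$ cylinder ending at $w$ with $p^{(n)}_w>0$, the numbers $p^{(n+1)}_{r(e_n)}/p^{(n)}_w$ (over edges $e_n$ with $s(e_n)=w$) are non-negative and, by the consistency relation, sum to $1$, so they describe a probability kernel for extending a path one more level (when $p^{(n)}_w=0$ the cylinder is null and the kernel may be chosen arbitrarily). The Ionescu--Tulcea theorem then produces a unique probability measure on the tail of $[e_0]$ from this sequence of kernels; scaling by $p^{(1)}_{r(e_0)}$ gives $\mu_{e_0}$, and a telescoping product shows $\mu_{e_0}(X^{(n)}_w(\ov e)) = p^{(n)}_w$ for every cylinder inside $[e_0]$. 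Summing, $\mu$ has the prescribed cylinder values; since $\mu(X^{(n)}_w(\ov e))$ depends only on $w$, it is tail-invariant. Uniqueness is immediate from the fact recorded just before the theorem: a tail-invariant Borel measure that is finite on cylinders is determined by its cylinder values.

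The main obstacle is precisely this extension step. Because $X_B$ need not be locally compact, cylinder sets are not compact, so the usual compactness argument for continuity-at-$\emptyset$ (hence for $\sigma$-additivity of the cylinder pre-measure) is unavailable. This is why I would route the construction through transition kernels and the Ionescu--Tulcea theorem, which builds the measure at the purely measurable level and thereby bypasses compactness; an equivalent route is to verify $\sigma$-additivity directly, exploiting the finiteness of the total mass $p^{(1)}_{r(e_0)}$ on each piece $[e_0]$.
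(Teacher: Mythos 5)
Your proof is correct, and it is worth noting that the paper itself contains no in-text argument for this theorem: it declares the proof ``straightforward'' and defers to \cite{BezuglyiKwiatkowskiMedynetsSolomyak2010} for classical diagrams and to \cite{Bezuglyi_Jorgensen_2021} for generalized ones. Your part (1) is the computation any proof must contain: the one-level refinement of a cylinder ending at $w$ into the cylinders obtained by appending the (possibly countably many, since columns of $F_n$ need not have finite support) edges in $s^{-1}(w)$, with tail-invariance collapsing the sum to $(F_n^T p^{(n+1)})_w = p^{(n)}_w$, and iteration placing $p^{(n)}$ in every $C^{(n)}_m$. Part (2) is where your route is genuinely different in emphasis from the classical-case argument: for a standard Bratteli diagram the path space is compact and cylinders are compact clopen sets, so the consistent cylinder weights extend by the usual continuity-at-$\emptyset$ argument; you correctly identify that this collapses for generalized diagrams ($X_B$ need not be locally compact) and instead run Ionescu--Tulcea with the Markov kernels assigning to an edge $e_n \in s^{-1}(w)$ the mass $p^{(n+1)}_{r(e_n)}/p^{(n)}_w$ on each piece $[e_0]$, which is a clean and fully adequate substitute: the kernels are well defined exactly because of the consistency relation, the telescoping product recovers the prescribed cylinder values, and summing over the countably many $e_0 \in E_0$ yields a $\sigma$-finite measure finite on cylinders. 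This buys a single self-contained argument covering the classical and generalized cases uniformly, at the modest cost of invoking a measure-theoretic extension theorem where the compact case needs none.

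Two small points worth making explicit. First, the null-vertex bookkeeping: if $p^{(k)}_{w} = 0$, then nonnegativity and the relation $p^{(k)}_w = \sum_{e_k \in s^{-1}(w)} p^{(k+1)}_{r(e_k)}$ force $p^{(m)}_v = 0$ for every vertex $v$ reachable from $w$, so the arbitrary choice of kernel at such vertices cannot spoil the identity $\mu_{e_0}(X^{(n)}_w(\ov e)) = p^{(n)}_w$; you gesture at this but should spell it out. Second, the hypothesis $p^{(n)} \in C^{(n)}_\infty$ in part (2) is redundant: your own iteration in part (1) derives it from the relations $F_n^T p^{(n+1)} = p^{(n)}$ alone. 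For uniqueness, your appeal to the paper's preceding remark is legitimate; if you want it self-contained, note that cylinders form a $\pi$-system generating the Borel $\sigma$-algebra and apply Dynkin's uniqueness theorem on each finite-measure piece $[e_0]$.
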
 The \textit{proof} of Theorem \ref{BKMS_measures=invlimits} is 
straightforward and can be found in 
\cite{BezuglyiKwiatkowskiMedynetsSolomyak2010} (for classical Bratteli diagrams) and \cite{Bezuglyi_Jorgensen_2021} (for generalized Bratteli diagrams).

\begin{remark}
 We stress that part (1) of Theorem \ref{BKMS_measures=invlimits}
is true for any generalized Bratteli diagram whose path space 
admits a tail-invariant measure. This means also that, for 
this diagram, the cone $C_{\infty}^{(n)}$ is not empty for all
$n \in \N_0$. In Proposition  \ref{Prop:no_meas_fin_cyl_sets}, 
we give an example of a bounded size diagram such that both sets $\mathcal M(B)$ and $C_{\infty}^{(n)}$ are empty.
\end{remark}

Let $\mathcal E$ be a countable Borel equivalence relation on 
a standard Borel space $X$. It is a well-known fact that the 
existence of an $\mathcal E$-invariant probability Borel 
measure $\mu$  on $X$ is determined by the property of $\mathcal{E}$ called
\textit{compressibility}. For a fixed $x \in X$, the set 
$\{y \in X : 
(x, y) \in \mathcal E\}$ is called the $\mathcal E$-class. 
It is said that $\mathcal E$ is \textit{compressible} 
if there is an injective Borel map $f: X \to X$ such that for each 
$\mathcal E$-class $L$, $f(L) \subsetneq  L$. A Borel set $A 
\subset X$ is compressible if the restriction of $\mathcal E$ onto
$A$ is compressible. We refer to \cite{Nadkarni1991}, 
\cite{Nadkarni1995} where the following lemma is proved,  
(see also \cite{Dougherty_Jackson_Kechris1994}).

\begin{lemma} 
Let $\mathcal E$ be a countable equivalence relation on a standard
Borel space X. The following are equivalent:

\begin{enumerate}
    \item $\mathcal E$ is not compressible.
    \item  There is an $\mathcal E$-invariant probability measure.
    \item There is an $\mathcal E$-ergodic, $\mathcal E$-invariant probability measure.
\end{enumerate}
\end{lemma}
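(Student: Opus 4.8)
The plan is to establish the cycle $(3)\Rightarrow(2)\Rightarrow(3)$ together with the equivalence $(1)\Leftrightarrow(2)$, keeping in mind that the three implications differ enormously in difficulty. The implication $(3)\Rightarrow(2)$ is immediate, since an ergodic invariant probability measure is in particular an invariant probability measure. For $(2)\Rightarrow(3)$ I would invoke ergodic decomposition. By the Feldman--Moore theorem $\mathcal{E}$ is the orbit equivalence relation of a Borel action of a countable group $G$, and $\mathcal{E}$-invariance of a measure coincides with $G$-invariance. The set of $G$-invariant Borel probability measures is convex, its extreme points are exactly the ergodic ones, and the ergodic decomposition theorem expresses every invariant measure as a barycenter of ergodic measures. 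Hence, if the set of invariant probability measures is nonempty, an ergodic measure must occur in the decomposition of any of its members, which yields $(3)$.

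For $(2)\Rightarrow(1)$ I would argue by contradiction: an invariant probability measure obstructs compression. The key observation is that every injective Borel map $f$ whose graph lies in $\mathcal{E}$ preserves $\mu$. Indeed, using the countable generating group $G$, one partitions $X$ into countably many Borel pieces on each of which $f$ coincides with a single $g\in G$; as each $g$ preserves $\mu$, so does $f$, and $\mu(f(A))=\mu(A)$ for all Borel $A$. A compression supplies such an $f$ together with a Borel \emph{wandering} complete section $A$ (for instance $A=X\setminus f(X)$), so that the iterates $A,f(A),f^2(A),\dots$ are pairwise disjoint with union all of $X$. Measure-preservation gives $\mu(f^n(A))=\mu(A)$ for every $n$, whence $1=\mu(X)=\sum_{n\ge 0}\mu(A)$, which is absurd. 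Thus no compression can exist.

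The genuine content is $(1)\Rightarrow(2)$, which is Nadkarni's theorem. Here I would assume that $\mathcal{E}$ carries no invariant probability measure and construct a compression. The route I expect to succeed is a \emph{Borel Tarski alternative}: show that the absence of an invariant (even finitely additive) mean forces $X$ to be $\mathcal{E}$-paradoxical, and that the paradoxical decomposition can be selected in a uniformly Borel way. Concretely, I would pass to the conservative/dissipative (Hopf) decomposition relative to the partial Borel automorphisms generating $\mathcal{E}$; on the dissipative part a compression comes directly from a Borel transversal of the wandering set, while on the conservative part one uses a definable marriage/matching argument (a Borel version of the Hall--Tarski theorem) to produce a Borel injection doubling each class, which is then spliced into a global compression $f$ with $f(L)\subsetneq L$. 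The main obstacle is exactly this last step: converting the measure-theoretic nonexistence statement into a uniformly Borel combinatorial object, since the relevant maximal matchings and exhaustion sequences must be chosen measurably. This is the place where the Borel setting genuinely departs from the classical ergodic-theoretic one, and I would lean on the descriptive set-theoretic machinery of Nadkarni (and the later treatments of Becker--Kechris and Kechris--Miller) rather than reprove it, the self-contained argument being long.
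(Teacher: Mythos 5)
Your treatment of the hard implication is, in fact, exactly what the paper does: the paper offers no proof of this lemma at all, but states it with references to Nadkarni's books \cite{Nadkarni1991}, \cite{Nadkarni1995} and to Dougherty--Jackson--Kechris \cite{Dougherty_Jackson_Kechris1994}. So your deferral of $(1)\Rightarrow(2)$ to Nadkarni's theorem (with the Hopf-decomposition/Borel-matching sketch as motivation) matches the paper's route, while your proofs of the easy implications --- $(3)\Rightarrow(2)$ trivially, $(2)\Rightarrow(3)$ via Feldman--Moore plus ergodic decomposition, and $(2)\Rightarrow(1)$ via measure preservation of injections with graph in $\mathcal E$ --- supply genuine content that the paper omits, and they are in substance correct. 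The countable-decomposition argument showing that any Borel injection $f$ with $\mathrm{graph}(f)\subset\mathcal E$ preserves $\mu$ is the standard one and is fine.

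One slip in your $(2)\Rightarrow(1)$ argument: with $A=X\setminus f(X)$, the iterates $A, f(A), f^2(A),\dots$ are indeed pairwise disjoint, but their union is $X\setminus\bigcap_{n}f^n(X)$, which need \emph{not} be all of $X$ (e.g.\ $f(n)=2n$ on a class identified with $\mathbb Z$ misses $0$ in every iterate), so the identity $1=\mu(X)=\sum_{n\ge 0}\mu(A)$ is not available. The contradiction survives, but you must argue instead that $\mu(A)>0$: since $f$ maps each class into itself, $A\cap L = L\setminus f(L)\neq\emptyset$ for every class $L$, so $A$ is a complete section; its $\mathcal E$-saturation is the countable union $\bigcup_{g\in G} g\cdot A$, and if $\mu(A)=0$ then this saturation would be null, contradicting $\mu(X)=1$. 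With $\mu(A)>0$ in hand, disjointness and measure preservation give $1\ge\mu\bigl(\bigcup_{n\ge 0}f^n(A)\bigr)=\sum_{n\ge 0}\mu(A)=\infty$, which is the desired absurdity. So only disjointness and positivity are needed; the (false) exhaustion claim should be dropped.
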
 

Now we give an explicit example of a generalized Bratteli diagram 
such that the restriction of the tail equivalence relation onto the equivalence class of a non-empty clopen set is compressible. 

\begin{theorem}\label{Thm:no_full_inv_prob_mu}
For the one-sided generalized Bratteli diagram $B = (B,\omega)$ 
with the left-to-right ordering $\omega$ shown in Fig. 
\ref{Fig:GBD_Example_Downar_Karpel_2019}, we have
\begin{enumerate}
    \item The set $X_{max}$ is empty. 
    \item The Vershik map $\varphi_B : X_B \to X_B \setminus X_{min}$
is a homeomorphism.
\item There exists a non-empty clopen set $C$ such that its tail equivalence class $\mc R(C)$ is compressible. 
    \item There is no probability $\varphi_B$-invariant measure on $X_B$ that assigns positive values to all cylinder sets. 
\end{enumerate}
\end{theorem}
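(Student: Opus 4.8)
The plan is to dispatch assertions (1) and (2) by a direct structural reading of the diagram, then to obtain (4) as a formal consequence of (3), spending the real effort on (3).

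\textbf{Parts (1) and (2).} First I would read the left-to-right order off Figure~\ref{Fig:GBD_Example_Downar_Karpel_2019} and verify that every maximal edge is slanted from right to left, while the leftmost vertex on each level carries no maximal outgoing edge (exactly the mechanism of Example~\ref{Ex:noXminnoXmax}). Then along any candidate infinite maximal path the index of the range vertex must strictly decrease at every level, so after finitely many steps the path reaches the leftmost vertex and cannot be continued maximally; hence $X_{max}=\emptyset$, which is (1). Part (2) is then immediate: since $X_{max}=\emptyset$, Lemma~\ref{lem:Vershik cont} applied to $\varphi_B\colon X_B\setminus X_{max}\to X_B\setminus X_{min}$ reads as a homeomorphism $\varphi_B\colon X_B\to X_B\setminus X_{min}$.

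\textbf{Part (3).} Because $X_{max}=\emptyset$, every point has a $\varphi_B$-successor, so each tail class is a single Vershik orbit; such an orbit is order-isomorphic to $\mathbb{N}$ when it meets $X_{min}$ (its minimal path being the unique element without a predecessor) and to $\mathbb{Z}$ otherwise. On an $\mathbb{N}$-type class $L$ the successor map already compresses, since $\varphi_B(L)=L\setminus(L\cap X_{min})\subsetneq L$. The plan for (3) is therefore to select from the figure a non-empty cylinder $C$ sitting in the cone of the minimal edges and to establish compressibility of $\mathcal{R}$ restricted to $\mathcal{R}(C)$ through the Nadkarni lemma stated above, by showing that $\mathcal{R}(C)$ supports no $\mathcal{R}$-invariant probability measure. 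Concretely, by Theorem~\ref{BKMS_measures=invlimits} any tail-invariant measure is coded by vectors $p^{(n)}$ with $F_n^{T}p^{(n+1)}=p^{(n)}$, and the mass it assigns at level $n$ equals $\sum_{w}H^{(n)}_{w}p^{(n)}_{w}$, where $H^{(n)}=F_{n-1}\cdots F_{0}H^{(0)}$ by \eqref{lem vector H}. Using the explicit incidence matrix I would check that the heights $H^{(n)}_{w}$ grow without bound along the vertices carrying the mass of $\mathcal{R}(C)$, so that no positive consistent family $(p^{(n)})$ can keep this total finite; thus no invariant probability measure on $\mathcal{R}(C)$ exists, and the Nadkarni lemma yields compressibility.

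\textbf{Part (4).} This follows from (3) by restriction. If $\mu$ were a $\varphi_B$-invariant probability measure on $X_B$ assigning positive mass to every cylinder, then $\mu$ is tail-invariant (a $\varphi_B$-invariant measure is always $\mathcal{R}$-invariant), and since $\mathcal{R}(C)$ is $\mathcal{R}$-invariant, $\mu|_{\mathcal{R}(C)}$ is a finite, non-zero, $\mathcal{R}$-invariant measure with $\mu(\mathcal{R}(C))\ge\mu(C)>0$. Normalising it produces an $\mathcal{R}$-invariant probability measure on $\mathcal{R}(C)$, contradicting the compressibility from (3). The delicate point throughout is (3): the saturation of a clopen set necessarily contains $\mathbb{Z}$-type tail classes, because a cylinder always contains paths that are never eventually minimal, and on these the successor map $\varphi_B$ does not compress while a Borel choice of origin on each $\mathbb{Z}$-orbit is unavailable. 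This is precisely why I would route (3) through non-existence of an invariant probability measure and the Nadkarni lemma rather than through an explicit $\varphi_B$-compression; the genuine obstacle is the height-growth estimate $H^{(n)}_{w}\to\infty$ for the incidence matrix of Figure~\ref{Fig:GBD_Example_Downar_Karpel_2019}, forcing the total mass to diverge.
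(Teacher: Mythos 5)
Your parts (1), (2) and (4) are sound: (1) follows exactly as you say from the left-to-right order forcing maximal edges to slant right-to-left into the tree region, (2) is Lemma~\ref{lem:Vershik cont} with $X_{max}=\emptyset$, and the deduction of (4) from (3) via restriction, normalisation and the Nadkarni lemma is correct (the paper instead gets (4) directly from the inclusion $\varphi_B(X_B)=X_B\setminus X_{min}\subset X_B\setminus C$, which forces $\mu(C)=0$ for any invariant probability measure, and only then derives (3); either order of deduction is fine).

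The genuine gap is in your part (3), and it comes from a false structural premise. You assert that the saturation of any clopen set necessarily contains $\mathbb{Z}$-type tail classes ``because a cylinder always contains paths that are never eventually minimal,'' and this forces you away from the direct compression and into a measure-theoretic detour. But for the cylinder $C$ the paper chooses --- all paths starting at the leftmost vertex of $V_0$ --- the subdiagram under that vertex is a tree in which every edge is minimal in the full diagram (each tree vertex's other incoming edges come from vertices strictly to the right). Hence \emph{every} path in $C$ is an infinite minimal path, $C\subset X_{min}$, and consequently every tail class $L$ in $\mathcal{R}(C)$ contains a minimal path. Since $\varphi_B(L)\subseteq L\setminus X_{min}\subsetneq L$ and $\varphi_B$ is an injective Borel map preserving tail classes, the map $\varphi_B|_{\mathcal{R}(C)}$ is itself an explicit witness of compressibility --- precisely the $\mathbb{N}$-type mechanism you describe and then discard. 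No $\mathbb{Z}$-type classes occur in $\mathcal{R}(C)$ at all.

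Moreover, the substitute you propose would not close the argument. Unboundedness of the heights $H^{(n)}_w$ does not preclude a consistent positive family $(p^{(n)})$ with finite total mass: in essentially every generalized Bratteli diagram the heights diverge, yet probability tail-invariant measures may exist because $p^{(n)}_w$ decays like $\xi_w/\lambda^n$ --- the renewal-shift diagram of Proposition~\ref{prop:renew} is an explicit example with $H^{(n)}_w\to\infty$ and a unique probability tail-invariant measure. So the ``height-growth estimate'' you flag as the genuine obstacle is a mirage; what is actually needed, and what you are missing, is the combinatorial observation that the cone over the leftmost vertex is a tree of minimal paths, after which (3) is a two-line direct compression and no appeal to Theorem~\ref{BKMS_measures=invlimits} or to mass estimates is required.
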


\begin{proof} The one-sided diagram in Fig. 
\ref{Fig:GBD_Example_Downar_Karpel_2019} is a modified version of 
\cite[Example 7.2]{DownarowiczKarpel_2019}. We identify each vertex 
level $V_i$ with $\N$. Similar to 
Example~\ref{Ex:LROrdering_no_V.map}, it is easy to note that  
there are no infinite maximal paths in the diagram with respect to
the left-to-right order $\omega$. This shows (1). As a 
consequence, the Vershik map $\varphi_B$ corresponding to $\omega$ 
is a homeomorphism from $X_B$ to $X_B \setminus X_{min}$. Hence we get (2).

%maps $X_B$ continuously and is an onto map from $X_B$ to $X_B 
%\setminus X_{\min}$. 

To prove (4), we show that there exists a cylinder set 
$C \subset X_B$ such that $\varphi_B (X_B) = X_B \setminus C$. Let 
$C$ be the cylinder set formed by all paths that begin at the 
leftmost vertex of $V_0$. The sub-diagram corresponding to $C$ is a 
tree and consists only of infinite minimal paths. Thus, 
$\varphi_B$ maps continuously 
$X_B$ to $X_B \setminus C$. The property 
$$
\varphi_B (X_B) = X_B \setminus X_{min} \subset X_B 
\setminus C
$$ 
shows that there does not exist any probability $\varphi_B$-invariant measure $\mu$ such that $\mu(C) >0$. Denote by $\mathcal{R}(C)$ the tail equivalence class of $C$. Since every tail equivalence class $L$ in $\mathcal{R}(C)$ contains a minimal path, we have $\varphi_B(L) \subsetneq L$ for every $L$. Thus, (3) is also proved.
\end{proof}

\begin{figure}[hbt!]
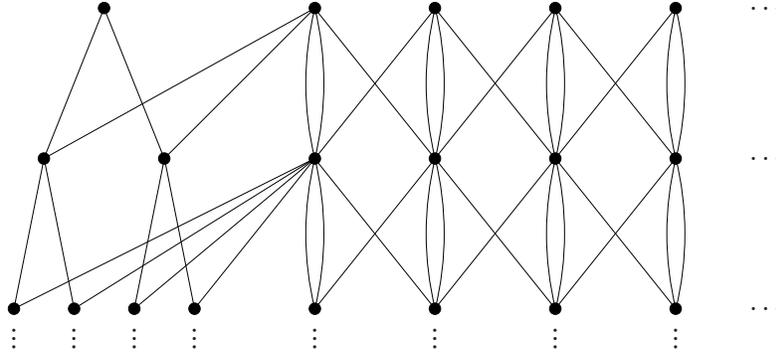

\unitlength = 0.4cm
\begin{center}
\begin{graph}(30,12)
\graphnodesize{0.4}

% Vertices of the first level
\roundnode{V11}(5,11)
%\nodetext{V11}(-0.6,0.4){$u$}
\roundnode{V12}(12,11)
%\nodetext{V12}(-0.6,0.4){$v$}
\roundnode{V13}(16,11)
%\nodetext{V13}(0.6,0.4){$w$}
\roundnode{V14}(20,11)
\roundnode{V15}(24,11)

%Vertices of the second level
\roundnode{V21}(3,6)
\roundnode{V22}(7,6)
\roundnode{V23}(12,6)%odometer
\roundnode{V24}(16,6)
\roundnode{V25}(20,6)
\roundnode{V26}(24,6)
%\freetext(19,6){$w$}

% Edges of the second level
\bow{V23}{V12}{-0.06}%odometer
\bow{V23}{V12}{0.06}%odometer
%\freetext(14.3,8.5){0}
%\freetext(15.7,8.5){1}

\edge{V21}{V11}
%\edgetext{V21}{V11}{0}
\edge{V22}{V11}
%\edgetext{V22}{V11}{0}
\edge{V21}{V12}
%\edgetext{V21}{V12}{1}
\edge{V22}{V12}
%\edgetext{V22}{V12}{1}

\edge{V24}{V12}
%\edgetext{V24}{V12}{0}
\bow{V24}{V13}{-0.06}%odometer
\bow{V24}{V13}{0.06}%odometer
\bow{V25}{V14}{-0.06}%odometer
\bow{V25}{V14}{0.06}%odometer
\bow{V26}{V15}{-0.06}%odometer
\bow{V26}{V15}{0.06}%odometer
%\edgetext{V24}{V13}{1}
%\edgetext{V25}{V12}{0}
\edge{V25}{V13}
\edge{V26}{V14}
\edge{V23}{V13}
\edge{V24}{V14}
\edge{V25}{V15}

%\edgetext{V25}{V13}{1}
%\edgetext{V23}{V12}{0}

%Vertices of the third level

\roundnode{V31}(2,1)
\roundnode{V32}(4,1)
\roundnode{V33}(6,1)
\roundnode{V34}(8,1)

\roundnode{V35}(12,1)

\roundnode{V36}(16,1)
\roundnode{V37}(20,1)
\roundnode{V38}(24,1)

\freetext(2,0){$\vdots$}
\freetext(4,0){$\vdots$}
\freetext(6,0){$\vdots$}
\freetext(8,0){$\vdots$}
\freetext(12,0){$\vdots$}
\freetext(16,0){$\vdots$}
\freetext(20,0){$\vdots$}
\freetext(24,0){$\vdots$}
\freetext(27,11){$\ldots$}
\freetext(27,6){$\ldots$}
\freetext(27,1){$\ldots$}
% Edges of the third level

\bow{V35}{V23}{-0.06}
\bow{V35}{V23}{0.06}
%\freetext(14.3,3.5){0}
%\freetext(15.7,3.5){1}

\edge{V31}{V21}
%\edgetext{V31}{V21}{0}
\edge{V31}{V23}
%\edgetext{V31}{V23}{1}

\edge{V32}{V21}
%\edgetext{V32}{V21}{0}
\edge{V32}{V23}
%\edgetext{V32}{V23}{1}

\edge{V33}{V22}
%\edgetext{V33}{V22}{0}
\edge{V33}{V23}
%\edgetext{V33}{V23}{1}

\edge{V34}{V22}
%\edgetext{V34}{V22}{0}
\edge{V34}{V23}
%\edgetext{V34}{V23}{1}

\edge{V35}{V24}
%\edgetext{V36}{V24}{1}
\edge{V36}{V23}
\edge{V36}{V25}
%\edgetext{V36}{V23}{0}

\edge{V37}{V24}
\edge{V37}{V26}
%\edgetext{V37}{V24}{1}

\edge{V38}{V25}
%\edgetext{V38}{V25}{1}

\bow{V24}{V36}{-0.06}%odometer
\bow{V24}{V36}{0.06}%odometer
\bow{V25}{V37}{-0.06}%odometer
\bow{V25}{V37}{0.06}%odometer
\bow{V26}{V38}{-0.06}%odometer
\bow{V26}{V38}{0.06}%odometer

\end{graph}
\caption{ A diagram with the left-to-right ordering, and no infinite maximal paths.}\label{Fig:GBD_Example_Downar_Karpel_2019}
\end{center}
\end{figure}

\begin{remark} Observe that for the classical (standard) Bratteli 
diagram shown in Fig.~\ref{Fig:Example_Downar_Karpel_2019}, the 
conclusion (4) of Theorem~\ref{Thm:no_full_inv_prob_mu} also 
holds. To see this, we extend the Vershik map to the entire path 
space by mapping the unique maximal path to the unique minimal path 
in the $2$-odometer (i.e. the subdiagram corresponding to the 
vertex $v$ in Fig.~\ref{Fig:Example_Downar_Karpel_2019}). 

Let $C$ be the cylinder set defined by the edge $[v_0, u]$. Then it 
is easy to see that $\varphi_B(X_B) = X_B \setminus C$. Hence, 
every probability $\varphi_B$-invariant measure $\mu$ must satisfy 
the condition $\mu(C) = 0$ which imply $(4)$. There is a unique 
probability invariant measure on $X_B$ sitting on the minimal 
component of the tail equivalence relation, the 2-odometer 
corresponding to the vertex $v$.
\end{remark}

\begin{figure}[hbt!]
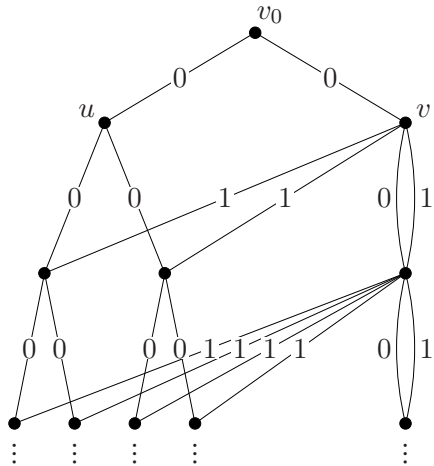

\unitlength = 0.4cm
\begin{center}
\begin{graph}(17,15)
\graphnodesize{0.4}
% The top vertex
\roundnode{V0}(10,14)
\freetext(10.5,14.6){$v_0$}
% Vertices of the first level
\roundnode{V11}(5,11)
\nodetext{V11}(-0.6,0.4){$u$}
\roundnode{V12}(15,11)
\nodetext{V12}(0.6,0.4){$v$}
%\roundnode{V13}(25,11)
%\nodetext{V13}(0.6,0.4){$w$}

% Edges of the first level
\edge{V11}{V0}
\edgetext{V11}{V0}{0}
\edge{V12}{V0}
\edgetext{V12}{V0}{0}
%\edge{V13}{V0}
%\edgetext{V13}{V0}{0}

%Vertices of the second level
\roundnode{V21}(3,6)
\roundnode{V22}(7,6)
\roundnode{V23}(15,6)%odometer
%\roundnode{V24}(23,6)
%\roundnode{V25}(27,6)

%\freetext(19,6){$w$}

% Edges of the second level
\bow{V23}{V12}{-0.06}%odometer
\bow{V23}{V12}{0.06}%odometer
\freetext(14.3,8.5){0}
\freetext(15.7,8.5){1}

\edge{V21}{V11}
\edgetext{V21}{V11}{0}
\edge{V22}{V11}
\edgetext{V22}{V11}{0}
\edge{V21}{V12}
\edgetext{V21}{V12}{1}
\edge{V22}{V12}
\edgetext{V22}{V12}{1}

%\edge{V24}{V12}
%\edgetext{V24}{V12}{0}
%\edge{V24}{V13}
%\edgetext{V24}{V13}{1}
%\edge{V25}{V12}
%\edgetext{V25}{V12}{0}
%\edge{V25}{V13}
%\edgetext{V25}{V13}{1}
%\edgetext{V23}{V12}{0}

%Vertices of the third level

\roundnode{V31}(2,1)
\roundnode{V32}(4,1)
\roundnode{V33}(6,1)
\roundnode{V34}(8,1)

\roundnode{V35}(15,1)

%\roundnode{V36}(22,1)
%\roundnode{V37}(24,1)
%\roundnode{V38}(26,1)
%\roundnode{V39}(28,1)

\freetext(2,0){$\vdots$}
\freetext(4,0){$\vdots$}
\freetext(6,0){$\vdots$}
\freetext(8,0){$\vdots$}
\freetext(15,0){$\vdots$}
%\freetext(22,0){$\vdots$}
%\freetext(24,0){$\vdots$}
%\freetext(26,0){$\vdots$}
%\freetext(28,0){$\vdots$}

% Edges of the third level

\bow{V35}{V23}{-0.06}
\bow{V35}{V23}{0.06}
\freetext(14.3,3.5){0}
\freetext(15.7,3.5){1}

\edge{V31}{V21}
\edgetext{V31}{V21}{0}
\edge{V31}{V23}
\edgetext{V31}{V23}{1}

\edge{V32}{V21}
\edgetext{V32}{V21}{0}
\edge{V32}{V23}
\edgetext{V32}{V23}{1}

\edge{V33}{V22}
\edgetext{V33}{V22}{0}
\edge{V33}{V23}
\edgetext{V33}{V23}{1}

\edge{V34}{V22}
\edgetext{V34}{V22}{0}
\edge{V34}{V23}
\edgetext{V34}{V23}{1}

%\edge{V36}{V24}
%\edgetext{V36}{V24}{1}
%\edge{V36}{V23}
%\edgetext{V36}{V23}{0}

%\edge{V37}{V24}
%\edgetext{V37}{V24}{1}
%\edge{V37}{V23}
%\edgetext{V37}{V23}{0}

%\edge{V38}{V25}
%\edgetext{V38}{V25}{1}
%\edge{V38}{V23}
%\edgetext{V38}{V23}{0}

%\edge{V39}{V25}
%\edgetext{V39}{V25}{1}
%\edge{V39}{V23}
%\edgetext{V39}{V23}{0}

\end{graph}
\caption{Illustration of the result in Theorem \ref{Thm:no_full_inv_prob_mu} via a standard Bratteli diagram.}\label{Fig:Example_Downar_Karpel_2019}
\end{center}
\end{figure} 

Proposition \ref{Prop:no_meas_fin_cyl_sets}, gives an example of 
a stationary generalized Bratteli diagram such that there is no 
tail-invariant measure with finite values on clopen sets. 
We emphasize that this example can be generalized to a class of 
stationary diagrams with similar property. These diagrams have an 
incidence matrix of the form given by \eqref{matrix_no_prob} where 
the diagonal entries form an increasing  sequence of positive 
integers. 

\begin{prop}\label{Prop:no_meas_fin_cyl_sets}
Let $B = B(F)$ be a one-sided generalized stationary Bratteli 
diagram as shown in Figure \ref{Fig:no measure} and given by 
$\mathbb{N} \times \mathbb{N}$ incidence matrix 
\begin{equation}\label{matrix_no_prob}
    F = \begin{pmatrix}
2 & 1 & 0 & 0 & \ldots\\
0 & 3 & 1 & 0 & \ldots\\
0 & 0 & 4 & 1 & \ldots\\
0 & 0 & 0 & 5 & \ldots\\
\vdots & \vdots & \vdots & \vdots & \ddots\\
\end{pmatrix}
\end{equation} There does not exist any tail-invariant measure on 
$X_B$ that assigns finite values to cylinder sets.
\end{prop}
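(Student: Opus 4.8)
The plan is to invoke the description of tail-invariant measures from Theorem \ref{BKMS_measures=invlimits}. Suppose, toward a contradiction, that $\mu$ is a tail-invariant (non-atomic, positive) measure on $X_B$ that is finite on all cylinder sets, and let $p^{(n)} = (p^{(n)}_w)_{w \in \mathbb{N}}$ be the associated sequence of non-negative vectors, $p^{(n)}_w = \mu(X_w^{(n)}(\ov e))$, which by part (1) of that theorem satisfies $F^{T} p^{(n+1)} = p^{(n)}$ for every $n \in \mathbb{N}_0$. The structural feature I would exploit is that $F$ is upper bidiagonal, so $F^{T}$ is \emph{lower} bidiagonal; concretely the relation reads $p^{(n)}_1 = 2\, p^{(n+1)}_1$ and $p^{(n)}_w = (w+1)\,p^{(n+1)}_w + p^{(n+1)}_{w-1}$ for $w \geq 2$. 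Solving these triangular equations for the next level gives the forward recursion $p^{(n+1)}_1 = \tfrac12 p^{(n)}_1$ and
\[
p^{(n+1)}_w = \frac{1}{w+1}\left(p^{(n)}_w - p^{(n+1)}_{w-1}\right),
\]
which lets me compute each coordinate of $p^{(n+1)}$ from $p^{(n)}$ one vertex at a time.

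The heart of the argument is an induction on $w$ proving $p^{(n)}_w = 0$ for all $w$ and all $n$. For the base case, the leftmost equation forces $p^{(n)}_1 = p^{(0)}_1\, 2^{-n}$; substituting this into the equation at vertex $2$ yields a first-order linear recurrence whose general solution has the form $p^{(n)}_2 = C\, 3^{-n} - p^{(0)}_1\, 2^{-n}$. Since $2^{-n}$ decays strictly more slowly than $3^{-n}$, the right-hand side is eventually negative unless $p^{(0)}_1 = 0$, so non-negativity forces $p^{(0)}_1 = 0$ and hence $p^{(n)}_1 \equiv 0$. The inductive step is identical in spirit: assuming $p^{(n)}_j \equiv 0$ for all $j < k$ and all $n$, the equation at vertex $k$ gives $p^{(n)}_k = p^{(0)}_k\,(k+1)^{-n}$, and feeding this into the equation at vertex $k+1$ produces
\[
p^{(n)}_{k+1} = C\,(k+2)^{-n} - p^{(0)}_k\,(k+1)^{-n}.
\]
Because the diagonal entries are strictly increasing, $(k+1)^{-n}$ again dominates $(k+2)^{-n}$, so $p^{(n)}_{k+1} \geq 0$ for all $n$ forces $p^{(0)}_k = 0$, which gives $p^{(n)}_k \equiv 0$ and completes the step.

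Once $p^{(0)}_w = 0$ for every $w$, I would conclude by observing that the sets of infinite paths starting at the distinct vertices of $V_0$ partition $X_B$, whence $\mu(X_B) = \sum_{w} p^{(0)}_w = 0$; thus $\mu$ is the zero measure, contradicting that $\mu$ is a genuine measure. The main obstacle, and the only place where genuine work is required, is the asymptotic comparison of the two geometric rates $(k+1)^{-n}$ and $(k+2)^{-n}$ in the solved recurrence: this is exactly where the hypothesis that the diagonal entries form a \emph{strictly increasing} sequence is used, and the same computation (with $(k+1),(k+2)$ replaced by consecutive diagonal entries $d_k < d_{k+1}$ and the particular solution scaled by $(d_{k+1}-d_k)^{-1}$) establishes the claim for the whole class of matrices of the form \eqref{matrix_no_prob}. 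The remaining bookkeeping — identifying the particular solution proportional to the slower geometric rate — is routine.
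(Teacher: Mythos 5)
Your proof is correct: the recursion $p^{(n)}_1 = 2p^{(n+1)}_1$, $p^{(n)}_w = (w+1)p^{(n+1)}_w + p^{(n+1)}_{w-1}$ is exactly what $F^T p^{(n+1)} = p^{(n)}$ gives for the matrix \eqref{matrix_no_prob}, your solved forms $p^{(n)}_k = p^{(0)}_k (k+1)^{-n}$ and $p^{(n)}_{k+1} = C(k+2)^{-n} - p^{(0)}_k (k+1)^{-n}$ check out (with $C = p^{(0)}_{k+1}+p^{(0)}_k$ fixed at $n=0$, finite by hypothesis), and the dominance of $(k+1)^{-n}$ over $(k+2)^{-n}$ legitimately forces $p^{(0)}_k = 0$; the only cosmetic caveat is that your final contradiction presumes $\mu \neq 0$, which is what the proposition implicitly asserts. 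However, your route is genuinely different from the paper's. The paper argues measure-theoretically: it takes the minimal vertex $m$ with $\mu(C_m) > 0$, observes that the vertical paths through $m$ form an $(m+1)$-odometer so tail-invariance forces every cylinder ending at $m \in V_n$ to have measure $(m+1)^{-n}$, and then lower-bounds $\mu(C_{m+1})$ by summing over the $(m+2)^{n-1}$ finite paths that stay at vertex $m+1$ for $n-1$ levels before dropping into $m$, getting $\mu(C_{m+1}) \geq \sum_{n\geq 1} (m+2)^{n-1}/(m+1)^n = \infty$ — so the paper's contradiction is ``some level-$0$ cylinder has infinite measure,'' while yours is ``some cylinder value goes negative.'' Both hinge on the same comparison of consecutive diagonal rates: in the paper it appears as the divergence ratio $(m+2)/(m+1) > 1$, in yours as the decay comparison of the two geometric modes of the recurrence. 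What each buys: the paper's argument is shorter, avoids solving recurrences, and directly yields the dichotomy recorded in the remark following the proposition (if $0 < p^{(0)}_i < \infty$ then $p^{(0)}_j = 0$ for $j < i$ and $p^{(0)}_j = \infty$ for $j > i$); your argument, by contrast, is purely linear-algebraic on non-negative solutions of $F^T p^{(n+1)} = p^{(n)}$ — which for this lower-bidiagonal $F^T$ is a deterministic forward recursion — so it establishes the slightly stronger fact that the only non-negative compatible sequence is identically zero, hence that the cones $C^{(n)}_\infty$ of Theorem \ref{BKMS_measures=invlimits} contain only the zero vector (precisely the fact asserted in the remark after that theorem), and it generalizes verbatim, as you note, to any matrix of this shape with strictly increasing diagonal $d_1 < d_2 < \cdots$, with the particular solution scaled by $(d_{k+1}-d_k)^{-1}$.
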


\begin{figure}[hbt!]
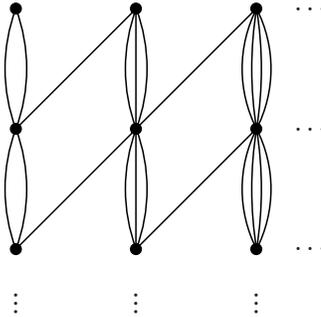

\unitlength=.8cm
\begin{graph}(7,6)
% \graphnodesize{0.2}
% \roundnode{V0}(3,6)
%  %\nodetext{V0}(-1,0){$V_0$}
 \roundnode{V11}(2,5)
 %\nodetext{V11}(-0.7,0){$w_1^{(0)}$}
  %\nodetext{V12}(0.7,0){$w_2^{(0)}$}
 \roundnode{V12}(4,5)
 \roundnode{V13}(6,5) 
  % The second level vertices
 \roundnode{V21}(2,3)
 \roundnode{V22}(4,3)
 \roundnode{V23}(6,3)
  %\nodetext{V21}(-0.7,0){$w_1^{(1)}$}
 % \nodetext{V22}(0.7,0){$w_2^{(1)}$}
 % The third level vertices
 \roundnode{V31}(2,1)
 \roundnode{V32}(4,1)
 \roundnode{V33}(6,1)
 % \nodetext{V31}(-0.7,0){$w_1^{(2)}$}
  %\nodetext{V32}(0.7,0){$w_2^{(2)}$}
  %
 %
 % EDGES
 \graphlinewidth{0.025}
% % First level
%  \edge{V0}{V11}
%  \edge{V0}{V12}

 % Second level
 \bow{V21}{V11}{0.09}
  \bow{V21}{V11}{-0.09}
    \edge{V21}{V12}
     
 \bow{V22}{V12}{0.09}
 \bow{V22}{V12}{-0.09}
 \edge{V22}{V12}
 
 \bow{V23}{V13}{0.12}
  \bow{V23}{V13}{-0.12}
   \bow{V23}{V13}{0.04}
  \bow{V23}{V13}{-0.04}
 \edge{V22}{V13}

 %third level
 \bow{V31}{V21}{0.09}
 \bow{V31}{V21}{-0.09}
   \edge{V31}{V22}
%     \edge{V32}{V22}[\graphlinecolour(1,0,0)]
 \bow{V32}{V22}{0.09}
 \bow{V32}{V22}{-0.09}
 \edge{V32}{V22}

  \bow{V33}{V23}{0.12}
  \bow{V33}{V23}{-0.12}
   \bow{V33}{V23}{0.04}
  \bow{V33}{V23}{-0.04}
 \edge{V32}{V23}
 
    \freetext(6.9,5){$\ldots$}
  \freetext(6.9,3){$\ldots$}
    \freetext(6.9,1){$\ldots$}
    \freetext(2,0.1){$\vdots$}
  \freetext(4,0.1){$\vdots$}
    \freetext(6,0.1){$\vdots$}
%\freetext(3,0.1){.\,.\,.\,.\,.\,.\,.\,.\,.\,.\,.\,.\,.\,.\,.\,.\,.\,.\,.\,.\,..\,.\,.\,.\,.\,.\,.\,.}
\end{graph}
\caption{A Bratteli diagram with no finite ergodic invariant measure.}\label{Fig:no measure}
\end{figure}

\begin{proof} As above, we identify vertices at each level with 
natural numbers. For $k \in \mathbb{N}$, denote by $C_k$ the 
cylinder set 
corresponding to the vertex $k$ on level $V_0$, i.e. 
$$
C_k = \{x \in X_B : s(x) = k \}, \ \ \  k \in V_0.
$$ 
Recall that  $s:E \rightarrow V$ is the source map. Suppose that 
there exists a non-zero tail-invariant measure $\mu$ on $X_B$ 
which assigns finite values to cylinder sets. Denote by
$$
m = \min\{k \in \mathbb{N} : \mu(C_k) > 0\}.
$$ 
By our assumption, the minimum exists. Normalize the measure $\mu$ 
such that $\mu(C_m) = 1$. Let $\ov e =(e_0,\ldots,e_n)$ be a finite 
path of length $(n+1)$ such that the range of $\ov e$ is the vertex 
labeled by $m$ on level $V_n$, i.e., $r(\ov e) = m \in V_n$. 
As before, we denote by $[\ov e]$ the corresponding cylinder set. 
It follows from the definition of $F$ that the path space $X_B$ 
contains countably many odometers: the set of vertical paths going 
through a vertex $i$ is an $(i+1)$-odometer. By tail invariance of 
$\mu$, all cylinder sets in the $(m + 1)$-odometer have the same 
measure as the set $[\ov e]$ has: 
$$
\mu([\ov e]) = \frac{1}{(m+1)^n}.
$$
Thus we have 
$$
\mu(C_{m+1}) \geq \sum_{n = 1}^{\infty} \frac{(m+2)^{n-1}}{(m+1)^n}
= \infty,
$$ 
and this is a contradiction. 
\end{proof}

\begin{remark} Let $B(F)$ be a stationary Bratteli diagram as in 
Proposition \ref{Prop:no_meas_fin_cyl_sets}, and let $\mu$ be any 
tail-invariant measure. Recall that the vector $p^{(0)}$
(see \eqref{eq:def_p_n}) consists of the values of the measure 
$\mu$ of cylinder sets corresponding to the level $V_0$.  It 
follows from the proof of  Proposition 
\ref{Prop:no_meas_fin_cyl_sets} that if, for some vertex $i\in 
V_0$, we have 
$$
0 < p_i^{(0)} < \infty,
$$ 
then $p_j^{(0)} = 0$ for 
every $j < i$ and $p_j^{(0)} = \infty$ for every $j > i$. Since the 
diagram is stationary, the same property holds for every level 
$V_n$; $n \in \mathbb{N}_0$.
\end{remark}

\section{Uniqueness of tail-invariant measures  
for stationary Bratteli diagrams }\label{Section:stat_GBD}

\subsection{Tail-invariant measures and Perron-Frobenius 
eigenvectors}

In \cite{Bezuglyi_Jorgensen_2021}, the authors used the
Perron-Frobenius (P-F) 
theory for infinite matrices to provide a description of tail 
invariant measures on the path space of a class of stationary 
generalized Bratteli diagrams. This class consists of diagrams 
with irreducible, aperiodic, and recurrent incidence matrices. 
for the reader's convenience, we provide a brief description of 
the P-F theory for infinite matrices in 
Appendix \ref{APP:Perron-Frobenius_Theory}. The results formulated 
in Appendix \ref{APP:Perron-Frobenius_Theory} are mostly taken from 
Chapter $7$ of the book \cite{Kitchens1998}. 
The foundations of the P-F theory for infinite matrices were laid 
down in the 1960s in a series of articles by D. Vere-Jones
\cite{VereJones_1967}, \cite{VereJones_1968}, 
\cite{VereJones_1962}. 
In this section, we use results from 
Appendix \ref{APP:Perron-Frobenius_Theory}, in particular, 
Theorem\ref{Thm:Generalized_Perron_Frobenius} and Theorem 
\ref{Thm:Pos_recc} to prove a criterion for the uniqueness of the 
tail-invariant measure on the path space of a stationary 
generalized Bratteli diagram $B(F)$. Recall that a matrix is 
called countably infinite if its rows and columns are indexed by 
a countable set. If $F$ is indexed by $\N$, then the diagram 
$B(F)$ is one-sided infinite; and if $F$ is indexed by $\Z$, then 
the diagram is two-sided infinite. 
The main results in this section hold for both kinds of diagrams. 
We provide examples (see Subsection \ref{Subsec:Examples}) of both 
kinds of diagrams.

We will keep the following notations: $A$ is the transpose of the 
infinite
incidence matrix $F$ of the generalized Bratteli diagram $B(F)$. 
When it exists (see Theorem\ref{Thm:Generalized_Perron_Frobenius}), 
we will denote by $\lambda$ the Perron eigenvalue and by 
$\xi = (\xi_v)$, $\eta = (\eta_v)$ the right and left eigenvectors 
for $A$, i.e., $A\xi = \lambda \xi$ and $\eta A = \lambda\eta$. 
Note that all entries of $\xi$ and $\eta$ are positive. 

In this section, we will work with an ordered stationary 
generalized Bratteli diagram $B(F) = B(V, E,>)$ where $>$ denotes 
a fixed order. We will assume that the order $>$ gives rise to a 
Vershik map on the path space of the diagram. As before we will 
denote the corresponding dynamical system by $(X_B, \varphi_B)$. We 
recall the following result (proved in 
\cite{BezuglyiJorgensen2022}) which gives an explicit formula 
for a tail-invariant measure $\mu$ on the path space of 
a stationary generalized Bratteli diagram.

\begin{theorem} [Theorem 2.20, \cite{BezuglyiJorgensen2022}] 
\label{Thm:inv1} 
Let $B(F) = B(V,E)$ be a stationary generalized 
Bratteli diagram such that the matrix $A = F^T$ is infinite, 
irreducible, aperiodic, and recurrent. Let $\xi = (\xi_v)$ be a 
Perron-Frobenius right eigenvector for $A$, i.e., $A \xi = 
\lambda \xi$, $\xi_v >0$.
\begin{enumerate}
    \item There exists a tail-invariant measure $\mu$ on 
    the path space $X_B$, satisfying the following property:
    if $\ol e(w, v)$ is a  
    finite path that begins at $w \in V_0$ and ends at $v \in 
    V_n$, $n \in \N$, then 
 \be\label{eq inv meas left}
 \mu([\ol e(w, v)]) = \frac{\xi_v}{\lambda^{n}}, 
 \ee  
 where $[\ol e(w, v)]$ is the corresponding cylinder set. 
 %In other words, the $(X_B,\varphi_B,\mu)$ is a measure-preserving dynamical system. 
 
 \item The measure $\mu$ is finite if and only if the 
Perron eigenvector $\xi = (\xi_v)$ has the property 
$\sum_{v} \xi_v < \infty$.

\end{enumerate}
\end{theorem}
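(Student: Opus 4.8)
The plan is to read the cylinder-set values directly off the Perron eigenvector and then invoke Theorem~\ref{BKMS_measures=invlimits}. Set $p^{(n)} := \lambda^{-n}\xi$ for $n \in \N_0$. Since the recurrence hypothesis guarantees (through Theorem~\ref{Thm:Generalized_Perron_Frobenius}) that $\lambda$ is finite and $\xi>0$, each $p^{(n)}$ is a strictly positive vector with finite entries. The eigenvalue equation immediately yields the backward relation required by \eqref{eq:formula_p_n}, because $A p^{(n+1)} = \lambda^{-(n+1)}A\xi = \lambda^{-n}\xi = p^{(n)}$. Moreover each $p^{(n)}$ belongs to $C_\infty^{(n)}$: in the stationary case $C_m^{(n)} = A^m(\R_+^{V_{n+m}})$, and since $\lambda^{-(n+m)}\xi \in \R_+^{V_{n+m}}$ and $A^m\big(\lambda^{-(n+m)}\xi\big) = \lambda^{-n}\xi = p^{(n)}$, we get $p^{(n)} \in C_m^{(n)}$ for every $m$, hence $p^{(n)} \in C_\infty^{(n)}$.

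With these vectors in hand, part~(2) of Theorem~\ref{BKMS_measures=invlimits} produces a unique tail-invariant measure $\mu$ with $\mu\big(X_v^{(n)}(\ov e)\big) = p_v^{(n)} = \xi_v/\lambda^n$, which is exactly \eqref{eq inv meas left}; this proves~(1). If one prefers to avoid the cone machinery, the same measure can be defined by declaring \eqref{eq inv meas left} on cylinder sets and checking additivity under refinement by hand: the floor $X_v^{(n)}(\ov e)$ is partitioned into the floors $X_{r(e)}^{(n+1)}(\ov e,e)$ over edges $e$ with $s(e)=v$, and $\sum_{e:\,s(e)=v}\xi_{r(e)}/\lambda^{n+1} = \lambda^{-(n+1)}(A\xi)_v = \xi_v/\lambda^n$, so the prescribed values are consistent. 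Each value is finite, and the remark following Theorem~\ref{BKMS_measures=invlimits} then gives a unique extension to a Borel measure.

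For part~(2) I would compute the total mass one level at a time. Partition $X_B = \bigsqcup_{w\in V_n}X_w^{(n)}$; since $X_w^{(n)}$ is the union of $H_w^{(n)} = |E(V_0,w)|$ floors each of measure $\xi_w/\lambda^n$, we obtain $\mu(X_B) = \lambda^{-n}\sum_{w\in V_n}H_w^{(n)}\xi_w$. By \eqref{lem vector H} and $H^{(0)}_w = 1$ we have $H_w^{(n)} = \sum_{u\in V_0}(F^n)_{wu}$, and using $(F^n)_{wu} = (A^n)_{uw}$ together with $A^n\xi = \lambda^n\xi$,
\[
\mu(X_B) = \frac{1}{\lambda^n}\sum_{u\in V_0}\sum_{w\in V_n}(A^n)_{uw}\,\xi_w = \frac{1}{\lambda^n}\sum_{u\in V_0}(A^n\xi)_u = \sum_{u\in V_0}\xi_u .
\]
Thus $\mu(X_B) = \sum_v\xi_v$, so $\mu$ is finite precisely when $\sum_{v}\xi_v<\infty$. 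The only delicate points are confirming that $\xi$ lies in every iterated cone $C_m^{(n)}$ (which rests on $\xi$ being a $\lambda$-eigenvector, hence a rescaled fixed vector of $A$) and justifying the interchange of the two infinite sums above, which is legitimate since all terms are non-negative (Tonelli). Both the finiteness of $\lambda$ and the strict positivity of $\xi$, on which the argument depends, come from the recurrence assumption via the infinite Perron-Frobenius theory recalled in the appendix.
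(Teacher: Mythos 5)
Your proposal is correct: the eigenvector relation $A\,(\lambda^{-(n+1)}\xi)=\lambda^{-n}\xi$ verifies \eqref{eq:formula_p_n}, the cone membership $p^{(n)}\in C^{(n)}_\infty$ follows exactly as you argue in the stationary case, and the total-mass computation $\mu(X_B)=\sum_{u\in V_0}\xi_u$ via $H^{(n)}=F^nH^{(0)}$ and Tonelli settles part~(2). The paper itself gives no proof of Theorem~\ref{Thm:inv1} (it is quoted from \cite{BezuglyiJorgensen2022}), and your derivation from Theorem~\ref{BKMS_measures=invlimits} is precisely the standard mechanism that result is set up for, so this is essentially the intended argument rather than a genuinely different route.
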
 

Now we show that if the incidence matrix of a stationary 
generalized Bratteli diagram is  \textit{positive recurrent} (in 
addition to the properties described in Theorem \ref{Thm:inv1}), 
then the invariant measure given by \eqref{eq inv meas left} is 
unique. We consider  the cases of finite and $\sigma$-finite 
measures separately. In the $\sigma$-finite case (Theorem 
\ref{Thm:Unique sigma finite}), we will work with the assumption 
that the dynamical system is conservative. Examples \ref{Ex:renewal_subshift}, \ref{Ex:BobokBruin} illustrate Theorems \ref{Thm:Unique finite}, \ref{Thm:Unique sigma finite}. 

\begin{thm}\label{Thm:Unique finite}
Let $B(F) = B(V,E,>)$ be an ordered stationary generalized 
Bratteli diagram 
such that the matrix $A = F^T$ is infinite, irreducible, aperiodic, 
and positive recurrent. Let $\xi = (\xi_v)$ be a Perron-Frobenius 
right eigenvector for $A$ such that 
$\sum_{u\in V_0} \xi_u = 1$. Then the measure $\mu$ given in \eqref{eq inv meas left} is the unique probability  
$\varphi_B$-invariant measure that takes positive values 
on cylinder sets. 
\end{thm}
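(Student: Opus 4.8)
The plan is to first verify that the measure $\mu$ defined by \eqref{eq inv meas left} is genuinely a probability measure, and only then to establish uniqueness. Decomposing $X_B = \bigsqcup_{v \in V_n} X_v^{(n)}$ into Kakutani-Rokhlin towers and using that the tower over $v \in V_n$ has $H_v^{(n)}$ floors, each of measure $\xi_v/\lambda^n$, I would compute $\mu(X_B) = \lambda^{-n}\sum_{v \in V_n} H_v^{(n)}\xi_v$. Since the diagram is stationary, \eqref{lem vector H} gives $H^{(n)} = F^n H^{(0)}$ with $H^{(0)} = \mathbf 1$, so $H_v^{(n)} = \sum_w (A^n)_{wv}$ (recall $A = F^T$, hence $(F^n)_{vw} = (A^n)_{wv}$). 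By Tonelli, $\sum_{v} H_v^{(n)}\xi_v = \sum_w (A^n\xi)_w = \lambda^n \sum_w \xi_w$, whence $\mu(X_B) = \sum_{w \in V_0}\xi_w = 1$ by the normalization. The same computation identifies $\mu$ as tail-invariant (Theorem \ref{Thm:inv1}); since the sets $X_{max}$ and $X_{min}$ are $\mu$-null, $\mu$ is also $\varphi_B$-invariant, so it is a legitimate candidate.

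For uniqueness, I would take any probability $\varphi_B$-invariant measure $\nu$ assigning positive values to all cylinder sets. As every $\varphi_B$-invariant measure is tail-invariant, Theorem \ref{BKMS_measures=invlimits} attaches to $\nu$ a sequence of strictly positive vectors $q^{(n)} = (\nu(X_v^{(n)}(\ov e)))_{v \in V_n} \in C_\infty^{(n)}$ satisfying $A q^{(n+1)} = q^{(n)}$, equivalently $q^{(0)} = A^n q^{(n)}$ for every $n$. The goal is to prove $q^{(0)} \in \R\,\xi$. Pairing the identity $A q^{(n+1)} = q^{(n)}$ with the left eigenvector $\eta$ and using $\eta A = \lambda \eta$, I would first record the normalization $\langle \eta, q^{(n)}\rangle = \lambda^{-n}\langle \eta, q^{(0)}\rangle$, where $\langle \eta, q^{(0)}\rangle =: c_\nu$ is a finite constant.

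The heart of the argument will be the positive-recurrence limit theorem (Theorem \ref{Thm:Pos_recc}): for an irreducible, aperiodic, positive recurrent $A$ one has $\langle \eta,\xi\rangle = \sum_v \eta_v\xi_v < \infty$ and $\lambda^{-n}(A^n)_{ij} \to \xi_i\eta_j/\langle\eta,\xi\rangle$. Writing the exact relation $q_i^{(0)} = \sum_j (A^n)_{ij} q_j^{(n)} = \sum_j \lambda^{-n}(A^n)_{ij}\,(\lambda^n q_j^{(n)})$ and letting $n \to \infty$, the factors $\lambda^{-n}(A^n)_{ij}$ converge to $\xi_i\eta_j/\langle\eta,\xi\rangle$ while the weights $\lambda^n q_j^{(n)}$ keep constant total $\eta$-mass $c_\nu$. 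If the passage to the limit is legitimate, this yields $q_i^{(0)} = (c_\nu/\langle\eta,\xi\rangle)\,\xi_i$ for every $i$, i.e. $q^{(0)} = c\,\xi$; consistency then forces $q^{(n)} = c\,\xi/\lambda^n = c\,p^{(n)}$, so $\nu = c\mu$, and comparing total masses (both $1$) gives $c = 1$ and $\nu = \mu$.

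The step I expect to be the main obstacle is exactly this interchange of the limit with the infinite sum over $j$, since the weights $\lambda^n q_j^{(n)}$ themselves vary with $n$. Making it rigorous will require a \emph{uniform} form of the strong ratio limit property, controlling $(A^n)_{ij}/(\lambda^n\eta_j)$ uniformly in $j$ together with tail bounds on the $\eta$-mass of $q^{(n)}$; this is precisely where positive recurrence (the finiteness of $\langle\eta,\xi\rangle$ and the attendant renewal control from Theorem \ref{Thm:Generalized_Perron_Frobenius}) is indispensable, and for a merely null recurrent matrix the limit degenerates and uniqueness fails. An alternative route that sidesteps the uniform estimate is to pass to the stochastic matrix $\widehat P_{ij} = A_{ij}\xi_j/(\lambda\xi_i)$ obtained from $\xi$ by a Doob transform: $\widehat P$ is a positive recurrent chain with stationary distribution proportional to $(\xi_v\eta_v)$ and trivial tail $\sigma$-algebra, and one then deduces uniqueness of $\mu$ from ergodicity of the tail equivalence relation together with the fact that the Radon-Nikodym cocycle of any competing tail-invariant measure is $\mathcal R$-invariant, hence $\mu$-a.e. constant.
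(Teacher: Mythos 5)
Your opening computation (that $\mu(X_B)=1$) is fine, though it is already contained in Theorem \ref{Thm:inv1}(2). The problem is the uniqueness half, and it is exactly the step you flag yourself: the interchange of $n\to\infty$ with the infinite sum in $q_i^{(0)}=\sum_j \lambda^{-n}(A^n)_{ij}\,\bigl(\lambda^n q_j^{(n)}\bigr)$. Flagging it is not filling it. Pointwise convergence $\lambda^{-n}(A^n)_{ij}\to\xi_i\eta_j$ together with constancy of the total $\eta$-mass does not rule out the mass of the weights $\lambda^n q^{(n)}$ escaping to infinity in $j$; you would need tightness of the family $\bigl(\eta_j\lambda^n q_j^{(n)}\bigr)_j$ uniformly in $n$ (or the uniform ratio-limit control you mention), and nothing in your proposal supplies it. Even the preliminary normalization is unproven: you declare $c_\nu=\sum_v \eta_v q_v^{(0)}$ to be finite, but $\eta$ need not be bounded and $q^{(0)}$ is only known to be summable, so $c_\nu<\infty$ requires an argument. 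Your fallback route is not a repair: two distinct ergodic tail-invariant measures are mutually singular, so there is no Radon--Nikodym cocycle of a ``competing'' measure relative to $\mu$ to be declared constant; tail triviality of the Doob chain $\widehat P$ would give ergodicity of $\mu$, which is weaker than uniqueness among all tail-invariant measures positive on cylinders.

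The paper's proof circumvents precisely this obstruction, by a genuinely different mechanism. It takes $\nu$ ergodic and applies the Birkhoff ergodic theorem to the indicator of the cylinder $[\ov e]$ (with $r(\ov e)=w\in V_n$): along the Kakutani--Rokhlin towers, the proportion of floors of $X_v^{(N)}$ lying in $[\ov e]$ is $|E(w,v)|/H_v^{(N)}=a_{wv}^{(N-n)}\big/\sum_{u\in V_0}a_{uv}^{(N)}$, and this converges to $\nu([\ov e])$. Then Theorem \ref{Thm:Pos_recc} is applied to numerator and denominator separately, giving $\xi_w\eta_v\big/\bigl(\lambda^n\sum_u\xi_u\eta_v\bigr)=\xi_w/\lambda^n$: the boundary factor $\eta_v$ \emph{cancels} in the ratio, so the unknown measure never enters any infinite sum — the only interchange left involves the known summable eigenvector $\xi$, not the unknown vectors $q^{(n)}$. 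This cancellation is what substitutes for the uniform strong ratio limit property your scheme would require. If you want to salvage your cone-theoretic argument (identifying $\xi$ as the unique normalized element of $\bigcap_n A^n(\R_+^{V})$ via Theorem \ref{BKMS_measures=invlimits}), you must prove the no-escape-of-$\eta$-mass statement, and in the positive recurrent setting that essentially amounts to re-deriving the paper's ratio step; for null recurrent matrices (e.g.\ $A_2(a,b)$ with $a\neq b$, where two distinct tail-invariant measures exist) the conclusion genuinely fails, confirming that positive recurrence must enter through such a compactness argument and cannot be waved through.
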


\begin{proof}
%Without loss of generality, we can assume that 
%$\sum_{v\in V_0} \xi_v = 1$. 
We fix a vertex $w \in V_n$, and 
consider a cylinder set given by  $[\ov e] = 
(e_0, \ldots, e_{n-1})$ 
with $r(e_{n-1}) = w$. Then by (\ref{eq inv meas left}), we have 
$$
\mu([\ov e]) = \frac{\xi_w}{\lambda^n}.
$$
As proved in Theorem \ref{Thm:inv1}, the measure $\mu$ is 
probability and takes finite positive values on cylinder sets. 
Suppose that $\nu$ is a probability ergodic $\varphi_B$-invariant 
measure with positive values on cylinder sets. 
\ignore{ Denote by $p_v^{(n)} = \nu([\ol e])$ where $[\ov e] = 
(e_0, \ldots, e_{n-1})$ with $r(e_{n-1}) = v$.}
Let $N \geq n$ and $v \in V_N$ be such that $E(w,v) \neq 
\emptyset$. 
Then, by the Birkhoff ergodic theorem, 
$$
\nu([\ov e]) = \lim_{N \rightarrow \infty} \frac{|E(w,v)|}
{H_v^{(N)}}, 
$$ 
where $H_v^{(N)}$ is the total number of finite paths 
with the range at vertex $v \in V_N$ (see Definition 
\ref{Def:Height}). Since $A$ is positive recurrent, it follows 
from Theorem \ref{Thm:Pos_recc} that, for any $v,w \in V$, 
$$
\underset{N \rightarrow 
\infty}{\mathrm{lim}} \,\, \dfrac{a_{wv}^{(N)}}{\lambda^{N}} = 
\xi_w\eta_v,
$$ 
where $a_{wv}^{(N)}$ is the entry of $A^N$ and $\eta = (\eta_v)$ 
is the left eigenvector of $A$ normalized by the condition
$\eta \cdot \xi =1$. Thus, we obtain 
$$
\nu([\ov e]) = \lim_{N \rightarrow \infty} 
\frac{a^{(N-n)}_{wv}}{\sum_{u \in V_0} a^{(N)}_{uv}} = \frac{\xi_w  
\eta_v}{\lambda^n\sum_{u \in V_0} \xi_u  \eta_v } = \frac{\xi_w}
{\lambda^n} = \mu([\ov e]).
$$
\end{proof}

Now we show the uniqueness of the infinite $\sigma$-finite 
$\varphi_B$-invariant measure given by \eqref{eq inv meas left}. 
We work with an additional assumption that the dynamical system 
$(X_B,\varphi_B,\mu)$ is conservative. 

\begin{thm}\label{Thm:Unique sigma finite} Let $B(F) = B(V,E,>)$ 
be an ordered stationary generalized Bratteli diagram such that 
the matrix $A = F^T$ is infinite, irreducible, aperiodic, and 
positive 
recurrent. Let $\xi = (\xi_v)$ be a Perron-Frobenius right 
eigenvector for $A$ such that $\sum_{u\in V_0} \xi_u = \infty$. 
Let $\mu$ be the $\sigma$-finite $\varphi_B$-invariant (given by 
\eqref{eq inv meas left}) such that $(X_B, \varphi_B, \mu)$ is 
conservative. Then $\mu$ is the unique (up to a constant 
multiple) $\sigma$-finite $\varphi_B$-invariant ergodic measure 
that takes positive values on cylinder sets.
\end{thm}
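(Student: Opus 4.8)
The plan is to mimic the proof of the finite case (Theorem \ref{Thm:Unique finite}), replacing the Birkhoff ergodic theorem with the \emph{Hopf ratio ergodic theorem}, which is the correct tool for a conservative ergodic transformation preserving an infinite $\sigma$-finite measure. Let $\nu$ be any conservative ergodic $\sigma$-finite $\varphi_B$-invariant measure that is positive on cylinder sets; the goal is to show $\nu=c\mu$ for some constant $c>0$. Since $\nu$ is non-atomic, the closed ``thin'' sets $X_{max}$ and $X_{min}$ are $\nu$-null, so $\varphi_B$-orbits coincide $\nu$-a.e.\ with the $\mathcal R$-classes and $\nu$ is tail-invariant; recall that a tail-invariant measure with finite values on cylinder sets is determined by those values (the fact noted after Definition \ref{def: tail inv meas}). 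Hence it suffices, for any two cylinder sets $[\ol e],[\ol e']$ with $r(\ol e)=w\in V_n$, $r(\ol e')=w'\in V_{n'}$, to compute $\nu([\ol e])/\nu([\ol e'])$ and show it equals $\mu([\ol e])/\mu([\ol e'])=\lambda^{n'-n}\xi_w/\xi_{w'}$ (from \eqref{eq inv meas left}).

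Since $[\ol e],[\ol e']$ have finite $\nu$-measure and $\nu([\ol e'])>0$, the Hopf ratio theorem gives, for $\nu$-a.e.\ $x$,
\[
\frac{\nu([\ol e])}{\nu([\ol e'])}=\lim_{T\to\infty}\frac{\sum_{k=0}^{T-1}\mathbbm{1}_{[\ol e]}(\varphi_B^k x)}{\sum_{k=0}^{T-1}\mathbbm{1}_{[\ol e']}(\varphi_B^k x)} .
\]
I would evaluate this limit through the Kakutani--Rokhlin towers. Fix a level $N>\max(n,n')$. Because the image under $\varphi_B$ of a path maximal up to level $N$ is minimal up to level $N$, the forward orbit of $x$ splits (after one initial partial climb) into successive \emph{complete climbs} of the level-$N$ towers $X_v^{(N)}$: each complete climb of $X_v^{(N)}$ visits every floor once, hence visits $[\ol e]$ exactly $|E(w,v)|=a^{(N-n)}_{wv}$ times and $[\ol e']$ exactly $a^{(N-n')}_{w'v}$ times, where $a^{(\cdot)}_{\cdot\cdot}$ are entries of powers of $A=F^T$. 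Thus the visit ratio over $[0,T)$ is a weighted average of per-tower floor ratios, giving, for each fixed $N$,
\[
\inf_{v}\frac{a^{(N-n)}_{wv}}{a^{(N-n')}_{w'v}}\ \le\ \frac{\nu([\ol e])}{\nu([\ol e'])}\ \le\ \sup_{v}\frac{a^{(N-n)}_{wv}}{a^{(N-n')}_{w'v}},
\]
the extrema taken over vertices occurring as climb-tops (for $N$ large, irreducibility makes all relevant towers reachable from both $w,w'$). Positive recurrence now enters: by Theorem \ref{Thm:Pos_recc}, $\lambda^{-M}a^{(M)}_{uv}\to\xi_u\eta_v$, so each per-tower ratio equals $\lambda^{n'-n}(\xi_w\eta_v)/(\xi_{w'}\eta_v)(1+o(1))$, and the crucial point is that the tower-factor $\eta_v$ \emph{cancels}, leaving a value independent of $v$. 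Letting $N\to\infty$ squeezes the ratio to $\lambda^{n'-n}\xi_w/\xi_{w'}=\mu([\ol e])/\mu([\ol e'])$; fixing one reference cylinder and setting $c=\nu([\ol e'])/\mu([\ol e'])$ yields $\nu([\ol e])=c\,\mu([\ol e])$ for all cylinder sets, hence $\nu=c\mu$.

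The main obstacle is the passage to the limit in the squeeze. Unlike the classical finite-rank situation of Theorem \ref{Thm:Unique finite}, each level $V_N$ here has \emph{infinitely many} vertices, so $\lambda^{-M}a^{(M)}_{wv}\to\xi_w\eta_v$ is a priori only pointwise in $v$ and the supremum/infimum over the infinitely many climb-tops need not converge for free. I would resolve this by exploiting the strength of \emph{positive} recurrence together with conservativity of $(X_B,\varphi_B,\nu)$: conservativity forces the generic orbit to concentrate its mass on towers of appreciable $\nu$-measure, while the normalization $\sum_v\xi_v\eta_v=1$ available in the positive recurrent case supplies the dominated-convergence control needed to replace the crude $\inf/\sup$ bound by a genuine weighted average tending to $\lambda^{n'-n}\xi_w/\xi_{w'}$. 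A cleaner alternative endgame is to invoke the dichotomy that two conservative ergodic $\sigma$-finite invariant measures are either mutually singular or proportional (the Radon--Nikodym derivative on the absolutely continuous part is $\varphi_B$-invariant, hence $\nu$-a.e.\ constant by ergodicity): the tower computation shows $\mu$ and $\nu$ assign comparable values to cylinder sets and so cannot be mutually singular, forcing $\nu=c\mu$.
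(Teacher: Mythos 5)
Your proposal follows essentially the same route as the paper's own proof: Hopf's ratio ergodic theorem applied to orbit visit counts, which are computed through the Kakutani--Rokhlin tower floors as $|E(v,w)| = a^{(N-n)}_{vw}$, followed by the positive-recurrence asymptotics $a^{(M)}_{vw}/\lambda^{M} \to \xi_v\eta_w$ of Theorem \ref{Thm:Pos_recc}, with the decisive cancellation of the left-eigenvector factor $\eta_w$ making the limiting ratio $\lambda^{n_2-n_1}\xi_{v_1}/\xi_{v_2}$ independent of the level-$N$ vertex and hence forcing $\nu = c\mu$ on all cylinder sets. The uniformity issue you flag in your final paragraph (the vertex $w$ along which the ratio is evaluated varies with $N$, while the convergence of Theorem \ref{Thm:Pos_recc} is a priori only pointwise in $w$) is passed over silently in the paper's proof, so your explicit discussion of how to control it is, if anything, more careful than the published argument.
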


\begin{proof} By Theorem~\ref{Thm:inv1}, there exist an 
invariant $\sigma$-finite measure $\mu$ on the path space of a 
generalized Bratteli diagram with irreducible, aperiodic and 
recurrent incidence matrix. Consider two cylinder sets 
$[\ov{e_1}],\, [\ov{e_2}] \subset X_B$ 
such that  $r(\ov{e_1})= v_1 \in V_{n_1}$ and $r(\ov{e_2})= v_2 
\in V_{n_2}$. Without loss of generality, assume that 
$n_2 > n_1$. Using (\ref{eq inv meas left}), we 
calculate the ratio of their measures,
\begin{equation}\label{ratio 1}
    \dfrac{\mu([\ov{e_1}])}{\mu([\ov{e_2}])} = 
    \dfrac{\xi_{v_1}/\lambda^{n_1}}{\xi_{v_2}/\lambda^{n_2}} = 
    \dfrac{\xi_{v_1}}{\xi_{v_2}} \lambda^{(n_2-n_1)}. 
\end{equation}

Let $m$ be a $\sigma$-finite ergodic measure on $X_B$. 
Now we apply Hopf's ratio ergodic theorem for $m$  and find the 
ratio of measures of the same cylinder sets $[\ov{e_1}]$ and 
$[\ov{e_2}]$ (see \cite{Aaronson97} for references). 
For this, let $N > n_2$ and $w \in V_N$ be such that the sets 
$E(v_1,w)$ and $E(v_2,w)$ are non empty. 
Since $A$ is irreducible, we can choose such $N$ using Lemma 
\ref{Lemma_Perron_value}(i). Now, we apply Hopf's ratio 
ergodic theorem to obtain
$$
\dfrac{m([\ov{e_1}])}{m([\ov{e_2}])} = \underset{N \rightarrow 
\infty}{\mathrm{lim}} \,\, \dfrac{|E(v_1,w)|}{|E(v_2,w)|}\,\,\, = 
\underset{N \rightarrow \infty}{\mathrm{lim}} \,\, 
\dfrac{a_{v_1w}^{(N-n_1)}}{a_{v_2w}^{(N-n_2)}} 
= \underset{N \rightarrow \infty}{\mathrm{lim}} \,\, 
\dfrac{a_{v_1w}^{(N-n_1)}}{\lambda^{N-n_1}} \cdot 
\dfrac{\lambda^{N-n_2}}{a_{v_2w}^{(N-n_2)}}\cdot \lambda^{n_2-n_1}.
$$ 
Since $A$ is positive recurrent, it follows from 
Theorem \ref{Thm:Pos_recc} that, for any $v,w \in V$,  
$$
\underset{N \rightarrow 
\infty}{\mathrm{lim}} \,\, \dfrac{a_{vw}^{(N)}}{\lambda^{N}} = 
\xi_v\eta_w.
$$
Therefore, 
\begin{equation}\label{ratio 2}
\dfrac{m([\ov{e_1}])}{m([\ov{e_2}])} = \dfrac{\xi_{v_1}\,\,\eta_w}
{\xi_{v_2}\,\,\eta_w} \lambda^{n_2-n_1}  = \dfrac{\xi_{v_1}}
{\xi_{v_2}} \lambda^{n_2-n_1} = \dfrac{\mu([\ov{e_1}])}
{\mu([\ov{e_2}])}.
\end{equation} 
Relation \eqref{ratio 2} shows that, for every cylinder set 
$[\ol e]$, the ratio 
$$
\frac{m([\ov{e}])}{\mu([\ov{e}])} = c
$$
for some constant $c$. This means that $m = c\mu$, and  the  
proof is complete. 
\end{proof}

\ignore{Fix a finite path $p$ in the path space and consider the 
corresponding cylinder set $[\ov{p}] \subset X_B$. Since we are 
working with $\sigma$-finite measures, there exists a constant 
$c >0$
such that $\frac{m([\ov{p}])}{\mu([\ov{p}])} = c$. Thus, for any 
cylinder set $[\ov{e}]$, 
$$
\frac{m([\ov{e}])}{\mu([\ov{e}])} =  \frac{m([\ov{e}])}
{m([\ov{p}])}\frac{m([\ov{p}])}{\mu([\ov{e}])} =  \frac{m([\ov{e}])}
{m([\ov{p}])}\frac{c\cdot \mu([\ov{p}])}{\mu([\ov{e}])}.
$$ Now setting $e= e_1$ and $e_2 = p$ in  (\ref{ratio 2}) we get 
$\frac{m([\ov{e}])}{\mu([\ov{e}])} = c$. This shows that $m = c\cdot
\mu$. Hence the measure $\mu$ defined in (\ref{eq inv meas left} 
a unique (up to a constant) $\varphi_B$-invariant $\sigma$-finite 
measure on $X_B$.}

\ignore{
As a corollary of Theorems 
\ref{Thm:Unique finite} and  
Theorem \ref{Thm:Unique sigma finite}, we obtain following result: 

\begin{corol}
Let $\sigma$ be a bounded size, left determined substitution on a 
countably infinite alphabet with irreducible, aperiodic, and 
positive recurrent substitution matrix. Then the substitution 
dynamical system 
$(X_{\sigma}, T)$ is uniquely ergodic. Furthermore, the measure 
is finite if the sum of entries of the Perron eigenvector 
corresponding to the incidence matrix converges and $\sigma$-finite, 
otherwise. 
\end{corol}}

\subsection{Generalized Bratteli diagrams with finite tail 
invariant measures}\label{subsec:finite_tail-inv_measure}

Suppose that a matrix $A = (a_{ij})$ with $a_{ij} \in \mathbb{N}_0$ 
for all $i,j$ is such that the Perron-Frobenius theorem holds: 
there exists a finite Perron eigenvalue $\lambda$ and a 
non-negative right eigenvector $\xi = (\xi_i)$ such that $A\xi = 
\lambda \xi.$ We give sufficient conditions on the matrix $A$ that 
lead to the existence of a summable eigenvector $\xi$, i.e., 
$\sum_i \xi_i < \infty$. By Theorem \ref{Thm:inv1}, these 
conditions will guarantee the existence of a finite tail-invariant
measure. 

\begin{prop}
Let $A = (a_{i,j} : i, j \in \N)$ be a non-negative matrix such 
that there exists a positive eigenvector $\xi$ corresponding an 
eigenvalue $\lambda$. 
If there exists a row of $A$ with finitely many zero entries, 
then the eigenvector $\xi = (\xi_i)$ is summable:
$$
\sum_{i \in \mathbb{N}} \xi_i < \infty.
$$
\end{prop}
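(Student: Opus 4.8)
The plan is to read off everything from the single eigenvalue equation attached to the distinguished row, so the argument is short. I would fix an index $k \in \N$ whose row of $A$ has only finitely many zero entries, and write $J = \{j_1, \dots, j_m\}$ for the (finite) set of columns $j$ with $a_{k,j} = 0$, so that $a_{k,j} \geq 1$ for all $j \notin J$. At this point I would stress the standing hypothesis $a_{ij} \in \N_0$ recorded just before the proposition: since the entries are non-negative \emph{integers}, any nonzero entry is at least $1$, and this integrality is precisely what drives the estimate.

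Next I would use the $k$-th coordinate of $A\xi = \lambda\xi$, namely
$$
\sum_{j \in \N} a_{k,j}\, \xi_j = \lambda\, \xi_k .
$$
All terms are non-negative, so this series converges to the finite value $\lambda\xi_k$. Discarding the finitely many columns in $J$ and using $a_{k,j}\geq 1$ off $J$ then yields
$$
\sum_{j \notin J} \xi_j \;\leq\; \sum_{j \notin J} a_{k,j}\,\xi_j \;\leq\; \sum_{j \in \N} a_{k,j}\,\xi_j \;=\; \lambda\,\xi_k \;<\; \infty .
$$

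Finally I would add back the omitted terms: since $J$ is finite and each $\xi_{j_i}$ is a finite positive number,
$$
\sum_{j \in \N} \xi_j \;=\; \sum_{j \notin J} \xi_j + \sum_{i=1}^{m} \xi_{j_i} \;\leq\; \lambda\,\xi_k + \sum_{i=1}^{m} \xi_{j_i} \;<\; \infty,
$$
which is exactly the claimed summability. I do not expect any genuine obstacle; the one point worth flagging is that the conclusion really relies on the integrality of the entries (or at least on a uniform positive lower bound for the nonzero entries of the chosen row). Without such a bound the first inequality above would be unavailable and the statement itself would fail.
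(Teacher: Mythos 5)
Your proof is correct and is essentially the same as the paper's: both fix the distinguished row, use the single eigenvalue equation $\sum_j a_{kj}\xi_j = \lambda\xi_k$, bound $\xi_j \leq a_{kj}\xi_j$ off the finite zero set, and add the finitely many remaining terms back. Your explicit flag that the inequality needs $a_{kj}\geq 1$ (i.e.\ the integer entries $a_{ij}\in\N_0$ from the surrounding setup, or at least a uniform positive lower bound on the nonzero entries of that row) is a point the paper's proof uses only tacitly, and is worth stating as you do.
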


\begin{proof}
Let the $i$th row of $A$ have finitely many zero entries:
$$
a_{ij} = 0 \ \Longleftrightarrow \ (j \in I, \; |I|<\infty).
$$
From the equality
$$
\sum_{j \in \mathbb{N}} a_{ij}\xi_j = \lambda \xi_i
$$
we have
$$
\sum_{j \in \mathbb{N}} \xi_j = \sum_{j \in I} \xi_j + 
\sum_{j \notin I} \xi_j \leq \sum_{j \in I} \xi_j  + \sum_{j \in 
\mathbb{N}} a_{ij} \xi_j = \sum_{j \in I} \xi_j + 
\lambda \xi_i < \infty.
$$
\end{proof} 

Note that the matrix $A$ will also have a summable right 
eigenvector if there are two rows in $A$ such that one row 
contains non-zero elements in even-numbered columns and 
the other one in odd-numbered columns. More generally, 
the following result holds.

\begin{prop}\label{prop:finite_eigenvector}
Let $A$ be a $\mathbb{N} \times \mathbb{N}$ matrix with 
non-negative integer entries such that $A\xi = \lambda \xi$ for 
$0 < 
\lambda < \infty$ and $\xi > 0$. For every $k\in \mathbb{N}$, let
$$
M_k = \{j \in \mathbb{N} : a_{kj} > 0\}.
$$
Assume that there exists a finite collection of rows $\{k_1, 
\ldots, k_p\}$ such that
$$
|\mathbb{N} \setminus \bigcup_{t = 1}^p M_{k_p}| < \infty.
$$
Then the eigenvector $\xi = (\xi_i)$ is finite in the sense that 
$$
\sum_{i \in \mathbb{N}} \xi_i < \infty.
$$
\end{prop}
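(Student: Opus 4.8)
The plan is to mimic the argument of the preceding proposition, replacing a single cofinite row by the finite family $\{k_1,\dots,k_p\}$ whose supports jointly cover all but finitely many columns. Set $I = \mathbb{N}\setminus\bigcup_{t=1}^p M_{k_t}$, which is a finite set by hypothesis. First I would split the total sum as $\sum_{j}\xi_j = \sum_{j\in I}\xi_j + \sum_{j\notin I}\xi_j$. The first piece is a finite sum of finite positive numbers, hence finite, so the whole problem reduces to bounding the tail $\sum_{j\notin I}\xi_j$.

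The key observation is the integrality of the entries: since $a_{kj}\in\mathbb{N}_0$, the condition $j\in M_k$ (that is, $a_{kj}>0$) forces $a_{kj}\ge 1$. For each $j\notin I$ there is at least one index $t$ with $j\in M_{k_t}$, and because all the terms involved are nonnegative, we may bound $\xi_j = 1\cdot\xi_j \le a_{k_t j}\xi_j \le \sum_{t=1}^p a_{k_t j}\xi_j$. Summing over $j\notin I$ and interchanging the (nonnegative) sums then gives
$$
\sum_{j\notin I}\xi_j \;\le\; \sum_{j\notin I}\sum_{t=1}^p a_{k_t j}\,\xi_j \;\le\; \sum_{t=1}^p \sum_{j\in\mathbb{N}} a_{k_t j}\,\xi_j \;=\; \sum_{t=1}^p \lambda\,\xi_{k_t} \;=\; \lambda\sum_{t=1}^p \xi_{k_t},
$$
where the middle equality is precisely the eigenvector relation $A\xi=\lambda\xi$ read off in the rows $k_1,\dots,k_p$. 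Since $\lambda<\infty$ and each coordinate $\xi_{k_t}$ is finite, the right-hand side is finite, and combining this with the finite contribution of the indices in $I$ yields $\sum_{j\in\mathbb{N}}\xi_j<\infty$, as claimed.

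There is essentially no serious obstacle in this argument; the only points requiring care are the passage from $a_{k_t j}>0$ to $a_{k_t j}\ge 1$ (which uses integrality of the entries in an essential way and would fail for general nonnegative real matrices) and the interchange of the order of summation, justified by Tonelli's theorem for series because every term $a_{k_t j}\xi_j$ is nonnegative. I note that the previous proposition is exactly the special case $p=1$, so the same proof structure carries over with only notational changes.
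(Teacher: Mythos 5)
Your proof is correct and is essentially the same as the paper's: both arguments split $\sum_j \xi_j$ over the finite exceptional set $I$ and its complement, use integrality to get $a_{k_t j}\ge 1$ on the supports $M_{k_t}$, and bound the tail by $\lambda\sum_{t=1}^p \xi_{k_t}$ via the eigenvector relation in the rows $k_1,\dots,k_p$. The only differences are presentational (you bound the tail first and cite Tonelli explicitly, while the paper sums the $p$ row relations first), so there is nothing of substance to add.
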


\begin{proof}
For each $t = 1, \ldots, p$, we have
$$
\sum_{j \in \mathbb{N}} a_{k_t j} \xi_j = \sum_{j \in 
M_{k_t}}a_{k_tj} \xi_j = \lambda \xi_{k_t}.
$$
Consider the sum of these relations:
$$
\sum_{t = 1}^p \sum_{j \in M_{k_t}}a_{k_tj} \xi_j = 
\lambda \sum_{t = 1}^p \xi_{k_t}.
$$
Note that since $a_{k_t j} \geq 1$ for $j \in M_{k_t}$, we have
$$
\sum_{t = 1}^p \sum_{j \in M_{k_t}}a_{k_tj} \xi_j \geq \sum_{j 
\in \bigcup_{t = 1}^p M_{k_p}} \xi_j.
$$
Then we get 
$$
\ba 
\sum_{j \in \mathbb{N}} \xi_j = & \sum_{j \in \mathbb{N} 
\setminus\bigcup_{t = 1}^p M_{k_p}} \xi_j + \sum_{j \in 
\bigcup_{t = 1}^p M_{k_p}} \xi_j \\
\leq  & \sum_{j \in \mathbb{N} \setminus\bigcup_{t = 1}^p 
M_{k_p}} \xi_j + \lambda \sum_{t = 1}^p \xi_{k_t} < \infty,
\ea
$$
since the set $\mathbb{N} \setminus \bigcup_{t = 1}^p M_{k_p}$ 
is finite. 
\end{proof}

\begin{remark}
The converse of Proposition~\ref{prop:finite_eigenvector} is not 
true since there are banded matrices with probability right 
eigenvectors, see examples in Subsection \ref{Subsec:Examples}.
\end{remark}

\subsection{Examples}\label{Subsec:Examples} In this subsection we 
consider several classes of stationary 
generalized Bratteli diagrams that admit finite and 
$\sigma$-finite  tail-invariant measures on their path spaces.
The reader can find the proof of these results in Appendix 
\ref{APP:Example}. 

\begin{example}\label{Ex:Matrix A_1} For $a,b \in \N$, consider 
the generalized 
stationary Bratteli diagram $B(F_1)$ where $A_1 = F_1^{T}$ is 
given by 
\begin{equation}\label{Matrix A_1}
A_1 =  \left(
  \begin{array}{cccccccccccc}
   \ddots & \vdots & \vdots & \vdots & \vdots & \vdots & \vdots 
   & \vdots & \vdots & \vdots & \udots\\
    \cdots & 2b & 0 & a & 0 & 0 & \textbf  0 & 0 & 0 & 0 & 0 & \cdots\\
    \cdots & 0 & 2b & 0 & a & 0 & \textbf 0 & 0 & 0 & 0 & 0 & \cdots\\
    \cdots & 0 & 0 & 2b & 0 & b & \textbf  0 & 0 & 0 & 0 & 0 & \cdots\\
    \cdots & 0 & 0 & 0 & 2b & a & \textbf b & 0 & 0 & 0 & 0 & \cdots\\
    \cdots & \textbf 0 & \textbf 0 &\textbf  0 &\textbf  0 &\textbf{b} & \textbf a & 
    \textbf{2b} &\textbf 0 &\textbf 0 &\textbf 0 & \cdots\\
    \cdots & 0 & 0 & 0 & 0 & 0 & \textbf  b & 0 & 2b & 0 & 0 & \cdots\\
    \cdots & 0 & 0 & 0 & 0 & 0 &\textbf  0 & a & 0 & 2b & 0 & \cdots\\
    \cdots & 0 & 0 & 0 & 0 & 0 &\textbf  0 & 0 & a & 0 & 2b & \cdots\\
    \udots & \vdots & \vdots & \vdots & \vdots & \vdots & \vdots & 
    \vdots & \vdots & \vdots & \vdots & \ddots\\
    \end{array}
\right)
\end{equation} 
We use the bold font to indicate the 0-th row and 0-th column.
Remark that this matrix is also considered in Example 
\eqref{ex:stochastic matrix} from a different point of view. 
In~\cite{Bezuglyi_Jorgensen_Sanadhya_2022}, it was shown that for $a = b = 1$, one can model by $B(F_1)$ endowed with the left-to-right order a substitution dynamical system given by the so-called ``one step forward, two steps back'' substitution on $\mathbb{Z}$:
\begin{gather*}
-1 \mapsto -2 \;-1 \; 0; \quad 0 \mapsto -1\; 0\; 1;\\
n \mapsto (n-1)(n + 1)(n + 1) \mbox{ for } n \leq -2;\\
n \mapsto (n - 1)(n - 1)(n + 1) \mbox{ for } n \geq 1.
\end{gather*}
%We 
%observe that $A_1$ is an aperiodic, irreducible, and
%recurrent two-sided infinite matrix.
\end{example}

\begin{prop}\label{prop Ex1 sect 5}
The stationary generalized Bratteli diagram $B(F_1)$, where $A_1 = 
F_1^T$ as in \eqref{Matrix A_1}, supports a tail-invariant measure 
$\mu$ given by \eqref{eq inv meas left}. The measure $\mu$ is 
defined using the eigenvalue $\lambda = a + 2b$ and the 
corresponding right eigenvector $\xi$ of $A_1$ given by
$$
\xi = \left(\ldots, \dfrac{1}{2^3}\left(\dfrac{a}{b}\right)^2, 
\dfrac{1}{2^2}\left(\dfrac{a}{b}\right), \dfrac{1}{2},1, \mathbf{1},
\dfrac{1}{2}, \dfrac{1}{2^2}\left(\dfrac{a}{b}\right), 
\dfrac{1}{2^3}\left(\dfrac{a}{b}\right)^2, \ldots \right)^T.
$$ 
(The 0-th entry of $\xi$ is shown in bold font). 
Moreover, $\mu$ is finite if and only if $a < 2b$.
\ignore{In this case, 
the matrix $A_1$ is positive recurrent and $\mu$ is uniquely 
ergodic. If $a \geq 2b$, then the measure $\mu$ is $\sigma$-finite, 
and the matrix $A_1$ is null-recurrent.} 
\end{prop}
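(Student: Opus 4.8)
The plan is to confirm that the displayed vector $\xi$ is a right $\lambda$-eigenvector of $A_1$ with $\lambda = a+2b$, and then to convert this eigenvector into the tail-invariant measure $\mu$ of \eqref{eq inv meas left} and read off finiteness from $\sum_v \xi_v$. First I would pin down the $\Z$-indexing dictated by the bold entries of \eqref{Matrix A_1}, so that the bold row and column are the $0$-th ones, and rewrite $\xi$ in closed form: $\xi_0 = \xi_{-1} = 1$, $\xi_k = \tfrac12 r^{\,k-1}$ for $k \ge 1$, and $\xi_{-k} = \tfrac12 r^{\,k-2}$ for $k \ge 2$, where $r := \tfrac{a}{2b}$. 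The vector is symmetric about $-\tfrac12$ (that is, $\xi_{-1-k} = \xi_k$) and geometric with ratio $r$ on each of its two tails; since every entry is positive, the only thing left is the scalar identity $(A_1\xi)_i = \lambda \xi_i$ for each row $i$.

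The rows split into a generic family and four exceptional central rows. For a generic positive row $n \ge 2$ the only nonzero entries are $a_{n,n-1} = a$ and $a_{n,n+1} = 2b$, so dividing by $\xi_n$ and using $\xi_{n-1}/\xi_n = 1/r$, $\xi_{n+1}/\xi_n = r$ reduces the row equation to $a/r + 2br = 2b + a = \lambda$; the generic negative rows $n \le -3$ have the $a$- and $2b$-entries interchanged but yield the same identity $2br + a/r = \lambda$. Thus all generic rows collapse to the single characteristic identity for $r$, which holds by construction.

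The four exceptional rows $n \in \{-2,-1,0,1\}$ carry the diagonal loops $a_{0,0} = a_{-1,-1} = a$ together with junction off-diagonal values such as $a_{0,-1} = a_{1,0} = b$, and I would verify each by substituting the explicit central values of $\xi$; for instance row $0$ reads $b \cdot 1 + a \cdot 1 + 2b \cdot \tfrac12 = a + 2b = \lambda \xi_0$, and the other three are equally elementary. Once $A_1\xi = \lambda\xi$ is established, the relation $\sum_{v'} a_{v,v'}\xi_{v'} = \lambda \xi_v$ is exactly the consistency condition guaranteeing that \eqref{eq inv meas left} is additive over the one-step extensions of any cylinder set; hence it defines a tail-invariant Borel measure $\mu$ with $\mu([\ol e(w,v)]) = \xi_v/\lambda^n$, in agreement with Theorem~\ref{Thm:inv1}(1). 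Irreducibility and aperiodicity of $A_1$, needed to view $\xi$ as the Perron eigenvector, are immediate from the band structure and the loop $a_{0,0} = a > 0$.

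For finiteness I would compute the total mass directly: partitioning $X_B$ by the source $s(x_0) = w \in V_0$ and summing the single-edge cylinder masses gives $\mu(\{x : s(x_0) = w\}) = \lambda^{-1}(A_1\xi)_w = \xi_w$, so $\mu(X_B) = \sum_{w} \xi_w$, matching Theorem~\ref{Thm:inv1}(2). Summing the two geometric tails yields $\sum_{k \ge 1} \xi_k = \tfrac12 \sum_{m \ge 0} r^m$ with an equal contribution from the negative side and finitely many bounded central terms, so $\sum_w \xi_w < \infty$ precisely when $r < 1$, i.e.\ $a < 2b$; when $a \ge 2b$ the series diverges and $\mu$ is only $\sigma$-finite. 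The main obstacle is organizational rather than conceptual: correctly transcribing the band of \eqref{Matrix A_1} into the $\Z$-indexed entries near the center so that the four exceptional rows are identified and checked without index slips; after that the generic rows reduce to one identity and the finiteness dichotomy is immediate.
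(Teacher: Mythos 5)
Your proposal is correct and follows essentially the same route as the paper's proof in Appendix~\ref{APP:Example}: identify $\lambda = a+2b$ (the paper reads it off the equal-column-sum property, you confirm it row by row), exhibit the positive right eigenvector with geometric ratio $r = a/2b$ on each tail, invoke the eigenvector-to-measure correspondence of \eqref{eq inv meas left}/Theorem~\ref{BKMS_measures=invlimits}, and settle finiteness by summing the two geometric series, giving $a<2b$ exactly as in the paper. The only stylistic differences are that the paper derives $\xi_n = a^{n-1}/(2^n b^{n-1})$ by induction on the three-term recurrence whereas you verify the stated closed form, and that you re-derive the total mass $\mu(X_B)=\sum_w \xi_w$ directly rather than citing Theorem~\ref{Thm:inv1}(2) — both immaterial.
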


We prove this result in Appendix \ref{APP:Example}. 

\begin{example}\label{Ex:BobokBruin} (see also \cite{BobokBruin2016}) This example deals
with a Bratteli diagram  of different nature. This diagram is
defined by a null-recurrent matrix of period $2$ and has at least two infinite $\sigma$-finite measures which take finite positive values on cylinder sets. 

Let $a$ and $b$ be natural numbers. Consider the 
generalized stationary Bratteli diagram $B(F_2)$ where $A_2 = F_2^T$ 
is given by
\begin{equation}\label{Matrix A_2}
    A_2 = A_2(a,b)= \left(
  \begin{array}{cccccccccccc}
   \ddots &  \vdots &  \vdots & \vdots & \vdots & \vdots & \udots\\
    \cdots & 0 & {b} & 0 & 0 & 0 & \cdots\\
    \cdots &{a} & 0 & {b} & 0 & 0 & \cdots\\
    \cdots & 0 & {a} & 0 & {b} & 0 & \cdots\\
    \cdots & 0 & 0 & {a} & {0} & {b} &  \cdots\\
    \cdots & 0 & 0 & 0 & {a} & {0} &  \cdots\\
    \udots & \vdots & \vdots & \vdots & \vdots & \vdots &  \ddots\\
    \end{array}
\right)
\end{equation} 
\end{example}

\begin{prop}\label{prop Ex 2 sect 5} 
(1) If $a\neq b$,  then there exist at least two  
tail-invariant infinite $\sigma$-finite measures on the path space $X_B$,
$\mu$ and $\wt \mu$. According to \eqref{eq inv meas left}, 
the measure 
$\mu$ is determined by $\lambda = a + b$ and 
$\xi = \left(\ldots, 1,1,1 \ldots \right)^T$, and 
the measure $\wt \mu$ is determined by $\wt \lambda = 2 \sqrt{ab}$
and $\wt \xi = \left( \wt \xi_n \right)^T$ where
$\wt {\xi}_n = \left(\dfrac{a}{b}\right)^{\tfrac{n}{2}}$, $n\in \Z$.\\
(2) If $a=b$, then $\mu = \wt \mu$. \\
The matrix $A_2(a,b)$ is null recurrent for any $a,b \in \mathbb{N}$.
\end{prop}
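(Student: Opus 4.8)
The plan is to read off the two tail-invariant measures directly from the two positive eigenvectors, and to treat the recurrence classification separately. First I would verify the eigenvalue equations by a one-line computation: from \eqref{Matrix A_2} one has $(A_2\zeta)_i=a\zeta_{i-1}+b\zeta_{i+1}$, so the constant vector $\xi\equiv 1$ gives $(A_2\xi)_i=a+b$, while for $\wt\xi_n=(a/b)^{n/2}$ one gets $(A_2\wt\xi)_i=a(a/b)^{(i-1)/2}+b(a/b)^{(i+1)/2}=2\sqrt{ab}\,\wt\xi_i$; hence $A_2\xi=(a+b)\xi$ and $A_2\wt\xi=2\sqrt{ab}\,\wt\xi$. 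Both eigenvectors are strictly positive, and any positive eigenvector $\zeta$ with $A_2\zeta=\rho\zeta$ automatically lies in the cone of Theorem \ref{BKMS_measures=invlimits}, since $\zeta=A_2^m(\rho^{-m}\zeta)\in C^{(n)}_\infty$ for every $m$. Thus formula \eqref{eq inv meas left} yields tail-invariant measures $\mu$ and $\wt\mu$ with finite positive values on all cylinder sets; periodicity of $A_2$ is no obstruction, because the consistency relation $F^T p^{(n+1)}=p^{(n)}$ of Theorem \ref{BKMS_measures=invlimits} is precisely the eigenvalue equation.

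Both measures are infinite but $\sigma$-finite. By Theorem \ref{Thm:inv1}(2), finiteness is equivalent to summability of the defining eigenvector, and $\sum_{n\in\Z}1=\infty$ while $\sum_{n\in\Z}(a/b)^{n/2}=\infty$ (a two-sided geometric series of ratio $\neq 1$ when $a\neq b$); $\sigma$-finiteness holds because the countably many level-$0$ cylinder sets cover $X_B$ with finite measure. To separate $\mu$ from $\wt\mu$ when $a\neq b$, I would fix a vertex $v$ and compare, via \eqref{eq inv meas left}, the measures of cylinder sets with range $v$ at two levels $n_1<n_2$: the $\mu$-ratio equals $\lambda^{n_2-n_1}$ and the $\wt\mu$-ratio equals $\wt\lambda^{n_2-n_1}$, and since strict AM--GM gives $\lambda=a+b>2\sqrt{ab}=\wt\lambda$, no constant $c$ can satisfy $\wt\mu=c\mu$. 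This proves (1). For (2), when $a=b$ one has $\lambda=2a=2\sqrt{a^2}=\wt\lambda$ and $\wt\xi_n=(a/a)^{n/2}=1=\xi_n$, so the two formulas define the same measure.

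The principal obstacle is (3), and the key realization is that the recurrence type must be measured against the Perron value, which is $2\sqrt{ab}$ and \emph{not} the larger eigenvalue $a+b$; the constant eigenvector $\xi$ is a spurious positive eigenvector for an eigenvalue strictly above the Perron value, the hallmark infinite-dimensional phenomenon. I would compute the diagonal return weights explicitly: a weighted loop of length $n$ at $i$ forces $n=2m$ with $m$ left and $m$ right steps, giving $a^{(2m)}_{ii}=\binom{2m}{m}(ab)^m$ and $a^{(2m+1)}_{ii}=0$. Stirling's formula yields $\big(a^{(2m)}_{ii}\big)^{1/(2m)}\to 2\sqrt{ab}$, identifying the Perron value as $\wt\lambda=2\sqrt{ab}$. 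Recurrence then follows from $\sum_m a^{(2m)}_{ii}\wt\lambda^{-2m}=\sum_m\binom{2m}{m}4^{-m}=\infty$ (terms $\sim(\pi m)^{-1/2}$), and null recurrence from $a^{(2m)}_{ii}\wt\lambda^{-2m}=\binom{2m}{m}4^{-m}\to 0$, which by Theorem \ref{Thm:Pos_recc} excludes positive recurrence. Equivalently, the Perron eigenvectors $\wt\xi_v=(a/b)^{v/2}$ and $\eta_v=(b/a)^{v/2}$ satisfy $\eta_v\wt\xi_v\equiv 1$, so $\sum_{v\in\Z}\eta_v\wt\xi_v=\infty$ is the standard null-recurrence signature, and conjugating $\wt\lambda^{-1}A_2$ by $\mathrm{diag}(\wt\xi)$ produces the symmetric nearest-neighbour random walk on $\Z$, whose null recurrence is classical. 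This argument is uniform in $a,b\in\N$, including $a=b$, establishing (3).
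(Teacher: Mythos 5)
Your proposal is correct and follows essentially the same route as the paper: verify the two positive eigenpairs $(a+b,\xi)$ and $(2\sqrt{ab},\wt\xi)$ by direct computation, obtain the two $\sigma$-finite tail-invariant measures from formula \eqref{eq inv meas left}, note the divergent eigenvector sums, observe the coincidence when $a=b$, and settle null recurrence via the return weights $a^{(2m)}_{ii}=\binom{2m}{m}(ab)^m$ and the associated symmetric nearest-neighbour walk on $\Z$ (the paper delegates this last step to \cite{BobokBruin2016} and Proposition \ref{prop:ThiagoThm}). The extra details you supply --- membership of the eigenvectors in the cone $C^{(n)}_\infty$ so that Theorem \ref{BKMS_measures=invlimits} applies despite the period-$2$ incidence matrix, and the ratio argument showing $\wt\mu$ is not a constant multiple of $\mu$ --- are points the paper's terser proof leaves implicit, and they strengthen rather than diverge from its argument.
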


This matrix is also considered in Example \ref{ex transient Sect 6} 
where we use the corresponding stochastic matrix to show that 
$A_2(a,b)$ is null recurrent for any $a,b \in \mathbb{N}$.
We prove Proposition \ref{prop Ex 2 sect 5} in Appendix 
\ref{APP:Example}. 
\\

\textit{Examples of one-sided infinite Bratteli diagrams}. 
In this subsection, we 
consider examples of one-sided infinite stationary generalized 
Bratteli diagrams and find conditions under which infinite 
tri-diagonal matrices (indexed by $\N$) have eigenvectors with 
finite entry sum (hence admit finite invariant measure). 

Let $A = (a_{ij})_{i,j \in \N}$ be a one-sided infinite 
tri-diagonal matrix:
$$ 
A = \begin{pmatrix}
a_{11} & a_{12} & 0 & 0 & 0 & ... \\
a_{21} & a_{22} & a_{23} & 0 & 0 & ... \\
0 & a_{32} & a_{33} & a_{34} & 0 & ... \\
0 & 0 & a_{43} & a_{44} & a_{45} & ... \\
0 & 0 & 0 & a_{54} & a_{55} & ... \\
 \vdots &  \vdots & \vdots & \vdots & \vdots &  \ddots
\end{pmatrix}  
$$ 
We assume that $A$ has an eigenvalue $\lambda$ with the right 
eigenvector $\xi$, i.e. $A \xi = \lambda \xi$. Our goal is to 
find conditions when the vector $\xi$ is summable, 
$\sum_{i =1}^{\infty} \xi_i < \infty$.  
We denote by 
$$
\sigma_i = \sum_{j = 1}^{\infty} a_{ij}
$$ 
the sum of the $i$-th row of $A$. It turns out that an important 
class of examples comes from the matrices having equal column sum 
property together with an additional requirement on the row sum 
as described in the definition below.

\begin{definition}\label{Def:Balanced} 
We say that a one-sided infinite  matrix $A = 
(a_{ij})_{i,j \in \N}$ is \textit{balanced} if it satisfies 
following conditions :

\begin{enumerate}
    \item $A$ has the property of equal column sum, i.e. $\sum_{i 
    =1}^{\infty} a_{ij} = c$ for every $j \in \N$;   this 
    automatically implies that $c = \lambda$ for every $j \in \N$. 
    
    \item $\sigma_2 = \sigma_3 = \sigma_4= ....$ and $\sigma_1 > 
    \sigma_i$ for all $i>1$. 
\end{enumerate} 
    
\end{definition} 

Below we provide some examples of one-sided infinite stationary 
Bratteli diagrams with \textit{balanced} incidence matrices. 

\begin{example}\label{ex:0ne_sided_1} 
Fix $b,c,\alpha \in \N$ such that $\alpha > 1$. Consider the 
generalized diagram $B(F_3)$ where  $A_3 = F_3^T$ is given by
\begin{equation}\label{Matrix A_3}
    A_3 = \begin{pmatrix}
b+\alpha c & \alpha c & 0 & 0 & 0 & ... \\
c & b & \alpha c & 0 & 0 & ... \\
0 & c & b & \alpha c & 0 & ... \\
0 & 0 & c & b & \alpha c & ... \\
 \vdots &  \vdots & \vdots & \vdots & \vdots &  \ddots
\end{pmatrix}
\end{equation} 
\end{example}

\begin{prop} \label{Prop:Ex_one_side_1} The stationary generalized 
Bratteli diagram $B(F_3)$, where $A_3 = F_3^T$ is defined  in 
\eqref{Matrix A_3}, supports a tail-invariant 
measure $\mu$ defined by 
\eqref{eq inv meas left} using the eigenvalue 
$\lambda = b+ c + \alpha c$ and 
the corresponding right eigenvector $\xi = (\xi_n)_{n\in \N}$ 
for $A_3$ such that
$$
\xi = \Big( \xi_1, \dfrac{\xi_1}{\alpha}, 
\dfrac{\xi_1}{\alpha^2}, 
\cdots, \dfrac{\xi_1}{\alpha^{n-1}}, \cdots \Big)^T
$$ 
where $\xi_1$ can be chosen to be any positive integer. 
    
\end{prop}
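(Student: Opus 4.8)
The plan is to reduce the proposition to the general existence results for tail-invariant measures (Theorem~\ref{Thm:inv1} or Theorem~\ref{BKMS_measures=invlimits}) once the announced eigenpair has been verified. First I would confirm by direct substitution that $A_3\xi=\lambda\xi$ with $\lambda=b+c+\alpha c$ and $\xi_n=\xi_1\alpha^{-(n-1)}$. Normalizing $\xi_1=1$, the top row gives $(b+\alpha c)\xi_1+\alpha c\,\xi_2=(b+\alpha c)+\alpha c\cdot\alpha^{-1}=b+\alpha c+c=\lambda\xi_1$, while for each $i\ge 2$ the tridiagonal row gives $c\,\xi_{i-1}+b\,\xi_i+\alpha c\,\xi_{i+1}=\alpha^{-(i-1)}\bigl(c\alpha+b+\alpha c\cdot\alpha^{-1}\bigr)=\alpha^{-(i-1)}(b+c+\alpha c)=\lambda\xi_i$. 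Since $\alpha>1$ and $b,c\in\N$, every coordinate of $\xi$ is strictly positive, so $\xi$ is a legitimate positive right eigenvector for the formula~\eqref{eq inv meas left}; I would also record $\sum_n\xi_n=\alpha/(\alpha-1)<\infty$, which makes the resulting measure finite.

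The most economical route is through Theorem~\ref{BKMS_measures=invlimits}. Setting $p^{(n)}_v:=\xi_v/\lambda^n$, the eigenvalue identity is exactly the compatibility relation $F_n^T p^{(n+1)}=A_3\,p^{(n+1)}=p^{(n)}$ required there. Furthermore, because $A_3^m\xi=\lambda^m\xi$, one has $p^{(n)}=A_3^m\bigl(\xi/\lambda^{n+m}\bigr)$ with $\xi/\lambda^{n+m}\in\R_+^{V}$, so $p^{(n)}\in C^{(n)}_m$ for every $m$ and hence $p^{(n)}\in C^{(n)}_\infty$. Theorem~\ref{BKMS_measures=invlimits}(2) then produces a unique tail-invariant measure $\mu$ with $\mu(X^{(n)}_v(\ov e))=\xi_v/\lambda^n$, which is precisely the measure of~\eqref{eq inv meas left}. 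The pleasant point here is that a strictly positive eigenvector lands in every cone $C^{(n)}_\infty$ automatically, so no recurrence hypothesis is needed for mere existence.

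To instead apply Theorem~\ref{Thm:inv1} verbatim, I would verify its structural hypotheses: $A_3$ is irreducible (the entries $c,\alpha c$ on the sub- and super-diagonal let any two states communicate) and aperiodic (the positive diagonal entries $b$, $b+\alpha c$ give loops at every vertex). The substantive hypothesis is recurrence, which I would establish by passing to the associated stochastic matrix $P=(p_{ij})$, $p_{ij}=a_{ij}\xi_j/(\lambda\xi_i)$. A short computation shows $P$ is a birth--death chain on $\N$, reflecting at $1$, with up-probability $c/\lambda$ and down-probability $\alpha c/\lambda$ at every interior state; since $A_3$ is recurrent exactly when $P$ is, and the standard series criterion yields $\sum_n\prod_{k}(\alpha c/\lambda)/(c/\lambda)=\sum_n\alpha^{n}=\infty$ (using $\alpha>1$), the walk is positive recurrent. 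Theorem~\ref{Thm:inv1} then gives the measure~\eqref{eq inv meas left} and, via $\sum_n\xi_n<\infty$, its finiteness.

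The main obstacle is the recurrence step: the eigenvalue verification is routine bookkeeping, but identifying $\lambda=b+c+\alpha c$ as the Perron value and certifying recurrence needs one genuine idea---either the observation that the positive eigenpair forces $p^{(n)}\in C^{(n)}_\infty$ (bypassing recurrence in Theorem~\ref{BKMS_measures=invlimits}), or the stochastic-matrix computation showing that the inward drift caused by $\alpha>1$ renders the associated random walk recurrent.
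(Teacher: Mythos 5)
Your proof is correct, and your eigenpair verification (first row gives $\xi_2=\xi_1/\alpha$, the tridiagonal rows give $\xi_{n}=\xi_1/\alpha^{n-1}$, summability from $\alpha>1$) is exactly the computation in the paper's Appendix~\ref{APP:Example}. Where you diverge is in how you justify that the eigenpair actually yields the measure of \eqref{eq inv meas left}. The paper is terse here: it observes that $A_3$ is balanced, reads off $\lambda=b+c+\alpha c$ from the equal column sums, and closes by noting that the left eigenvector is $\eta=(1,1,1,\dots)$ with $\eta\cdot\xi<\infty$, leaving the appeal to Theorem~\ref{Thm:inv1} implicit --- and notably it never verifies the recurrence hypothesis that Theorem~\ref{Thm:inv1} formally requires (the relation $\eta\cdot\xi<\infty$ only upgrades recurrence to positive recurrence via Proposition~\ref{Prop:Pos_rec}; it does not by itself rule out transience). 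You fill this gap in two independent ways. Your first route, through Theorem~\ref{BKMS_measures=invlimits}, is the cleanest: since $A_3^m\xi=\lambda^m\xi$, the vectors $p^{(n)}=\xi/\lambda^n$ lie in every cone $C^{(n)}_m=A_3^m(\R_+^{V})$, hence in $C^{(n)}_\infty$, and the compatibility relation $A_3 p^{(n+1)}=p^{(n)}$ is the eigenvalue identity --- so existence needs no recurrence at all, a point the paper does not make explicit. Your second route, the birth--death chain with interior down-probability $\alpha c/\lambda$ and up-probability $c/\lambda$, correctly establishes (positive) recurrence via the series criterion $\sum_n\alpha^n=\infty$ and $\sum_n\alpha^{-n}<\infty$, and together with \eqref{Equation:connection_p_and_a} it also certifies that $\lambda$ is genuinely the Perron value (divergence of $\sum_n a^{(n)}_{vv}\lambda^{-n}$ forces $\lambda\le\lambda_{PF}$, while $a^{(n)}_{vv}\le\lambda^n$ from the positive eigenvector gives the reverse inequality). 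In short: same core computation, but your write-up supplies the recurrence/Perron-value bookkeeping the paper elides, and the cone-membership observation buys existence under strictly weaker hypotheses than the paper's cited theorem.
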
 
Proposition \ref{Prop:Ex_one_side_1} is proved in Appendix 
\ref{APP:Example}.

\begin{example}\label{ex:0ne_sided_2} 
Fix $b\in \N_0$, $r \in \N$ and take integers $\alpha$ and  
$\beta$ such that  $|\alpha|, |\beta| < r$. Denote by
\be\label{eq q1q2}
q_1 = \dfrac{r-\alpha}{r + \beta}, \qquad q_2 = \dfrac{r-\beta}
{r + \alpha}.
\ee
Consider the one-sided infinite stationary Bratteli diagram 
$B(F_4)$ where $A_4 = F_4^{T}$ is given by
\begin{equation}\label{Matrix A_4}
    A_4 = F_4^T = \begin{pmatrix}
(b+r+\alpha) & r+\beta & 0 & 0 & 0 & ... \\
r-\alpha & b & r+\alpha & 0 & 0 & ... \\
0 & r-\beta & b & r + \beta & 0 & ... \\
0 & 0 & r-\beta & b & r+\beta & ... \\
 \vdots &  \vdots & \vdots & \vdots & \vdots &  \ddots
\end{pmatrix}
\end{equation}     
\end{example} 

\begin{prop} \label{Prop:Ex_one_side_2} For the
stationary Bratteli diagram $B(F_4)$ defined by the matrix given 
in \eqref{Matrix A_1}, the following statements hold.

\begin{enumerate}

\item For the eigenvalue $\lambda = b+ 2r$, the right eigenvector 
$\xi = (\xi_n)$ ($A\xi = \lambda \xi$) for $A_4$ has the entries 
\begin{equation}\label{induction}
 x_1 =1, \qquad    \xi_{2n} = \dfrac{(r-\alpha)^n\,(r-\beta)^{n-1}}
 {(r+\alpha)^{n-1}(r+\beta)^n}, \qquad 
\xi_{2n+1} = \dfrac{(r-\alpha)^n(r-\beta)^n}
{(r+\alpha)^n (r+\beta)^n};
\end{equation}

\item  If $r, \alpha, \beta$  are chosen such that $q_1q_2 < 1$
(see \eqref{eq q1q2}),
then the diagram $B(F_4)$ supports a finite tail 
invariant measure $\mu$ determined as in \eqref{eq inv meas left}. 

\item If $r, \alpha, \beta$  are chosen such that $q_1q_2 \geq 1$,
the tail-invariant measure is  $\sigma$-finite.

\end{enumerate}

\end{prop}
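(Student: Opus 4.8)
The plan is to handle all three parts through the single observation that the eigenvalue $\lambda = b+2r$ turns the equation $A_4\xi = \lambda\xi$ into a two-term recurrence whose solution has \emph{alternating ratios} $q_1,q_2$. Reading the recurrence off the rows of $A_4$, the sub- and super-diagonal coefficients alternate with period two between the $\alpha$-pair $(r-\alpha,\,r+\alpha)$ and the $\beta$-pair $(r-\beta,\,r+\beta)$, and the diagonal is the constant $b$ (with the $(1,1)$-entry the exceptional value $b+r+\alpha$). I would first verify part (1) directly. After subtracting the diagonal contribution $b\xi_k$, each interior equation collapses to $c_k^-\xi_{k-1}+c_k^+\xi_{k+1}=2r\,\xi_k$, where $c_k^{\pm}$ denote the sub/super-diagonal entries of row $k$. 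Writing $\rho_k=\xi_{k+1}/\xi_k$ and dividing by $\xi_k$, this reads $c_k^-/\rho_{k-1}+c_k^+\rho_k = 2r$.

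I would then check by induction that the choice $\rho_{2n-1}=q_1$, $\rho_{2n}=q_2$ solves every such equation. The top row forces $(r+\beta)\xi_2=(r-\alpha)\xi_1$, i.e. $\rho_1=q_1$; and each interior identity reduces, via $q_1=(r-\alpha)/(r+\beta)$ and $q_2=(r-\beta)/(r+\alpha)$, to either $(r-\beta)+(r+\beta)=2r$ or $(r-\alpha)+(r+\alpha)=2r$. Telescoping the alternating ratios from $\xi_1=1$ recovers exactly the closed forms in \eqref{induction}, namely $\xi_{2n+1}=(q_1q_2)^n$ and $\xi_{2n}=q_1(q_1q_2)^{n-1}$. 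Since a tridiagonal eigenvalue equation determines $\xi$ uniquely from $\xi_1$, this confirms $\xi$ is the (positive) eigenvector.

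For parts (2)--(3) I would first record that, because $|\alpha|,|\beta|<r$, all four factors $r\pm\alpha$ and $r\pm\beta$ are positive, so $q_1,q_2>0$ and $\xi>0$. Hence Theorem \ref{Thm:inv1} applies and \eqref{eq inv meas left} defines a tail-invariant measure $\mu$ that is finite on every cylinder set, with total mass $\mu(X_B)=\sum_{w\in V_0}\xi_w=\sum_{n\geq 1}\xi_n$. Splitting this sum by parity and factoring gives $\sum_{n\geq 1}\xi_n=(1+q_1)\sum_{m\geq 0}(q_1q_2)^m$, a geometric series. It converges precisely when $q_1q_2<1$, which proves $\mu$ is finite in that case (part 2); when $q_1q_2\geq 1$ the series diverges, so $\mu(X_B)=\infty$ while $\mu$ remains finite on each cylinder set, i.e. $\mu$ is $\sigma$-finite (part 3).

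The computations are elementary, so the only care points I would flag explicitly are: confirming that the off-diagonal entries genuinely alternate in the $\alpha/\beta$ pattern (this two-periodic Jacobi structure is exactly what makes the alternating-ratio eigenvector work, and a non-alternating reading would instead yield a single geometric ratio), and checking that $\lambda=b+2r$ furnishes a positive eigenvector so that \eqref{eq inv meas left} is a legitimate tail-invariant measure. I expect no genuine obstacle beyond bookkeeping.
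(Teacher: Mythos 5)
Your proposal is correct and follows essentially the same route as the paper's own proof: the paper likewise verifies the eigenvector by induction through the three-term recurrence (using precisely your alternating-ratio relations, e.g. $(r-\beta)\xi_{2n-2}=(r+\alpha)\xi_{2n-1}$, just not phrased via $\rho_k$), and then sums the same geometric series $\bigl(1+q_1\bigr)\sum_{m\geq 0}(q_1q_2)^m$ to settle finiteness versus $\sigma$-finiteness. Your flagged care point is well taken --- the intended incidence matrix does alternate the $\alpha$/$\beta$ pairs two-periodically (the displayed fourth row of \eqref{Matrix A_4} appears to be a typo), and with that reading your verification matches the paper's computation exactly.
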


The proof of Proposition \ref{Prop:Ex_one_side_2} is given in 
Appendix \ref{APP:Example}.
\vskip 0.3cm

We finish this section with two examples of generalized 
Bratteli diagrams of different types: 
they are not of bounded size.

\begin{example}[Renewal subshift]\label{Ex:renewal_subshift} 
Consider the one-sided infinite generalized diagram $B(F_5)$ 
where $A_5 = F_5^{T} = (a_{ij})_{i,j \in \N}$ is defined by
\begin{equation}
    A_5 = \begin{pmatrix}
    1 & 1 & 1 & 1 & 1 & 1 & \cdots\\
    1 & 0 & 0 & 0 & 0 & 0 & \cdots\\
    0 & 1 & 0 & 0 & 0 & 0 & \cdots\\
    0 & 0 & 1 & 0 & 0 & 0 & \cdots\\
    0 & 0 & 0 & 1 & 0 & 0 & \cdots\\
    0 & 0 & 0 & 0 & 1 & 0 & \cdots\\
   \vdots & \vdots & \vdots & \vdots & \vdots & \vdots & \ddots
    \end{pmatrix}.
\end{equation}
In other words, 
\begin{equation} \label{Matrix A_5}
     a_{ij} = \left\{
\begin{aligned}
& 1 \mbox{ if } i = 1 \mbox{ or } i = j + 1,\\
& 0 \mbox{ otherwise }.
\end{aligned}
\right.
\end{equation}
We remark that the matrix $A_5$ is balanced, see Definition 
\ref{Def:Balanced}. 

\end{example}

\begin{prop} \label{prop:renew} The stationary generalized 
Bratteli diagram $B(F_5)$ corresponding to the renewal shift in 
Example \ref{Ex:renewal_subshift} supports a unique probability ergodic tail 
invariant measure $\mu$. The 
measure $\mu$ is defined by \eqref{eq inv meas left} using the Perron eigenvalue $\lambda = 2$ 
and the corresponding right eigenvector $\xi = 
\left(\frac{1}{2}, \frac{1}{2^2}, \frac{1}{2^3}, 
\ldots\right)^T$ of $A_5$. The matrix $A_5$ is positive recurrent.
\end{prop}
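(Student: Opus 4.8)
The plan is to reduce everything to Theorems \ref{Thm:inv1} and \ref{Thm:Unique finite} by checking their hypotheses for $A_5$, namely that $A_5$ is irreducible, aperiodic, and positive recurrent. First I would read off the graph structure from \eqref{Matrix A_5}: the nonzero entries are $a_{1j}=1$ for every $j\in\N$ (the first row) together with the subdiagonal entries $a_{i,i-1}=1$ for $i\geq 2$. Hence from vertex $1$ one reaches every vertex in one step, and from any vertex $i\geq 2$ one descends $i\to i-1\to\cdots\to 1$; so every pair of vertices communicates and $A_5$ is irreducible. The self-loop $a_{11}=1$ gives a cycle of length $1$, so the period is $1$ and $A_5$ is aperiodic.

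Next I would confirm the Perron data directly. Setting $\xi_n=2^{-n}$, the first row yields $\sum_{j\in\N}\xi_j=\sum_{j\geq 1}2^{-j}=1=2\xi_1$, while for $i\geq 2$ the single nonzero entry gives $\xi_{i-1}=2^{-(i-1)}=2\xi_i$; thus $A_5\xi=2\xi$, so $\lambda=2$ is the Perron eigenvalue with right eigenvector $\xi$ as stated. Since $\sum_n\xi_n=1<\infty$, part $(2)$ of Theorem \ref{Thm:inv1} will force the measure $\mu$ of \eqref{eq inv meas left} to be a finite (probability) measure, once recurrence is in hand.

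The heart of the argument, and the step I expect to be the main obstacle, is establishing positive recurrence. I would exploit the transparent path structure: the only first-return loop at vertex $1$ of length $n$ is $1\to n\to n-1\to\cdots\to 1$ (the self-loop when $n=1$), so $f^{(n)}_{11}=1$ for every $n\geq 1$. With $R=1/\lambda=1/2$ this gives the recurrence identity
\[
F_{11}(R)=\sum_{n=1}^{\infty} f^{(n)}_{11}R^{n}=\sum_{n=1}^{\infty}2^{-n}=1,
\]
together with a finite mean return time
\[
\sum_{n=1}^{\infty} n\, f^{(n)}_{11}R^{n}=\sum_{n=1}^{\infty} n\,2^{-n}=2<\infty,
\]
which is precisely positive recurrence. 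Equivalently, summing the first-return series produces $\sum_{N}a^{(N)}_{11}z^{N}=(1-z)/(1-2z)$, whence $a^{(N)}_{11}/2^{N}\to \tfrac12>0$, the $R$-positivity criterion recorded in Appendix \ref{APP:Perron-Frobenius_Theory} (Theorem \ref{Thm:Pos_recc}). As a cross-check, the left eigenvector is $\eta=(1,1,1,\dots)$ with $\eta A_5=2\eta$, and $\sum_v\xi_v\eta_v=\sum_v 2^{-v}=1<\infty$, which both confirms positive recurrence and gives the normalization $\eta\cdot\xi=1$ consistent with $a^{(N)}_{11}/2^{N}\to\xi_1\eta_1=\tfrac12$.

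Finally, with $A_5$ infinite, irreducible, aperiodic, and positive recurrent verified, I would invoke Theorem \ref{Thm:inv1} to produce the tail-invariant measure $\mu$ of \eqref{eq inv meas left} and, using $\sum_{u}\xi_u=1$, apply Theorem \ref{Thm:Unique finite} to conclude that $\mu$ is the unique probability $\varphi_B$-invariant, and hence tail-invariant, ergodic measure assigning positive values to all cylinder sets. This completes the argument.
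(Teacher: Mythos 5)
Your verification is correct and follows the same overall route as the paper's proof: check the hypotheses of Theorems \ref{Thm:inv1} and \ref{Thm:Unique finite} for $A_5$ and invoke them. In fact your treatment of positive recurrence is more detailed than the paper's, which merely says ``one can check by definition''; your first-return computation $l_{11}(n)=1$ for all $n\geq 1$ (the unique loop $1\to n\to n-1\to\cdots\to 1$), giving $\sum_n l_{11}(n)2^{-n}=1$ for recurrence and $\sum_n n\,l_{11}(n)2^{-n}=2<\infty$ for positive recurrence, together with $T_{11}(z)=(1-z)/(1-2z)$ pinning the radius of convergence at $1/2$ (so that $\lambda=2$ really is the Perron eigenvalue, not just some eigenvalue with a positive eigenvector), is a legitimate self-contained argument, and your cross-check $\eta=(1,1,\ldots)$, $\eta\cdot\xi=1$, $a^{(N)}_{11}/2^N\to\tfrac12=\xi_1\eta_1$ is consistent with Theorem \ref{Thm:Pos_recc}.

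There is, however, one step in the statement that you do not reach. Theorem \ref{Thm:Unique finite} yields uniqueness only within the class of probability measures that take \emph{positive values on all cylinder sets}, and your final sentence is phrased with exactly that restriction; but Proposition \ref{prop:renew} asserts that $\mu$ is the unique probability ergodic tail-invariant measure outright. The paper closes this gap with a short argument special to $B(F_5)$: since the first vertex on each level $n$ is joined by an edge to every vertex on level $n+1$, every probability tail-invariant measure is automatically positive on all cylinder sets. Concretely, if $p^{(n)}=\bigl(\mu(X_w^{(n)}(\ov e))\bigr)_{w\in V_n}$ is the vector of cylinder measures from Theorem \ref{BKMS_measures=invlimits}, then the structure of $A_5$ gives $p_w^{(n)}=p_{w-1}^{(n+1)}$ for $w\geq 2$ and $p_1^{(n)}=\sum_{v} p_v^{(n+1)}$; so $p_v^{(n)}=0$ for some $v,n$ would force $p_1^{(n+v-1)}=0$, hence $p^{(n+v)}\equiv 0$ and $\mu(X_B)=\sum_v H_v^{(n+v)}p_v^{(n+v)}=0$, contradicting that $\mu$ is a probability measure. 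Adding this observation upgrades your conclusion to the full statement; without it, your proof establishes a strictly weaker uniqueness claim than the proposition asserts.
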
 

The proof follows from an application of 
Theorem \ref{Thm:Unique finite} and is given 
in Appendix \ref{APP:Example}. 

Rest of the examples in this section consist of generalized Bratteli diagrams with incidence matrices that are not balanced. 

\begin{example}[Pair renewal shift, 
see \cite{Raszeja2021}]\label{Ex:Pair_Renewal_Subshift} 
Consider the one-sided infinite generalized diagram $B(F_6)$ where 
$A_6 = F_6^{T} = (a_{ij})_{i,j \in \N}$ is given by
\begin{equation} \label{Matrix A_6}
     a_{ij} = \left\{
\begin{aligned}
& 1, \mbox{ if } i = 1 \mbox{ and } j \in \N,\\
& 1, \mbox{ if } i = 2 \mbox{ and } j = 2n \mbox{ for } n \in \N,\\
& 1, \mbox{ if } i = n+1 \mbox{ and } j = n \mbox{ for } n \in \N,\\
& 0 \mbox{ otherwise}.
\end{aligned}
\right.
\end{equation}Note that $A_6$ does not have the equal column sum property 
and is not balanced.
\ignore{By Proposition 
\ref{Prop_Perron_eigenvalue_estimates}, we see that the Perron
eigenvalue $\lambda \in [2, 3]$. }
Explicitly, the matrix $A_6$ is 
\begin{equation*}
    A_6 = \begin{pmatrix}
    1 & 1 & 1 & 1 & 1 & 1 & \cdots\\
    1 & 1 & 0 & 1 & 0 & 1 & \cdots\\
    0 & 1 & 0 & 0 & 0 & 0 & \cdots\\
    0 & 0 & 1 & 0 & 0 & 0 & \cdots\\
    0 & 0 & 0 & 1 & 0 & 0 & \cdots\\
    0 & 0 & 0 & 0 & 1 & 0 & \cdots\\
   \vdots & \vdots & \vdots & \vdots & \vdots & \vdots & \ddots
    \end{pmatrix}.
\end{equation*}

 This infinite matrix is a modified version of $A_5$, and the corresponding shift space is called the pair renewal shift (see~\cite{Raszeja2021}).

\end{example}

\begin{prop} \label{prop:renew_pair} 
The stationary generalized Bratteli diagram $B(F_6)$ corresponding 
to the pair renewal shift in Example \ref{Ex:Pair_Renewal_Subshift} 
supports a unique up constant multiple finite tail-invariant measure $\mu$ 
given by \eqref{eq inv meas left}. The measure $\mu$ is defined 
by the Perron eigenvalue $\lambda =  1 + \sqrt{2}$ and the 
corresponding  right eigenvector $\xi = (\xi_n)_{n \in \N}$ of 
$A_6$ which is given by
\begin{equation*}
\xi_1 = \frac{1}{1+\sqrt{2}}, \qquad 
    \xi_n = \frac{2}{(1+\sqrt{2})^n}, \quad n \geq 2.
\end{equation*} The matrix $A_6$ is positive recurrent.
\end{prop}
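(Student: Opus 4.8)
The plan is to pin down the Perron data of $A_6$ explicitly, verify the prescribed eigenvector, and then settle positive recurrence by a renewal (first-return) computation; the uniqueness statement will follow by quoting Theorems \ref{Thm:inv1} and \ref{Thm:Unique finite}.

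First I would record the standing properties of $A_6$. Since the first row is all ones, vertex $1$ connects to every vertex in one step, while from any vertex $i\geq 3$ the only outgoing transition is $i\to i-1$, so every vertex descends to $2$ and then to $1$; hence $A_6$ is irreducible. The diagonal entries $a_{11}=a_{22}=1$ give loops of length $1$, so $A_6$ is aperiodic. Next I would check directly that the stated $\xi$ satisfies $A_6\xi=\lambda\xi$ with $\lambda=1+\sqrt2$, treating the three row types separately: row $1$ produces $\sum_j \xi_j$, row $2$ produces $\xi_1+\sum_{k\geq1}\xi_{2k}$, and each row $i\geq 3$ produces simply $\xi_{i-1}$. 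Using the identity $\lambda^2=3+2\sqrt2=2\lambda+1$ these collapse to $\lambda\xi_1$, $\lambda\xi_2$ and $\lambda\xi_i$. The same geometric decay gives $\sum_n\xi_n=\xi_1+\sum_{n\geq2}2\lambda^{-n}<\infty$ (in fact the total is exactly $1$, matching the normalization required by Theorem \ref{Thm:Unique finite}), so by Theorem \ref{Thm:inv1}(2) the measure $\mu$ of \eqref{eq inv meas left} is finite.

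The heart of the argument is positive recurrence, where I would exploit the renewal structure. The only branching vertices are $1$ (which goes anywhere) and $2$ (which self-loops, jumps to higher even vertices, or exits to $1$), while every vertex $\geq 3$ descends deterministically. Writing $R_2(z)=\sum_{j\geq1}z^{2j-1}=z/(1-z^2)$ for the first-return loops at vertex $2$ inside $\{2,3,\dots\}$ and $G_{21}(z)=z/(1-R_2(z))$ for the first passage $2\to 1$, a short bookkeeping over the first step $1\to k$ collapses the first-return generating function at vertex $1$ to
\[
F(z)=z+\frac{z}{1-z}\,G_{21}(z)=\frac{z}{1-z-z^2}.
\]
I expect this algebraic collapse to be the technical crux, as it simultaneously fixes the convergence parameter and decides recurrence. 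The radius of convergence of $F$ is the smaller root of $1-z-z^2$, namely $\tfrac{\sqrt5-1}{2}$, whereas $F(R)=1$ means $z^2+2z-1=0$, whose relevant root is $R=\sqrt2-1$, which lies \emph{strictly inside} that disk. Hence the loop series $L(z)=(1-F(z))^{-1}$ has radius of convergence exactly $R=\sqrt2-1$, giving $\lambda=R^{-1}=1+\sqrt2$ and, because $F(R)=1$, recurrence of $A_6$. Since $R$ sits in the interior of the domain of analyticity of $F$, the mean return time $R\,F'(R)$ is finite, so $A_6$ is in fact \emph{positive} recurrent.

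Finally I would assemble the conclusion. Positive recurrence, with irreducibility and aperiodicity, places $A_6$ under the hypotheses of Theorem \ref{Thm:Unique finite}; since the sets of maximal and minimal paths are null, tail-invariance and $\varphi_B$-invariance agree, and that theorem yields that the measure $\mu$ of \eqref{eq inv meas left} determined by $\lambda=1+\sqrt2$ and $\xi$ is the unique (up to scalar) finite tail-invariant measure assigning positive values to all cylinder sets. As an independent verification of positive recurrence one may instead exhibit the left eigenvector $\eta=(1,\sqrt2,1,\sqrt2,\dots)$ solving $\eta A_6=\lambda\eta$ and observe $\sum_v\eta_v\xi_v\leq \lambda^{-1}+2\sqrt2\sum_{n\geq2}\lambda^{-n}<\infty$, which, given recurrence, is exactly the Vere-Jones criterion underlying Theorem \ref{Thm:Pos_recc}.
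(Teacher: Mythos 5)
Your proposal is correct, and all the computations check out: the three-row verification of $A_6\xi=\lambda\xi$ reproduces exactly the system \eqref{eq:system_pair_renewal} used in the paper, your first-return series collapses correctly to $F(z)=z/(1-z-z^2)$ (its coefficients are the Fibonacci numbers, and indeed $F(\sqrt2-1)=1$ since $1-(\sqrt2-1)-(\sqrt2-1)^2=\sqrt2-1$), and your left eigenvector $\eta=(1,\sqrt2,1,\sqrt2,\dots)$ is precisely the paper's $\eta=(1,\lambda-1,1,\lambda-1,\dots)$. Where you genuinely diverge from the paper is in how recurrence and positive recurrence are certified. The paper solves the eigen-system, cites Raszeja's work for the fact that $\lambda=1+\sqrt2$ is the Perron value and that $A_6$ is recurrent, and then upgrades to positive recurrence via the Vere--Jones criterion $\eta\cdot\xi<\infty$ of Proposition \ref{Prop:Pos_rec}. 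You instead make the argument self-contained: the renewal computation simultaneously identifies $\lambda^{-1}=\sqrt2-1$ as the radius of convergence of $L(z)=(1-F(z))^{-1}$ (so $\lambda$ really is the Perron eigenvalue in the sense of Lemma \ref{Lemma_Perron_value}, not merely an eigenvalue admitting a positive eigenvector), yields recurrence from $F(\lambda^{-1})=1$, and yields positive recurrence directly from the definition in Appendix \ref{APP:Perron-Frobenius_Theory}, since $\sum_n n\,l_{1,1}(n)\lambda^{-n}=\lambda^{-1}F'(\lambda^{-1})<\infty$ because $\sqrt2-1$ lies strictly inside the disk of analyticity of $F$ (radius $(\sqrt5-1)/2$). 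What your route buys is independence from the external citation and an honest verification of the Perron property; what the paper's route buys is brevity, since once recurrence is granted, the one-line check $\eta\cdot\xi<\infty$ is faster than the generating-function bookkeeping. Your closing remark relegating $\eta\cdot\xi<\infty$ to an ``independent verification'' is thus an accurate description of the relationship: it is in fact the paper's primary argument. The only soft spot, shared with the paper, is the phrase ``unique up to constant multiple finite tail-invariant measure'': Theorem \ref{Thm:Unique finite} gives uniqueness among measures positive on cylinder sets; here, as in Proposition \ref{prop:renew}, positivity is automatic because the first row of $A_6$ is all ones, and it would cost you one sentence to say so.
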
 

The proof of Proposition \ref{prop:renew_pair} is given in Appendix \ref{APP:Example}. 

\begin{example} Consider the one-sided infinite generalized diagram $B(F_7)$ 
where $A_7 = F_7^{T} = (a_{ij})_{i,j \in \N}$ is defined by
    \begin{equation}\label{Matrix A_7}
    A_7 = \begin{pmatrix}
    c_0 & 1 & 0 & 0 & 0 & 0 & \cdots\\
    c_1 & 0 & 1 & 0 & 0 & 0 & \cdots\\
    c_2 & 0 & 0 & 1 & 0 & 0 & \cdots\\
    c_3 & 0 & 0 & 0 & 1 & 0 & \cdots\\
    c_4 & 0 & 0 & 0 & 0 & 1 & \cdots\\
   \vdots & \vdots & \vdots & \vdots & \vdots & \vdots & \ddots
    \end{pmatrix}
\end{equation} Here $c_k \in \N$ for all $k \geq 0$.
\end{example}

\begin{prop} \label{prop:A_7} 
The stationary generalized Bratteli diagram $B(F_7)$ 
corresponding to the matrix defined in \eqref{Matrix A_7}
supports a $\sigma$-finite infinite tail-invariant measure 
$\mu$ 
given by \eqref{eq inv meas left} if there exists constant 
$C \in \N$ such that for every $k \in \N_0$, $c_k < C$. The 
measure $\mu$ is defined 
by the eigenvalue $\lambda \leq C+1$ and the 
corresponding  right eigenvector $\xi = (\xi_k)_{k \in \N}$ 
of $A_7$ which is given by
\begin{equation}\label{eq:right_A7}
    \xi_{k+1} = \lambda^{k+1} - \sum_{i=0}^k c_i 
    \lambda^{k-i}, \quad k \in \N.
\end{equation} 
Moreover, $\eta \cdot \xi < \infty$ if and only if 
$$
\sum_{k=1}^{\infty} \frac{k c_k}{\lambda^{k+1}} < 
\infty,
$$
where $\eta$ is the left eigenvector corresponding to $\lambda$.
\ignore{In this case, the measure $\mu$ is uniquely ergodic.} 
\end{prop}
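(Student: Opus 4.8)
The plan is to reduce the eigenvector equation $A_7\xi=\lambda\xi$ to a scalar recurrence and then to locate the correct eigenvalue. Writing $A_7\xi=\lambda\xi$ row by row and using that the $k$-th row carries only the entries $c_k$ (in the first column) and a single $1$ on the superdiagonal, one gets $c_k\xi_0+\xi_{k+1}=\lambda\xi_k$, i.e. the recurrence $\xi_{k+1}=\lambda\xi_k-c_k$ with the normalization $\xi_0=1$. Solving it produces exactly the closed form \eqref{eq:right_A7}, namely $\xi_{k+1}=\lambda^{k+1}-\sum_{i=0}^k c_i\lambda^{k-i}=\lambda^{k+1}\bigl(1-\sum_{i=0}^k c_i\lambda^{-1-i}\bigr)$. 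Thus the sign of $\xi_{k+1}$ is governed by the partial sums of $g(\lambda):=\sum_{i\ge 0}c_i\lambda^{-1-i}$, and the canonical (Perron) choice is the value of $\lambda$ for which $g(\lambda)=1$. I would justify that this is precisely the Perron eigenvalue $\lambda=1/R$ through the renewal identity $\sum_n a_{00}^{(n)}z^n=(1-F(z))^{-1}$, where $F(z)=\sum_n f_{00}^{(n)}z^n=z\sum_{i\ge 0}c_iz^{i}$ is the first-return generating function at the vertex $0$ (the only first-return path of length $k+1$ is $0\to 1\to\cdots\to k\to 0$, of weight $c_k$); the radius of convergence $R$ then solves $F(R)=1$, which is $g(1/R)=1$, and $U_{00}(R)=\infty$ gives recurrence.

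Next I would establish the existence of such a $\lambda$ together with the bound $\lambda\le C+1$. Since every $c_i\ge 1$, the function $g$ is finite, continuous and strictly decreasing on $(1,\infty)$, with $g(\lambda)\to\sum_{i\ge 0}c_i=\infty$ as $\lambda\to 1^+$; moreover $g(C+1)<C\sum_{i\ge 0}(C+1)^{-1-i}=C\cdot\tfrac{1}{C}=1$ by the hypothesis $c_i<C$. Hence there is a unique root $\lambda\in(1,C+1)$ of $g(\lambda)=1$, and for it $\xi_{k}=\lambda^{k}\sum_{i\ge k}c_i\lambda^{-1-i}>0$ for all $k$. Setting $p^{(n)}=\lambda^{-n}\xi$, one checks $F_n^{T}p^{(n+1)}=A_7\bigl(\lambda^{-(n+1)}\xi\bigr)=\lambda^{-(n+1)}(A_7\xi)=\lambda^{-n}\xi=p^{(n)}$ and $p^{(n)}=A_7^{\,m}p^{(n+m)}\in C^{(n)}_{\infty}$, so Theorem \ref{BKMS_measures=invlimits}(2) yields the tail-invariant measure given by \eqref{eq inv meas left} (alternatively, having noted irreducibility, aperiodicity, and the recurrence above, one may invoke Theorem \ref{Thm:inv1}). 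This measure is finite on cylinders because $\xi_k\le C/(\lambda-1)$, hence $\sigma$-finite; and it is infinite because $\xi_k\ge c_k\lambda^{-1}\ge\lambda^{-1}$ forces $\mu(X_B)=\sum_k\xi_k=\infty$.

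For the ``moreover'' statement I would produce the left Perron eigenvector explicitly. The column relations of $A_7$ give $\eta_{k-1}=\lambda\eta_k$ for $k\ge 1$ and $\sum_{i\ge 0}\eta_ic_i=\lambda\eta_0$; the geometric vector $\eta_k=\lambda^{-k}$ solves the first family and solves the last equation precisely because $\sum_{i\ge 0}\lambda^{-i}c_i=\lambda g(\lambda)=\lambda$. Then, interchanging the order of summation (all terms are nonnegative, so Tonelli applies),
\[
\eta\cdot\xi=\sum_{k\ge 0}\lambda^{-k}\sum_{i\ge k}c_i\lambda^{k-1-i}=\sum_{i\ge 0}c_i\lambda^{-1-i}\sum_{k=0}^{i}1=\sum_{i\ge 0}(i+1)\frac{c_i}{\lambda^{i+1}}.
\]
Since $\sum_{i\ge 0}c_i\lambda^{-1-i}=g(\lambda)=1<\infty$, this sum is finite if and only if $\sum_{i\ge 1}\,i\,c_i\lambda^{-i-1}<\infty$, which is the asserted criterion.

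The step I expect to be the main obstacle is pinning down the eigenvalue. Positivity of $\xi$ by itself does not single out $\lambda$: the closed form stays positive for every $\lambda$ at or above the Perron value, so one must argue that the canonical recurrent measure corresponds to the smallest such $\lambda$, the root of $g(\lambda)=1$, and that this root obeys $\lambda\le C+1$. Carrying out the renewal-equation identification of this root with the Perron value $1/R$ (and thereby with the recurrence needed for the stated normalization) is the one genuinely non-routine ingredient; the remaining estimates, the verification of $F^{T}p^{(n+1)}=p^{(n)}$, and the summation swap in the last display are all straightforward.
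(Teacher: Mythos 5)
Your proposal is correct, and its computational core coincides with the paper's proof: the geometric left eigenvector $\eta_k=\lambda^{-k}$ forcing the normalization $\sum_{k\ge 0}c_k\lambda^{-(k+1)}=1$, the bound $\lambda\le C+1$ from that identity, the closed form \eqref{eq:right_A7} with positivity read off from the tail $\xi_k=\lambda^{k}\sum_{i\ge k}c_i\lambda^{-1-i}$, and the Tonelli swap giving $\eta\cdot\xi=\sum_i (i+1)c_i\lambda^{-(i+1)}$ (the paper's version starts at $k=1$ and gets $\sum_k k c_k\lambda^{-(k+1)}$; the two differ by the convergent term $\sum_i c_i\lambda^{-(i+1)}=1$, so the criteria agree). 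Where you genuinely go beyond the paper: (i) you prove existence and uniqueness of the root of $g(\lambda)=1$ by monotonicity of $g$ on $(1,\infty)$, which the paper tacitly assumes; (ii) you identify that root with the Perron value via the first-return generating function $F(z)=z\sum_i c_iz^i$ and deduce recurrence from $T_{00}(\lambda^{-1})=\infty$ — the paper's proof never addresses recurrence, which matters because the existence claim \eqref{eq inv meas left} is proved in Theorem \ref{Thm:inv1} only under the recurrence hypothesis; (iii) you sidestep that dependence entirely by verifying the cone condition $p^{(n)}=\lambda^{-n}\xi\in C^{(n)}_\infty$ and invoking Theorem \ref{BKMS_measures=invlimits}(2) directly, which needs no recurrence at all. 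So your write-up is, if anything, more complete than the paper's: the paper buys brevity by treating $\lambda$ and the measure's existence as given by the eigenvector computation, while your renewal-equation detour buys a rigorous pinning-down of $\lambda$ (positivity of the closed form alone holds for all $\lambda$ above the root, as you correctly note) plus the recurrence needed if one routes through Theorem \ref{Thm:inv1}. The only cosmetic caveat is the usual $c_k<C$ versus $c_k\le C$ slack (your strict estimate $g(C+1)<1$ matches the proposition's hypothesis and yields $\lambda<C+1$, consistent with the stated $\lambda\le C+1$).
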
 

The proof of Proposition \ref{prop:A_7} is given in Appendix 
\ref{APP:Example}. 

 \ignore{Below we provide example of matrix which has form similar to matrix $A_7$ given by \eqref{Matrix A_7}. This example is taken from \cite{Kitchens1998} where the author provides explicit conditions for the matrix $T_a$ to be recurrent or transient. 

\begin{example} \cite[Example 7.1.13]{Kitchens1998} \label{kitchens_7.1.13}
\begin{equation}\label{Matrix T_a}
    T_a = \begin{pmatrix}
    0 & a & 0 & 0 & 0 & 0 & \cdots\\
    \frac{1}{2^2} & 0 & 1 & 0 & 0 & 0 & \cdots\\
    \frac{1}{3^2} & 0 & 0 & 1 & 0 & 0 & \cdots\\
    \vdots & \vdots & \vdots & \vdots & \vdots & \vdots & \vdots\\
    \frac{1}{(n+1)^2} & 0 & 0 & 0 & 0 & 1 & \cdots\\
   \vdots & \vdots & \vdots & \vdots & \vdots & \vdots & \ddots
    \end{pmatrix}.
\end{equation} It is shown in \cite{Kitchens1998} that there exists $r > 0$ such that for $0< a < r$ the matrix $T_a$ is transient. For $a=r$, $T_a$ is null recurrent and for $a> r$, $T_a$ is positive recurrent. Thus in light of Theorem \ref{Thm:Unique finite} the case $a> r$ implies the existence of uniquely ergodic tail-invariant measure on the path space of the stationary generalized Bratteli diagram \tcb{$B(T_a)$}.
\end{example}}

\ignore{
\begin{example} Consider the one-sided infinite generalized diagram $B(F_8)$ 
where $A_8 = F_8^{T} = (a_{ij})_{i,j \in \N}$ is defined by
    \begin{equation}\label{Matrix A_8}
    A_8 = \begin{pmatrix}
    a_0 & 1 & 0 & 0 & 0 & 0 & \cdots\\
    a_1 & 0 & 1 & 0 & 0 & 0 & \cdots\\
    a_2 & 1 & 0 & 1 & 0 & 0 & \cdots\\
    a_3 & 0 & 1 & 0 & 1 & 0 & \cdots\\
    a_4 & 0 & 0 & 1 & 0 & 1 & \cdots\\
   \vdots & \vdots & \vdots & \vdots & \vdots & \vdots & \ddots
    \end{pmatrix}
\end{equation} Here $a_i \in \N$ for all $i \geq 0$.
\end{example}
}

 %%%%%% Long part is ignored about 200 rows

\ignore{
\subsection{Ergodicity of shift invariant measure} 
In this section, we give a sufficient condition for the shift invariant measure obtained in Theorem \ref{inv3} to be ergodic. Since the shift-invariant measure can be either probability or $\sigma$-finite (depending on
convergence or divergence of $\sum_{i} \ell_i$), we will discuss both these
 cases. 
 
\subsubsection{Finite shift-invariant ergodic measure} The notion of a \textit{bounded size} (see Definition $\ref{Bdd size}$) substitution can also be applied to stationary generalized Bratteli diagrams. 

\begin{definition}We say that a countable, non-negative matrix $F$ is a \textit{band matrix} if the non-zero entries are confined to a diagonal band of finite width. A countable, non-negative band matrix $F$ with bounded row sum property  is called a \textit{bounded size matrix}.

 A stationary generalized Bratteli diagram $B=B(F)$ is said to be of \textit{bounded size} if $F$ is a bounded size matrix.
\end{definition} 

In other words a non-negative integer-valued matrix $F =(f_{i,j} : i,j \in 
\Z)$ is of bounded size if there are natural numbers $t, L$ such that
$$
(a) \ f_{i,j} = 0\  \mbox{if}\  |i - j| > t,\quad (b)\ \sum_{j} f_{i,j} < L, \ \forall i.
$$

By Theorem \ref{inv3}, a shift invariant measure is ergodic if and only if it corresponds to an ergodic tail-invariant measure on the path space of a stationary generalized Bratteli diagram.  Hence, we discuss first the ergodicity of  tail-invariant measure  $\mu$ on $X_B$ by an irreducible, recurrent, and bounded size incidence matrix.

 \begin{remark}\label{irr} We recall that the following   properties hold:
 If $F$ is a countable, non-negative, irreducible, and aperiodic matrix, then
  
 \noindent $(i)$ for all pairs of states $i, j \in \Z$ there exists some $n$ such that $f^{(n)}_{ij} >0$, and

 \noindent $(ii)$ for a fixed state $i$ there exists $k$ such that $f^{(n)}_{ii} >0$ for all $n \geq k$ (see Lemma $7.1.1$, \cite{Kitchens1998}).
 
 \end{remark}
 
\begin{theorem}\label{erg_1}

Let $B= B(F)$ be a stationary generalized Bratteli diagram. Assume that $F$ is bounded size, irreducible, aperiodic and recurrent such that the left eigenvector $\ell = (\ell_v)$ has the property $\sum_{v} \ell_v < \infty$, then the probability tail-invariant measure on the path space $X_B$ (defined in \ref{eq inv meas left}), is ergodic.

\end{theorem}  
 
\begin{example}[One step forward, two step back substitution on $\mathbb{Z}$] \cite{Bezuglyi_Jorgensen_Sanadhya_2022}. Define $\sigma$ by
$$
\left\{
\begin{aligned}
n &\mapsto (n - 1)(n + 1)(n + 1), \; n \leq -2,\\
-1 &\mapsto -2 \; -1 \; 0\\
0 &\mapsto -1 \; 0 \; 1\\
n &\mapsto (n - 1)(n - 1)(n + 1), \; n \geq 1. 
\end{aligned}
\right.
$$
For the matrix $A$ of the substitution, one has $\lambda = 3$ and $x = (\ldots, \frac{1}{4}, \frac{1}{2},1,1,\frac{1}{2}, \frac{1}{4}, \ldots)^T$.

\end{example}

The following example shows a new type of subdiagrams and invariant measures which arise only in the case of generalized Bratteli diagrams and cannot be observed for Bratteli diagrams with finitely many vertices on each level.

\begin{example}
Let $B$ be a stationary Bratteli diagram with the $\mathbb{N} \times \mathbb{N}$ incidence matrix
$$
F = \begin{pmatrix}
2 & 0 & 0 & 0 & \ldots\\
1 & 3 & 0 & 0 & \ldots\\
0 & 1 & 4 & 0 & \ldots\\
0 & 0 & 1 & 5 & \ldots\\
\vdots & \vdots & \vdots & \vdots & \ddots\\
\end{pmatrix}.
$$

\begin{figure}
\unitlength=1cm
\begin{graph}(7,6)
% \graphnodesize{0.2}
% \roundnode{V0}(3,6)
%  %\nodetext{V0}(-1,0){$V_0$}
 \roundnode{V11}(2,5)
 %\nodetext{V11}(-0.7,0){$w_1^{(0)}$}
  %\nodetext{V12}(0.7,0){$w_2^{(0)}$}
 \roundnode{V12}(4,5)
 \roundnode{V13}(6,5)
  % The second level vertices
 \roundnode{V21}(2,3)
 \roundnode{V22}(4,3)
 \roundnode{V23}(6,3)
  %\nodetext{V21}(-0.7,0){$w_1^{(1)}$}
 % \nodetext{V22}(0.7,0){$w_2^{(1)}$}
 % The third level vertices
 \roundnode{V31}(2,1)
 \roundnode{V32}(4,1)
 \roundnode{V33}(6,1)
 % \nodetext{V31}(-0.7,0){$w_1^{(2)}$}
  %\nodetext{V32}(0.7,0){$w_2^{(2)}$}
  %
 %
 % EDGES
 \graphlinewidth{0.025}
% % First level
%  \edge{V0}{V11}
%  \edge{V0}{V12}

 % Second level
 \bow{V21}{V11}{0.09}
  \bow{V21}{V11}{-0.09}
    \edge{V22}{V11}
     
 \bow{V22}{V12}{0.09}
 \bow{V22}{V12}{-0.09}
 \edge{V22}{V12}
 
 \bow{V23}{V13}{0.12}
  \bow{V23}{V13}{-0.12}
   \bow{V23}{V13}{0.04}
  \bow{V23}{V13}{-0.04}
 \edge{V23}{V12}

 %third level
 \bow{V31}{V21}{0.09}
 \bow{V31}{V21}{-0.09}
   \edge{V32}{V21}
%     \edge{V32}{V22}[\graphlinecolour(1,0,0)]
 \bow{V32}{V22}{0.09}
 \bow{V32}{V22}{-0.09}
 \edge{V32}{V22}

  \bow{V33}{V23}{0.12}
  \bow{V33}{V23}{-0.12}
   \bow{V33}{V23}{0.04}
  \bow{V33}{V23}{-0.04}
 \edge{V33}{V22}
  \freetext(6.9,5){$\ldots$}
 \freetext(6.9,3){$\ldots$}
  \freetext(6.9,1){$\ldots$}
  \freetext(2,0.5){$\vdots$}
  \freetext(4,0.5){$\vdots$}
    \freetext(6,0.5){$\vdots$}
\end{graph}
\caption{The Bratteli diagram with infinitely countably many finite ergodic invariant measures and uncountably many infinite $\sigma$-finite ergodic invariant measures.}
\end{figure}

\end{example}

} %end \ignore

%%%%%%%

\section{Stochastic matrices in Bratteli diagrams}
\label{sect:stochastic}

In this section, we consider several stochastic matrices and 
discuss the relations between them. Also, we consider the 
properties of the corresponding generalized Bratteli diagrams. 

\subsection{Stochastic matrices and measures} 
Let $B = (V, E)$ be a stationary generalized Bratteli diagram with 
infinite incidence matrix $F$. Let $A = F^T$.
Assume that $A$ is irreducible, has a finite Perron eigenvalue $\lambda$, and that $A$ admits a positive right eigenvector $\xi$ for $\lambda$:
$$
A \xi = \lambda \xi.
$$
%As in Section 
%\ref{sect Basic}, we assume that $A$ is %irreducible, aperiodic, and 
%recurrent with finite Perron eigenvalue $\lambda$. By 
%Theorem~\ref{Thm:Generalized_Perron_Frobenius} there exist unique 
%up to constant multiple positive left and right eigenvectors 
%$\eta$, $\xi$ for $\lambda$:
%$$
%A \xi = \lambda \xi, \quad \eta A = \lambda %\eta.
%$$ 
Define the matrix $P = (p_{w,v} : w,v \in V_0)$ as follows:
\begin{equation}\label{Formula:stochastic_matrix_elements}
p_{w,v} = \frac{a_{w,v}\;\xi_v}{\lambda\xi_w}. 
\end{equation}  
Clearly, the matrix $P$ is row stochastic, that is 
$$
\sum_{v \in V} p_{w,v} = 1.
$$
Hence, $P$ can be considered as a Markov matrix that gives the probability to get from $w \in V_0$ to $v \in V_1$ along any edge from $E(v,w)$. The matrix $P$ is called also a probability transition kernel.
Denote $P^n = (p^{(n)}_{w,v})$ and $A^n = (a^{(n)}_{w,v})$. 
By induction, we have 
$$
p^{(n)}_{w,v} = \frac{a^{(n)}_{w,v}\;\xi_v}{\lambda^n\xi_w}, 
\qquad v, w \in V_0, \ n\in \N.
$$
In particular, 
\begin{equation}\label{Equation:connection_p_and_a}
p^{(n)}_{vv} = \frac{1}{\lambda^n}a^{(n)}_{v,v}.   
\end{equation}
From~(\ref{Equation:connection_p_and_a}) it easily follows that 
the spectral radius of $P$ is $1$, and the corresponding right 
eigenvector consists of all ones.

The following result was proved by Thiago Raszeja.\footnote{We 
are thankful to Thiago for the permission to include this 
statement in the paper.}
\begin{prop}
\label{prop:ThiagoThm}
    Let $P$ be the stochastic matrix defined in 
    \eqref{Formula:stochastic_matrix_elements} by a matrix $A$.
    Then $P$ is recurrent (null recurrent, positive recurrent) or
    transient if and only $A$ is recurrent (null recurrent, positive 
    recurrent) or transient. In particular, $P$ is 
    recurrent if and only if $\sum_n p^{(n)}_{w,w} = \infty$ for 
    all $w\in V_0$.
\end{prop}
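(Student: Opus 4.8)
The plan is to transfer the classification of $A$ to $P$ by means of the diagonal identity $p^{(n)}_{vv} = \lambda^{-n} a^{(n)}_{vv}$ already recorded in \eqref{Equation:connection_p_and_a}, supplemented by a telescoping identity for first-return weights. First I would recall the definitions of recurrence and transience from the Perron-Frobenius theory for infinite matrices (Appendix \ref{APP:Perron-Frobenius_Theory}): the irreducible matrix $A$ with Perron value $\lambda$ is \emph{recurrent} exactly when $\sum_{n} \lambda^{-n} a^{(n)}_{vv} = \infty$ for some (equivalently, by irreducibility, for every) vertex $v$, and \emph{transient} otherwise. On the other side, the stochastic matrix $P$ has spectral radius $1$, and as a Markov chain it is recurrent exactly when $\sum_n p^{(n)}_{vv} = \infty$. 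Since \eqref{Equation:connection_p_and_a} gives $\sum_n p^{(n)}_{vv} = \sum_n \lambda^{-n} a^{(n)}_{vv}$, the two series converge or diverge together; hence $A$ is recurrent iff $P$ is recurrent, and therefore $A$ is transient iff $P$ is transient. This argument also yields the ``in particular'' clause, irreducibility of $P$ (inherited from $A$) ensuring that the quantifier over $w$ is harmless.

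For the finer split into positive and null recurrence I would pass to first-return quantities. Let $\ell^{(n)}_{vv}$ denote the sum of the weights (products of entries of $A$) over all paths of length $n$ that leave $v$ and return to $v$ for the first time at step $n$, and let $\rho^{(n)}_{vv}$ denote the first-return probabilities for the chain $P$. The key observation is that $\xi$ telescopes along any path: for a first-return path $v = w_0, w_1, \ldots, w_{n-1}, w_n = v$ its $P$-weight is
$$
\prod_{i=0}^{n-1} p_{w_i, w_{i+1}} = \frac{1}{\lambda^n}\Big(\prod_{i=0}^{n-1} a_{w_i, w_{i+1}}\Big)\prod_{i=0}^{n-1}\frac{\xi_{w_{i+1}}}{\xi_{w_i}} = \frac{1}{\lambda^n}\prod_{i=0}^{n-1} a_{w_i, w_{i+1}},
$$
because $\prod_{i} \xi_{w_{i+1}}/\xi_{w_i} = \xi_{w_n}/\xi_{w_0} = 1$. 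Summing over all such paths gives $\rho^{(n)}_{vv} = \lambda^{-n} \ell^{(n)}_{vv}$ (equivalently, this can be read off from the renewal relation $a^{(n)}_{vv} = \sum_{k=1}^{n}\ell^{(k)}_{vv} a^{(n-k)}_{vv}$ after multiplying by $\lambda^{-n}$).

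With this identity the positive/null dichotomy becomes a statement about mean return times. By standard renewal theory $P$ is positive recurrent iff $\sum_n n\,\rho^{(n)}_{vv} < \infty$ and null recurrent iff this sum is infinite (with $P$ recurrent in both cases). By the identity just established this equals $\sum_n n\,\lambda^{-n} \ell^{(n)}_{vv}$, which is precisely the quantity whose finiteness characterizes positive recurrence of $A$ in the Perron-Frobenius theory (Theorem \ref{Thm:Pos_recc}). Hence $A$ is positive recurrent iff $P$ is, and likewise for null recurrence, which together with the first paragraph completes all four equivalences.

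The main obstacle is bookkeeping rather than analysis: I must verify that the definitions of $\lambda$-recurrence, positive recurrence, and null recurrence adopted in the Appendix (following Kitchens \cite{Kitchens1998}) agree term by term with the classical Markov-chain notions applied to $P$, and that irreducibility of $A$ makes each of these a class property independent of $v$, so the equivalences hold for all $w \in V_0$ simultaneously. Once the normalizations are matched, every step above is an identity of series, so no further estimates are needed.
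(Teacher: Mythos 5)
Your proof is correct, but there is nothing in the paper to compare it against: the paper states Proposition \ref{prop:ThiagoThm} without proof, attributing the result to T.~Raszeja (see the footnote), so your argument stands as a self-contained proof rather than a variant of the authors' own. The two halves are both sound. For plain recurrence/transience, the diagonal identity $p^{(n)}_{vv} = \lambda^{-n} a^{(n)}_{vv}$ from \eqref{Equation:connection_p_and_a}, together with the fact that the Vere-Jones Perron value of $P$ is $1$, makes the defining series of Definition \ref{Def_recurrent_transient} for $A$ and the classical recurrence series for $P$ literally the same series, and irreducibility (a class property, preserved because $\xi > 0$ and $\lambda > 0$ give $p_{w,v} > 0 \Leftrightarrow a_{w,v} > 0$) removes any dependence on the base vertex. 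That same positivity observation is worth stating explicitly in your second step: it is what guarantees that the set of first-return paths counted by $\ell^{(n)}_{vv}$ for $A$ coincides with the set of first-return paths carrying the probabilities $\rho^{(n)}_{vv}$ for $P$, so that the telescoping computation $\prod_i \xi_{w_{i+1}}/\xi_{w_i} = 1$ really does yield $\rho^{(n)}_{vv} = \lambda^{-n}\ell^{(n)}_{vv}$ term by term. One small citation slip: the finiteness of $\sum_n n\,\ell^{(n)}_{vv}\lambda^{-n}$ is not the content of Theorem \ref{Thm:Pos_recc}; it is the \emph{definition} of positive recurrence given in Appendix \ref{APP:Perron-Frobenius_Theory} just before Proposition \ref{Prop:Pos_rec} (Theorem \ref{Thm:Pos_recc} is the limit statement $A^n/\lambda^n \to \xi\eta$). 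With that reference corrected, the positive/null dichotomy is exactly the equality of mean-return-time series, as you say. As an aside, the finer dichotomy also transfers by a one-line eigenvector computation: $\eta^P_v := \eta_v \xi_v$ is a left eigenvector of $P$ for the eigenvalue $1$ while the right eigenvector is the all-ones vector, so $\eta^P \cdot \mathbf{1} = \eta \cdot \xi$, and Proposition \ref{Prop:Pos_rec} applies to $A$ and $P$ simultaneously; your first-return route is equivalent but avoids invoking that criterion.
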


\begin{example}\label{ex transient Sect 6}
Consider the matrix $A =A(a,b)$ with two positive integer parameters
$a, b$:
\begin{equation}\label{eq:matrix A(a,b)}
A =  \left(
  \begin{array}{cccccccccccc}
   \ddots &  \vdots &  \vdots & \vdots & \vdots & \vdots & 
   \udots\\
    \cdots & 0 & {b} & 0 & 0 & 0 & \cdots\\
    \cdots &{a} & 0 & {b} & 0 & 0 & \cdots\\
    \cdots & 0 & {a} & 0 & {b} & 0 & \cdots\\
    \cdots & 0 & 0 & {a} & {0} & {b} &  \cdots\\
    \cdots & 0 & 0 & 0 & {a} & {0} &  \cdots\\
    \udots & \vdots & \vdots & \vdots & \vdots & \vdots &  \ddots\\
    \end{array}
\right)
\end{equation}

\ignore{Note that $A$ is the matrix of the random walk on $\Z$ such that
$Prob(v \to (v+1)) = \frac{b}{a+b} $ and $Prob(v \to (v - 1)) = 
\frac{a}{a+b} $, $v \in \Z$.
Moreover, this matrix has the properties of equal row sums and 
equal column sums. Then $\lambda = a+b $ is an eigenvalue for 
$A$, and  $\xi = (\ldots, 1, 1, 1, \ldots)^T$ and 
$\eta = (\ldots, 1, 1, 1, \ldots)$ are the right and left 
eigenvectors for $A$ corresponding to $\lambda$, respectively.  
Denote by $P$ the stochastic matrix defined by $A$ and let 
$Q = P^2$
with entries $(q^{(n)}_{v,w})$, $w,v \in \Z$.}
As shown in \cite{BobokBruin2016}, the spectral radius (Perron eigenvalue)
of $A$ is $\lambda_A = 2\sqrt{ab}$. It can be also checked, using Proposition \ref{prop:ThiagoThm}, 
that $A$ is null recurrent for any $a, b \in \mathbb{N}$. 
\ignore{$$
\sum_{n\geq 1} q^{(n)}_{v,v} = \sum_{n\geq 1} \binom{2n}{n}
\frac{a^nb^n}{(a+b)^{2n}}.
$$
This series converges if $a\neq b$. }
\end{example}

\begin{prop}
Let $A$ be as in \eqref{eq:matrix A(a,b)} with $a\neq b$. Then 
the corresponding stationary generalized Bratteli diagram $B$ 
admits at least two  tail-invariant measures.
\end{prop}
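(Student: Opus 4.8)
The plan is to recognize that the matrix in \eqref{eq:matrix A(a,b)} is literally the matrix $A_2(a,b)$ of Example~\ref{Ex:BobokBruin}, so the assertion is already the content of Proposition~\ref{prop Ex 2 sect 5}(1); I would recall the argument here in eigenvector language. The whole strategy is to produce \emph{two positive right eigenvectors of $A$ that are not proportional} and then feed each of them into the measure formula \eqref{eq inv meas left}. Since a tail-invariant measure with finite values on cylinders is determined by the values $\mu([\ov e])$, and \eqref{eq inv meas left} makes these depend only on the range vertex and the level, tail-invariance will be automatic; the two eigenpairs will then give two genuinely different measures.

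First I would exhibit the two eigenpairs by direct computation. Reading off a typical row of $A$, one has $(A\xi)_i = a\,\xi_{i-1} + b\,\xi_{i+1}$ (the subdiagonal entry $a$ multiplies $\xi_{i-1}$ and the superdiagonal entry $b$ multiplies $\xi_{i+1}$). Thus the constant vector $\xi=(\ldots,1,1,1,\ldots)^T$ satisfies $A\xi=(a+b)\xi$, while the geometric vector $\wt\xi=(\wt\xi_n)$ with $\wt\xi_n=(a/b)^{n/2}$ satisfies $A\wt\xi = 2\sqrt{ab}\,\wt\xi$, because $a(a/b)^{-1/2}+b(a/b)^{1/2}=\sqrt{ab}+\sqrt{ab}=2\sqrt{ab}$. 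Both vectors are strictly positive since $a,b\in\N$. By the AM--GM inequality $a+b>2\sqrt{ab}$ whenever $a\neq b$, so the two eigenvalues $\lambda=a+b$ and $\wt\lambda=2\sqrt{ab}$ are distinct and the corresponding eigenvectors cannot be scalar multiples of one another.

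Next I would turn each eigenpair into a tail-invariant measure. For a positive eigenvector $\xi$ with $A\xi=\lambda\xi$ set $p^{(n)}=\xi/\lambda^{n}$; since $A=F^{T}$, the eigenvalue equation is exactly the compatibility relation $F^{T}p^{(n+1)}=p^{(n)}$ of \eqref{eq:formula_p_n}, and equivalently it is the tower/additivity identity $\sum_{u}A_{v,u}\,\xi_u/\lambda^{n+1}=\xi_v/\lambda^{n}$. Hence by Theorem~\ref{BKMS_measures=invlimits} the assignment $\mu([\ov e(w,v)])=\xi_v/\lambda^{n}$ of \eqref{eq inv meas left} extends to a genuine Borel tail-invariant measure on $X_B$ with finite values on cylinders. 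Applying this to the two eigenpairs yields $\mu$ (from $\lambda=a+b$, $\xi\equiv1$) and $\wt\mu$ (from $\wt\lambda=2\sqrt{ab}$, $\wt\xi_n=(a/b)^{n/2}$). They are distinct: for two cylinders at the same level $n$ whose ranges are different vertices $v_1\neq v_2$, the measure $\mu$ assigns the common value $\lambda^{-n}$, whereas $\wt\mu$ assigns the unequal values $(a/b)^{v_1/2}\wt\lambda^{-n}$ and $(a/b)^{v_2/2}\wt\lambda^{-n}$ (unequal because $a\neq b$). Since $\mu$ equalizes these two cylinders while $\wt\mu$ does not, no constant $c$ can satisfy $\wt\mu=c\,\mu$, so $\mu\neq\wt\mu$.

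The step needing care is precisely the passage from the eigenvector to a measure for the \emph{non-Perron} eigenvalue $\lambda=a+b$. By Example~\ref{ex transient Sect 6} the Perron value of $A$ is $2\sqrt{ab}$, and $A$ is only null recurrent and of period $2$, so Theorem~\ref{Thm:inv1} (which assumes aperiodicity and uses the Perron eigenvector) does not literally apply to the pair $(a+b,\,\xi\equiv1)$; moreover telescoping cannot repair this, since $A^{2}$ decomposes into even/odd blocks and so is reducible. The resolution is that the measure construction needs neither recurrence, nor aperiodicity, nor Perron-ness: only positivity of $\xi$ together with $A\xi=\lambda\xi$ is used to verify the additivity identity above, which is exactly $(A\xi)_v=\lambda\xi_v$. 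Thus Theorem~\ref{BKMS_measures=invlimits} delivers both measures and the proof is complete.
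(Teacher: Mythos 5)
Your proof is correct and takes essentially the same route as the paper's: both produce the two eigenpairs $(a+b,\,\mathbf{1})$ and $\bigl(2\sqrt{ab},\,((a/b)^{n/2})_{n\in\Z}\bigr)$ and convert each into a tail-invariant measure via the cylinder formula \eqref{eq inv meas left}, justified by Theorem \ref{BKMS_measures=invlimits}. Your added care — the explicit non-proportionality/distinctness check, and the observation that Theorem \ref{Thm:inv1} does not literally apply here (since $A$ has period $2$, $A^2$ is reducible, and $a+b$ is not the Perron value) so that only the general Theorem \ref{BKMS_measures=invlimits} is needed — merely makes explicit what the paper leaves implicit.
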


\begin{proof} For $\lambda = a+b$, we define a $\sigma$-finite 
invariant measure $\mu$ using the eigenvector $\xi = 
(\ldots, 1,1, \ldots)$:
for $\ol e = (e_0, \ldots, e_{n-1})$, $\mu([\ol e]) = (a+b)^{-n}$.

To get another tail-invariant measure $m$, we solve the equation 
$A \tau = \lambda_A \tau$. Omitting computations, we find that
$$
\tau = \left(\left(\frac{a}{b}\right)^{i/2} : i \in \Z\right).
$$
Therefore, the measure $m$ of the cylinder set $[\ol e]$  is
$$
m([\ol e]) = \frac{1}{(2b)^n}
\left(\frac{a}{b}\right)^{\frac{i-n}{2}}
$$
where $r(\ol e) = r(e_{n-1}) =i$.

\end{proof}

%%%%%%

\begin{example}\label{ex:stochastic matrix}
[see Example~\ref{Ex:Matrix A_1}] 
One can define stochastic matrix $P$ using $\lambda = a + 2b$ as follows: 
$$
p_{0,0} = p_{-1,-1} = \frac{a}{a + 2b};\quad p_{0,-1} = p_{0,1} = 
p_{-1,-2} = p_{-1,0} = \frac{b}{a + 2b}; 
$$
for $k \geq 1$ we have
$$
p_{k, k-1} = \frac{2b}{a+2b}; \quad p_{k,k+1} = \frac{a}{a+2b};
$$
and for $k \leq -2$:
$$
p_{k, k-1} = \frac{a}{a+2b}; \quad p_{k,k+1} = \frac{2b}{a+2b}.
$$
All other entries of $P$ are zero. 

Notice that if 
$a < 2b$, the random walk on $\mathbb{Z}$ 
corresponding to $P$ is positive recurrent. Indeed, for 
$k \geq 1$, the probability to walk from $k$ to $k+1$ is less than 
the probability to walk from $k$ to $k-1$. For $k \leq -2$, the 
probability to walk from $k$ to $k-1$ is less than the probability 
to walk from $k$ to $k+1$. This means that the random walk 
approaches $\{-1,0\}$ with a higher probability than escapes 
to infinity. Since the inverse of the Perron eigenvalue $\lambda_P$ for $A_1$ is the convergence radius of the series from Defnition~\ref{Def_recurrent_transient}, we obtain that $\lambda_P$ is greater or equal to $a + 2b$. 
\end{example}

%%%%%%

\subsection{Stationary diagrams with positive recurrent incidence 
matrix} 
Let $A= F^T$ be an infinite matrix that determines a stationary
generalized 
Bratteli diagram $B$. Suppose that there exists a Perro-Frobenius 
eigenpair $(\lambda, \xi)$, $A\xi = \lambda \xi$.
Let $P = (p_{w,v})_{w,v \in V}$ be the 
stochastic matrix corresponding to the matrix $A$ as defined in 
(\ref{Formula:stochastic_matrix_elements}). Using the definition of 
a tail-invariant measure as in (\ref{eq inv meas left}) note that 
$P$ can be also defined as
\begin{equation}\label{matrix P}
    p_{w,v} = a_{w,v} \frac{\mu^{(n+1)}_v}{\mu^{(n)}_w}
  \qquad \mbox{where} \ \mu^{(n)}_v = \frac{\xi_v}{\lambda^n} .
\end{equation} 

This formula remains true for non-stationary Bratteli diagrams $B$
defined by a sequence of incidence matrices $(F_n)$ and 
the corresponding sequence of transpose matrices $A_n = F_n^T$. 
We suppose that there exists a sequence of positive vectors 
$(\mu^{(n)})$  such that $A_n \mu^{(n +1)} = \mu^{(n)}$. 
Recall that such a sequence generates a tail-invariant measure. 
In this case, we define the sequence of row stochastic matrices 
$\widetilde P_n = (\widetilde p^{(n)}_{w,v})_{w,v \in V}$:
\begin{equation}
    \widetilde p^{(n)}_{w,v} = a^{(n)}_{w,v}\, \frac{\mu^{(n+1)}_v}{\mu^{(n)}_w}. 
\end{equation} 
Observe that if $B = (V,E)$ is stationary, then $\widetilde P_n = P$ 
for each $n \in \N$. 

Another way to realize row stochastic matrices is by using 
the height vectors $H^{(n)} = (H^{(n)}_{v} : 
v \in V_n)$ as in Definition \ref{Def:Height} and
(\ref{lem vector H}). Recall that $F_n H^{(n)} = H^{(n+1)}$ for 
$n \in \N$. Thus, we can define $\widetilde F_n = 
(\widetilde f^{(n)}_{v,w})$, $w \in V_n$, $v \in V_{n+1}$ 
as follows: 
\begin{equation}
    \widetilde f^{(n)}_{v,w} = f^{(n)}_{v,w}\, \frac{H^{(n)}_w}
    {H^{(n+1)}_v} \,\, w \in V_n,\, v \in V_{n+1}. 
\end{equation} 
The sequence of matrices $\widetilde F_n$ consists of row 
stochastic 
matrices. Let the clopen set $X_v^{(n)}$ be as in Definition 
\ref{Def:Kakutani-Rokhlin}, then for any tail-invariant measure 
$\mu$ on $X_B$, we have
\begin{equation}\label{X_v}
\mu (X_v^{(n)}) = \mu_v^{(n)} H_v^{(n)} = : \widetilde 
q_{v}^{(n)}.
\end{equation} 
Setting $\widetilde q^{(n)} : = 
(\widetilde q_{v}^{(n)})_{v \in V_n}$, we observe that 
\begin{equation}\label{q_n+1}
    \widetilde q^{(n+1)} \widetilde F_n = \widetilde q^{(n)}.
\end{equation}

In what follows we will focus on the case of \textit{positive 
recurrent} incidence matrices. 

Let $B = (V,E)$ be a stationary generalized Bratteli diagram such 
that the matrix  $A = F^T$ is irreducible, aperiodic, and 
positive recurrent. Let $\xi$ and $\eta$  denote the  
right and left positive 
eigenvectors, respectively, corresponding to the Perron value 
$\lambda$. Moreover, we assume that the right eigenvectors $\xi = 
(\xi_v)_{v \in V_n}$  is summable, i.e. 
$\underset{v\in V_n}{\sum} 
\xi_v < \infty$. Without loss of generality, we assume that 
$\underset{v\in V_n}{\sum} \xi_v = 1$. 
According to Theorem \ref{Thm:inv1},
the right eigenvector $\xi$ defines a tail-invariant measure.
We discuss here the role of the left eigenvector $\eta$,
see Theorem \ref{thm:H grow}. 
 
We define a sequence of vectors $(\nu^{(n)})_{n \in \N_0}$ as 
follows: $\nu^{(0)} = \xi$, and $\nu^{(n+1)}= \nu^{(n)} P$ 
for all $n \in \N_0$, where $P$ is the row stochastic matrix as 
defined in (\ref{Formula:stochastic_matrix_elements}). Note that 
$(\nu^{(n)})$ is a sequence of probability vectors. To see this,  
we note that $\nu^{(0)} = \xi$ is a probability vector, and by 
induction,  
$$
\underset{v\in V_n}{\sum}\, \nu_v^{(n+1)} =  \underset{v\in V_n}
{\sum} \, \underset{w\in V_n}{\sum} \nu_w^{(n)} p_{w,v} = 
\underset{v\in V_n}{\sum} \, \underset{w\in V_n}{\sum} 
\nu_w^{(n)} \frac{a_{w,v}\;\xi_v}{\lambda\,\xi_w} 
$$

$$
 = \underset{w\in V_n}{\sum} \nu_w^{(n)} \frac{\lambda \, \xi_w}
 {\lambda\, \xi_w} = \underset{w\in V_n}{\sum} \nu_w^{(n)} = 1.
$$ 

\begin{lemma}\label{lem:vectors nu}
For every $v \in V_n$, we have 
$$
\nu_v^{(n)} = \dfrac{\xi_v}{\lambda^n} \underset{w\in V_0}{\sum} 
a_{w,v}^{(n)},
$$ 
where $a_{w,v}^{(n)}$ ($a_{w,v}^{(1)} = a_{w,v}$) 
denotes the $(w,v)$-th entry of the 
matrix $A^n$. 
\end{lemma}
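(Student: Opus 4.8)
The plan is to establish the formula by induction on $n$, using the defining recursion $\nu^{(n)} = \nu^{(n-1)} P$ and the explicit entries of $P$ from \eqref{Formula:stochastic_matrix_elements}. For the base case $n = 0$, the matrix $A^0$ is the identity, so $a_{w,v}^{(0)} = \delta_{w,v}$ and $\sum_{w \in V_0} a_{w,v}^{(0)} = 1$; since $\nu^{(0)} = \xi$ by definition, the right-hand side equals $\xi_v$, as required.

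For the inductive step, I would assume the formula holds at level $n$. Using $\nu^{(n+1)} = \nu^{(n)} P$ together with $p_{w,v} = a_{w,v}\xi_v/(\lambda \xi_w)$, I would write
\begin{equation*}
\nu_v^{(n+1)} = \sum_{w \in V_0} \nu_w^{(n)} \, \frac{a_{w,v}\,\xi_v}{\lambda\,\xi_w} = \sum_{w \in V_0} \frac{\xi_w}{\lambda^n}\Big(\sum_{u \in V_0} a_{u,w}^{(n)}\Big)\frac{a_{w,v}\,\xi_v}{\lambda\,\xi_w},
\end{equation*}
where the second equality substitutes the inductive hypothesis. The factors $\xi_w$ cancel, leaving $\frac{\xi_v}{\lambda^{n+1}} \sum_{w}\sum_u a_{u,w}^{(n)} a_{w,v}$. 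Interchanging the two sums (justified below) and recognizing $\sum_w a_{u,w}^{(n)} a_{w,v} = a_{u,v}^{(n+1)}$ as the $(u,v)$-entry of $A^{n+1}=A^nA$ yields $\nu_v^{(n+1)} = \frac{\xi_v}{\lambda^{n+1}} \sum_{u \in V_0} a_{u,v}^{(n+1)}$, which is exactly the claimed formula at level $n+1$.

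The only point requiring care is the convergence of the infinite sums and the legitimacy of the interchange of summation order. Since all entries $a_{w,v}^{(n)}$ are non-negative, the double series has non-negative terms, so Tonelli's theorem permits the swap unconditionally. Finiteness follows from the observation that $\sum_{w \in V_0} a_{w,v}^{(n)} = H_v^{(n)} = |E(V_0,v)|$ is precisely the (finite) height of the tower $X_v^{(n)}$ from Definition \ref{Def:Height}; indeed, transposing the relation $F_n \cdots F_0 H^{(0)} = H^{(n+1)}$ from \eqref{lem vector H} with $H^{(0)} = \mathbf 1$ identifies the column sums of $A^n$ with the tower heights. Hence every quantity appearing above is finite, and the equivalent compact form $\nu_v^{(n)} = \xi_v H_v^{(n)}/\lambda^n$ records this identification. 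I expect no genuine obstacle here: the content is a clean induction, and the mild bookkeeping is exactly the identification of the column sums of $A^n$ with the finite heights $H_v^{(n)}$, which also guarantees that all the series involved are absolutely convergent.
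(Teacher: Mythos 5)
Your proposal is correct and follows essentially the same route as the paper's proof: an induction on $n$ that substitutes the inductive hypothesis into $\nu^{(n+1)} = \nu^{(n)}P$, cancels the factors $\xi_w$, and recognizes $\sum_w a^{(n)}_{u,w}a_{w,v} = a^{(n+1)}_{u,v}$. Your extra care in justifying the interchange of summation via Tonelli and identifying the column sums $\sum_{w\in V_0} a^{(n)}_{w,v}$ with the finite heights $H^{(n)}_v$ merely makes explicit what the paper leaves implicit, and is a sound refinement rather than a different argument.
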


\begin{proof} The proof is by induction. We recall that 
$\underset{w\in V_0}{\sum} \, a_{w,v} < \infty$. Next, 
$$
\nu_v^{(1)} = \underset{w\in V_0}{\sum} \nu_{w}^{(0)} P_{w,v} = 
\underset{w\in V_0}{\sum} \, \xi_w \, a_{w,v} 
\frac{\xi_v}{\lambda \, \xi_w} = \frac{\xi_v}{\lambda} \, 
\underset{w\in V_0}{\sum} \, a_{w,v}. 
$$ 
Check the induction step:
$$
\nu_v^{(n+1)} = \underset{w\in V_0}{\sum} \nu_{w}^{(n)} P_{w,v} 
= \underset{w\in V_0}{\sum} \bigg( \dfrac{\xi_w}{\lambda^n} 
\underset{s\in V_0}{\sum} a_{s,w}^{(n)} \bigg)  p_{w,v}
$$

$$
= \underset{w\in V_0}{\sum} \bigg( \dfrac{\xi_w}{\lambda^n} 
\underset{s\in V_0}{\sum} a_{s,w}^{(n)} \, \frac{a_{w,v}\;\xi_v}
{\lambda\,\xi_w} \bigg)  = \underset{s\in V_0}{\sum} \frac{\xi_v}
{\lambda^{n+1}} \underset{w\in V_0}{\sum} a_{s,w}^{(n)} 
\, a_{w,v} 
= \frac{\xi_v}{\lambda^{n+1}} \underset{s\in V_0}{\sum} 
a_{s,v}^{(n+1)}.
$$
\end{proof}

\begin{prop}\label{xi_eta} 
(1) Let $A$ be a positive recurrent matrix. Then, for every 
vertex $v \in V_0$, 
$$
\mu(X_v^{(n)}) = \nu_v^{(n)}. 
$$ 
where $\mu$ is the tail-invariant  measure 
defined in (\ref{eq inv meas left}).

(2)
If $A$ is a positive recurrent 
matrix and the right eigenvector $\xi$ is probability,  
then for every $v \in V_0$, 
$$
\nu_v^{(n)} \rightarrow \xi_v \, \eta_v \qquad n \rightarrow 
\infty.
$$ 
\end{prop}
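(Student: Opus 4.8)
The plan is to prove the two parts separately, deriving part (1) from the explicit formula \eqref{eq inv meas left} for $\mu$, and reducing part (2) to the convergence statement of Theorem \ref{Thm:Pos_recc} together with a dominated convergence argument that exploits the probability normalization of $\xi$.

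For part (1), I would start from the decomposition of the tower into its floors. By Definition \ref{Def:Kakutani-Rokhlin}, $X_v^{(n)} = \bigsqcup_{\ov e \in E(V_0,v)} X_v^{(n)}(\ov e)$ is a disjoint union of cylinder sets. Each floor $X_v^{(n)}(\ov e) = [\ov e]$ is the cylinder of a finite path $\ov e$ with $r(\ov e) = v \in V_n$, so formula \eqref{eq inv meas left} gives $\mu(X_v^{(n)}(\ov e)) = \xi_v/\lambda^n$, a value independent of the floor $\ov e$. Hence $\mu(X_v^{(n)})$ equals $\xi_v/\lambda^n$ times the number of floors, which is $H^{(n)}_v = |E(V_0,v)|$ by Definition \ref{Def:Height}. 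Since $a^{(n)}_{w,v}$ is the $(w,v)$-entry of $A^n = (F^n)^T$ and therefore counts the finite paths from $w \in V_0$ to $v \in V_n$, one has $H^{(n)}_v = \sum_{w \in V_0} a^{(n)}_{w,v}$, so that $\mu(X_v^{(n)}) = \frac{\xi_v}{\lambda^n}\sum_{w\in V_0} a^{(n)}_{w,v}$. By Lemma \ref{lem:vectors nu} the right-hand side is exactly $\nu_v^{(n)}$, which proves (1).

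For part (2), the plan is to express $\nu^{(n)}$ through the iterates of the stochastic matrix $P$ and then pass to the limit termwise. Since $\nu^{(0)} = \xi$ and $\nu^{(n+1)} = \nu^{(n)}P$, and all entries are nonnegative (so Tonelli permits reassociating the sums), I obtain $\nu^{(n)} = \xi P^n$, that is $\nu_v^{(n)} = \sum_{w \in V_0} \xi_w\, p^{(n)}_{w,v}$. Iterating \eqref{Formula:stochastic_matrix_elements} gives $p^{(n)}_{w,v} = a^{(n)}_{w,v}\,\xi_v/(\lambda^n\xi_w)$, and Theorem \ref{Thm:Pos_recc} yields $a^{(n)}_{w,v}/\lambda^n \to \xi_w\eta_v$; combining these I get the pointwise limit $p^{(n)}_{w,v} \to \xi_v\eta_v$ for each fixed pair $w,v$.

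The main obstacle is the interchange of the limit $n \to \infty$ with the infinite sum over $w$, and this is precisely where the hypothesis that $\xi$ is a probability vector is essential. Because $0 \le p^{(n)}_{w,v} \le 1$, the summands satisfy $\xi_w\, p^{(n)}_{w,v} \le \xi_w$, and $\sum_{w \in V_0} \xi_w = 1 < \infty$ furnishes an $n$-independent summable majorant. The dominated convergence theorem for the counting measure on $V_0$ then gives
\[
\nu_v^{(n)} = \sum_{w\in V_0} \xi_w\, p^{(n)}_{w,v} \longrightarrow \sum_{w\in V_0}\xi_w\,\xi_v\eta_v = \xi_v\eta_v\sum_{w\in V_0}\xi_w = \xi_v\eta_v \qquad (n\to\infty),
\]
as claimed. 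As a consistency check one can note that $\xi_v\eta_v$ is the stationary distribution of $P$: using $\eta A = \lambda\eta$ one computes $\sum_{w}\xi_w\eta_w\, p_{w,v} = \frac{\xi_v}{\lambda}\sum_w \eta_w a_{w,v} = \xi_v\eta_v$, while $\sum_v \xi_v\eta_v = \eta\cdot\xi = 1$, so the limit vector is itself a probability vector, in agreement with each $\nu^{(n)}$ being one.
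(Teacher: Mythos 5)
Your proof is correct and follows essentially the same route as the paper's: part (1) unwinds the paper's one-line computation $\mu(X_v^{(n)}) = \mu_v^{(n)} H_v^{(n)} = \sum_{w\in V_0} a^{(n)}_{w,v}\,\xi_v/\lambda^n = \nu_v^{(n)}$ (the tower-floor decomposition you spell out is exactly what stands behind \eqref{X_v} and Lemma \ref{lem:vectors nu}), and part (2) invokes Theorem \ref{Thm:Pos_recc} and the normalization $\sum_w \xi_w = 1$ just as the paper does. The one point where you go beyond the paper is the interchange of limit and infinite sum over $w$: the paper passes $\lim_n$ inside $\sum_{w} a^{(n)}_{w,v}/\lambda^n$ without comment, whereas your bound $\xi_w\, p^{(n)}_{w,v} \le \xi_w$ (valid since $P$ is row stochastic, so $p^{(n)}_{w,v}\le 1$) together with $\sum_w \xi_w = 1$ supplies the dominated convergence justification this step genuinely requires — a small but real tightening of the published argument.
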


\begin{proof} (1) Note that by (\ref{X_v}) we have 
$$
\mu (X_v^{(n)}) 
= \mu_v^{(n)} H_v^{(n)} =  \sum_{w \in V_0} a^{(n)}_{w, v} 
\,\,\mu_v^{(n)} 
=  \sum_{w \in V_0} a^{(n)}_{w, v}\,\, \frac{\xi_v}{\lambda^{n}} 
= \nu_v^{(n)}.
$$ 

(2) 
Theorem \ref{Thm:Pos_recc} states that if  
$A$ is positive recurrent and the left and right eigenvectors 
are normalized by the condition  $\eta \cdot \xi = 1$, 
then for every $v,w \in V_0$,
$$
\lim_{n \rightarrow \infty} \frac{a_{w,v}^{(n)}}{\lambda^n} = 
\xi_w \eta_v.
$$ 
It follows from Lemma \ref{lem:vectors nu} that 
$$ 
\lim_{n \rightarrow \infty} \, \nu_v^{(n)} = \lim_{n \rightarrow 
\infty}\, \dfrac{\xi_v}{\lambda^n} \underset{w\in V_0}{\sum} 
a_{w,v}^{(n)} = \lim_{n \rightarrow \infty}\, \xi_v 
\underset{w\in
V_0}{\sum} \dfrac{a_{w,v}^{(n)}}{\lambda^n} = \xi_v 
\underset{w\in V_0}{\sum} \xi_w \eta_v =   \xi_v \, \eta_v.
$$ 
We used here the fact that the vector $\xi$ is probability.
\end{proof}

\begin{theorem}\label{thm:H grow}
Let $B(F)$ be a stationary generalized Bratteli diagram and the
matrix $A = F^T$. Suppose that $A$ is irreducible, aperiodic, 
and positive recurrent. Let $\lambda$ be the Perron eigenvalue 
of $A$ and let $\xi = (\xi_i)$, $\eta = (\eta_i)$ be the 
corresponding right and left eigenvectors normalized such that 
$\sum_{v\in V_0} \xi_v = 1$ and $\eta \cdot \xi = 1$. Then, 
for every $v,w \in V_{0}$, 
$$
\dfrac{H^{(n)}_w}{H^{(n+1)}_v} \rightarrow \frac{\eta_w}{\lambda 
\cdot \eta_v}\mbox{ as } n \rightarrow \infty.
$$
\end{theorem}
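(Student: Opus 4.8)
The plan is to reduce the statement to the asymptotics of the height vectors, which are already controlled by the material preceding the theorem. First I would record what the heights are for a stationary diagram. Since $F_n = F$, relation \eqref{lem vector H} gives $H^{(n)} = F^n H^{(0)}$, where $H^{(0)}$ is the all-ones vector by Definition \ref{Def:Height}. Writing $F = A^T$ and using $(F^n)_{v,w} = (A^n)_{w,v} = a^{(n)}_{w,v}$, this yields, for every $v \in V_0$,
\[
H^{(n)}_v \;=\; \sum_{w \in V_0} (F^n)_{v,w} \;=\; \sum_{w \in V_0} a^{(n)}_{w,v}.
\]
Thus the whole problem is about the growth rate of the column sums of $A^n$.

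Next I would establish the normalized limit $H^{(n)}_v / \lambda^n \to \eta_v$. By Lemma \ref{lem:vectors nu} we have $\nu^{(n)}_v = \frac{\xi_v}{\lambda^n}\sum_{w\in V_0} a^{(n)}_{w,v} = \frac{\xi_v}{\lambda^n} H^{(n)}_v$, so that $H^{(n)}_v = \lambda^n \nu^{(n)}_v / \xi_v$. By Proposition \ref{xi_eta}(2), using the hypothesis $\sum_{v \in V_0}\xi_v = 1$, we have $\nu^{(n)}_v \to \xi_v \eta_v$, whence
\[
\frac{H^{(n)}_v}{\lambda^n} \;=\; \frac{\nu^{(n)}_v}{\xi_v} \;\longrightarrow\; \eta_v .
\]
Equivalently, this is Theorem \ref{Thm:Pos_recc}, which gives $a^{(n)}_{w,v}/\lambda^n \to \xi_w \eta_v$, summed over $w \in V_0$ against $\sum_w \xi_w = 1$; the only delicate point is the interchange of the limit with the infinite sum over $V_0$, and that interchange is exactly what Proposition \ref{xi_eta}(2) secures for us.

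Finally I would form the desired ratio directly. Since the limits $\eta_w$ and $\eta_v$ are strictly positive (positivity of the left Perron eigenvector under irreducibility and positive recurrence), dividing is legitimate, and
\[
\frac{H^{(n)}_w}{H^{(n+1)}_v} \;=\; \frac{1}{\lambda}\cdot\frac{H^{(n)}_w/\lambda^n}{H^{(n+1)}_v/\lambda^{n+1}} \;\longrightarrow\; \frac{1}{\lambda}\cdot\frac{\eta_w}{\eta_v} \;=\; \frac{\eta_w}{\lambda\,\eta_v},
\]
as claimed. The substantive content is entirely in the second step, namely the convergence $H^{(n)}_v/\lambda^n \to \eta_v$ and in particular the passage of the limit through the countable sum $\sum_{w \in V_0}$; once that is invoked from Proposition \ref{xi_eta}, the theorem follows by elementary manipulation of the ratio. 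I expect the interchange of limit and infinite summation to be the only genuine obstacle, but it is already handled in the results preceding the statement.
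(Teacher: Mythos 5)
Your proof is correct, but it takes a genuinely different and in fact leaner route than the paper's. The paper never isolates the asymptotics of the heights themselves: it starts from the invariance identity $\sum_{v}\mu(X_v^{(n+1)})\,f_{v,w}\,H^{(n)}_w/H^{(n+1)}_v=\mu(X_w^{(n)})$ (the stochastic matrix $\widetilde F$ from \eqref{q_n+1}), passes to the limit using Proposition~\ref{xi_eta}, extracts from the resulting convergent series the \emph{existence} of $L_{v,w}=\lim_n H^{(n)}_w/H^{(n+1)}_v$, and then identifies $L_{v,w}$ by comparing $\sum_v \xi_v\eta_v f_{v,w}L_{v,w}=\xi_w\eta_w$ with the eigenvector equation $\sum_v \xi_v f_{v,w}=\lambda\xi_w$. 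You instead prove the stronger pointwise statement $H^{(n)}_v/\lambda^n\to\eta_v$ directly, by observing $H^{(n)}_v=\sum_w a^{(n)}_{w,v}$ and combining Lemma~\ref{lem:vectors nu} with Proposition~\ref{xi_eta}(2), after which the theorem is a one-line ratio computation (legitimate since $\eta_v>0$ by Theorem~\ref{Thm:Generalized_Perron_Frobenius}). Your version buys several things: it yields the individual growth rate of each tower height, not just ratios; it avoids the two delicate steps in the paper's argument (the interchange of limit and infinite sum asserted in \eqref{double conv}, and the inference of termwise equality of $L_{v,w}\lambda\eta_v/\eta_w=1$ from equality of two infinite sums, which as written is not fully justified); and it concentrates the only genuine analytic difficulty --- pushing the Vere-Jones limit $a^{(n)}_{w,v}/\lambda^n\to\xi_w\eta_v$ of Theorem~\ref{Thm:Pos_recc} through the countable sum over $V_0$ --- into Proposition~\ref{xi_eta}(2), which both proofs rely on anyway. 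What the paper's route buys instead is thematic: it stays inside the stochastic-matrix formalism of the section and exercises the measure identity $\mu(X_v^{(n)})=\nu_v^{(n)}$, but as a proof of this particular limit it is strictly heavier than yours.
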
 

\begin{proof} Since $B$ is a stationary Bratteli diagram, we 
identify all levels $V_n$ with $\Z$.  
It follows from \eqref{X_v} and \eqref{q_n+1} 
that, for every $n\in \N$,
\begin{equation}\label{eq:F and X_v}
\mu(X_v^{(n+1)}) \widetilde F = \mu(X_v^{(n)}),
\end{equation} 
By definition of $\widetilde F$, we get that, for every 
$ w \in \Z$,  
\begin{equation} \label{eq with Z}
    \underset{v\in \Z}{\sum} \mu (X_v^{(n+1)})\, f_{v,w}\, 
    \frac{H^{(n)}_w}{H^{(n+1)}_v} = \mu (X_w^{(n)}).
\end{equation} 
Taking the limit in \eqref{eq with Z} as $n \rightarrow \infty$ 
and using Proposition \ref{xi_eta}, we obtain that
\begin{equation}
 \lim_{n \rightarrow \infty} \, \bigg(\underset{v\in Z}{\sum} 
 \mu (X_v^{(n+1)})\, f_{v,w}\, \frac{H^{(n)}_w}{H^{(n+1)}_v} 
 \bigg) = \lim_{n \rightarrow \infty} \big(\mu (X_w^{(n)})\big) = 
 \xi_w \, \eta_w,
\end{equation} 
This implies that the series is convergent:
\begin{equation}\label{double conv}
    \underset{v\in V_0}{\sum} \,  \lim_{n \rightarrow \infty} 
    \bigg( \mu (X_v^{(n+1)})\, f_{v,w}\,
    \frac{H^{(n)}_w}{H^{(n+1)}_v}\bigg) = \xi_w \, \eta_w < 
    \infty, \qquad w\in \Z.
\end{equation} 
Hence, the limit 
$$ 
\lim_{n \rightarrow \infty} \bigg( \mu (X_v^{(n+1)})\,
f_{v,w}\, \frac{H^{(n)}_w}{H^{(n+1)}_v}\bigg)
$$ 
exists. Proposition \ref{xi_eta} states that
 the limit 
$\lim_{n \rightarrow \infty} \mu (X_v^{(n+1)})$ exists for 
every $v$. We conclude therefore that 
$$ 
L_{v,w} : =  \lim_{n \rightarrow \infty} \, \dfrac{H^{(n)}_w}
{H^{(n+1)}_v} < \infty.
$$ 
The proved facts allow us to rewrite (\ref{double conv}) 
as follows: 
$$ 
\underset{v\in V_0}{\sum} \,  \xi_v \, \eta_v \, f_{v,w} 
\, L_{v,w} = \xi_w \, \eta_w.
$$ 
Multiplying in the above relation both sides by 
$\dfrac{\lambda}{\eta_w}$, we get 
$$ 
\underset{v\in V_0}{\sum} \,  \xi_v \, f_{v,w}\,
\bigg( L_{v,w} \, \dfrac{\lambda  \eta_v}{\eta_w}\bigg)  
= \lambda  \xi_w.
$$ 
Since   
$$ 
\underset{v\in V_0}{\sum} \,  \xi_v \, f_{v,w} = \lambda 
\xi_w,
$$
we obtain that $L_{v,w} \, \dfrac{\lambda 
\eta_v}{\eta_w} = 1$. 
In other words, 
$$ 
L_{v,w} =  \lim_{n \rightarrow \infty} \, 
\dfrac{H^{(n)}_w}{H^{(n+1)}_v} = \frac{\eta_w}{\lambda  
\eta_v},
$$ 
as needed. 
\end{proof}

\subsection{Stochastic matrices and measures for non-stationary generalized Bratteli diagrams}
Suppose that a generalized Bratteli diagram is defined by the sequence of
incidence matrices $(F_n)$ and $A_n = F_n^T$. Assume that there exists
a probability tail-invariant measure $\mu$. According to Theorem
\ref{BKMS_measures=invlimits}, this measure is completely determined by the 
sequence of non-negative vectors $(\mu^{(n)})$ such that 
$A_n \mu^{(n+1)} = \mu^{(n)}$. Simultaneously, we have the sequence
$(H^{(n)})$ which satisfies the condition $F_n H^{(n)} = H^{(n+1)}$.
Since $\mu$ is a probability measure, the sequences $(H^{(n)})$ and $(\mu^{(n)})$ satisfy the equality 
\be\label{eq inn prod is 1}
\langle \mu^{(n)}, H^{(n)} \rangle :=  \sum_{v\in V_n} \mu_v^{(n)} H_v^{(n)} = 1, \quad n \in \N_0.
\ee

Let 
$$
|| \mu^{(n)} ||_\infty = \sup_{v \in V_n} \mu^{(n)}_v, \quad n \in \N_0.
$$
Denote 
$$
\wh\mu^{(n)} = \frac{\mu^{(n)}}{|| \mu^{(n)} ||_\infty}, \quad 
\wh H^{(n)} = \frac{H^{(n)}}{\langle \wh\mu^{(n)}, H^{(n)} \rangle }.
$$

It can be checked directly that $\langle \wh\mu^{(n)}, \wh H^{(n)} \rangle 
=1$. 

\begin{lemma}\label{Lemma:lambda_n}
Let 
$$
\lambda_n = \frac{|| \mu^{(n)} ||_\infty}{|| \mu^{(n+1)} ||_\infty}.
$$
Then, for  $n \in \N_0$,
\begin{enumerate}
    \item $\lambda_n > 1$,
    \smallskip
    
    \item $A_n \wh \mu^{(n+1)} = \lambda_n \wh \mu^{(n)}$,

      \smallskip
    \item $F_n \wh H^{(n)} = \lambda_n \wh H^{(n+1)}$.
\end{enumerate}
\end{lemma}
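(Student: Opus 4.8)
The plan is to reduce everything to the single identity $\wh H^{(n)} = \|\mu^{(n)}\|_\infty\, H^{(n)}$, after which parts (2) and (3) become one-line verifications. Write $M_n := \|\mu^{(n)}\|_\infty$. Since $\wh\mu^{(n)} = \mu^{(n)}/M_n$, bilinearity of the pairing together with \eqref{eq inn prod is 1} gives $\langle \wh\mu^{(n)}, H^{(n)}\rangle = M_n^{-1}\langle \mu^{(n)}, H^{(n)}\rangle = M_n^{-1}$; hence $\wh H^{(n)} = H^{(n)}/\langle\wh\mu^{(n)},H^{(n)}\rangle = M_n H^{(n)}$, and in particular $\langle\wh\mu^{(n)},\wh H^{(n)}\rangle = \langle\mu^{(n)},H^{(n)}\rangle = 1$, the identity asserted just before the lemma. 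For (2) I would simply substitute: using $A_n\mu^{(n+1)} = \mu^{(n)}$ from Theorem \ref{BKMS_measures=invlimits}, one gets $A_n\wh\mu^{(n+1)} = M_{n+1}^{-1}A_n\mu^{(n+1)} = M_{n+1}^{-1}\mu^{(n)} = (M_n/M_{n+1})\,\wh\mu^{(n)} = \lambda_n\wh\mu^{(n)}$. For (3), using $F_nH^{(n)} = H^{(n+1)}$ from \eqref{lem vector H} and $\wh H^{(n)} = M_nH^{(n)}$, one gets $F_n\wh H^{(n)} = M_nF_nH^{(n)} = M_nH^{(n+1)} = (M_n/M_{n+1})\,\wh H^{(n+1)} = \lambda_n\wh H^{(n+1)}$.

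Next I would establish the weak inequality $\lambda_n \ge 1$, i.e. $M_n \ge M_{n+1}$. Since $A_n = F_n^T$, componentwise $\mu^{(n)}_w = \sum_{v\in V_{n+1}} f^{(n)}_{v,w}\,\mu^{(n+1)}_v$. Fix $\e > 0$ and choose $v_0\in V_{n+1}$ with $\mu^{(n+1)}_{v_0} > M_{n+1} - \e$. As $v_0\in V_{n+1}\setminus V_0$ has $r^{-1}(v_0)\ne\emptyset$, the row of $F_n$ indexed by $v_0$ is nonzero, so there is $w_0\in V_n$ with $f^{(n)}_{v_0,w_0}\ge 1$. Discarding the remaining nonnegative terms, $\mu^{(n)}_{w_0} \ge f^{(n)}_{v_0,w_0}\,\mu^{(n+1)}_{v_0} \ge \mu^{(n+1)}_{v_0} > M_{n+1}-\e$, so $M_n \ge M_{n+1}-\e$; letting $\e\to0$ yields $M_n\ge M_{n+1}$.

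The main obstacle is upgrading this to the strict inequality $M_n > M_{n+1}$. The displayed estimate is strict unless the maximizing vertex $v_0$ is fed through a degenerate configuration: if the supremum is attained and $M_n=M_{n+1}$, then $M_{n+1}=M_n\ge \mu^{(n)}_{w_0}\ge \mu^{(n+1)}_{v_0}=M_{n+1}$ forces $f^{(n)}_{v_0,w_0}=1$ and $f^{(n)}_{v,w_0}=0$ for every $v\ne v_0$ (using that $\mu^{(n+1)}_v>0$ on every cylinder, by fullness of $\mu$), i.e. $w_0$ is a ``thread'' vertex with $|s^{-1}(w_0)| = 1$ whose unique edge lands in $v_0$. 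My plan is to exclude this using the standing hypotheses: were $\lambda_n = 1$, the maximizers would all have to be threaded this way, and propagating the maximizer forward — when $v_0$ is itself a thread one gets $M_{n+1}=M_{n+2}$ with the image of $v_0$ again a maximizer — one produces an infinite path all of whose vertices from some level on have a single outgoing edge, contradicting the no-isolated-points assumption (which guarantees that along every infinite path there are infinitely many levels $m$ with $|s^{-1}(r(x_m))|>1$). Making this propagation airtight, especially handling maximizers with several outgoing edges and the case where $M_{n+1}$ is not attained, is the delicate point; the clean alternative I would try first is to telescope the diagram so that every vertex acquires at least two outgoing edges, which renders the estimate above strict in a single step and proves $\lambda_n>1$ directly.
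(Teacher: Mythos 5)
Your parts (2) and (3) are correct and essentially identical to the paper's own argument: the paper also verifies (2) by substituting $A_n\mu^{(n+1)}=\mu^{(n)}$, and its computation for (3) amounts to your identity $\wh H^{(n)} = \|\mu^{(n)}\|_\infty\, H^{(n)}$, obtained from $\langle \mu^{(n)}, H^{(n)}\rangle = 1$. One simplification you missed: since $\mu$ is a probability measure and $H^{(n)}_v \geq 1$ for all $v$, we have $\sum_v \mu^{(n)}_v \leq \langle \mu^{(n)}, H^{(n)}\rangle = 1$, so the entries of $\mu^{(n)}$ vanish at infinity and the supremum $\|\mu^{(n)}\|_\infty$ is always attained; the ``sup not attained'' case you flag as delicate never occurs.

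The genuine gap is the strict inequality in (1), and neither of your proposed repairs closes it. The forward-propagation argument is a non sequitur: the hypothesis $\lambda_n=1$ constrains only the edges between levels $n$ and $n+1$. From a maximizer $v_0\in V_{n+1}$ you learn that each predecessor $w_0$ is a maximizer at level $n$ whose unique outgoing edge lands at $v_0$, but you learn nothing about the edges \emph{leaving} $v_0$, so you cannot conclude $M_{n+1}=M_{n+2}$, let alone build an infinite thread path contradicting the no-isolated-points assumption — that assumption only requires branching at infinitely many levels along each path and is perfectly consistent with, say, $F_n$ being the identity matrix at one isolated level, in which case $\mu^{(n)}=\mu^{(n+1)}$ and $\lambda_n=1$ genuinely holds. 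Telescoping likewise does not prove the stated lemma: it yields $\lambda_n\cdots\lambda_{n+k-1}>1$ for the grouped levels of a \emph{different} diagram, whereas the lemma asserts the inequality for each individual $\lambda_n$ of the original diagram, and the identity-level example shows these can equal $1$ absent a further hypothesis. What the paper actually does is assume your ``clean alternative'' outright: its proof picks a near-maximizer $v_0\in V_{n+1}$ and a vertex $w$ with $E(w,v_0)\neq\emptyset$, then invokes $|s^{-1}(w)|>1$ — every vertex has at least two outgoing edges, each carrying positive measure since $\mu$ is full — to get the strict inequality $\mu^{(n+1)}_{v_0} < \mu^{(n)}_w \leq \|\mu^{(n)}\|_\infty$ in a single step. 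So your diagnosis of the obstruction (thread vertices) is exactly right, and in fact sharper than the paper's write-up, but strictness rests on a standing structural hypothesis in the paper's argument; it is not derivable from the assumptions you allowed yourself, and your attempt to derive it would fail.
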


\begin{proof}
We have for all $n \in \N_0$
$$
A_n  \wh \mu^{(n+1)} = \frac{1}{|| \mu^{(n+1)} ||_\infty} A_n 
\mu^{(n+1)} = \frac{\mu^{(n)}}{|| \mu^{(n+1)} ||_\infty}  = 
\frac{|| \mu^{(n)}||_\infty }{||\mu^{(n+1)} ||_\infty} \wh\mu^{(n)} 
= \lambda_n \wh \mu^{(n)}.
$$

To see that $\lambda_n > 1$, we take $\varepsilon >0$ and find $v_0
\in V_{n+1}$ such that 
$$
||\mu^{(n+1)} ||_\infty - \varepsilon < \mu_{v_0}^{(n+1)}. 
$$
Let $w$ be a vertex in $V_n$ such that $E(w, v_0) \neq \emptyset$. 
Since $|s^{-1}(w)| > 1$, we see that $\mu_{v_0}^{(n+1)} < 
\mu_{w}^{(n)}  \leq ||\mu^{(n)} ||_\infty $. Using the fact that the set
$r^{-1}(v_0)$ is finite, we conclude that 
$||\mu^{(n+1)} ||_\infty < ||\mu^{(n)} ||_\infty$ as desired. 

For the third relation, we compute using \eqref{eq inn prod is 1}
$$
\ba 
F_n \wh H^{(n)} = &\ \frac{H^{(n+1)}}{\langle \wh\mu^{(n)}, H^{(n)}
 \rangle }\\
 = & \ \frac{|| \mu^{(n)}||_\infty}{\langle \mu^{(n)}, H^{(n)} \rangle }
 H^{(n+1)}\\
 = & \ \frac{|| \mu^{(n)}||_\infty}{\langle \mu^{(n+1)}, H^{(n+1)} \rangle }
 H^{(n+1)}\\
 = & \ \frac{|| \mu^{(n)}||_\infty}{|| \mu^{(n+1)}||_\infty}
 \frac{1}{\langle \wh \mu^{(n+1)}, H^{(n+1)} \rangle}  H^{(n+1)}\\
 = & \ \lambda_n \wh H^{(n+1)}.
\ea
$$
\end{proof}

We summarize the above discussion in the following theorem.

\begin{theorem}\label{thm seq wh mu}
Let $B$ be a generalized Bratteli diagram and $\mu$ a tail-invariant
 measure on the path space $X_B$. Then there exist two sequences of
  positive vectors 
$(\wh \mu^{(n)})$ 
and  $(\wh H^{(n)})$  such that for all $n\in \N_0$
$$
\langle \wh \mu^{(n)}, \wh H^{(n)} \rangle = 1,
$$
and 
$$
A_n \wh \mu^{(n+1)} = \lambda_n \wh \mu^{(n)},  \ \ \  
A^T_n \wh H^{(n)} = \lambda_n \wh H^{(n+1)}.
$$

\end{theorem}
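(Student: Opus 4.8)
The plan is to assemble the statement directly from the preceding construction, since each of its assertions has essentially been verified already. First I would invoke Theorem~\ref{BKMS_measures=invlimits} to obtain the sequence of non-negative vectors $(\mu^{(n)})$ with $A_n \mu^{(n+1)} = \mu^{(n)}$ that represents the tail-invariant measure $\mu$, together with the height vectors $(H^{(n)})$ from Definition~\ref{Def:Height} satisfying $F_n H^{(n)} = H^{(n+1)}$ by \eqref{lem vector H}. Since $\mu$ is a probability measure, each coordinate $\mu_v^{(n)} = \mu(X_v^{(n)}(\ov e))$ is the measure of a single cylinder set and hence lies in $[0,1]$; in particular $|| \mu^{(n)} ||_\infty \le 1$ is finite, so the normalized vectors $\wh\mu^{(n)} = \mu^{(n)}/|| \mu^{(n)} ||_\infty$ are well defined and positive.

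Next I would confirm that $\wh H^{(n)}$ is well defined and that the pairing equals one. The probability normalization \eqref{eq inn prod is 1} gives $\langle \mu^{(n)}, H^{(n)}\rangle = 1$, whence $\langle \wh\mu^{(n)}, H^{(n)}\rangle = 1/|| \mu^{(n)} ||_\infty$ is a finite positive number; therefore $\wh H^{(n)} = H^{(n)}/\langle \wh\mu^{(n)}, H^{(n)}\rangle$ is well defined, and the normalization identity follows immediately:
\[
\langle \wh\mu^{(n)}, \wh H^{(n)}\rangle = \frac{\langle \wh\mu^{(n)}, H^{(n)}\rangle}{\langle \wh\mu^{(n)}, H^{(n)}\rangle} = 1.
\]

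Finally, the two eigen-type relations are exactly the content of Lemma~\ref{Lemma:lambda_n}. Part~(2) of that lemma yields $A_n \wh\mu^{(n+1)} = \lambda_n \wh\mu^{(n)}$, and part~(3) yields $F_n \wh H^{(n)} = \lambda_n \wh H^{(n+1)}$; since $A_n^T = (F_n^T)^T = F_n$, the latter is precisely $A_n^T \wh H^{(n)} = \lambda_n \wh H^{(n+1)}$, which completes the assembly. There is no genuine obstacle in this argument, as the theorem merely consolidates the lemma and the normalization conventions; the only point requiring care is the well-definedness of the normalized vectors, which rests entirely on $\mu$ being a probability measure, so that $|| \mu^{(n)} ||_\infty$ is finite and the pairing $\langle \mu^{(n)}, H^{(n)}\rangle$ equals $1$ for every $n \in \N_0$.
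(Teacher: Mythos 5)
Your proof is correct and takes essentially the same route as the paper, which offers Theorem~\ref{thm seq wh mu} explicitly as a summary of the preceding construction: the normalization identity \eqref{eq inn prod is 1} yields $\langle \wh\mu^{(n)}, \wh H^{(n)}\rangle = 1$, and parts (2) and (3) of Lemma~\ref{Lemma:lambda_n} (with $A_n^T = F_n$) give the two eigen-type relations. Your extra attention to well-definedness — finiteness of $\| \mu^{(n)} \|_\infty$ and of the pairing $\langle \wh\mu^{(n)}, H^{(n)}\rangle$, both resting on $\mu$ being a probability measure — correctly makes explicit the standing assumption of the subsection that the paper leaves implicit in the theorem statement.
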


\begin{remark}
Theorem \ref{thm seq wh mu} remains true if instead of $H^{(0)} = (1, 1, 
.... )$ one takes an arbitrary 
sequence $t^{(0)}$ of positive integers $t^{(0)}_v, v \in V_0$,
then the sequences $t^{(n)}$ are determined automatically by 
the relation $F_{n-1}\cdots F_1F_0 t^{(0)}, n \in \N$. 
\end{remark}

Theorem \ref{thm seq wh mu} can be used to construct a sequence
$(\wh P_n)$ of row stochastic matrices. 

\begin{lemma}\label{lem matrix wh P} 
Let $D_n$ be the diagonal matrix
whose non-zero entries are $\wh \mu^{(n)}_v, v \in V_n$. 
Then the matrix 
\be\label{eq def wh P_n}
\wh P_n = \frac{1}{\lambda_n}D_n^{-1} A_n D_{n+1}, \quad n \in N_0, 
\ee
is stochastic. 
\end{lemma}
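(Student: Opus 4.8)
The plan is to verify row stochasticity by a direct entrywise computation, reducing the row-sum condition to the identity $A_n \wh\mu^{(n+1)} = \lambda_n \wh\mu^{(n)}$ established in Theorem \ref{thm seq wh mu}. First I would fix conventions: since $A_n = F_n^T$, its rows are indexed by $V_n$ and its columns by $V_{n+1}$, and its $(w,v)$-entry equals $f^{(n)}_{v,w}$ for $w \in V_n$, $v \in V_{n+1}$. The diagonal matrix $D_n$ (resp.\ $D_{n+1}$) carries the numbers $\wh\mu^{(n)}_w$ (resp.\ $\wh\mu^{(n+1)}_v$) on its diagonal, and by Theorem \ref{thm seq wh mu} these form \emph{positive} vectors; hence $D_n^{-1}$ is well defined, and $\wh P_n$ has rows indexed by $V_n$ and columns indexed by $V_{n+1}$.

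Next I would multiply out $\wh P_n = \frac{1}{\lambda_n} D_n^{-1} A_n D_{n+1}$ to obtain, for $w \in V_n$ and $v \in V_{n+1}$,
\[
(\wh P_n)_{w,v} = \frac{1}{\lambda_n}\,\frac{f^{(n)}_{v,w}\,\wh\mu^{(n+1)}_v}{\wh\mu^{(n)}_w}.
\]
Since every $f^{(n)}_{v,w} \geq 0$, every $\wh\mu$-entry is positive, and $\lambda_n > 1 > 0$ by Lemma \ref{Lemma:lambda_n}, all entries of $\wh P_n$ are non-negative. This is precisely the non-stationary analogue of formula \eqref{Formula:stochastic_matrix_elements}, with $\xi$ replaced by $\wh\mu^{(n)},\wh\mu^{(n+1)}$ and $\lambda$ by $\lambda_n$.

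Then I would compute the $w$-th row sum. Summing over $v \in V_{n+1}$ and factoring out the terms independent of $v$,
\[
\sum_{v \in V_{n+1}} (\wh P_n)_{w,v} = \frac{1}{\lambda_n \wh\mu^{(n)}_w} \sum_{v \in V_{n+1}} f^{(n)}_{v,w}\,\wh\mu^{(n+1)}_v = \frac{(A_n \wh\mu^{(n+1)})_w}{\lambda_n \wh\mu^{(n)}_w},
\]
and invoking $A_n \wh\mu^{(n+1)} = \lambda_n \wh\mu^{(n)}$ from Theorem \ref{thm seq wh mu} makes the numerator equal to $\lambda_n \wh\mu^{(n)}_w$, so the row sum equals $1$. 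Together with non-negativity this shows $\wh P_n$ is row stochastic.

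The only genuine subtlety — the ``hard part'', though it is mild — is that the inner sum $\sum_{v \in V_{n+1}} f^{(n)}_{v,w}\,\wh\mu^{(n+1)}_v$ is in general an infinite series, since a column of $F_n$ (equivalently a row of $A_n$) may have infinitely many non-zero entries. I would point out that its convergence is not an extra assumption but is already built into the meaning of the relation $A_n \wh\mu^{(n+1)} = \lambda_n \wh\mu^{(n)}$ in Theorem \ref{thm seq wh mu}: the matrix--vector product is well defined and finite exactly because this series converges, with value $\lambda_n \wh\mu^{(n)}_w$. Hence no separate convergence argument is needed, and the termwise manipulation above is justified.
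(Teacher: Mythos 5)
Your proof is correct and follows essentially the same route as the paper: the entrywise formula $(\wh P_n)_{w,v} = a^{(n)}_{w,v}\,\wh\mu^{(n+1)}_v/(\lambda_n \wh\mu^{(n)}_w)$ followed by the row-sum computation that reduces to $A_n \wh\mu^{(n+1)} = \lambda_n \wh\mu^{(n)}$ is exactly the paper's argument. Your added remarks on non-negativity and on the convergence of the infinite row sum being built into the eigenvector relation are sound refinements the paper leaves implicit.
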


\begin{proof}
Indeed,  for $w \in V_n, v \in V_{n+1}$, one has
$$
\ba 
\sum_{v \in V_{n+1}} \wh p^{(n)}_{w, v} = & \ \sum_{v \in V_{n+1}} 
\frac{1}{\lambda_n \wh \mu^{(n)}_w} a^{(n)}_{w,v} \wh \mu^{(n+1)}_v\\
= &\ \frac{1}{\lambda_n \wh \mu^{(n)}_w} \sum_{v \in V_{n+1}} 
a^{(n)}_{w,v} \wh \mu^{(n+1)}_v\\
=& \ \frac{1}{\lambda_n \wh \mu^{(n)}_w} \lambda_n \wh \mu^{(n)}_w\\
= & \ 1.
\ea
$$
\end{proof}

We note that the entries of $\wh P_n$ can be written in two ways:
$$
\wh p^{(n)}_{w, v} = \frac{a^{(n)}_{w,v} \wh \mu^{(n+1)}_v}
{\lambda_n \wh \mu^{(n)}_w} 
 = \frac{a^{(n)}_{w,v} \mu_v^{(n+1)}}{\mu^{(n)}_w}.
$$
It can be easily seen that, for a stationary generalized Bratteli diagram, the stochastic matrices $\wh P_n$ coincide with 
the matrix $P$ defined in \eqref{Formula:stochastic_matrix_elements}.

Similarly to the case of finite Bratteli diagrams, we can produce a sequence
of stochastic incidence matrices for any generalized Bratteli diagram $B$ 
with incidence matrices $(F_n)$. For given $B$ and $(F_n)$, compute the
sequence of vectors $(H^{(n)})$ as in Lemma \ref{lem vector H}. Then 
 define entries of a new matrix $G_n =(g_{v,w}^{(n)})$ as follows:
\be\label{eq entries of G_n}
g_{v,w}^{(n)} := \frac{f^{(n)}_{v,w} H^{(n)}_w}{H^{(n+1)}_v},\quad 
v\in V_{n+1}, w\in V_n, n \in \N.
\ee

\begin{lemma} \label{lem stoch m-x C_n}
Let $\mu$ be a tail-invariant measure on a generalized 
Bratteli diagram $B = (V,E)$. 
Then the matrix $G_n$, with entries defined by \eqref{eq entries of G_n},
is stochastic and satisfies the relation
$$
C_n s^{(n+1)} = s^{(n)}, \quad n \in \N,
$$
where $C_n = G_n^T$ and $s^{(n)}$ is the probability vector with entries 
$(\mu(X^{(n)}_v) : v \in V_n)$. 
\end{lemma}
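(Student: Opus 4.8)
The plan is to verify the two assertions of the lemma separately, both by direct computation from the definition \eqref{eq entries of G_n} of $g^{(n)}_{v,w}$: the stochasticity will follow from the height recursion \eqref{lem vector H}, and the intertwining relation $C_n s^{(n+1)} = s^{(n)}$ from the tail-invariance of $\mu$ as encoded in Theorem \ref{BKMS_measures=invlimits}.

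First I would check that $G_n$ is row stochastic. Fixing $v \in V_{n+1}$ and summing the $v$-th row (a finite sum, since each row of $F_n$ has finitely many nonzero entries) gives
$$
\sum_{w\in V_n} g^{(n)}_{v,w} = \frac{1}{H^{(n+1)}_v}\sum_{w\in V_n} f^{(n)}_{v,w}H^{(n)}_w = \frac{(F_n H^{(n)})_v}{H^{(n+1)}_v} = 1,
$$
where the last equality is exactly $F_n H^{(n)} = H^{(n+1)}$ from \eqref{lem vector H}. Thus stochasticity is immediate and uses no hypothesis on $\mu$.

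For the intertwining relation I would first assemble two input identities. Write $\mu^{(n)}_v := \mu(X^{(n)}_v(\ov e))$ for the common value of $\mu$ on any floor of the tower $X^{(n)}_v$, which is well-defined by tail-invariance. By \eqref{X_v} the entries of the probability vector $s^{(n)}$ are $s^{(n)}_v = \mu(X^{(n)}_v) = \mu^{(n)}_v H^{(n)}_v$. Moreover $(\mu^{(n)})$ is precisely the sequence $p^{(n)}$ of Theorem \ref{BKMS_measures=invlimits}, so \eqref{eq:formula_p_n} gives $F_n^{T}\mu^{(n+1)} = \mu^{(n)}$, that is $\sum_{v\in V_{n+1}} f^{(n)}_{v,w}\,\mu^{(n+1)}_v = \mu^{(n)}_w$ for every $w\in V_n$ (a convergent sum, since it equals the finite quantity $\mu^{(n)}_w$, even though a column of $F_n$ may have infinitely many nonzero entries). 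Then, for fixed $w\in V_n$, recalling that $C_n = G_n^{T}$ so $(C_n)_{w,v} = g^{(n)}_{v,w}$, I would compute
$$
(C_n s^{(n+1)})_w = \sum_{v\in V_{n+1}} g^{(n)}_{v,w}\, s^{(n+1)}_v = \sum_{v\in V_{n+1}} \frac{f^{(n)}_{v,w}H^{(n)}_w}{H^{(n+1)}_v}\,\mu^{(n+1)}_v H^{(n+1)}_v .
$$
The factors $H^{(n+1)}_v$ cancel, leaving $H^{(n)}_w \sum_{v} f^{(n)}_{v,w}\mu^{(n+1)}_v = H^{(n)}_w \mu^{(n)}_w = \mu(X^{(n)}_w) = s^{(n)}_w$, which is the desired relation.

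The computation is short, so there is no substantial analytic obstacle; the only point demanding care is the bookkeeping of the transpose and the index ranges — $g^{(n)}_{v,w}$ lives on $V_{n+1}\times V_n$ while $C_n = G_n^{T}$ reverses this, and the row/column convention of ``stochastic'' must be matched to the direction in which the towers refine. Once the two input identities $F_n H^{(n)} = H^{(n+1)}$ and $F_n^{T}\mu^{(n+1)} = \mu^{(n)}$ are in hand, the height factors cancel cleanly and both claims drop out.
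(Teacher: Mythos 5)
Your proposal is correct and follows essentially the same route as the paper: stochasticity of $G_n$ from $F_n H^{(n)} = H^{(n+1)}$, and the relation $C_n s^{(n+1)} = s^{(n)}$ by the same direct computation using $s^{(n)}_v = \mu^{(n)}_v H^{(n)}_v$ together with $F_n^{T}\mu^{(n+1)} = \mu^{(n)}$ from Theorem \ref{BKMS_measures=invlimits}, with the $H^{(n+1)}_v$ factors cancelling. Your extra remarks on convergence of the (possibly infinite) column sum and on the transpose/index bookkeeping are sound and merely make explicit what the paper leaves implicit.
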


\begin{proof}
The fact that $G_n$ is stochastic follows from the relation $F_n H^{(n)} 
= H^{(n+1)}$. 

To check the other statement of the lemma, we first recall that 
$s^{(n)}_v = \mu^{(n)}_vH^{(n)}_v$ and then compute
$$
\ba
(C_n s^{(n+1)})_v = \ & \sum_{w\in V_{n+1}} g^{(n)}_{w,v}s^{(n+1)}_w 
\\
=  \ & \sum_{w\in V_{n+1}} f^{(n)}_{w, v} \frac{H^{(n)}_v}{H^{(n+1)}_w}
 \mu^{(n+1)}_w
H^{(n+1)}_w\\
=\ & H^{(n)}_v \sum_{w\in V_{n+1}} f^{(n)}_{w, v}  \mu^{(n+1)}_w\\
=\ & H^{(n)}_v \mu^{(n)}_v\\
=\ & s^{(n)}_v.
\ea
$$
We used here relation \eqref{eq:formula_p_n} of Theorem 
\ref{BKMS_measures=invlimits}.
\end{proof}

The main result of this subsection is as follows:

\begin{theorem}\label{Thm:seq_wh_mu_main}
    Let $B$ be a generalized Bratteli diagram with incidence matrices $F_n$ and $\mu$ be a probability tail-invariant measure on $B$. Let $A_n = F_n^T$. Then there exist a sequence of numbers $\lambda_n > 1$, a sequence of vectors $\wh\mu^{(n)} = (\wh\mu_v^{(n)})$, $v \in V_n$, and a sequence of vectors $\wh H^{(n)} = (\wh H_v^{(n)})$, $v \in V_n$, 
such that for every $n \in \mathbb{N}$

\begin{enumerate}
    \item $\langle\wh \mu^{(n)}, \wh H^{(n)}\rangle = 1$,
    \item $A_n \wh\mu^{(n+1)} = \lambda_n \wh\mu^{(n)}$,
    \item $F_n \wh H^{(n)} = \lambda_n \wh H^{(n+1)}$.
\end{enumerate}

Conversely, if there exist a sequence of non-negative infinite integer matrices $A_n$ with finite column sums, a sequence of numbers $\lambda_n > 1$, a sequence of non-negative infinite vectors $\wh \mu^{(n)} = (\wh \mu_v^{(n)})$, and a sequence of positive infinite vectors $\wh H^{(n)} = (\wh H_v^{(n)})$ with $\wh H^{(0)} = H^{(0)}$ which satisfy conditions (1)-(3) above, then a generalized Bratteli diagram $B$ defined by incidence matrices $F_n = A_n^T$ possesses a probability tail-invariant measure, for which the measures of cylinder sets are defined by the sequence of vectors
\begin{equation}\label{eq:formula_for_p^n}
p^{(0)}= \wh \mu^{(0)} \mbox{ and } p^{(n)}=  \frac{1}{\lambda_0 \cdots \lambda_{n-1}}\wh \mu^{(n)}, \mbox{ for } n \geq 1.
\end{equation}
where $p^{(n)} = (\mu(X_w^{(n)}(\ov e))) : w\in V_n) $.

\end{theorem}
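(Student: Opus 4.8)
The plan is to argue the two implications separately. The forward direction is essentially a repackaging of the normalization already carried out in Lemma~\ref{Lemma:lambda_n} and Theorem~\ref{thm seq wh mu}, whereas the converse is where the real work lies: there I would reconstruct the measure from Theorem~\ref{BKMS_measures=invlimits} and then verify that the prescribed normalizations force it to be a probability measure. For the forward direction, suppose $\mu$ is a probability tail-invariant measure. Then Theorem~\ref{BKMS_measures=invlimits} yields a sequence of non-negative vectors $(\mu^{(n)})$ with $A_n \mu^{(n+1)} = \mu^{(n)}$, and the heights $(H^{(n)})$ satisfy $F_n H^{(n)} = H^{(n+1)}$ by \eqref{lem vector H}. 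Setting $\lambda_n = ||\mu^{(n)}||_\infty / ||\mu^{(n+1)}||_\infty$, $\wh\mu^{(n)} = \mu^{(n)}/||\mu^{(n)}||_\infty$, and $\wh H^{(n)} = H^{(n)}/\langle \wh\mu^{(n)}, H^{(n)}\rangle$, Lemma~\ref{Lemma:lambda_n} furnishes $\lambda_n > 1$ together with conditions (2) and (3), while Theorem~\ref{thm seq wh mu} records $\langle \wh\mu^{(n)}, \wh H^{(n)}\rangle = 1$, which is condition (1). No further computation is needed.

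For the converse, the first step is to check that $F_n = A_n^T$ determines an admissible generalized Bratteli diagram: since each $A_n$ has finite column sums and non-negative integer entries, each $F_n$ has finite row sums and non-negative integer entries, so $B = B(F_n)$ is legitimate. Next I would define the vectors $p^{(n)}$ by \eqref{eq:formula_for_p^n} and verify the consistency relation required by Theorem~\ref{BKMS_measures=invlimits}, namely $F_n^T p^{(n+1)} = A_n p^{(n+1)} = p^{(n)}$. This is a one-line computation using condition (2):
\[
A_n p^{(n+1)} = \frac{1}{\lambda_0 \cdots \lambda_n} A_n \wh\mu^{(n+1)} = \frac{\lambda_n}{\lambda_0 \cdots \lambda_n}\wh\mu^{(n)} = \frac{1}{\lambda_0\cdots\lambda_{n-1}}\wh\mu^{(n)} = p^{(n)}.
\]
Iterating gives $p^{(n)} = F_n^T \cdots F_{n+m-1}^T\, p^{(n+m)}$ with $p^{(n+m)} \geq 0$ for every $m$, so that $p^{(n)} \in C_m^{(n)}$ for all $m$ and hence $p^{(n)} \in C_{\infty}^{(n)}$. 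Theorem~\ref{BKMS_measures=invlimits}(2) then produces a tail-invariant measure $\mu$ with $\mu(X_w^{(n)}(\ov e)) = p_w^{(n)}$.

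The remaining point, and the one I expect to be the main obstacle, is to confirm that $\mu$ is a \emph{probability} measure; this is exactly where the hypothesis $\wh H^{(0)} = H^{(0)}$ and condition (1) are used. The key is to prove by induction that the genuine heights of $B$ satisfy $H^{(n)} = (\lambda_0 \cdots \lambda_{n-1})\,\wh H^{(n)}$: the base case is $\wh H^{(0)} = H^{(0)}$, and the inductive step follows from condition (3) and $F_n H^{(n)} = H^{(n+1)}$, since $H^{(n+1)} = F_n H^{(n)} = (\lambda_0\cdots\lambda_{n-1}) F_n \wh H^{(n)} = (\lambda_0 \cdots \lambda_n)\wh H^{(n+1)}$. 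By tail-invariance each tower $X_w^{(n)}$ consists of $H_w^{(n)}$ floors of equal measure $p_w^{(n)}$, so
\[
\mu(X_B) = \sum_{w \in V_n} \mu\big(X_w^{(n)}\big) = \sum_{w \in V_n} H_w^{(n)} p_w^{(n)} = \langle p^{(n)}, H^{(n)}\rangle = \langle \wh\mu^{(n)}, \wh H^{(n)}\rangle = 1
\]
by condition (1), the scaling factor $\lambda_0\cdots\lambda_{n-1}$ in $H^{(n)}$ cancelling the reciprocal factor in $p^{(n)}$. Thus the delicate part is the bookkeeping of the products $\lambda_0 \cdots \lambda_{n-1}$, which must match between $p^{(n)}$ and $H^{(n)}$ so that their pairing collapses to the normalized pairing in (1); once this identity is established, the total mass is $1$ at every level and independent of $n$, completing the proof.
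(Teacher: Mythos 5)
Your proposal is correct and follows essentially the same route as the paper: the forward direction is delegated to Lemma~\ref{Lemma:lambda_n} and Theorem~\ref{thm seq wh mu}, and the converse rests on the one-line computation $A_n p^{(n+1)} = p^{(n)}$ together with Theorem~\ref{BKMS_measures=invlimits} and the normalization $\langle \wh\mu^{(0)}, \wh H^{(0)}\rangle = 1$ at level $0$. You additionally spell out two details the paper leaves implicit --- the membership $p^{(n)} \in C_\infty^{(n)}$ and the induction $H^{(n)} = (\lambda_0\cdots\lambda_{n-1})\,\wh H^{(n)}$ giving unit mass at every level --- both of which are sound but do not change the argument.
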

\begin{proof}
The ``if'' part of the theorem is proved in Lemma~\ref{Lemma:lambda_n} above. To prove the ``only if'' part, we first notice that for all $n \geq 0$:
$$
A_n p^{(n+1)} = \frac{1}{\lambda_0 \cdots \lambda_{n}} A_n \wh \mu^{(n+1)} = \frac{1}{\lambda_0 \cdots \lambda_{n}} \lambda_n \wh \mu^{(n)} = p^{(n)}. 
$$
Hence by Theorem~\ref{BKMS_measures=invlimits} the measure $\mu$ defined by \eqref{eq:formula_for_p^n} is a tail-invariant measure on $B$. Since $\wh H^{(0)} = H^{0}$ and $\langle\wh \mu^{(0)}, \wh H^{(0)}\rangle = 1$, we obtain that $\mu$ is a probability measure.
\end{proof}

\section{Open problems}\label{Sect:OP}
This section contains several open problems. We have not 
tried to create a comprehensive list of problems that would cover all possible directions. We focus here on the existence of Vershik maps 
and probability tail-invariant measures. These areas are 
well-studied in the case of standard Bratteli diagrams.
 We refer to the literature mentioned in Section \ref{intro}. It would be interesting to understand which of these results (or their 
 analogs) can be proved 
 in the context of generalized Bratteli diagrams. 

\begin{enumerate}
    
    \item Find  necessary and sufficient conditions for an aperiodic irreducible infinite non-negative integer matrix with finite Perron eigenvalue to have a right eigenvector 
    $\xi = (\xi_v)$ with $\sum_{v \in V_0} \xi_v < \infty$. 
    It follows from our results given in Section 
    \ref{Section:stat_GBD} that 
    such conditions will imply the existence of a finite 
    tail-invariant measure for the corresponding stationary 
    generalized Bratteli which takes positive values on cylinder sets. We present some sufficient conditions of this kind in Subsections~\ref{subsec:finite_tail-inv_measure}, \ref{Subsec:Examples}.

    \item Find conditions on incidence matrices of generalized Bratteli diagrams which allow determining the number of ergodic tail-invariant measures. In particular, it 
    is important to know when a generalized Bratteli diagram is uniquely ergodic. This problem was discussed in many papers on Cantor dynamics, see e.g. \cite{Durand2010}, 
    \cite{Putnam2018}, \cite{BezuglyiKarpel2016}, 
    \cite{BezuglyiKarpelKwiatkowski2019} for the case of standard Bratteli diagrams. As a part of this problem, it would be 
    interesting to consider the cases of null-recurrent and/or transient incidence matrices. In Example~\ref{ex transient Sect 6}, we present two different tail-invariant measures. 
    If a generalized Bratteli diagram is not uniquely ergodic, how can one determine the support of ergodic measures? 

    \item Most results of this paper are related to the case of irreducible Bratteli diagrams. How can we describe 
    tail-invariant measures on reducible (stationary) generalized Bratteli diagrams? We mention Proposition~\ref{Prop:no_meas_fin_cyl_sets} to illustrate 
    what may happen in this case. For the 
standard Bratteli diagrams, we refer to  \cite{BezuglyiKwiatkowskiMedynetsSolomyak2010} where the method
of measure extension played an important role. This remark 
motivates the following problem: Find conditions under which a measure on a generalized Bratteli diagram  is an extension of a measure from a subdiagram (see~\cite{AdamskaBezuglyiKarpelKwiatkowski2017}).

     \item Let $B$ be a stationary generalized Bratteli diagram such that the corresponding incidence matrix is aperiodic irreducible and does not have  a finite Perron eigenvalue 
     (we give examples of such matrices in Appendix~\ref{APP:Perron-Frobenius_Theory}). Can such a diagram possess a finite tail-invariant measure? In particular, can a stationary generalized Bratteli diagram with the incidence matrix 
$$
F = \begin{pmatrix}
2 & 1 & 0 & 0 & \ldots\\
1 & 3 & 1 & 0 & \ldots\\
0 & 1 & 4 & 1 & \ldots\\
0 & 0 & 1 & 5 & \ldots\\
\vdots & \vdots & \vdots & \vdots & \ddots\\
\end{pmatrix}
$$
possess a probability tail-invariant measure which takes positive values on cylinder sets?

    \item If $B$ is a generalized Bratteli diagram, then one 
    can consider various partial orders on the set of edges. 
For standard Bratteli diagrams, we know that some orders 
generate continuous Vershik maps. On the other hand, there are 
diagrams on which it is impossible to define a Vershik map,
see details in \cite{Medynets_2006}, 
\cite{BezuglyiKwiatkowskiYassawi2014}.
Is it true that any generalized Bratteli diagram $B$ can be endowed with an order which generates a Borel Vershik map?  In particular, is there an order without infinite maximal and minimal paths? Are there algebraic conditions on incidence matrices that guarantee the existence of a Vershik map? 
The reader can find more information in  \cite{BezuglyiKwiatkowskiYassawi2014},
\cite{BezuglyiYassawi2017} for standard Bratteli diagrams.

    \item For an ordered generalized Bratteli diagram find conditions, under which the corresponding Vershik map is a homeomorphism (we have partly answered this question in Subsection~\ref{sub:Left_right}).
\end{enumerate}

%%% Edit and add the text below

\vspace{5mm}

\textbf{Acknowledgments}. 
The authors are pleased to thank our colleagues and collaborators, 
especially, J. Bobok, H. Bruin, R. Curto, J. Kwiatkowski, P. Muhly, 
and W. Polyzou for valuable and stimulating discussions. We are 
grateful to  T. Raszeja for computing the eigenvalue and 
eigenvector problem in Example~\ref{Ex:Pair_Renewal_Subshift} and checking the recurrence properties in Example~\ref{Ex:renewal_subshift}, ~\ref{Ex:Pair_Renewal_Subshift} and 
proving 
Proposition~\ref{prop:ThiagoThm}. S.S. is thankful to colleagues at 
the Ben Gurion University of the Negev for their encouragement. 
S.B. and O.K. are also grateful to the Nicolas Copernicus 
University in Torun for its hospitality and support. S.B. 
acknowledges the hospitality of AGH University during his visit to 
Krakow. O.K. is supported by the NCN (National Science Center, Poland) Grant 2019/35/D/ST1/01375 and by the program ``Excellence initiative - research university'' for the AGH University of Science and Technology. 

%%%%%% APPendix

\appendix
\section{Perron-Frobenius theory for infinite 
matrices}\label{APP:Perron-Frobenius_Theory} 

For the benefit of the readers, in this appendix, we provide some 
definitions and results from the Perron-Frobenius theory of 
infinite matrices 
which are of direct relevance to the proofs in the body of our 
paper. As mentioned before, these results are due to 
Vere-Jones (see \cite{VereJones_1962} 
\cite{VereJones_1967} \cite{VereJones_1968}). The formulations of 
statements and definitions in this section are taken from the 
book by Kitchens \cite[Chapter 7]{Kitchens1998}. 
%%%%

 Recall that a matrix $A = (a_{ij})$ is called \textit{infinite} 
(or countably infinite) if its rows and columns
are indexed by the same countably infinite set. Let $A = 
(a_{ij})_{i,j \in \Z}$ be a real, non-negative, infinite 
matrix. We enumerate the rows and columns of $A$ by $\Z$ to
make this material closer to two-sided generalized Bratteli 
diagrams. Note that the results in this section are independent of 
the fact that we enumerate the rows and columns by $\Z$ or by $\N$. 
As before, we denote by $a_{ij}^{(n)}$ the $(i,j)$-th 
entry of $A^n$, $n \in \N$, whenever it exists, i.e., whenever 
it is finite. The matrix $A$ is called 
\textit{irreducible} if for every pair $i,j \in \mathbb{Z}$ there 
is $n > 0$ such that $a^{(n)}_{ij} > 0$. Denote $$
p(i) = \gcd\{n:a^{(n)}_{ii} > 0\}, \ \ \ i \in \mathbb{Z}.
$$
Then $p(i)$ is called the \textit{period of index} $i$. For an 
irreducible matrix $A$, the periods of all indices are the same 
and called the \textit{period of $A$}. An irreducible matrix with 
period one is called \textit{aperiodic}.

\begin{lemma}\cite{Kitchens1998} \label{Lemma_Perron_value}
Let $A$ be a real, non-negative, irreducible, aperiodic, infinite 
matrix. Fix $i \in \mathbb{Z}$. Then:
\begin{enumerate}[label=(\alph*)]
    \item there exists $k \in \mathbb{N}$ such that $a_{ii}^{(n)} 
    > 0$ for all $n \geq k$,
    \item 
\be\label{eq:spectral radius}
\lambda = \lim\limits_{n \rightarrow \infty} \sqrt[n]{a^{(n)}_{ii}}
= \sup\limits_{n\in \mathbb{N}} \sqrt[n]{a^{(n)}_{ii}} \leq \infty,
\ee
and the value of $\lambda$ does not depend on $i$.
\end{enumerate} 
\end{lemma}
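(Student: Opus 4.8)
The plan is to treat the two assertions separately, relying throughout on the elementary supermultiplicativity estimate obtained by expanding a matrix power: for all $m,n\geq 1$,
\begin{equation*}
a_{ii}^{(m+n)} = \sum_{\ell} a_{i\ell}^{(m)}\,a_{\ell i}^{(n)} \;\geq\; a_{ii}^{(m)}\,a_{ii}^{(n)}.
\end{equation*}
First I would prove $(a)$. Set $S_i = \{n\geq 1 : a_{ii}^{(n)} > 0\}$. The inequality above shows that $S_i$ is closed under addition, i.e. it is a subsemigroup of $(\N,+)$, and aperiodicity of $A$ means precisely that $\gcd S_i = p(i) = 1$. I would then invoke the classical fact that an additive subsemigroup of $\N$ whose elements have greatest common divisor $1$ is cofinite: it contains every sufficiently large integer. (This follows by choosing finitely many elements of $S_i$ whose gcd is already $1$, forming their nonnegative integer combinations, and using closure under addition together with Bézout to fill a full residue system modulo one fixed element.) Applying this to $S_i$ yields a $k$ with $a_{ii}^{(n)} > 0$ for all $n\geq k$, which is $(a)$.

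For $(b)$ I would pass to logarithms and use Fekete's lemma. By $(a)$ the sequence $b_n := a_{ii}^{(n)}$ is strictly positive for $n\geq k$, and supermultiplicativity gives that $c_n := \log b_n$ is superadditive on $n\geq k$. Fekete's lemma then provides $\lim_n c_n/n = \sup_n c_n/n$ in $(-\infty,+\infty]$, i.e. $\lim_n b_n^{1/n} = \sup_{n\geq k} b_n^{1/n}$, which is \eqref{eq:spectral radius} once I check that the supremum may be taken over all $n\geq 1$. The only care needed is the bookkeeping at small indices where $b_n = 0$: any index $j<k$ with $b_j>0$ satisfies $b_{nj}^{1/(nj)} \geq b_j^{1/j}$ by supermultiplicativity, so every early value $b_j^{1/j}$ is dominated by later ones and hence $\sup_{n\geq k} b_n^{1/n} = \sup_{n\geq 1} b_n^{1/n} =: \lambda$. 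This also records that $\lambda\leq\infty$, with $\lambda=+\infty$ genuinely possible.

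Finally, to see that $\lambda$ does not depend on $i$, I would use irreducibility as a bridge. Given indices $i,j$, pick $s,t$ with $a_{ij}^{(s)}>0$ and $a_{ji}^{(t)}>0$, and set $c = a_{ji}^{(t)}\,a_{ij}^{(s)} > 0$. Then $a_{jj}^{(n+s+t)} \geq c\,a_{ii}^{(n)}$; taking $(n+s+t)$-th roots and letting $n\to\infty$ gives $\lambda_j \geq \lambda_i$, since $c^{1/(n+s+t)}\to 1$ and the exponent $n/(n+s+t)\to 1$ force $\bigl(a_{ii}^{(n)}\bigr)^{1/(n+s+t)}\to\lambda_i$ (including the case $\lambda_i=\infty$). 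By symmetry $\lambda_i=\lambda_j$. The main obstacle is not any single step but the careful handling of the two degenerate regimes — the vanishing entries $b_n=0$ for small $n$ when invoking Fekete, and the possibility $\lambda=+\infty$ throughout — so that the suprema and limits are matched over the correct ranges of indices.
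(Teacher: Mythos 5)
Your proof is correct, and there is nothing in the paper to compare it against: the lemma is quoted verbatim from Kitchens \cite{Kitchens1998} with no proof supplied, and your argument is exactly the standard textbook route — supermultiplicativity $a_{ii}^{(m+n)}\geq a_{ii}^{(m)}a_{ii}^{(n)}$, cofiniteness of a numerical semigroup with gcd $1$ for part (a), Fekete's lemma for part (b), and the irreducibility bridge $a_{jj}^{(n+s+t)}\geq a_{ji}^{(t)}a_{ii}^{(n)}a_{ij}^{(s)}$ for independence of $i$. Your handling of the two degenerate regimes (zero entries at small indices when invoking Fekete, and $\lambda=+\infty$) is careful and sound.
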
 

If the value $\lambda$ in Lemma~\ref{Lemma_Perron_value} is 
finite, then it is called the \textit{Perron eigenvalue} of $A$. 
If $A$ is a finite non-negative irreducible and aperiodic matrix, 
then $\lambda$ coincides with the usual Perron eigenvalue of $A$. If the matrix $A$ is periodic, then one defines the Perron 
eingenvalue of $A$ as 
$$
\lambda = \limsup\limits_{n \rightarrow \infty} \sqrt[n]{a^{(n)}_{ii}}.
$$

The next example gives a banded matrix $A$ with infinite spectral 
radius given by formula \eqref{eq:spectral radius}. 

\begin{example}\label{example_infinte_Perron_value} 
Let $A = (a_{ij})$ be a non-negative integer $\mathbb{Z} \times 
\mathbb{Z}$ matrix with the entries
$$
\left\{
\begin{aligned}
&a_{0,0} = 1,\\
&a_{m,m} = |m|^{|m|}, & \mbox{ for } m \in \mathbb{Z}\setminus
\{0\},\\
&a_{m, m - 1} = a_{m, m + 1} = 1, & \mbox{ for } m \in 
\mathbb{Z},\\
&a_{i,j} = 0 \mbox{ for } |i - j| > 1, \; & i,j \in \mathbb{Z}.
\end{aligned}
\right.
$$
We prove straightforwardly that the Perron value $\lambda$ of $A$ 
is infinite. Fix $m \in \mathbb{N}$. First we show by induction 
that $a^{(n)}_{m,m+n} \geq 1$ for all $n \in \mathbb{N}$. To see 
this, observe that $a_{k,k+1} = 1 \geq 1$ for all $k \in \N$. By 
induction step, 
$$
a^{(n+1)}_{m, m+n+1} \geq a^{(n)}_{m,m+n}\,\,a_{m+n, m+n+1} \geq 1.
$$
Similarly, $a^{(n)}_{m+n,m} \geq 1$ for all $n \in \mathbb{N}$. 
Then 
$$
a^{(n+1)}_{m,m+n} \geq a^{(n)}_{m,m+n}a_{m+n, m+n} \geq 
(m+n)^{(m+n)}.
$$ 
%(we use hypothesis I here.)
It follows that
$$
a^{(2n+1)}_{mm} \geq a^{(n+1)}_{m,m+n}a^{(n)}_{m+n,m} \geq 
(m+n)^{(m+n)}.
$$
Therefore
$$
\sqrt[2n+1]{a^{(2n+1)}_{mm}} \geq ((m+n)^{(m+n)})^{\tfrac{1}{2n+1}} 
\rightarrow \infty
$$
as $n \rightarrow \infty$. Thus, the Perron value $\lambda$ is 
infinite.

\end{example}

\begin{definition}\label{Def_recurrent_transient}
Let $A$ be a real, non-negative, irreducible, aperiodic, infinite 
matrix with a finite Perron value $\lambda$ and $i \in \mathbb{Z}$. 
\begin{enumerate}[label=(\roman*)]
\item $A$ is called \textit{recurrent} if
$$
\sum\limits_{n = 0}^{\infty} \frac{a^{(n)}_{ii}}{\lambda^n} = 
\infty,
$$
where $A^0 = I$ is an infinite identity matrix.

\medskip
\item $A$ is called \textit{transient} if
$$
\sum\limits_{n = 0}^{\infty} \frac{a^{(n)}_{ii}}{\lambda^n} < 
\infty.
$$ 
Moreover, the convergence of the series does not depend on the 
choice of $i$. 
\end{enumerate}
\end{definition}
For a real, non-negative, irreducible, aperiodic infinite matrix 
$A$, we define the following generating function:
$$
T^A_{w,v}(z) = \sum_{i = 0}^{\infty} a_{w,v}^{(i)} z^i,
$$ 
where $a_{w,v}^{(0)} = \delta_{w,v}$. The radius of convergence 
of $T^A_{w,v}(z)$ is $\lambda^{-1}$. Hence, $A$ is recurrent if 
and only if
$$
T^A_{w,w}(\lambda^{-1}) = \infty,
$$
and $A$ is transient if and only if
$$
T^A_{w,w}(\lambda^{-1}) < \infty.
$$

\begin{thm}[Generalized Perron-Frobenius theorem, 
\cite{Kitchens1998}] \label{Thm:Generalized_Perron_Frobenius}
Let $A$ be a real, non-negative, irreducible, aperiodic, recurrent, 
infinite matrix. Let
$\lambda < \infty$ be a Perron eigenvalue of $A$. Then 
\begin{enumerate}[label=(\roman*)]
\item  there exist strictly positive eigenvectors $\eta,\xi$ such 
that $\eta A = \lambda \eta$, $A \xi = \lambda \xi$;

\item $\eta$ and $\xi$ are unique up to constant multiples.
\end{enumerate}
We will call $\xi$ and $\eta$ the left and the right \textit{Perron 
eigenvectors} of $A$.
\end{thm}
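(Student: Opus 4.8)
The plan is to follow the classical approach of Vere-Jones, reproduced in \cite{Kitchens1998}: construct explicit candidate eigenvectors out of first-passage (taboo) path counts and then use recurrence to establish both invariance and uniqueness. Throughout, write $R = \lambda^{-1}$ for the common radius of convergence of the generating functions $T^A_{w,v}(z) = \sum_{n \geq 0} a^{(n)}_{w,v} z^n$; Lemma~\ref{Lemma_Perron_value} guarantees $\lambda$ is well defined and finite, and aperiodicity gives $a^{(n)}_{w,w} > 0$ for all large $n$. Fix once and for all a reference vertex $0$. First I would record the renewal identities. For vertices $w,v$ let $f^{(n)}_{w,v}$ be the weighted number of length-$n$ paths from $w$ to $v$ that reach $v$ only at the final step, and set $F_{w,v}(z) = \sum_{n \geq 1} f^{(n)}_{w,v} z^n$. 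Decomposing a path from $w$ to $v$ by its first arrival at $v$ yields $T^A_{w,v}(z) = F_{w,v}(z)\, T^A_{v,v}(z)$ for $w \neq v$ and $T^A_{v,v}(z) = (1 - F_{v,v}(z))^{-1}$. In this language the recurrence hypothesis $T^A_{0,0}(R) = \infty$ is exactly the condition $F_{0,0}(R) = 1$, which is the pivot of the whole argument.

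Next I would exhibit the eigenvectors. Let ${}_0a^{(n)}_{w,v}$ denote the weighted number of length-$n$ paths from $w$ to $v$ that avoid the vertex $0$ at all intermediate times, and define
$$ \eta_v = \sum_{n \geq 0} {}_0a^{(n)}_{0,v}\, R^n, \qquad \xi_w = \sum_{n \geq 0} {}_0a^{(n)}_{w,0}\, R^n, $$
with the normalization $\eta_0 = \xi_0 = 1$. Positivity of every entry is immediate from irreducibility, since every vertex is joined to $0$ in both directions. Finiteness is the first point that genuinely uses recurrence: the excursion interpretation shows that $\eta_v$ and $\xi_w$ are the $R$-weighted expected numbers of visits to $v$ (resp.\ of passages from $w$) during a single excursion away from $0$, and $F_{0,0}(R) = 1$ is precisely what makes these quantities finite. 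To verify $\eta A = \lambda \eta$, I would split every length-$(n+1)$ path from $0$ to $v$ according to whether it revisits $0$ before the last step; the last-exit decomposition gives a relation of the form $\sum_i {}_0a^{(n)}_{0,i}\, a_{i,v} = {}_0a^{(n+1)}_{0,v} + (\text{return term})$, and summing against $R^{n}$ and collapsing the return contribution via $F_{0,0}(R) = 1$ produces $\sum_i \eta_i a_{i,v} = R^{-1}\eta_v = \lambda \eta_v$. The symmetric first-entrance decomposition gives $A\xi = \lambda \xi$. This establishes part (i).

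For part (ii) I would prove uniqueness for the right eigenvector; the left case is identical after transposing. Let $x \geq 0$, $x \neq 0$, be any $R$-subinvariant vector, $A x \leq \lambda x$, normalized so that $x_0 = 1$. Iterating subinvariance and decomposing the resulting sums by the last visit to $0$ (taboo on $0$) yields the comparison $x_w \geq \xi_w$ for every $w$. Recurrence then forces equality: were $x_w > \xi_w$ for some $w$, propagating this strict surplus through the subinvariance inequalities and summing the resulting bounds against $R^n$ would make a series dominated by $T^A_{0,0}(R)$ convergent, contradicting $T^A_{0,0}(R) = \infty$. Hence $x = \xi$, so every nonzero $R$-subinvariant vector is a scalar multiple of $\xi$ and is automatically invariant. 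I expect this uniqueness step to be the main obstacle: it is where one must argue that recurrence upgrades subinvariance to invariance and pins the vector down up to scale, and it demands the most careful bookkeeping of the taboo path decompositions. The existence and positivity in part (i), by contrast, are comparatively routine once the renewal identities and the identity $F_{0,0}(R) = 1$ are in place.
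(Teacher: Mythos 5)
The paper does not prove this theorem at all: it is stated in Appendix \ref{APP:Perron-Frobenius_Theory} as an imported result, with the proof deferred to \cite[Chapter 7]{Kitchens1998} and ultimately to Vere-Jones \cite{VereJones_1962}, \cite{VereJones_1967}. Your proposal is precisely that cited argument --- taboo/first-passage generating functions, the renewal identity $T^A_{v,v}(z)=(1-F_{v,v}(z))^{-1}$, the candidate eigenvectors $\xi_w = F_{w,0}(\lambda^{-1})$ and $\eta_v = L_{0,v}(\lambda^{-1})$, the eigenvector equation closed up at the reference vertex via $F_{0,0}(\lambda^{-1})=1$, and uniqueness by comparing an arbitrary subinvariant vector with $\xi$ --- so it is correct and follows essentially the same (indeed the canonical) route as the source the paper relies on. One small calibration: finiteness of the taboo sums does not genuinely need recurrence, only the universally valid bound $F_{0,0}(\lambda^{-1})\le 1$ together with irreducibility (concatenate a taboo path with a fixed return leg to embed it in a first-return count); recurrence is what makes the eigenvalue equation exact at the distinguished vertex, and in the uniqueness step the clean form of your contradiction is that a nonzero surplus $y = x - \lambda^{-1}Ax \ge 0$ with $y_i>0$ gives $x_0 \ge \sum_{n} \lambda^{-n} a^{(n)}_{0,i} y_i$, hence $T^A_{0,i}(\lambda^{-1})<\infty$, contradicting the fact that recurrence plus irreducibility force every $T^A_{w,v}(\lambda^{-1})$ to diverge.
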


\begin{example}
There are banded matrices that do not have finite Perron values.
Here is an example of such a matrix:
$$
A = \begin{pmatrix}
n_1 & 1 & 0 & 0 & \ldots\\
1 & n_2 & 1 & 0 & \ldots\\
0 & 1 & n_3 & 1 & \ldots\\
0 & 0 & 1 & n_4 & \ldots\\
\vdots & \vdots & \vdots & \vdots & \ddots\\
\end{pmatrix}.
$$
It is easy to see that if the sequence $(n_i)$ is unbounded, 
then the solution of 
$A x = \lambda x$ does not exist for positive vectors $x$.
\end{example}

We set $l_{w,v}(0) = 0$ and for a real, non-negative, irreducible, 
aperiodic infinite matrix $A$ define $l_{w,v}(1) = a_{w,v}$ and 
$$
l_{w,v}(n+1) = \sum_{i \neq w} l_{w,i}(n) a_{i,v}.
$$
Then $l_{w,v}(n)$ is the number of paths of length $n$ from vertex 
$w$ to vertex $v$ which do not return to $w$ at any time prior to 
$n$. Define the corresponding generating function
$$
L_{w,v}(z) = \sum_{n = 1}^{\infty}l_{w,v}(n)z^n.
$$ The matrix $A$ is called \textit{positive recurrent} if 
$$
\sum_{n = 1}^{\infty} n l_{w,w}^{(n)}(\lambda^{-n})  < \infty
$$
and $A$ is called \textit{null recurrent} if 
$$
\sum_{n = 1}^{\infty} n l_{w,w}^{(n)}(\lambda^{-n}) = \infty. 
$$

\begin{prop}\cite{Kitchens1998}\label{Prop:Pos_rec}
Let $A$ be a real, non-negative, irreducible, aperiodic, 
recurrent, infinite matrix. Let $\eta$ and $\xi$ be the left and 
right Perron eigenvectors of $A$. If
$$
\eta \cdot \xi = \sum_i \eta_i\xi_i < \infty,
$$  
then $A$ is positive recurrent. Otherwise, $A$ is null recurrent.
\end{prop}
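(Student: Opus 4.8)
Fix a reference index $w$ and write $R:=\lambda^{-1}$ for the common radius of convergence of the series $T^A_{i,j}(z)$. The idea is to realise both Perron eigenvectors as boundary values (at $z=R$) of first-passage generating functions anchored at $w$, and then to read off $\eta\cdot\xi$ as the weighted mean first-return ``time'' $\sum_n n\,l_{w,w}(n)R^n$, which by Definition~\ref{Def_recurrent_transient} is exactly the quantity separating positive from null recurrence. Since the eigenvectors are unique only up to a scalar (Theorem~\ref{Thm:Generalized_Perron_Frobenius}), while the \emph{finiteness} of $\eta\cdot\xi$ is unchanged under rescaling, it suffices to prove the identity $\eta\cdot\xi=\sum_n n\,l_{w,w}(n)R^n$ for one convenient normalisation. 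Recall first that the first-return decomposition gives $T^A_{w,w}(z)=(1-L_{w,w}(z))^{-1}$, so that recurrence is equivalent to $L_{w,w}(R)=1$.

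First I would construct the eigenvectors. Let $f^{(n)}_{i,w}$ denote the number of paths of length $n$ from $i$ to $w$ with no visit to $w$ before time $n$, and set $F_{i,w}(z)=\sum_{n\ge 1}f^{(n)}_{i,w}z^n$; note $f^{(n)}_{w,w}=l_{w,w}(n)$, hence $F_{w,w}=L_{w,w}$. The one-step recursions $f^{(m)}_{i,w}=\sum_{j\ne w}a_{ij}f^{(m-1)}_{j,w}$ (for $m\ge2$, with $f^{(1)}_{i,w}=a_{iw}$) and $l_{w,j}(m)=\sum_{i\ne w}l_{w,i}(m-1)a_{ij}$ (for $m\ge 2$, with $l_{w,j}(1)=a_{wj}$) translate, after multiplying by $\lambda=R^{-1}$ and evaluating at $z=R$, into the eigenvector relations $\sum_j a_{ij}\,\xi_j=\lambda\xi_i$ and $\sum_i\eta_i\,a_{ij}=\lambda\eta_j$, where $\xi_i:=F_{i,w}(R)$ and $\eta_i:=L_{w,i}(R)$, normalised so that $\xi_w=\eta_w=L_{w,w}(R)=1$. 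By recurrence these series converge to finite positive numbers, and by the uniqueness in Theorem~\ref{Thm:Generalized_Perron_Frobenius} they are the Perron eigenvectors up to scalars.

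The core computation is a combinatorial identity obtained by cutting a first-return loop at each of its interior times. A first-return path $w=v_0,\dots,v_n=w$ of length $n\ge 2$, split at a position $k\in\{1,\dots,n-1\}$, factors bijectively into a prefix counted by $l_{w,v_k}(k)$ and a suffix counted by $f_{v_k,w}(n-k)$; summing over the $n-1$ interior positions yields $(n-1)\,l_{w,w}(n)=\sum_{k=1}^{n-1}\sum_{i\ne w}l_{w,i}(k)\,f_{i,w}(n-k)$. Multiplying by $z^n$, summing, and recognising a Cauchy product gives $\sum_{i\ne w}L_{w,i}(z)F_{i,w}(z)=zL'_{w,w}(z)-L_{w,w}(z)$; since all coefficients are non-negative, this is valid in $[0,\infty]$ at $z=R$. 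Adding the diagonal term $\eta_w\xi_w=L_{w,w}(R)^2=1$ and using $L_{w,w}(R)=1$ produces $\eta\cdot\xi=\sum_i L_{w,i}(R)F_{i,w}(R)=R\,L'_{w,w}(R)=\sum_{n\ge1}n\,l_{w,w}(n)R^n$. The dichotomy of Definition~\ref{Def_recurrent_transient} then closes the argument: $\eta\cdot\xi<\infty$ exactly when $A$ is positive recurrent, and otherwise $A$ is null recurrent.

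The main obstacle I anticipate is not the algebra but the analytic justification at the boundary $z=R$: one must check that the first-passage series $F_{i,w}(R)$ and $L_{w,i}(R)$ actually converge, so that $\xi,\eta$ have finite, strictly positive entries. This rests on recurrence, through the relations $T^A_{i,w}=F_{i,w}\,T^A_{w,w}$ and $T^A_{w,j}=T^A_{w,w}\,L_{w,j}$ together with the monotone behaviour of these power series as $z\uparrow R$. Everything else---the eigenvector verification and the summation identity---is bookkeeping with non-negative terms, where Tonelli's theorem legitimises every interchange and the possibly infinite value $\eta\cdot\xi$ is handled uniformly in $[0,\infty]$.
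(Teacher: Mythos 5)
Your argument is, in substance, the classical Vere-Jones proof; note that the paper itself offers no proof of this proposition (it is quoted from \cite{Kitchens1998}, Chapter 7), so the relevant benchmark is that standard argument, and you have reconstructed it faithfully. The eigenvector construction $\xi_i=F_{i,w}(R)$, $\eta_j=L_{w,j}(R)$ with $R=\lambda^{-1}$, the verification of $A\xi=\lambda\xi$ and $\eta A=\lambda\eta$ (which, as you correctly set up, needs $L_{w,w}(R)=1$, i.e.\ recurrence, to absorb the boundary terms $a_{iw}$ and $a_{wj}$ coming from the excluded index $w$), and the cut-at-interior-times identity $(n-1)\,l_{w,w}(n)=\sum_{k=1}^{n-1}\sum_{i\ne w} l_{w,i}(k)\,f^{(n-k)}_{i,w}$, hence $\eta\cdot\xi = R\,L'_{w,w}(R)=\sum_{n\ge 1} n\,l_{w,w}(n)\lambda^{-n}$, are all correct; working throughout in $[0,\infty]$ with Tonelli does deliver the positive recurrent and null recurrent cases simultaneously, and uniqueness of the eigenvectors up to scalars makes the finiteness of $\eta\cdot\xi$ normalization-independent, as you observe.

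The one step that is thinner than it should be is exactly the one you flag: finiteness of $\xi_i=F_{i,w}(R)$ and $\eta_i=L_{w,i}(R)$. The relations $T^A_{i,w}=F_{i,w}\,T^A_{w,w}$ and $T^A_{w,j}=T^A_{w,w}\,L_{w,j}$ together with monotonicity as $z\uparrow R$ only show that the limits exist in $(0,\infty]$: in the recurrent case $T^A_{w,w}(z)\to\infty$ as $z\uparrow R$, so the ratio $T^A_{i,w}(z)/T^A_{w,w}(z)$ is an increasing function whose boundedness is not a consequence of these identities alone. The clean repair is a combinatorial domination by first-return loops, using only irreducibility and $L_{w,w}(R)=1<\infty$. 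For $i\ne w$, a shortest path from $w$ to $i$ never revisits $w$ (otherwise the segment after the last visit would be shorter), so $l_{w,i}(m)\ge 1$ for some $m$; appending any first-passage path from $i$ to $w$ of length $n$ produces a first-return loop of length $m+n$, injectively in the second factor, whence $f^{(n)}_{i,w}\le l_{w,w}(n+m)$ and $F_{i,w}(R)\le \lambda^{m}L_{w,w}(R)=\lambda^{m}<\infty$. Symmetrically, a shortest path from $i$ to $w$ is a first-passage path, so $f^{(m')}_{i,w}\ge 1$ for some $m'$, giving $l_{w,i}(n)\le l_{w,w}(n+m')$ and $L_{w,i}(R)\le\lambda^{m'}<\infty$; the same concatenations also give strict positivity of all entries. (Aperiodicity, assumed in the statement, plays no role in the proof.) With this bound inserted, your proof is complete.
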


\begin{thm}\cite{Kitchens1998}\label{Thm:Pos_recc} Let $A$ be a real, non-negative, 
irreducible, aperiodic, recurrent, infinite matrix, and let $\eta$ 
and $\xi$ be the left and right Perron eigenvectors of $A$.
If $A$ is positive recurrent and $\eta \cdot \xi = 1$ then
$$
\lim_{n \rightarrow \infty} \frac{A^n}{\lambda^n} = \xi \eta.
$$
\end{thm}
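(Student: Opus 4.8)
The plan is to follow the $R$-theory of Vere--Jones, using $R=\lambda^{-1}$ as the common radius of convergence of the generating functions $T^{A}_{w,v}(z)=\sum_{n\ge 0}a^{(n)}_{w,v}z^{n}$ from the appendix. Writing the asserted limit entrywise, the goal is to show that $\Pi_{w,v}:=\lim_{n}\lambda^{-n}a^{(n)}_{w,v}$ exists and equals $\xi_{w}\eta_{v}$ for all $w,v$. I would first treat the diagonal entries, then bootstrap to the off-diagonal ones via a first-passage decomposition, and finally match the limit with the eigenvectors provided by Theorem~\ref{Thm:Generalized_Perron_Frobenius}. Throughout, interchanging the limit in $n$ with the matrix product $\Pi A$ is legitimate because $A=F^{T}$ has finite column support (each column of $A$ is a row of $F$, hence finitely supported), so only finite sums occur on the relevant side.

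For the diagonal entries I would use the renewal identity $T^{A}_{i,i}(z)=(1-L_{i,i}(z))^{-1}$, where $L_{i,i}(z)=\sum_{n}l_{i,i}(n)z^{n}$ is the first-return generating function. Setting $b_{n}:=R^{\,n}l_{i,i}(n)$ and $u_{n}:=R^{\,n}a^{(n)}_{i,i}$, the renewal equation $u_{n}=\sum_{k=1}^{n}b_{k}u_{n-k}$ (with $u_{0}=1$) holds. Recurrence (Definition~\ref{Def_recurrent_transient}) forces $\sum_{n}b_{n}=L_{i,i}(R)=1$, so $(b_{n})$ is a probability law on $\N$; aperiodicity of $A$ gives $\gcd\{n:b_{n}>0\}=1$; and positive recurrence is exactly the finiteness of the mean $\mu_{i}:=\sum_{n}nb_{n}=\sum_{n}nR^{\,n}l_{i,i}(n)<\infty$. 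The Erd\H{o}s--Feller--Pollard discrete renewal theorem then yields $\lim_{n}\lambda^{-n}a^{(n)}_{i,i}=1/\mu_{i}$. This renewal step is the analytic heart of the argument and the one I expect to be the main obstacle to make fully rigorous.

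For the off-diagonal entries I would decompose a path $w\to v$ at its first visit to $v$, giving $a^{(n)}_{w,v}=\sum_{m=1}^{n}f^{(m)}_{w,v}\,a^{(n-m)}_{v,v}$, where $f^{(m)}_{w,v}$ counts first-passage paths and $F_{w,v}(z)=\sum_{m}f^{(m)}_{w,v}z^{m}$. Multiplying by $R^{n}$ turns this into the convolution $R^{n}a^{(n)}_{w,v}=\sum_{m}(R^{m}f^{(m)}_{w,v})(R^{\,n-m}a^{(n-m)}_{v,v})$; since $R^{j}a^{(j)}_{v,v}\to 1/\mu_{v}$ is bounded and $\sum_{m}R^{m}f^{(m)}_{w,v}=F_{w,v}(R)<\infty$, dominated convergence gives $\Pi_{w,v}=F_{w,v}(R)/\mu_{v}$. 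To evaluate $F_{w,v}(R)$ I would pass to the matrix $\widetilde P_{w,u}=R\,\xi_{u}a_{w,u}/\xi_{w}$ built from the right eigenvector $\xi$ of Theorem~\ref{Thm:Generalized_Perron_Frobenius} (the identity $A\xi=\lambda\xi$ makes $\widetilde P$ row stochastic, since $R\lambda=1$). The $R$-recurrence of the diagram makes $\widetilde P$ an irreducible recurrent stochastic matrix, so its first-passage probabilities are certain: $\sum_{m}\widetilde f^{(m)}_{w,v}=1$. As $\widetilde f^{(m)}_{w,v}=R^{m}(\xi_{v}/\xi_{w})f^{(m)}_{w,v}$ (telescoping the $\xi$-ratios along a path), this gives $F_{w,v}(R)=\xi_{w}/\xi_{v}$, hence $\Pi_{w,v}=\xi_{w}/(\xi_{v}\mu_{v})$.

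It remains to recognize $1/(\xi_{v}\mu_{v})$ as the normalized left eigenvector. I would observe that the return weights of $\widetilde P$ at $v$ coincide with $b_{n}$ (the $\xi$-factors cancel on a self-loop), so $\widetilde P$ has mean return time $\mu_{v}$ and stationary distribution $\pi_{v}=1/\mu_{v}$ with $\sum_{v}\pi_{v}=1$ by positive recurrence. A direct computation shows $\pi\widetilde P=\pi$ is equivalent to $\eta A=\lambda\eta$ for $\eta_{v}:=\pi_{v}/\xi_{v}=1/(\xi_{v}\mu_{v})$; by the uniqueness clause of Theorem~\ref{Thm:Generalized_Perron_Frobenius} this is the Perron left eigenvector, and $\eta\cdot\xi=\sum_{v}\xi_{v}\eta_{v}=\sum_{v}\pi_{v}=1$, matching the hypothesis. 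Therefore $\Pi_{w,v}=\xi_{w}\eta_{v}$, i.e.\ $\lambda^{-n}A^{n}\to\xi\eta$, as claimed. Besides the renewal theorem, the points needing care are the $R$-transform bookkeeping (which cancellations survive on a self-loop versus a genuine first passage) and the limit/matrix-product interchange, both manageable given the finite column support of $A=F^{T}$.
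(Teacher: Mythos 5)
Your argument is correct and is essentially the proof the paper relies on: the paper gives no proof of this theorem, citing Kitchens \cite{Kitchens1998}, whose Chapter 7 treatment (following Vere-Jones) uses exactly your ingredients --- the renewal identity $T^A_{i,i}(z)=(1-L_{i,i}(z))^{-1}$ combined with the Erd\H{o}s--Feller--Pollard theorem, the first-passage decomposition for off-diagonal entries, and stochasticization via the right Perron eigenvector $p_{w,v}=a_{w,v}\xi_v/(\lambda\xi_w)$. The only difference is packaging: the cited source passes to the stochastic matrix first and invokes the convergence theorem for positive recurrent aperiodic Markov chains ($p^{(n)}_{w,v}\to\pi_v$) wholesale, whereas you re-derive that convergence from the renewal equation directly, so the mathematical content coincides.
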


The next fact explains why we were interested in measures 
taking positive values on cylinder sets.

\begin{prop} 
Let $A$ be a real, non-negative, irreducible, 
and aperiodic infinite matrix such that there exists $\lambda > 0$ 
and a non-zero non-negative vector $x = (x_i)_{i \in \Z}$ for 
which
\begin{equation}\label{Formula_Ax<lambdax}
Ax \leq \lambda x.
\end{equation}
Then if $x_j > 0$ for some $j \in \Z$, it follows that $x_i > 0$ 
for all $i \in \Z$.
\end{prop}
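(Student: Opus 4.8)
The plan is to exploit the sub-invariance inequality $Ax \le \lambda x$ by iterating it and then to use irreducibility to propagate positivity from the single index $j$ to every index. First I would observe that, since $A \ge 0$ and $x \ge 0$, the inequality can be iterated: applying the non-negative operator $A$ to both sides of $Ax \le \lambda x$ and using $A(\lambda x) = \lambda (Ax) \le \lambda^2 x$ gives $A^2 x \le \lambda^2 x$, and by induction $A^n x \le \lambda^n x$ for every $n \ge 1$. Written componentwise, this reads
\[
\sum_{k} a^{(n)}_{ik}\, x_k \le \lambda^n x_i, \qquad i \in \Z, \ n \ge 1 .
\]

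The second step is the propagation argument. Fix an arbitrary index $i \in \Z$. Since $A$ is irreducible, there is some $n = n(i) > 0$ with $a^{(n)}_{ij} > 0$. Because all terms of the sum above are non-negative, I may bound the full sum from below by its single $k = j$ term, obtaining $a^{(n)}_{ij}\, x_j \le \lambda^n x_i$. As $a^{(n)}_{ij} > 0$, $x_j > 0$, and $\lambda > 0$, the left-hand side is strictly positive, so $x_i \ge \lambda^{-n}\, a^{(n)}_{ij}\, x_j > 0$. Since $i$ was arbitrary, $x_i > 0$ for all $i \in \Z$, which is the claim. (Aperiodicity is not actually needed here; irreducibility alone suffices.)

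The main obstacle will be the bookkeeping required to make the iteration rigorous in the infinite-dimensional setting: I must verify that each $A^n x$ is finite componentwise and that the identity $(A^n x)_i = \sum_k a^{(n)}_{ik} x_k$ holds with $a^{(n)}_{ik}$ the entries of the matrix power. Both points follow from non-negativity. Finiteness propagates inductively, since $\bigl(A(A^{n-1}x)\bigr)_i \le \lambda\, (A^{n-1}x)_i \le \lambda^n x_i < \infty$, so every intermediate vector has finite entries and the operator $A$ may legitimately be applied again. The rearrangement of the double sums defining matrix multiplication is justified by Tonelli's theorem for series of non-negative terms, so no absolute-convergence issue arises. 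I would record these two remarks briefly and then present the short chain of inequalities above as the core of the argument.
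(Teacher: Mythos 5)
Your proof is correct and takes essentially the same approach as the paper: both establish the iterated bound $a^{(n)}_{ij}\,x_j \le \lambda^n x_i$ and then use irreducibility to choose $n$ with $a^{(n)}_{ij} > 0$, forcing $x_i > 0$. The only cosmetic difference is that the paper proves the bound by induction on products along a single path, $a_{i i_1}\cdots a_{i_{n-1} j}\,x_j \le \lambda^n x_i$, which sidesteps the finiteness/Tonelli bookkeeping you handle explicitly for $A^n x$; your observation that aperiodicity is not needed is also consistent with the paper's argument, which uses only irreducibility.
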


\begin{proof}
By Formula~\ref{Formula_Ax<lambdax}, we have
$$
\sum_j a_{ij}x_j \leq \lambda x_i\,\, \mathrm{for}\,\,
\mathrm{ all}\,\, i \in \Z.
$$ 
Thus, we have
$$
a_{ij}x_j \leq \lambda x_i
$$
for all $i,j$. We show by induction that
$$
a_{i_0 i_1} \cdots a_{i_{n-1}i_n} x_{i_n} \leq \lambda^n x_{i_0}.
$$
Indeed, suppose the above inequality is true for some $n$, we 
prove it for $n+1$:
$$
a_{i_0 i_1} \cdots a_{i_{n-1}i_n}(a_{i_{n}i_{n+1}} x_{i_{n+1}}) 
\leq a_{i_0 i_1} \cdots a_{i_{n-1}i_n}(\lambda x_{i_n}) \leq 
\lambda (\lambda^n x_{i_0}) = \lambda^{n+1} x_{i_0}.
$$
Let $x_j > 0$, fix any $x_i$. Since $A$ is irreducible, there 
exists $n$ such that
$$
a^{(n)}_{ij} > 0,
$$
where $A^n = (a^{(n)}_{ij})$. Thus, there exist $i_1, \ldots, 
i_{n-1}$ such that
$$
a_{i i_1}\cdots a_{i_{n-1}j} > 0.
$$
Hence,
$$
\lambda^n x_i \geq a_{i i_1}\cdots a_{i_{n-1}j}x_j > 0.
$$
\end{proof}

The next proposition gives bounds for a positive eigenvalue of an infinite matrix which has a non-negative corresponding eigenvector.
\begin{prop}\label{Prop_Perron_eigenvalue_estimates}
Let $A$ be a real, non-negative, irreducible, and aperiodic 
infinite matrix with the uniformly bounded sum of elements in 
each column. 
Suppose there exists $0 < \lambda < \infty$ and a non-zero 
non-negative vector $x = (x_i)_{i \in \Z}$ for which $Ax = 
\lambda x$. If 
$$
\sum_{i} x_i < \infty
$$
then
$$
\inf_{j}\sum_{i} a_{ij} \leq \lambda \leq \sup_{j} \sum_i a_{ij}. 
$$
In particular, if the sum of entries in every column of
$A$ is $c$,  then $\lambda = c$.
\end{prop}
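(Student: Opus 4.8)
The plan is to sum the eigenvalue equation over all rows and then interchange the two summations, converting row data into column sums. Concretely, the relation $Ax = \lambda x$ reads, for each index $i \in \Z$,
$$
\sum_{j \in \Z} a_{ij} x_j = \lambda x_i .
$$
First I would sum both sides over $i \in \Z$. The right-hand side becomes $\lambda \sum_{i} x_i$, which is finite and strictly positive by the hypothesis that $x$ is a non-zero non-negative vector with $\sum_i x_i < \infty$. In particular this shows that the left-hand double sum $\sum_{i}\sum_{j} a_{ij} x_j$ equals $\lambda \sum_i x_i < \infty$.

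Next I would interchange the order of summation. Since every term $a_{ij} x_j$ is non-negative and the double sum is finite, Tonelli's theorem for series guarantees that the reordering is legitimate and preserves the value. Writing $c_j := \sum_{i \in \Z} a_{ij}$ for the $j$-th column sum (finite, indeed uniformly bounded, by hypothesis), this yields
$$
\sum_{j \in \Z} c_j x_j = \lambda \sum_{j \in \Z} x_j .
$$

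The conclusion then follows by a sandwich estimate. For every $j$ one has $\inf_k c_k \leq c_j \leq \sup_k c_k$, and since $x_j \geq 0$ these inequalities survive multiplication by $x_j$ and summation:
$$
\Big(\inf_k c_k\Big) \sum_{j} x_j \;\leq\; \sum_{j} c_j x_j \;=\; \lambda \sum_{j} x_j \;\leq\; \Big(\sup_k c_k\Big) \sum_{j} x_j .
$$
Because $0 < \sum_j x_j < \infty$, I can divide through by $\sum_j x_j$ to obtain $\inf_j c_j \leq \lambda \leq \sup_j c_j$, which is the claimed bound. The final assertion is then immediate: if every column sum equals $c$, then $\inf_j c_j = \sup_j c_j = c$ forces $\lambda = c$.

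The only genuine subtlety — the ``hard part,'' such as it is — is the justification of the interchange of summation; everything else is elementary arithmetic. This step is harmless here precisely because all summands are non-negative and, crucially, the total mass $\sum_i \sum_j a_{ij} x_j = \lambda \sum_i x_i$ is finite, so Tonelli applies with no conditional-convergence concerns. I would also remark that irreducibility and aperiodicity play no role in this particular argument: they belong to the standing hypotheses, but the estimate relies solely on non-negativity of $A$, the summability of $x$, and the uniform boundedness of the column sums (the latter only to ensure $\sup_j c_j < \infty$).
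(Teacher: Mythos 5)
Your proof is correct and takes essentially the same route as the paper's: sum the relation $Ax = \lambda x$ over $i$, interchange the two summations to convert the double sum into $\sum_j \left(\sum_i a_{ij}\right) x_j$, and sandwich $\lambda$ between $\inf_j \sum_i a_{ij}$ and $\sup_j \sum_i a_{ij}$ before dividing by $\sum_j x_j$. The paper performs the interchange silently, so your explicit appeal to Tonelli (valid by non-negativity and finiteness of the total sum) and your note that $0 < \sum_j x_j < \infty$ only make the same argument slightly more careful.
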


\begin{proof} We have
$$
\sum_{j} a_{ij} x_j= \lambda x_i
$$
and
$$
\sum_i \sum_{j} a_{ij} x_j = \lambda \sum_i x_i.
$$
Let 
$$
m = \inf_{j}\sum_{i} a_{ij}, \quad M = \sup_{j}\sum_{i} a_{ij}.
$$
Then 
$$
m \sum_{j} x_j \leq \lambda \sum_i x_i \leq M \sum_{j} x_j
$$
and
$$
m \leq \lambda \leq M.
$$
In particular, if $m = M = c$ then $\lambda = c$.
\end{proof}

%%%%%

\section{Examples of Bratteli diagrams supporting invariant 
measures}\label{APP:Example}

In this appendix, we give the proofs of the propositions formulated
in Subsection \ref{Subsec:Examples} for some classes of stationary 
generalized Bratteli diagrams. Most of them satisfy the conditions 
of Theorem \ref{Thm:inv1}. This means that, for such diagrams,
there exist tail-invariant measures 
(finite or $\sigma$-finite) given by (\ref{eq inv meas left}).

We will follow the notations of Appendix 
\ref{APP:Perron-Frobenius_Theory}. Recall that  
$A = (a_{ij})_{i,j \in \Z}$ is an integer-valued, non-negative, and
countably infinite matrix.
\\

\noindent 
\begin{proof} (\textit{Proposition \ref{prop Ex1 sect 5}})
We first note that $A_1$ has the equal column sum property. 
Hence, there is an eigenvalue equal to the column sum $\lambda = a + 2b$. Denote by 
$\xi = (\xi_n)_{n \in \Z}$ the right eigenvector for $A_1$, 
i.e., $A_1 \xi = (a +2b) \xi$.
Then $b\xi_{-1} + a \xi_0 + 2b \xi_1 = (a + 2b) \xi_0$. 
We choose $\xi_{-1} = \xi_0 = 1$, and find that $\xi_1 = 
\frac{1}{2}$. Analogously, the equality
$$
b\xi_{0} + 2b \xi_2 = b + 2b \xi_2 =(a + 2b)\xi_1 =\frac{a}{2} + b
$$ 
gives $\xi_2 = \frac{a}{4b}$. By induction,  
we prove that 
$$\xi_n = \frac{a^{n-1}}{2^n b^{n-1}}
$$ 
for each $n \in \N$. Indeed, we have  
$$
a\xi_{n-1} + 2b\xi_{n+1} = \frac{a^{n-1}}{2^{n-1}b^{n-2}} + 
2b\xi_{n+1} = (a + 2b)\xi_{n} = \frac{a^{n} + 
2a^{n-1}b}{2^nb^{n-1}}.
$$ 
Thus, 
$$
\xi_{n+1} = \frac{a^{n} + 2a^{n-1}b - 2a^{n-1}b}{2^{n+1}b^{n}} = 
\frac{a^{n}}{2^{n+1}b^{n}}. 
$$
To find  $\xi_{-n}$, we use the same equations. Therefore, 
$$
\xi = \left(\ldots, \dfrac{1}{2^3}\left(\dfrac{a}{b}\right)^2, 
\dfrac{1}{2^2}\left(\dfrac{a}{b}\right), \dfrac{1}{2},1,1,
\dfrac{1}{2}, \dfrac{1}{2^2}\left(\dfrac{a}{b}\right), 
\dfrac{1}{2^3}\left(\dfrac{a}{b}\right)^2, \ldots \right)^T
$$
is the right eigenvector for $A_1$. Clearly, the tail 
invariant measure $\mu$, which is determined by 
\eqref{eq inv meas left}, is finite if and only if 
$$
\sum_{n \in \mathbb{Z}} \xi_n < \infty \Longleftrightarrow a < 2b.
$$

Since $\eta = (\cdots, 1,1, 1, \cdots)$ is the left eigenvector
for $A_1$, the condition $a < 2b$ is equivalent to the 
property $\eta \cdot \xi < \infty$.
\ignore{of positive recurrence of $A_1$. In this case, we can 
apply Theorem \ref{Thm:Unique finite} to conclude the uniqueness 
of $\mu$. If $a \geq 2b$ then it follows from the discussion in 
Section 
\ref{sect:stochastic}, Example \ref{ex:stochastic matrix} that 
$A_1$ is null recurrent. }
\end{proof}

\noindent 
\begin{proof}  (\textit{Proposition \ref{prop Ex 2 sect 5}})
(1) For $a \neq b$, calculations show that $A_2$ has eigenvalues 
$\lambda = a+b$ and $\wt {\lambda} = 2\sqrt{ab}$. 
We note that $\wt {\lambda}$ is the spectral radius of the matrix 
$A_2$ (see also 
Example \ref{ex transient Sect 6}). The eigenvectors 
corresponding to $\lambda$ and $\wt \lambda$ are given below:
$$
A_2 \,\xi = \lambda \, \xi, \quad \lambda = a + b, \quad
\xi = (\xi_n) = \left(\ldots, 1,1,1 \ldots \right)^T, \quad 
\sum_{n  \in \mathbb{Z}} \xi_n = \infty
$$ 
and 
$$
A_2 \,\wt{\xi} = \wt{\lambda} \, \wt{\xi}, \quad \wt{\lambda} = 
2\sqrt{ab}, \quad \wt{\xi} = (\wt{\xi}_n), \quad \wt{\xi}_n = 
\left(\frac{a}{b}\right)^{\tfrac{n}{2}}, \quad \sum_{n \in 
\mathbb{Z}} \wt{\xi}_n = \infty.
$$ 
We apply \eqref{eq inv meas left} to obtain two $\sigma$-finite 
invariant measure $\mu$ and $\wt \mu$ on the path space $X_B$, 
corresponding to $\lambda$ and $\wt \lambda$ respectively. 

(2) When $a = b$, we obtain the  eigenvalue $\lambda = \wt \lambda = 2a$ and and 
$\xi = \wt \xi = \left(\ldots, 1,1,1 \ldots \right)^T$ hence 
$\mu = \wt \mu$. 

Direct computations or application of Proposition~\ref{prop:ThiagoThm} show that matrix $A$ is null recurrent.
\end{proof}

\noindent 
\begin{proof} (\textit{Proposition \ref{Prop:Ex_one_side_1}}) 
Note that the tri-diagonal matrix $A_3$ is a balanced matrix (see 
Definition \ref{Def:Balanced}) with $\lambda = b+ c + \alpha c$,  
$\sigma_1 = b + 2 \alpha c$ and $\sigma_i = b + c + \alpha c$ for 
$i > 1$. Let $\xi = (\xi_n)_{n \in \N}$ be the corresponding right  
eigenvector. Then the first entry of the  
equation 
$$
A_3 \xi = (b+c+\alpha c) \xi
$$ 
implies that $(b+\alpha c)\xi_1 + \alpha c \, \xi_2 = 
(b+c+\alpha c) \xi_1$. Hence, we get 
$\xi_2 = \dfrac{\xi_1}{\alpha}$. 

Then, we apply the relation 
$$ 
c\, \xi_{n-1} + b \, \xi_n + \alpha c \, \xi_{n+1} = 
(b+c+\alpha c) \xi_n
$$
for $n > 2$, and by induction, we can easily obtain that 
$\xi_n = \dfrac{\xi_1}{\alpha^{n-1}}$. Since $\alpha > 1$, 
$\sum_{n \in \mathbb{Z}} \xi_n < \infty$. 
Finally, we observe that $\eta A = \lambda \eta$ where 
$\eta = (\cdots,1,1,1,\cdots)$. This means that $\eta \cdot \xi <
\infty$.
\ignore{and $A_3$ is positively recurrent. Therefore the tail
invariant measure $\mu$ defined in Theorem \ref{Thm:inv1} is
finite and uniquely ergodic by Theorem \ref{Thm:Unique finite}. }
\end{proof}

\noindent 
\begin{proof} (\textit{Proposition \ref{Prop:Ex_one_side_2}}) 
(1) Note that the tri-diagonal matrix $A_4$ given by 
\eqref{Matrix A_4} 
is a balanced matrix (see Definition \ref{Def:Balanced}) with the 
eigenvalue $\lambda = b+ 2r$, $\sigma_1 = b + 2 r + \alpha + 
\beta$ and $\sigma_i = b + 2r $ for $i > 1$. We show that the 
right eigenvector $\xi$ corresponding to $\lambda$ has entries as 
in \eqref{induction}. To see this, we set $\xi_1 =1$ and 
find from the equation  $(b + r +\alpha) x_1 + (r+\beta)x_2 = 
(b+2t) x_1$ that $x_2 = \dfrac{r - \alpha}{r + \beta}$.
Fix $n >0$ and  assume that $\xi_i$ satisfies equations in 
(\ref{induction}) for all $i\leq 2n-1$. We note that, using the
equality 
$$
(r-\beta) \xi_{2n-2} = (r+ \alpha) \xi_{2n-1},
$$
one can find $\xi_{2n}$ from the relation 
 $$
(r-\beta) \xi_{2n-2} + b\, \xi_{2n-1} + (r+\beta) \xi_{2n} = 
(b+2 r) \xi_{2n-1}.
$$ 
Then, 
$$
(r+ \alpha) \xi_{2n-1} + (r+\beta) \xi_{2n} = 2 r\, \xi_{2n-1}
$$
and 
$$
\xi_{2n} = \dfrac{(r-\alpha)\,\xi_{2n-1}}{(r+\beta)}= 
\dfrac{(r-\alpha)^n\,(r-\beta)^{n-1}}{(r+\alpha)^{n-1}(r+\beta)^n}
$$
A similar calculation gives the formula for $\xi_{2n+1}$:
$$
\xi_{2n+1} = \dfrac{(r-\alpha)^n(r-\beta)^n}
{(r+\alpha)^n (r+\beta)^n}.
$$

(2) Now we find the conditions under which the eigenvector 
$\xi$ is summable, i.e., $\sum_{i \in \N} \xi_i < \infty$. 
Recall that the parameters $q_1$ and $q_2$ of the matrix $A_4$ 
have been defined in \eqref{eq q1q2}. 

Suppose that $q_1q_2 < 1$. Then we can write 
$\xi_{2n+1} = (q_1\,q_2)^{n}$, and 
$\xi_{2(n+1)} = q_1\,(q_1\,q_2)^{n}$. This means that  
the sum of all entries of $\xi$ can be found as follows:
$$
\ba
\sum_{i=0}^{\infty} \xi_i =&  \sum_{n=0}^{\infty} (q_1\,q_2)^{n} +  
\sum_{n=0}^{\infty} q_1\,(q_1\,q_2)^{n} \\
=  &\dfrac{1}{(1-q_1q_2)} + \dfrac{q_1}{(1-q_1q_2)}\\
= & \dfrac{2\,r+\beta-\alpha}{(r+\beta)} < \infty. 
\ea
$$

It is obvious that $\eta = (\cdots,1,1,1,\cdots)$ is 
the left eigenvector corresponding to $\lambda$ and we have
$\eta\cdot\xi < \infty$. 
\ignore{the matrix $A_4$ is positive recurrent. 
We now apply Theorem \ref{Thm:Unique finite} to conclude that 
the corresponding tail-invariant measure is uniquely ergodic. }

(3) If $q_1 q_2 \geq 1$, then the series 
$\sum_{i=0}^{\infty} \xi_i$ diverges and the tail-invariant measure 
$\mu$ given by \eqref{eq inv meas left} is $\sigma$-finite. 
\end{proof}

\ignore{
The next statement is motivated by Proposition 
\ref{Prop:Ex_one_side_2}. Let 
\begin{equation}\label{class3}
    A = \begin{pmatrix}
(b_2+r_2+\beta) & r_1+\alpha & 0 & 0 & 0 & ... \\
r_2-\beta & b_2 & r_2+\beta & 0 & 0 & ... \\
0 & r_3-\alpha & b_3 & r_3 + \alpha & 0 & ... \\
0 & 0 & r_4-\beta & b_4 & r_4+\beta & ... \\
 \vdots &  \vdots & \vdots & \vdots & \vdots &  \ddots
\end{pmatrix}
\end{equation}

Because some entries of the matrix 
$A$ can be negative, we cannot connect this matrix to a 
generalized Bratteli diagram.

\begin{prop}\label{Ex:class3} Choose positive real numbers $r_1, 
r_2, \alpha, \beta, \lambda$. Later we will make precise the 
relation between these real numbers. For $n\geq 2$, define $$
b_n = \lambda - 2 r_n \,\,\,\,\,\,\,\,\,\,\,\, r_{n+1} = 2 r_{n} - 
r_{n-1}. 
$$ We require that all these numbers are positive, set

\end{prop}

\begin{proof}
    Thus $\sigma_1 = b_2 + r_1 + r_2 + \alpha + \beta$, and for $i\geq 2$ we have $\sigma_i = b_i + 2 r_i = \lambda$. Also observe that the matrix $A$ in (\ref{class3}) is an equal column sum matrix with each column equal to $\lambda$. 

A calculation similar to Proposition $\ref{Prop:Ex_one_side_2}$ shows that the entries in the eigenvector $x = (x_i)_{i \in \N}$ for the matrix $A$ in (\ref{class3}) are given by $$
x_{2n} = \dfrac{(r_{2n} - \beta) (r_{2n-1} - \alpha) (r_{2n-2} - \beta)...(r_{2} - \beta)}{(r_{2n-1} + \alpha) (r_{2n-2} + \beta) (r_{2n-3} + \alpha)...(r_{1} + \alpha)} \,\,\, x_1 \,\,\,\, \mathrm{for}\,\,\,\, n \in \N
$$ and $$
x_{2n+1} = \dfrac{(r_{2n+1} - \alpha) (r_{2n} - \beta) (r_{2n-1} - \alpha)...(r_{2} - \beta)}{(r_{2n} + \beta) (r_{2n-1} + \alpha) (r_{2n-2} + \beta)...(r_{1} + \alpha)} \,\,\, x_1 \,\,\,\, \mathrm{for}\,\,\,\, n \in \N \cup \{0\}.
$$ To make sure that $\sum_{i=0}^{\infty} x_i < \infty$, we need that for all $n \geq 1$,

\begin{equation}\label{ratio}
    \dfrac{x_{2n+1}}{x_{2n}} = \dfrac{(r_{2n+1} - \alpha)}{(r_{2n}+ \beta)} < 1\,\,\,\, \mathrm{and}\,\,\,\, \dfrac{x_{2n}}{x_{2n - 1} + 2} = \dfrac{r_{2n} - \beta}{r_{2n - 1}+ 2} < 1.
\end{equation} If there exists $q_1, q_2 \in \N$ such that $x_{2n} < q^{2n}_1 x_1$ and $x_{2n+1} < q^{2n+1} x_1$ for every $n \in \N$ then (\ref{ratio}) is satisfied and $\sum_{i=0}^{\infty} x_i < \infty$.
\end{proof}
}

\begin{proof} (\textit{Proposition \ref{prop:renew}})
It is obvious that the matrix  $A_5$ has the equal column
sum property and the eigenvalue $\lambda = 2$. One can check by definition that $\lambda$ is the Perron eigenvalue and the matrix $A_5$ is positive recurrent.
Then the left 
and right eigenvectors are 
$$
\eta = (1, 1, 1, \ldots)
$$
and
$$
\xi = \left(1, \frac{1}{2}, \frac{1}{2^2}, \frac{1}{2^3}, 
\ldots\right)^T.
$$
Since the matrix $A_5$ is positive recurrent, it follows from
Theorem~\ref{Thm:Unique finite} that
the Bratteli diagram
supports the unique (up to constant multiple)
finite ergodic invariant measure which takes positive values on cylinder sets, and this measure is generated by $\xi$ and $\lambda$. It is easy to see that any  probability tail-invariant measure on $B(F_5)$ should have positive values on cylinder sets. It follows from the fact that for every non-negative $n$ there is an edge between the first vertex on level $n$ and every vertex on level $n+1$. Thus, the probability ergodic tail-invariant measure for $B(F_5)$ is unique.
\end{proof}
 
\begin{proof} (\textit{Proposition \ref{prop:renew_pair}}) 
By definition of the matrix $A_6$, the entries of $A_6$ are
\begin{equation*}
    a_{1,n}=a_{2,2n}=a_{n+1,n} = 1,\quad \forall n\in \mathbb{N}
\end{equation*}
and zero otherwise. 
Hence, the equation $A_6 \xi = \lambda\xi$ is equivalent to
the system
\begin{equation}\label{eq:system_pair_renewal}
\begin{cases}
    \sum_{k=1}^\infty \xi_k = \lambda \xi_1;\\
    \xi_1 + \sum_{k=1}^\infty \xi_{2k} = \lambda \xi_2;\\
    \xi_n = \lambda \xi_{n+1}, \qquad n\geq 2.
\end{cases}
\end{equation}
Direct calculations show that the Perron eigenvalue 
$\lambda$ is $1 + \sqrt{2}$, and the corresponding probability 
right eigenvector $\xi = (\xi_n)_{n \in \N}$ is given by
\begin{equation*}
\xi_1 = \frac{1}{1+\sqrt{2}}; \quad 
    \xi_n = \frac{2}{(1+\sqrt{2})^n}, \quad n \geq 2,
\end{equation*} 
and the matrix $A_6$ is recurrent (see~\cite{Raszeja2021}).
The left eigenvector is $\eta = (1, \lambda - 1, 1, \lambda - 1, \ldots)$. Thus, the matrix $A_6$ is positive recurrent, and we 
can again apply Theorem \ref{Thm:Unique finite} to conclude that the invariant measure given by \eqref{eq inv meas left} is uniquely ergodic.

\ignore{There is a unique solution of \eqref{eq:system_pair_renewal} 
$$
\lambda = 1 + \sqrt{2}.
$$
The corresponding probability right eigenvector for $A$ has the 
entries
\begin{equation*}
\xi_1 = \frac{1}{1+\sqrt{2}}, \qquad 
    \xi_n = \frac{2}{(1+\sqrt{2})^n}, \quad n \geq 2.
\end{equation*}
Similarly, we can find the left eigenvector $\eta = (\eta_i)$ for 
$A_6$:
$$
\eta = (1, \lambda - 1, 1, \lambda - 1, \ldots),
$$
Thus, we see that $\eta \cdot \xi < \infty$ and the matrix $A_6$ 
is positive recurrent.}
\end{proof}

\begin{proof} (\textit{Proposition \ref{prop:A_7}})  Let 
$\eta$ and $\xi$ be left and right eigenvectors 
corresponding to eigenvalue $\lambda$. Then 
$\eta A_7 = \lambda \eta$ implies 
$$
\sum_{k=0}^{\infty} c_k \eta_k = \lambda\,\eta_0, \quad 
\eta_0 = \lambda\, \eta_1 ,\cdots \cdots,\eta_k = \lambda\, 
\eta_{k+1},\cdots \cdots
$$ 
Setting $\eta_0 = 1$, we get $\eta = (1, \frac{1}
{\lambda},\cdots,\frac{1}{\lambda^k},\cdots) $ and 
\be\label{eq A7}
\sum_{k=0}^{\infty} \dfrac{c_k}{\lambda^{k+1}} = 1.
\ee
If $c_k \leq  C$ for all $k \in \N_0$, then we deduce from
\eqref{eq A7} that $1 \leq \frac{C}{\lambda} \cdot 
\frac{1}{1 - 1/\lambda}$ or  $\lambda \leq C+1$. In 
particular, $\lambda = C+1 $ if all $c_k = C$.

Now we calculate the right eigenvector $\xi = 
(\xi_k)_{k=0}^{\infty}$. Observe that $A_7 \xi = 
\lambda \xi$ implies the following relations
$$
c_o\,\xi_0 + \xi_1 = \lambda\, \xi_0,\quad \cdots \cdots
\quad,c_k \,\xi_0 + \xi_{k+1} =\lambda \, \xi_k,\,\cdots \,.
$$ 
Hence, setting $\xi_0 =1$, we have 
$$ 
\xi_{k+1} =  \lambda^{k+1} - c_0 \, \lambda^k - c_1 
\lambda^{k-1} - \cdots - c_{k-1} \lambda - c_k, \quad
k \in \N_0,
$$ 
which proves \eqref{eq:right_A7}. We check
that $\xi_k > 0$ for all $k \in \N_0$. Indeed, 
$$
\xi_{k+1} = \lambda^{k+1} - \lambda^{k+1} \bigg(\dfrac{c_0}
{\lambda} + \dfrac{c_1}{\lambda^2} + \cdots + \dfrac{c_k}
{\lambda^k}\bigg) >  \lambda^{k+1} \bigg(1 - 
\sum_{i=0}^{\infty} \dfrac{c_i}{\lambda^{i+)}}\bigg) = 0
$$ 
as follows from \eqref{eq A7}. 
The eigenvector $\xi = (\xi_k)$ and $\lambda$ define the 
tail-invariant measure $\mu$ according to Theorem 
\ref{Thm:inv1}. Clearly, this measure is infinite. 

\ignore{By Proposition \ref{Prop:Pos_rec}, $A_7$ is positive 
recurrent if and only if} 
We can also find conditions under which
$\xi \cdot \eta < \infty$. 
We calculate 
$$
\ba 
\sum_{k=1}^{\infty} \xi_k \, \eta_k = & \sum_{k=1}^{\infty} 
\dfrac{1}{\lambda^k} \bigg(\lambda^k - \sum_{j=0}^{k-1} c_j 
\lambda^{k-j-1} \bigg)\\
= & \sum_{k=1}^{\infty} \bigg( 1 - \sum_{j=0}^{k-1} 
\dfrac{c_j}{\lambda^{j+1}}\bigg)\\
= & \sum_{k=1}^{\infty} \,\, \sum_{j\geq 1} \dfrac{c_j}{\lambda^{j+1}}\\
= &\sum_{k=1}^{\infty} \dfrac{k \cdot c_k}{\lambda^{k+1}}.
\ea 
$$ 
This proves the second statement of Proposition 
\ref{prop:A_7}. 
\ignore{Observe that if there exists constant $C \in \N$ such that 
for every $k \in \N_0$, $c_k < C$ then $A_7$ is positive 
recurrent and the corresponding $\mu$ is uniquely ergodic.} 
\end{proof}

\ignore{
\begin{thm}
Let $B$ be a generalized Bratteli diagram with the matrix $A = F^T$ 
of the form 
$$
A =\left(
  \begin{array}{ccccccccc}
   \ddots & \vdots & \vdots & \cdots & \vdots & \vdots & \cdots & \vdots & \udots \\
    \cdots & A_1 & 0 & \cdots & 0 & Y_{1,s+1} & \cdots & Y_{1,m} & \cdots \\
    \cdots & 0 & A_2 & \cdots & 0 & Y_{2,s+1} & \cdots & Y_{2,m} & \cdots\\
    \cdots & \vdots & \vdots & \ddots & \vdots & \vdots & \cdots& \vdots & \cdots\\
    \cdots & 0 & 0 & \cdots & A_s & Y_{s,s+1} & \cdots & Y_{s,m} & \cdots\\
    \cdots & 0 & 0 & \cdots & 0 & A_{s+1} & \cdots & Y_{s+1,m} & \cdots\\
    \udots & \vdots & \vdots & \cdots & \vdots & \vdots & \ddots & \vdots & \ddots\\
    \end{array}
\right)
$$
where all $A_i$ are finite primitive matrices. Then every distinguished eigenvalue corresponds to a finite ergodic invariant measure.
\end{thm}
}

\bibliographystyle{alpha}
\bibliography{referencesBKK5}

\end{document}

\ignore{The focus in our paper is a study of discrete dynamical 
systems 
which are realized in the path space $X_B$ of generalized Bratteli 
diagrams $B$. By the latter, we refer to a generalization of 
discrete graph structures usually referred to as Bratteli diagrams. 
The main extension in the generalized variant is that the levels in 
the diagrams will not be assumed finite, but instead the levels may 
be infinite, even measures spaces. This in turn implies that order 
structures for the Generalized Bratteli diagrams $B$ will become 
more subtle.

Our main results entail the following: a systematic study of the 
corresponding Vershik maps, the properties of the tail equivalence 
relation, the tail-invariant measures, the study of ergodicity of
invariant measures,  the interplay between stochastic 
matrices and incidence measures of $B$, and extensions of 
Perron-Frobenius data to the case of infinite matrices. In the 
general setting of generalized Bratteli diagrams, we give a 
necessary and sufficient condition for existence of a uniquely 
determined class of tail-invariant measures (Theorem 
\ref{BKMS_measures=invlimits}). We 
show that, for the case when the generalized stationary Bratteli 
diagram has an irreducible aperiodic incidence matrix, then the 
tail equivalence relation is topologically transitive, Theorem 
\ref{thm_top_trans}.
We present the infinite-dimensional Perron-Frobenius 
data which accounts for existence of a unique (up to a constant 
multiple) invariant measure, and uniqueness in the case of 
probability measures. We further present precise Perron-Frobenius 
data which yield finite ergodic invariant measures. And yet other 
Perron-Frobenius data which yields probability tail-invariant 
measures on the path space $X_B$ which are also ergodic. We also
discuss the connections between various stochastic matrices and the 
}